\theoremstyle{plain}
\newtheorem{proposition}{Proposition}[section]
\newtheorem{theorem}[proposition]{Theorem}
\newtheorem{lemma}[proposition]{Lemma}
\newtheorem{corollary}[proposition]{Corollary}
\newtheorem{fact}[proposition]{Fact}
\theoremstyle{definition}
\newtheorem{definition}[proposition]{Definition}
\newtheorem{observation}[proposition]{Observation}
\theoremstyle{remark}
\newtheorem{remark}[proposition]{Remark}
\DeclareMathOperator{\supp}{supp}
\DeclareMathOperator{\id}{id}
\DeclareMathOperator{\dist}{d}
\DeclareMathOperator{\dirac}{\mathcal{D}}
\DeclareMathOperator{\Oc}{\mathcal{O}}
\DeclareMathOperator{\Z}{\mathbb{Z}}
\DeclareMathOperator{\R}{\mathbb{R}}
\DeclareMathOperator{\Rb}{\mathbb{R}}
\newcommand{\abs}[1]{\left|#1\right|}
\newcommand{\norm}[1]{\left\|#1\right\|}
\newcommand{\mc}{\mathcal}
\newcommand{\mr}{\mathrm}
\newcommand{\mb}{\mathbb}
\newcommand{\ms}{\mathsf}
\newcommand{\resp}{resp.\ }
\renewcommand{\emptyset}{\varnothing}
\begin{document}

\title{Patterson--Sullivan theory for coarse cocycles}
\author[Blayac]{Pierre-Louis Blayac}
\address{University of Michigan}
\author[Canary]{Richard Canary}
\address{University of Michigan}
\author[Zhu]{Feng Zhu}
\address{University of Wisconsin-Madison}
\author[Zimmer]{Andrew Zimmer}
\address{University of Wisconsin-Madison}
\thanks{Canary was partially supported by grant DMS-2304636 from the National Science Foundation.
Zhu was partially supported by an AMS-Simons Travel Grant.
Zimmer was partially supported by a Sloan research fellowship and grant DMS-2105580  from the National Science Foundation.}
\date{\today}
\keywords{}
\subjclass[2010]{}

\begin{abstract} In this paper we develop a theory of Patterson--Sullivan measures associated to coarse cocycles of convergence groups. This framework includes Patterson--Sullivan measures associated to the Busemann cocycle on the geodesic boundary of a Gromov hyperbolic metric spaces and Patterson--Sullivan measures on flag manifolds associated to Anosov (or more general transverse) subgroups of semisimple Lie groups, as well as more examples. Under some natural geometric assumptions on the coarse cocycle, we prove existence, uniqueness, and ergodicity results. 

\end{abstract}

\maketitle

\setcounter{tocdepth}{1}

\tableofcontents


\section{Introduction} 

Patterson--Sullivan measures were first constructed by Patterson \cite{Patterson} in the setting of Fuchsian groups and by Sullivan \cite{Sullivan1979} for
Kleinian groups. They have been used to study the dynamics of the recurrent part of the geodesic flow of the quotient manifold,
the geometry of the limit set of the group and to obtain counting estimates for both orbit points of the group  and closed geodesics in the quotient manifold.
They have been generalized to many settings, including proper isometric actions on Gromov hyperbolic spaces \cite{roblin,BF2017,CDST2018}
and discrete subgroups of semisimple Lie
groups \cite{Albuquerque,quint}.

In this paper we develop a theory of Patterson--Sullivan measures for coarse-cocycles of convergence group actions, which encompasses many of the previous
situations. When the coarse-cocycle has an expanding property and a finite critical exponent, we show that Patterson--Sullivan measures exist in the critical dimension.
Moreover, we establish a Shadow Lemma in the spirit of Sullivan and show that the action of the convergence group is ergodic with respect to the measure
when the associated Poincar\'e series diverges at its critical exponent.

We also develop the notion of a coarse Gromov--Patterson--Sullivan (GPS) system, which is a pair of coarse-cocycles  with an associated coarse Gromov product, and establish a version of the Hopf--Tsuji--Sullivan ergodic dichotomy in this setting. In a companion paper, we will use this framework to establish mixing, equidistribution
and counting results for relatively Anosov groups (and more generally for certain GPS systems for geometrically finite convergence groups).

\subsection{Main results}
Suppose $\Gamma \subset \mathsf{Homeo}(M)$ is a convergence group (see Section~\ref{sec:compactification} for definitions).
A function $\sigma \colon \Gamma \times M \to \Rb$ 
is called a \emph{$\kappa$-coarse-cocycle} if:
\begin{enumerate}
\item For every $\gamma \in \Gamma$, the function $\sigma(\gamma, \cdot)$ is \emph{$\kappa$-coarsely continuous}: if $x_0 \in M$, then 
$$
\limsup_{x\to x_0} \abs{  \sigma(\gamma, x_0)- \sigma(\gamma, x) } \leq \kappa.
$$
\item $\sigma$ satisfies a coarse version of the cocycle identity: if $\gamma_1, \gamma_2 \in \Gamma$ and $x \in M$, then
$$
\abs{ \sigma(\gamma_1\gamma_2, x) - \Big(\sigma(\gamma_1, \gamma_2 x) +\sigma(\gamma_2, x) \Big)} \leq \kappa.
$$
\end{enumerate}

Notice that  a $0$-coarse-cocycle is simply a continuous cocycle. In the classical hyperbolic setting, one usually considers the Busemann
cocycle.

For such a coarse-cocycle, Patterson--Sullivan measures are naturally defined as follows. 

\begin{definition}\label{defn:patterson-sullivan}
Suppose $\Gamma \subset \mathsf{Homeo}(M)$ is a convergence group and $\sigma \colon \Gamma\times M \to \Rb$ is a coarse-cocycle, then
a probability measure $\mu$ on $M$ is
a \emph{$C$-coarse $\sigma$-Patterson--Sullivan measure} of dimension $\delta$ if, for every $\gamma \in \Gamma$, the measures $\mu$ and $\gamma_*\mu$ are absolutely continuous 
and 
$$
e^{-C-\delta \sigma(\gamma^{-1}, \cdot)} \leq \frac{d\gamma_* \mu}{d\mu} \leq e^{C-\delta \sigma(\gamma^{-1}, \cdot)} 
$$
$\mu$-almost everywhere. 
\end{definition}

In this paper we develop a theory of Patterson--Sullivan measures for coarse-cocycles which have certain geometric properties. In Section~\ref{sec:compactification}, we will show that the set $\Gamma \sqcup M$ has a unique topology which makes it a compact metrizable space and where the natural action of $\Gamma$ on $\Gamma \sqcup M$ is a convergence group action. We call a metric on $\Gamma \sqcup M$ which generates this topology a \emph{compatible metric}. Using such a metric we make the following definitions.

\begin{definition}\label{defn:expanding} Suppose $\Gamma \subset \mathsf{Homeo}(M)$ is a non-elementary convergence group and $\dist$ is a compatible metric on $\Gamma \sqcup M$.  A coarse-cocycle $\sigma \colon \Gamma \times M \to \Rb$ is
\begin{enumerate} 
\item \emph{proper} if $\sigma(\gamma_n,\gamma_n^+) \rightarrow +\infty$ whenever $\{\gamma_n\} \subset \Gamma$ is a sequence of distinct loxodromic elements where the sequence of pairs of repelling/attracting points satisfies  $\liminf_{n \rightarrow \infty} \dist(\gamma_n^-, \gamma_n^+) > 0$. 
\item \emph{expanding} if it is proper and for every $\gamma \in \Gamma$ there is a number $\norm{\gamma}_\sigma \in \Rb$, called the  \emph{$\sigma$-magnitude of $\gamma$}, with the following property:  
\begin{itemize}
\item[] for every $\epsilon > 0$ there exists $C > 0$ such that: whenever $x \in M$, $\gamma \in \Gamma$ and $\dist(x,\gamma^{-1}) > \epsilon$, then 
$$
\norm{\gamma}_\sigma - C \leq \sigma(\gamma, x) \leq \norm{\gamma}_\sigma+C.
$$ 
\end{itemize} 
\end{enumerate} 
\end{definition} 

Given an expanding coarse-cocycle $\sigma \colon \Gamma \times M \to \Rb$, the \emph{$\sigma$-Poincar\'e series} is
$$
Q_\sigma(s) = \sum_{\gamma \in \Gamma} e^{-s \norm{\gamma}_\sigma}\in[0,+\infty]
$$
and the \emph{$\sigma$-critical exponent} is 
$$
\delta_\sigma(\Gamma) = \inf \left\{ s > 0 : Q_\sigma(s) < +\infty\right\}\in [0,+\infty]. 
$$
Although the magnitude function is not uniquely defined, any two choices will differ by a uniformly bounded amount and hence $\delta_\sigma(\Gamma)$ is independent of the choice of particular magnitude function.
Whether $Q_\sigma(\delta_\sigma(\Gamma))=+\infty$ is also independent of the choice of magnitude.

We show that if a coarse-cocycle is expanding and has finite critical exponent $\delta_\sigma(\Gamma)$,  then it admits a coarse Patterson--Sullivan measure of
dimension $\delta_\sigma(\Gamma)$ which is supported on the limit set. Moreover, any coarse Patterson--Sullivan measure has dimension at least $\delta_\sigma(\Gamma)$.

We also establish a Shadow Lemma for coarse Patterson--Sullivan measures and use it to study the associated Patterson--Sullivan measures. In particular,
we establish ergodicity of the action when the Poincar\'e series diverges at its critical exponent.

\begin{theorem}[see Theorem~\ref{thm:ergodicity on single M}]\label{uniqueness and ergodicity}Suppose $\Gamma \subset \mathsf{Homeo}(M)$ is a non-elementary convergence group and $\sigma \colon \Gamma \times M \to \Rb$ is an expanding coarse-cocycle with $\delta:=\delta_\sigma(\Gamma) < +\infty$. If $\mu$ is a $C$-coarse $\sigma$-Patterson--Sullivan measure of dimension $\delta$ and
$$
\sum_{\gamma \in \Gamma} e^{-\delta \norm{\gamma}_\sigma} = + \infty,
$$
 then:
 \begin{enumerate}
 \item $\Gamma$ acts ergodically on $(M, \mu)$.
 \item $\mu$ is coarsely unique in the following sense: if $\lambda$ is a $C$-coarse $\sigma$-Patterson--Sullivan measure of dimension $\delta$, then $e^{-4C} \mu \leq \lambda \leq e^{4C}\mu$.
  \item The conical limit set of $\Gamma$ has full $\mu$-measure.  
  \end{enumerate} 
 \end{theorem}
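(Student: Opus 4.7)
My plan is to carry out the classical three-step Patterson--Sullivan argument, adapted to the convergence-group/coarse-cocycle setting, using the Shadow Lemma established earlier in the paper and the expanding hypothesis on $\sigma$ in place of the usual Sullivan shadow lemma and geodesic geometry. The first step is to show that $\mu(\Lambda^{\mathrm{con}}) > 0$. Fix a parameter $R$ large enough that the Shadow Lemma yields $\mu(\mathrm{Sh}(\gamma,R)) \asymp e^{-\delta \norm{\gamma}_\sigma}$ uniformly in $\gamma \in \Gamma$, and partition $\Gamma$ into shells $S_n := \{\gamma : \norm{\gamma}_\sigma \in [n,n+1)\}$. The divergence hypothesis forces $\sum_n \sum_{\gamma \in S_n} \mu(\mathrm{Sh}(\gamma,R)) = +\infty$, while the upper bound in the Shadow Lemma together with $\mu(M) = 1$ gives a uniform bound on the multiplicity with which shadows in a single shell cover any point of $M$. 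A Borel--Cantelli type argument then shows $\mu(\limsup_n \bigcup_{\gamma \in S_n} \mathrm{Sh}(\gamma,R)) > 0$, and this $\limsup$ lies inside $\Lambda^{\mathrm{con}}$.

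The core of the argument is a Hopf-style density transfer: if $E \subseteq M$ has a conical density point $\eta$, and $\gamma_n \in \Gamma$ is a sequence witnessing conicality (so that $\gamma_n^{-1}\eta \to \eta_+$ while $\gamma_n^{-1}$ tends locally uniformly off $\eta$ to some $\eta_- \ne \eta_+$), then $\gamma_n^{-1} E$ has $\mu$-density $1$ in a definite-size neighborhood $V$ of $\eta_-$. The proof uses the PS identity $d\gamma_*\mu/d\mu \in e^{-\delta \sigma(\gamma^{-1},\cdot)} \cdot [e^{-C}, e^C]$, which combined with the expanding property makes the Radon--Nikodym derivative essentially constant on shadows $\mathrm{Sh}_n$ shrinking around $\eta$, so the density assertion $\mu(E \cap \mathrm{Sh}_n)/\mu(\mathrm{Sh}_n) \to 1$ transfers under $\gamma_n^{-1}$ to $\mu(\gamma_n^{-1} E \cap V)/\mu(V) \to 1$. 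Applying this first to $E = \Lambda^{\mathrm{con}}$ (whose density points are automatically conical) and varying $\{\gamma_n\}$ to accumulate on an arbitrary point of the limit set proves (3). Once (3) holds, applying the transfer again to a general $\Gamma$-invariant $A$ with $\mu(A) > 0$ (now guaranteed to meet $\Lambda^{\mathrm{con}}$ in positive measure) proves (1).

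For the coarse uniqueness (2), let $\lambda$ be a second $C$-coarse $\sigma$-Patterson--Sullivan measure of dimension $\delta$. The Shadow Lemma applied to both $\mu$ and $\lambda$ shows they are comparable on every shadow, and a standard covering argument promotes this to mutual absolute continuity. Writing $\phi := d\lambda/d\mu$, the PS bounds for $\mu$ and $\lambda$ together give $e^{-2C}\phi \le \phi \circ \gamma^{-1} \le e^{2C}\phi$ $\mu$-a.e.\ for every $\gamma \in \Gamma$ (a single full-measure set works by countability of $\Gamma$). If the set $\{\phi > e^{4C}\}$ had positive $\mu$-measure, its $\Gamma$-saturation would be a $\Gamma$-invariant set on which $\phi > e^{2C} > 1$, hence of full $\mu$-measure by ergodicity, contradicting $\int \phi \, d\mu = 1$. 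Therefore $\phi \le e^{4C}$ $\mu$-a.e., and symmetrically $\phi \ge e^{-4C}$.

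The main obstacle is making the Hopf-style density transfer rigorous in the absence of a geodesic structure: the expanding property of $\sigma$ is precisely what gives quantitative control on the distortion $d(\gamma_n)_*\mu/d\mu$ over the shrinking shadows around $\eta$, and one also needs to establish that shadows form a legitimate differentiation basis for $\mu$, a statement that typically requires a maximal-inequality argument built on top of the Shadow Lemma. The remaining steps are relatively formal once the Shadow Lemma and the density transfer are in hand.
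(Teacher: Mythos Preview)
Your overall architecture matches the paper's: the Shadow Lemma plus a shadow-based Lebesgue differentiation result drives both ergodicity and the conical-full-measure statement, and uniqueness follows from ergodicity via quasi-invariance of a Radon--Nikodym derivative. You also correctly flag the maximal inequality needed to make shadows a differentiation basis (this is Lemma~\ref{lem:Leb diff theorem}, proved from the Vitali-type covering in Proposition~\ref{prop:properties of fake shadows}\eqref{item:Vish covering lemma}). There is, however, one real gap and two places where the paper takes a shorter route.

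The gap is your Borel--Cantelli step. Bounding the overlap multiplicity of shadows \emph{within a single shell} only controls first moments; it does nothing to force the $\limsup$ to have positive measure. What is required is the Kochen--Stone form of Borel--Cantelli, i.e.\ $\sum_{m,n\le N}\mu(A_m\cap A_n)\lesssim\bigl(\sum_{n\le N}\mu(A_n)\bigr)^2$ with the $A_n$ the \emph{individual} shadows ordered by magnitude. This needs control of $\mu\bigl(\mc S_\epsilon(\alpha)\cap\mc S_\epsilon(\beta)\bigr)$ across \emph{different} shells, and the key input is the multiplicative property Proposition~\ref{prop:properties of fake shadows}\eqref{item:multiplicative property of intersecting fake shadows}: if the shadows meet and $\norm{\alpha}_\sigma\le\norm{\beta}_\sigma$ then $\norm{\beta}_\sigma\approx\norm{\alpha}_\sigma+\norm{\alpha^{-1}\beta}_\sigma$, whence $\mu(\mc S_\epsilon(\alpha)\cap\mc S_\epsilon(\beta))\lesssim e^{-\delta\norm{\alpha}_\sigma}e^{-\delta\norm{\alpha^{-1}\beta}_\sigma}$. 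This is the computation in Proposition~\ref{prop:uniform conical has full measure}.

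On the differences: for (3) the paper does not use your density-transfer route. Having shown $\mu(\Lambda^{\rm con}(\Gamma))>0$, it observes that if the complement had positive measure then the normalized restriction $\hat\mu=\mu(\,\cdot\,\cap(\Lambda^{\rm con}(\Gamma))^c)/\mu((\Lambda^{\rm con}(\Gamma))^c)$ would again be a coarse $\sigma$-Patterson--Sullivan measure of dimension $\delta$, and the same Kochen--Stone argument would give $\hat\mu(\Lambda^{\rm con}(\Gamma))>0$, a contradiction. This establishes (3) \emph{before} ergodicity and avoids a subtle circularity in your order: the paper's density-transfer Lemma~\ref{lem:zooming in on density points} uses that $\mu$ has no atoms, which itself relies on (3). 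For (2), the paper sidesteps your covering argument for absolute continuity by taking $\mu_t=(1-t)\mu+t\lambda$, which is again a $C$-coarse Patterson--Sullivan measure and for $s,t\in(0,1)$ is automatically mutually absolutely continuous with $\mu_s$; one then runs the level-set-plus-ergodicity argument on $d\mu_t/d\mu_s$ and lets $s,t\to 0,1$. Your shadow-comparison-plus-covering route works too (the paper uses it in Section~\ref{sec:convexity of critical exponent}), but is longer.
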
 

As an application of ergodicity in the divergent case, we prove the following rigidity result for Patterson--Sullivan measures. 

\begin{proposition}[see Propositions~\ref{prop:abscts or singular} and~\ref{prop:characterisation of abscts case}]\label{prop:rigidity of PS measures intro} Suppose $\Gamma \subset \mathsf{Homeo}(M)$ is a  non-elementary convergence group and $\sigma_1, \sigma_2 \colon \Gamma \times M \to \Rb$ are expanding coarse-cocycles. For $i=1,2$, let  $\mu_i$ be a coarse $\sigma_i$-Patterson--Sullivan measure of dimension $\delta_i$. 

If  $\sum_{\gamma\in\Gamma} e^{-\delta_1 \norm{\gamma}_{\sigma_1}} = +\infty$, then either: 
\begin{enumerate}
\item $\mu_1 \perp \mu_2$, or
\item $\mu_1 \ll \mu_2$ and $\mu_2 \ll \mu_1$. Moreover, in this case 
$$
\sup_{\gamma \in \Gamma}\abs{ \delta_1\norm{\gamma}_{\sigma_1} - \delta_2\norm{\gamma}_{\sigma_2} } < \infty.
$$
\end{enumerate} 
\end{proposition}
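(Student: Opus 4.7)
My plan is to combine a Lebesgue-decomposition argument with the ergodicity and coarse uniqueness conclusions of Theorem~\ref{uniqueness and ergodicity}. Decompose $\mu_2 = \mu_2^{\mathrm{ac}} + \mu_2^{\mathrm{s}}$ relative to $\mu_1$. Since both $\mu_1$ and $\mu_2$ are $\Gamma$-quasi-invariant and the Lebesgue decomposition is unique, $\mu_2^{\mathrm{ac}}$ and $\mu_2^{\mathrm{s}}$ are themselves $\Gamma$-quasi-invariant; a direct computation then shows each inherits the $\sigma_2$-Patterson--Sullivan property of $\mu_2$, with the same dimension $\delta_2$ and the same constant $C$. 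Setting $f = d\mu_2^{\mathrm{ac}}/d\mu_1$, the set $\{f>0\}$ is $\Gamma$-invariant modulo $\mu_1$-null sets, so the ergodicity half of Theorem~\ref{uniqueness and ergodicity} (available by the divergence hypothesis on $\sigma_1$) forces $\{f>0\}$ to be either $\mu_1$-null or $\mu_1$-conull. In the former case $\mu_2^{\mathrm{ac}}=0$, so $\mu_2 = \mu_2^{\mathrm{s}} \perp \mu_1$, giving alternative~(1).

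Assume henceforth that $\mu_1$ and $\mu_2^{\mathrm{ac}}$ are equivalent. Combining the Radon--Nikodym inequalities for $\mu_1$ and $\mu_2^{\mathrm{ac}}$ yields
\[
e^{-2C}\,e^{\delta_1\sigma_1(\gamma^{-1},x)-\delta_2\sigma_2(\gamma^{-1},x)} \;\leq\; \frac{f(\gamma^{-1}x)}{f(x)} \;\leq\; e^{2C}\,e^{\delta_1\sigma_1(\gamma^{-1},x)-\delta_2\sigma_2(\gamma^{-1},x)}
\]
for $\mu_1$-a.e.\ $x$ and every $\gamma\in\Gamma$, and the expanding property, applied away from an $\epsilon$-neighborhood of $\gamma$ in $\Gamma\sqcup M$, replaces each $\sigma_i(\gamma^{-1},\cdot)$ by $\norm{\gamma^{-1}}_{\sigma_i}$ up to a uniform additive error. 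I would extract the magnitude bound via Lebesgue differentiation at a $\mu_1$-generic conical point $\xi$: invoking the Shadow Lemma developed in the earlier sections (for both $\mu_1$ and $\mu_2^{\mathrm{ac}}$), for any sequence $\gamma_n\to\xi$ whose shadows shrink to $\xi$ one has
\[
f(\xi) \;=\; \lim_n \frac{\mu_2^{\mathrm{ac}}(\mathcal{O}(\gamma_n))}{\mu_1(\mathcal{O}(\gamma_n))} \;\asymp\; \lim_n e^{\delta_1\norm{\gamma_n}_{\sigma_1}-\delta_2\norm{\gamma_n}_{\sigma_2}}.
\]
Since $0<f(\xi)<\infty$ at $\mu_1$-a.e.\ $\xi$, the quantity $\delta_1\norm{\gamma_n}_{\sigma_1}-\delta_2\norm{\gamma_n}_{\sigma_2}$ stays bounded along conical approach sequences. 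To propagate this to every $\gamma\in\Gamma$, I would apply the same argument to the shifted sequence $\{\gamma_n\gamma\}$, which still converges conically to $\xi$, and use the coarse subadditivity $\norm{\gamma_n\gamma}_\sigma = \norm{\gamma_n}_\sigma+\norm{\gamma}_\sigma+O(1)$ following from the coarse cocycle identity and the expanding property.

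Once $\sup_\gamma|\delta_1\norm{\gamma}_{\sigma_1}-\delta_2\norm{\gamma}_{\sigma_2}|<\infty$, the two Poincar\'e series are comparable, so $\sum_\gamma e^{-\delta_2\norm{\gamma}_{\sigma_2}}=+\infty$ and Theorem~\ref{uniqueness and ergodicity} applies to $\sigma_2$ as well. In particular, any two $C$-coarse $\sigma_2$-Patterson--Sullivan measures of dimension $\delta_2$ are $e^{4C}$-comparable after normalization, and applying this to $\mu_2^{\mathrm{ac}}$ and $\mu_2$ forces $\mu_2^{\mathrm{s}}=0$: otherwise, on a subset of $\supp(\mu_2^{\mathrm{s}})$ the normalized $\mu_2^{\mathrm{ac}}$ would vanish while the normalized $\mu_2$ would be positive, violating the comparability. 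Thus $\mu_2=\mu_2^{\mathrm{ac}}$ is equivalent to $\mu_1$, giving alternative~(2). I expect the main obstacle to be the propagation step: transferring boundedness of $\delta_1\norm{\gamma_n}_{\sigma_1}-\delta_2\norm{\gamma_n}_{\sigma_2}$ from conical approach sequences to arbitrary $\gamma$ requires a careful combined use of the coarse cocycle identity and the expanding property to establish subadditivity of $\norm{\cdot}_\sigma$ on generic concatenations.
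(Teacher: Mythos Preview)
Your dichotomy argument in the first paragraph is essentially the paper's proof of Proposition~\ref{prop:abscts or singular}, and the claim that $\mu_2^{\rm ac}$ inherits the Patterson--Sullivan transformation rule (with the same constant) is correct via uniqueness of the Lebesgue decomposition together with quasi-invariance of $\mu_1$.

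The genuine gap is the propagation step, which you rightly flag. Neither ingredient is uniform in $\gamma$. For the shadows: if $\xi\in\bigcap_n \mc S_{\epsilon_0}(\gamma_n)$ then $\xi\in\bigcap_n \mc S_{\epsilon_\gamma}(\gamma_n\gamma)$ only for an $\epsilon_\gamma$ depending on how much $\gamma^{-1}$ contracts near $\lim_n\gamma_n^{-1}$, and the Shadow Lemma constant $C(\epsilon_\gamma)$ blows up as $\epsilon_\gamma\to 0$. For subadditivity: Proposition~\ref{prop:basic properties}\eqref{item:multiplicative estimate} gives $\norm{\gamma_n\gamma}_\sigma=\norm{\gamma_n}_\sigma+\norm{\gamma}_\sigma+O(1)$ only when $\dist(\gamma_n^{-1},\gamma)$ is bounded below, and since $\gamma_n^{-1}\to b\in M$ this fails uniformly as $\gamma\to b$ in $\Gamma\sqcup M$. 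So the bound you extract on $\abs{\delta_1\norm{\gamma}_{\sigma_1}-\delta_2\norm{\gamma}_{\sigma_2}}$ depends on $\gamma$.

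The paper bypasses this in two ways. First, it gets $\mu_2\ll\mu_1$ \emph{before} the magnitude bound: $\mu_1\ll\mu_2$ and $\mu_1(\Lambda^{\rm con}(\Gamma))=1$ force $\mu_2(\Lambda^{\rm con}(\Gamma))>0$, hence $\sum_\gamma e^{-\delta_2\norm{\gamma}_{\sigma_2}}=+\infty$ by Proposition~\ref{prop:some consequences of the shadow lemma}, and the dichotomy applied to $\mu_2$ finishes. Second, for the magnitude bound it shows instead that $f=d\mu_1/d\mu_2$ is globally bounded, using \emph{two} conical sequences $\{\gamma_{1,n}\}$ and $\{\gamma_{2,n}\}=\{\gamma_{1,n}\alpha\}$ toward a single density point $x_0$, chosen so that $\gamma_{i,n}^{-1}\to b_i$ with $b_1\neq b_2$; then $M=(M\smallsetminus B_{2\epsilon}(b_1))\cup(M\smallsetminus B_{2\epsilon}(b_2))$. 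Along each sequence the Shadow Lemma bounds $\abs{\delta_1\norm{\gamma_{i,n}}_{\sigma_1}-\delta_2\norm{\gamma_{i,n}}_{\sigma_2}}$, so by expansion the cocycle difference is bounded on $M\smallsetminus B_{2\epsilon}(b_i)$; the transformation rule $f\circ\gamma=e^{\delta_2\sigma_2(\gamma,\cdot)-\delta_1\sigma_1(\gamma,\cdot)}f$ transports the bound on $f$ from a level set $E$ to $\gamma_{i,n}^{-1}E\cap(M\smallsetminus B_{2\epsilon}(b_i))$, and Lemma~\ref{lem:zooming in on density points} gives $\mu_2(\gamma_{i,n}^{-1}E)\to 1$. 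The two half-spaces together then give $f$ bounded $\mu_2$-a.e., and the magnitude bound follows from the Shadow Lemma.
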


\begin{remark} As described in Section~\ref{subsec:transverse groups} below, the Patterson--Sullivan measures studied in this paper include, as a special case, the Patterson--Sullivan measures associated to transverse discrete subgroups of semisimple Lie groups. In this special case, Kim proved a version of the above proposition for a special class of Zariski-dense transverse groups~\cite{Kim2024}. The above proposition shows that the extra assumptions in~\cite{Kim2024} can be removed.  
\end{remark}

Using this rigidity result we establish a strict convexity result for the critical exponent. 

\begin{theorem}[see Theorem~\ref{thm:convexity of entropy}]\label{thm:convexity of entropy in intro}  Suppose $\Gamma \subset \mathsf{Homeo}(M)$ is a non-elementary convergence group and $\sigma_0, \sigma_1 \colon \Gamma \times M \to \Rb$ are expanding coarse-cocycles with finite critical exponents $\delta_{\sigma_0}(\Gamma)= \delta_{\sigma_1}(\Gamma)=1$. For $0 < \lambda < 1$, let $\sigma_\lambda = \lambda \sigma_0 + (1-\lambda)\sigma_1$. Then 
$$\delta_{\sigma_\lambda}(\Gamma) \leq 1.$$
Moreover, if $\sum_{\gamma \in \Gamma} e^{-\delta_{\sigma_\lambda}(\Gamma) \norm{\gamma}_{\sigma_\lambda}} = +\infty$, then the following are equivalent:
\begin{enumerate}
\item $\delta_{\sigma_\lambda}(\Gamma) = 1$.
\item $\sup_{\gamma \in \Gamma} \abs{ \norm{\gamma}_{\sigma_0} - \norm{\gamma}_{\sigma_1}} < +\infty$.
\end{enumerate}
\end{theorem}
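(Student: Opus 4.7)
First I verify that $\sigma_\lambda$ is itself an expanding coarse-cocycle. Since a non-negative linear combination of coarse-cocycles is a coarse-cocycle, and properness transfers because both summands $\sigma_0(\gamma_n,\gamma_n^+), \sigma_1(\gamma_n,\gamma_n^+) \to +\infty$ on the relevant sequences, the only task is to identify a magnitude: for $x$ with $\dist(x,\gamma^{-1}) > \epsilon$, the expansion of each $\sigma_i$ gives
\begin{equation*}
\bigl|\sigma_\lambda(\gamma,x) - \bigl(\lambda\|\gamma\|_{\sigma_0} + (1-\lambda)\|\gamma\|_{\sigma_1}\bigr)\bigr| \leq \lambda C_0 + (1-\lambda)C_1,
\end{equation*}
so one may take $\|\gamma\|_{\sigma_\lambda} = \lambda\|\gamma\|_{\sigma_0} + (1-\lambda)\|\gamma\|_{\sigma_1}$ up to a uniform additive error. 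Hölder's inequality with conjugate exponents $1/\lambda$ and $1/(1-\lambda)$ now yields
\begin{equation*}
Q_{\sigma_\lambda}(s) \;\leq\; e^{sC}\, Q_{\sigma_0}(s)^{\lambda}\, Q_{\sigma_1}(s)^{1-\lambda},
\end{equation*}
which is finite for every $s > 1$; hence $\delta_{\sigma_\lambda}(\Gamma) \leq 1$. The direction (2)$\Rightarrow$(1) follows immediately from the displayed magnitude identity, since $\sup_\gamma|\|\gamma\|_{\sigma_0}-\|\gamma\|_{\sigma_1}|<\infty$ forces $\|\gamma\|_{\sigma_\lambda}$ to agree with $\|\gamma\|_{\sigma_0}$ up to a bounded error, so the critical exponents coincide.

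For the reverse direction, assume $\delta_{\sigma_\lambda}(\Gamma) = 1$ and divergence of the $\sigma_\lambda$-Poincar\'e series at $1$. By the existence portion of the main theorem, there exist coarse Patterson--Sullivan measures $\mu_\lambda, \mu_0, \mu_1$ of dimension $1$ for $\sigma_\lambda, \sigma_0, \sigma_1$ respectively. Apply Proposition~\ref{prop:rigidity of PS measures intro} to the pair $(\sigma_\lambda, \sigma_0)$: the divergence hypothesis on $\sigma_\lambda$ is exactly the required condition, so either $\mu_\lambda \perp \mu_0$, or $\mu_\lambda$ and $\mu_0$ are mutually absolutely continuous with $\sup_\gamma|\|\gamma\|_{\sigma_\lambda}-\|\gamma\|_{\sigma_0}|<\infty$. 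In the absolutely continuous case, substituting the convex-combination identity gives $(1-\lambda)\sup_\gamma|\|\gamma\|_{\sigma_0}-\|\gamma\|_{\sigma_1}|<\infty$, which is (2).

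The main obstacle is therefore to rule out the singular alternative $\mu_\lambda \perp \mu_0$ (together with its analogue $\mu_\lambda \perp \mu_1$ obtained by applying rigidity to $(\sigma_\lambda,\sigma_1)$). Here I would first note that the Hölder bound, combined with the divergence of $Q_{\sigma_\lambda}(1)$, forces at least one of $Q_{\sigma_0}(1)$ or $Q_{\sigma_1}(1)$ to be infinite; without loss of generality $\sigma_0$ is itself of divergence type, and Theorem~\ref{uniqueness and ergodicity} then pins down $\mu_0$ up to a bounded Radon--Nikodym factor. I would then construct an auxiliary $\sigma_\lambda$-Patterson--Sullivan measure $\nu$ of dimension $1$ that is absolutely continuous with respect to $\mu_0$---for instance, of the form $\nu = g\, d\mu_0$ whose density transforms as $g\circ\gamma^{-1}/g \asymp e^{(1-\lambda)(\sigma_0(\gamma^{-1},\cdot)-\sigma_1(\gamma^{-1},\cdot))}$, so that the coarse cocycle property of $\nu$ matches $\sigma_\lambda$. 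The density $g$ should arise from a shadow-lemma or limit construction using $\mu_1$, and this construction is the delicate step because $\mu_0$ and $\mu_1$ need not themselves be absolutely continuous. Once $\nu$ exists, invoking the coarse uniqueness in Theorem~\ref{uniqueness and ergodicity} identifies $\mu_\lambda$ with $\nu$ up to a bounded factor, forcing $\mu_\lambda \ll \mu_0 \ll \mu_\lambda$ and contradicting the singular alternative, thereby reducing to the absolutely continuous case already handled.
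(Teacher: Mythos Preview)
Your setup is correct through the easy parts: $\sigma_\lambda$ is expanding with magnitude $\lambda\|\gamma\|_{\sigma_0}+(1-\lambda)\|\gamma\|_{\sigma_1}$, H\"older gives $\delta_{\sigma_\lambda}(\Gamma)\le 1$, and (2)$\Rightarrow$(1) is immediate. You are also right that the heart of (1)$\Rightarrow$(2) is to rule out the possibility that $\mu_\lambda$ is singular to both $\mu_0$ and $\mu_1$, after which Proposition~\ref{prop:rigidity of PS measures intro} finishes the job.

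The genuine gap is your proposed mechanism for excluding singularity. Constructing a $\sigma_\lambda$-Patterson--Sullivan measure $\nu=g\,d\mu_0$ requires a measurable $g$ satisfying the coarse coboundary equation
\[
\frac{g\circ\gamma^{-1}}{g}\;\asymp\;e^{(1-\lambda)\bigl(\sigma_0(\gamma^{-1},\cdot)-\sigma_1(\gamma^{-1},\cdot)\bigr)}
\]
for all $\gamma$. But the existence of such a bounded (or even $\mu_0$-a.e.\ finite) $g$ is essentially equivalent to the cocycle $(1-\lambda)(\sigma_0-\sigma_1)$ being a coarse coboundary, which by Lemma~\ref{prop:magnitude versus period} is precisely the conclusion $\sup_\gamma|\|\gamma\|_{\sigma_0}-\|\gamma\|_{\sigma_1}|<\infty$ you are trying to prove. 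Your suggestion that $g$ ``should arise from a shadow-lemma or limit construction using $\mu_1$'' does not break this circularity: any natural candidate (e.g.\ a power of a formal density $d\mu_1/d\mu_0$) is undefined exactly when $\mu_0\perp\mu_1$, and the observation that at least one of $\sigma_0,\sigma_1$ is of divergence type does not by itself manufacture a density.

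The paper avoids this altogether by proving directly that $\mu_\lambda\ll\mu_0+\mu_1$ (Proposition~\ref{prop:mu_lambda abs cont to mu_0 plus mu_1}). The key is that the Shadow Lemma and the convex-combination identity for magnitudes give, on every shadow,
\[
\mu_\lambda\bigl(\mc S_\epsilon(\gamma)\bigr)\;\asymp\;e^{-\|\gamma\|_{\sigma_\lambda}}\;\asymp\;\mu_0\bigl(\mc S_\epsilon(\gamma)\bigr)^{\lambda}\mu_1\bigl(\mc S_\epsilon(\gamma)\bigr)^{1-\lambda}\;\le\;C\,(\mu_0+\mu_1)\bigl(\mc S_\epsilon(\gamma)\bigr)
\]
by the weighted AM--GM inequality. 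This shadow estimate is then promoted to all Borel sets via the Vitali-type covering property of shadows (Proposition~\ref{prop:properties of fake shadows}\eqref{item:Vish covering lemma}). No cohomology equation is solved; the absolute continuity comes from a pointwise inequality on a basis of sets.
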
 

In the context of Theorem~\ref{thm:convexity of entropy in intro}, if $\sigma_0$ and $\sigma_1$ do not have coarsely equivalent magnitudes, then one obtains a drop in critical exponent when taking a convex combination of $\sigma_0$ and $\sigma_1$. These types of strict convexity results can be used to prove entropy rigidity results, see for instance~\cite{PS2017} and \cite{CZZ2}.

We further study coarse-cocycles which have a well-behaved ``dual cocycle'' and coarse Gromov product.

\begin{definition} \label{def:GPS}
Suppose $\Gamma \subset \mathsf{Homeo}(M)$ is a non-elementary convergence group and let $M^{(2)} = \{ (x,y) \in M^2 : x \neq y\}$.
We say that $(\sigma, \bar{\sigma}, G)$ is a $\kappa$-coarse \emph{Gromov--Patterson--Sullivan system} (or \emph{GPS system}) if   $\sigma, \bar{\sigma} \colon \Gamma \times M \rightarrow \Rb$ are proper $\kappa$-coarse-cocycles, $G \colon M^{(2)} \to \Rb$ is a locally bounded function, and 
$$
\abs{ \Big( \bar\sigma(\gamma, x) + \sigma(\gamma, y) \Big) - \Big( G(\gamma x, \gamma y) - G(x,y) \Big)} \leq \kappa
$$
for all $\gamma \in \Gamma$ and distinct $x,y \in M$. When $\kappa=0$ and $G$ is continuous, we say that $(\sigma, \bar{\sigma}, G)$ is a \emph{continuous GPS system}. 
\end{definition}

We prove that if  $(\sigma, \bar{\sigma}, G)$ is a coarse GPS system, then $\sigma$ and $\bar\sigma$ are expanding (see Proposition~\ref{prop:GPS implies expanding}).
We construct a measurable flow space associated to a GPS system and use the Patterson--Sullivan measures of $\sigma$ and $\bar\sigma$ and the Gromov
product to give it a Bowen--Margulis--Sullivan measure. We will show that the dynamics of this flow space are controlled by the behavior of the Poincar\'e series at the critical exponent and use this to establish the following version of the Hopf--Tsuji--Sullivan dichotomy.

\begin{theorem}[see Section~\ref{sec:proof of dichotomy}]
\label{our dichotomy}
Suppose $(\sigma, \bar\sigma, G)$ is a coarse GPS system and $\delta_\sigma(\Gamma) < +\infty$. Let $\mu$ and $\bar \mu$ be Patterson--Sullivan measures of dimension $\delta$ for $\sigma$ and $\bar{\sigma}$ respectively. 
Then there exists a measurable function $\tilde G$ on $M^{(2)}$ such that 
$$\nu:=e^{\delta \tilde G}\bar\mu\otimes\mu$$
is $\Gamma$-invariant.
Moreover we have the following dichotomy:
\begin{enumerate} 
\item If $\sum_{\gamma \in \Gamma} e^{-\delta \norm{\gamma}_\sigma} = +\infty$, then:
\begin{enumerate}
\item $\delta = \delta_\sigma(\Gamma)$.
\item $\mu( \Lambda^{\rm con}(\Gamma)) = 1 = \bar\mu(\Lambda^{\rm con}(\Gamma))$. 
\item The $\Gamma$ action on $(M^{(2)},\nu)$ is ergodic and conservative. 
\end{enumerate} 
\item If $\sum_{\gamma \in \Gamma} e^{-\delta \norm{\gamma}_\sigma} < +\infty$, then:
\begin{enumerate}
\item $\delta \geq \delta_\sigma(\Gamma)$.
\item $\mu( \Lambda^{\rm con}(\Gamma)) = 0 = \bar\mu(\Lambda^{\rm con}(\Gamma))$. 
\item The $\Gamma$ action on $(M^{(2)}, \nu)$ is non-ergodic and dissipative. 
\end{enumerate} 
\end{enumerate} 

\end{theorem}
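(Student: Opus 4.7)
The plan is to first produce the $\Gamma$-invariant measure $\nu$, and then reduce the dichotomy to Shadow-Lemma estimates combined with a Hopf-style argument.

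\emph{Construction of $\tilde G$.} The coarse GPS identity combined with the $C$-coarse Patterson--Sullivan property of $\mu$ and $\bar\mu$ shows that $e^{\delta G}\bar\mu\otimes\mu$ is $\Gamma$-quasi-invariant with Radon--Nikodym cocycle $c(\gamma,\cdot)$ bounded uniformly in $[e^{-K},e^K]$ for some $K=K(C,\kappa,\delta)$. To upgrade this coarse invariance to exact invariance, I would solve for a bounded measurable function $h\colon M^{(2)}\to\Rb$ whose $\Gamma$-coboundary equals $\log c$ almost everywhere; then $\tilde G:=G-h/\delta$ makes $\nu=e^{\delta\tilde G}\bar\mu\otimes\mu$ exactly $\Gamma$-invariant. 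The existence of such a bounded $h$ comes from a measurable selection: choose a Borel fundamental domain for the $\Gamma$-action on a $\Gamma$-invariant conull subset of $M^{(2)}$ on which the action is essentially free (removing the at-most countable locus of loxodromic fixed pairs), set $h\equiv 0$ there, and extend equivariantly using that $c$ is already an exact multiplicative cocycle.

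\emph{Shadow Lemma and Borel--Cantelli.} The Shadow Lemma established earlier in the paper gives, for sufficiently large $r$, that the $\mu$-measure of the shadow at $\gamma$ is comparable to $e^{-\delta\norm{\gamma}_\sigma}$ with uniform constants (and analogously for $\bar\mu$). Since $\Lambda^{\rm con}(\Gamma)$ is, up to a $\mu$-null set, the collection of points lying in infinitely many such shadows with uniformly bounded overlap, the first Borel--Cantelli lemma gives $\mu(\Lambda^{\rm con}(\Gamma))=0$ in the convergent case, establishing $(2)(b)$; in the divergent case, a quasi-independence estimate for shadows together with the second Borel--Cantelli lemma yields $\mu(\Lambda^{\rm con}(\Gamma))=1$, establishing $(1)(b)$. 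The same arguments apply verbatim to $\bar\mu$.

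\emph{Dichotomy on $(M^{(2)},\nu)$ and critical exponents.} In the divergent case, conservativity of the $\Gamma$-action on $(M^{(2)},\nu)$ follows from the fact that $\nu$-almost every pair $(x,y)$ has both coordinates conical, so orbits return to compact subsets of $M^{(2)}$; ergodicity then comes from a Hopf-type argument exploiting the product structure of $\nu$ together with the boundary realization of typical orbits through the shadow estimates. In the convergent case, the Borel--Cantelli step directly produces a measurable wandering set of positive $\nu$-measure, establishing dissipativity, and non-ergodicity is then automatic since any positive-measure subset of a wandering set generates a proper $\Gamma$-invariant set. For the critical exponents, any coarse $\sigma$-Patterson--Sullivan measure has dimension at least $\delta_\sigma(\Gamma)$ by the existence theorem proved earlier, giving $(2)(a)$; in the divergent case, divergence of $\sum e^{-\delta\norm{\gamma}_\sigma}$ combined with the definition of $\delta_\sigma(\Gamma)$ forces $\delta=\delta_\sigma(\Gamma)$, giving $(1)(a)$.

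The hardest step will be the construction of $\tilde G$: rigorously upgrading the coarse $\Gamma$-quasi-invariance of $e^{\delta G}\bar\mu\otimes\mu$ to exact invariance while keeping $\tilde G$ measurable and at bounded distance from $G$. This cohomological problem is complicated by the presence of loxodromic fixed pairs with non-trivial cyclic stabilizers, and appears to require a careful excision of this locus before applying a measurable selection. Once $\nu$ is in hand, the dichotomy unfolds along the classical Hopf--Tsuji--Sullivan lines, with the Shadow Lemma supplying all the required quantitative input.
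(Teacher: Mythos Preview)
The central gap is in your construction of $\tilde G$. You propose to trivialize the bounded cocycle $\log c$ by choosing a Borel fundamental domain for the $\Gamma$-action on (a conull essentially-free subset of) $M^{(2)}$, setting $h\equiv 0$ there, and extending equivariantly. But a measurable fundamental domain exists precisely when the action is dissipative; in the divergent case the $\Gamma$-action on $(M^{(2)},\nu)$ is conservative, so no such domain exists, and your construction collapses exactly in the case where the theorem has the most content. Removing the loxodromic fixed pairs does not help: essential freeness is not the obstruction, conservativity is. The paper avoids this circularity entirely by invoking a lemma of Bader--Furman (\cite[Lem.~3.4]{BF2017}), which asserts that any uniformly bounded measurable cocycle $\Gamma\times Y\to\Rb$ on a standard Borel $\Gamma$-space is the coboundary of a bounded Borel function, with no hypothesis whatsoever on the dynamics of the action. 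That is the device that makes the construction of $\tilde G$ go through uniformly in both regimes.

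A secondary issue: your claim that in the convergent case ``non-ergodicity is then automatic since any positive-measure subset of a wandering set generates a proper $\Gamma$-invariant set'' is incomplete. Dissipativity does not by itself preclude ergodicity when the invariant measure has atoms (a single finite orbit carrying all the mass is both dissipative and ergodic). The paper handles this by proving separately that if the $\Gamma$-action on $(M^{(2)},\nu)$ were ergodic then $\nu$ could have no atoms---here the product form $\nu=e^{\delta\tilde G}\bar\mu\otimes\mu$ is used in an essential way---and that atomlessness of an invariant measure forces conservativity. Non-ergodicity in the convergent case is then obtained by contraposition, not directly from the wandering-set picture.
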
 

In the theorem above, $\Lambda^{\rm con}(\Gamma)$ denotes the set of conical limit points. We provide the definitions of conservative and dissipative actions, and state their basic properties, in Appendix~\ref{appendix:conservative and dissipative}.

\subsection{Motivating examples}\label{sec:motivating examples}
We now discuss a range of examples which our approach to Patterson--Sullivan theory treats in a unified way.

\subsubsection{Transverse subgroups of semi-simple Lie groups}\label{subsec:transverse groups} In the sequel to this paper~\cite{papertwo} we apply the framework developed here to study Patterson--Sullivan measures for certain classes of discrete subgroups of semi-simple Lie groups. We show, among other things, that the ergodic dichotomy for transverse groups established in~\cite{CZZ2023a,KOW2023} is a particular case of the dichotomy established in this paper. For more details, see ~\cite[Section 11]{papertwo}.

\subsubsection{Proper actions on Gromov hyperbolic spaces}

If $X$ is a proper geodesic Gromov hyperbolic metric space and $\Gamma \subset \mathsf{Isom}(X)$ is discrete, then $\Gamma$ acts on the Gromov boundary $\partial_\infty X$ 
as a convergence group (see \cite[Th.\,3A]{Tukia1994} or \cite{Freden95}).
If we fix a base point $o \in X$, we can define,
and for each $x \in \partial_\infty X$, a {\em Busemann function}
$$b_x  \colon X \rightarrow \Rb \quad\text{by setting}\quad b_x (q) = \limsup_{p \rightarrow x} \dist(p,q)-\dist(p,o).$$
The \emph{Busemann coarse-cocycle} $\beta\colon\Gamma\times\partial_\infty X\to\mathbb R$ is defined by 
$$\beta(\gamma, x) = b_x(\gamma^{-1}(o)).$$
When $X$ is ${\rm CAT}(-1)$ (e.g. $X=\mathbb H^n$) this is a continuous cocycle, but in general it will only be a coarse-cocycle.

The Gromov product $G \colon \partial_\infty X^{(2)} \to \Rb$ is classically defined by 
$$
G(x,y) = \limsup_{p \rightarrow x, q \rightarrow y} \dist(o,p)+\dist(o,q) - \dist(p,q).
$$ 
Then $(\beta, \beta, G)$ is a coarse GPS system, which is not always continuous. Further, one can choose 
$$
\norm{\gamma}_\beta = \dist(o, \gamma(o)). 
$$
Hence, $\delta_\beta(\Gamma)$ is the critical exponent of the usual Poincar\'e series
$$
Q(s) = \sum_{\gamma \in \Gamma} e^{-s \dist(o,\gamma(o))}.
$$

When $X$ is ${\rm CAT}(-1)$, Roblin~\cite{roblin} proved the Hopf--Tsuji--Sullivan dichotomy for the GPS system $(\beta, \beta, G)$, see also work of Burger--Mozes~\cite{BurgerMozes1996}. 
Building upon work of Bader--Furman~\cite{BF2017}, Coulon--Dougall--Schapira--Tapie~\cite{CDST2018} extended this to the case of general proper geodesic Gromov hyperbolic metric spaces. 

Das--Simmons--Urba\'{n}ski have studied the case of improper Gromov hyperbolic metric spaces~\cite{DasSimmonsUrbanski}.

\subsubsection{Coarsely additive potentials} We continue to assume that $X$ is a proper geodesic Gromov hyperbolic metric space and $\Gamma \subset \mathsf{Isom}(X)$ is discrete.

The following definition is adapted from Cantrell--Tanaka~\cite[Def.\ 2.2]{CantrellTanaka_measures}. 

\begin{definition}\label{defn:coarsely additive potential}
A function $\psi \colon X \times X \to \Rb$ is a \emph{coarsely additive potential} if 
\begin{enumerate}
\item\label{item:potential proper} $\lim_{r \rightarrow \infty} \inf_{\dist_X(p,q) \geq r} \psi(p,q) = +\infty$, 
\item\label{item:potential bounded} for any  $r>0$,  
$$\sup_{\dist_X(p,q) \leq r}\abs{ \psi(p,q)} < +\infty,
$$ 
\item\label{item:potential coarsely additive} for every $r > 0$ there exists $\kappa=\kappa(r) > 0$ such that: if $u$ is contained in the $r$-neighborhood of a geodesic in $(X,\dist_X)$ joining $p$ to $q$, then 
$$
\abs{\psi(p,q) - \big(\psi(p,u) + \psi(u,q)\big) } \leq \kappa.
$$
\end{enumerate} 
\end{definition} 

\begin{remark}\label{remarke:CT work} Cantrell--Tanaka consider the case when $\Gamma$ is word hyperbolic and $X=\Gamma$ with a word metric. In this case they introduce \emph{tempered potentials} which are functions $\psi : \Gamma \times \Gamma \rightarrow \Rb$  which satisfy ~\eqref{item:potential coarsely additive} and another property they call (QE). 
In their results they consider the case when $\psi$ is $\Gamma$-invariant (which implies~\eqref{item:potential bounded}) and has finite ``exponent'' (which implies  ~\eqref{item:potential proper}). 
Dilsavor--Thompson proved that condition (QE) is a consequence of ~\eqref{item:potential coarsely additive}  and $\Gamma$-invariance \cite[Rem.\ 3.14]{DT2023} in geodesic hyperbolic spaces, see also Lemma~\ref{lem:formula for Gromov product} for a proof using the language of this paper (which uses also property~\eqref{item:potential proper}).
So, in the case considered by Cantrell--Tanaka the two definitions coincide.
\end{remark} 

We will show that any $\Gamma$-invariant potential gives rise to an expanding coarse-cocycle on $\partial_\infty X$, and that when $\Gamma$ acts co-compactly on $X$, every expanding coarse-cocyle arises in this way. 

\begin{theorem}[see Theorem~\ref{thm:hyperbolic1_potential}]\label{thm: coarsely additive potentials introduction} 
Suppose $\psi$ is a $\Gamma$-invariant coarsely additive potential. 
Define functions $\sigma_{\psi}, \bar\sigma_{\psi} \colon \Gamma \times \partial_\infty X \to [-\infty,\infty]$ and $G_{\psi} \colon \partial_\infty X^{(2)} \to  [-\infty,\infty]$ by 
\begin{align*}
\sigma_{\psi}(\gamma,x) & = \limsup_{p \rightarrow x} \, \psi(\gamma^{-1}o, p) - \psi(o,p), \\ 
\bar\sigma_{\psi}(\gamma,x) & = \limsup_{p \rightarrow x} \, \psi(p,\gamma^{-1}o) - \psi(p,o), \\
G_{\psi}(x,y) & = \limsup_{p \rightarrow x, q \rightarrow y}\,  \psi(p,o) +\psi(o,q) - \psi(p,q).
\end{align*}
Then these quantities are finite, with $G_\psi$ bounded below, $(\sigma_{\psi}, \bar\sigma_{\psi}, G_{\psi})$ is a coarse GPS-system, and one can choose 
$$
\norm{\gamma}_{\sigma_{\psi}} = \psi(o,\gamma o).
$$
\end{theorem}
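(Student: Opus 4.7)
The plan is to reduce all claims to one technical lemma, which I will call the \textbf{bounded-difference lemma}: for any $A, B \in X$ and $x \in \partial_\infty X$, the quantity $\limsup_{p\to x}\bigl(\psi(A,p) - \psi(B,p)\bigr)$ is finite and bounded by a constant depending only on $\dist_X(A,B)$ and the hyperbolicity constant $\delta$ of $X$. To prove the lemma I would fix $p$ far from $\{A,B\}$ and invoke $\delta$-thinness of the triangle $ABp$ to produce a ``center'' consisting of points $u_A \in [A,p]$ and $u_B\in [B,p]$ that are pairwise $O(\delta)$-close, with $\dist_X(u_A,A)$ and $\dist_X(u_B,B)$ both at most $\dist_X(A,B) + O(\delta)$. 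Coarse additivity (Definition~\ref{defn:coarsely additive potential}(3)) then gives $\psi(A,p)\approx\psi(A,u_A)+\psi(u_A,p)$ and $\psi(B,p)\approx\psi(B,u_B)+\psi(u_B,p)$; the terms $\psi(u_A,p)$ and $\psi(u_B,p)$ differ by a bounded amount (apply (3) again with $u_B$ near $[u_A,p]$ and use (2) to bound $\psi(u_A,u_B)$), while the small terms $\psi(A,u_A), \psi(B,u_B)$ are controlled by (2). This gives a uniform bound on $\psi(A,p)-\psi(B,p)$ as $p$ ranges over a neighborhood of $x$, so not just finite limsup but bounded oscillation near $x$.

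Applying the lemma with $(A,B)=(\gamma^{-1}o,o)$ gives finiteness of $\sigma_\psi(\gamma,x)$, and symmetrically for $\bar\sigma_\psi$. For $G_\psi$ I would run the analogous argument on the double limsup over $(p,q)\to(x,y)$ using a center of the triangle $p,o,q$: in fact $G_\psi(x,y)$ approximates the Gromov product $(x|y)_o$ up to bounded error, so $G_\psi$ is finite and bounded below on $\partial_\infty X^{(2)}$, and locally bounded follows from an easy uniformity argument on compact subsets.

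For the coarse-cocycle property, I would telescope
\[
\psi(\gamma_2^{-1}\gamma_1^{-1}o,p)-\psi(o,p)=\bigl[\psi(\gamma_2^{-1}\gamma_1^{-1}o,p)-\psi(\gamma_2^{-1}o,p)\bigr]+\bigl[\psi(\gamma_2^{-1}o,p)-\psi(o,p)\bigr]
\]
and use $\Gamma$-invariance to rewrite the first bracket as $\psi(\gamma_1^{-1}o,\gamma_2p)-\psi(o,\gamma_2p)$, whose limsup as $p\to x$ (and hence $\gamma_2p\to\gamma_2x$) is exactly $\sigma_\psi(\gamma_1,\gamma_2x)$. The coarse cocycle inequality then follows because the bounded-oscillation output of the lemma implies that the limsup of the sum and the sum of the limsups differ by a uniformly controlled constant. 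Choosing $\norm{\gamma}_{\sigma_\psi}:=\psi(o,\gamma o)$, both properness and the expanding property reduce to showing that when $x$ stays uniformly far from $\gamma^{-1}$ in a compatible metric, $o$ lies at bounded distance from the geodesic $[\gamma^{-1}o,p]$ for $p$ near $x$; coarse additivity then gives $\sigma_\psi(\gamma,x)\approx\psi(\gamma^{-1}o,o)=\psi(o,\gamma o)$, and property~(1) of the potential upgrades this to properness for sequences of loxodromics with separated fixed pairs.

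Finally, for the GPS relation $\bar\sigma_\psi(\gamma,x)+\sigma_\psi(\gamma,y)\approx G_\psi(\gamma x,\gamma y)-G_\psi(x,y)$, I would substitute definitions and use $\Gamma$-invariance $\psi(\gamma p,\gamma q)=\psi(p,q)$ to cancel the cross terms, leaving
\[
\bigl[\psi(\gamma p,o)-\psi(p,o)\bigr]+\bigl[\psi(o,\gamma q)-\psi(o,q)\bigr],
\]
whose double limsup matches $\bar\sigma_\psi(\gamma,x)+\sigma_\psi(\gamma,y)$ up to the bounded-oscillation error from the lemma. The main obstacle throughout is converting the one-sided information provided by limsups into two-sided coarse bounds: this is exactly what the bounded-oscillation output of the key lemma delivers, and the technical crux is the careful bookkeeping of $O(\delta)$-errors when choosing centers of thin triangles and when passing from the single to the double limsup for $G_\psi$.
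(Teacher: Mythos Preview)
Your strategy is correct and tracks the paper's proof closely: your bounded-difference lemma is essentially Lemma~\ref{lem:cocycles well defined potentials}, and the subsequent deductions (coarse-cocycle, expanding with magnitude $\psi(o,\gamma o)$, GPS relation by direct substitution) proceed along the same lines as Lemmas~\ref{lem:buseman function is a coarse-cocycle}--\ref{lem:formula for Gromov product}. Two points need care.

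First, the throwaway claim that $G_\psi(x,y)$ approximates the ordinary Gromov product $(x\mid y)_o$ up to bounded error is false in general (take $\psi=2\dist_X$). What is true, and what the paper proves in Lemma~\ref{lem:formula for Gromov product}, is that $G_\psi(x,y)$ is coarsely equal to $\inf_{u\in\ell}\bigl(\psi(u,o)+\psi(o,u)\bigr)$ for any geodesic line $\ell$ joining $x$ and $y$; local boundedness follows from this, and the global lower bound comes from the coarse triangle inequality $\psi(p,q)\le\psi(p,o)+\psi(o,q)+\kappa_1$ of Lemma~\ref{lem:basic properties of potentials}(2), not from any comparison with $(x\mid y)_o$.

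Second, and more substantively: for $\sigma_\psi$ to be a $\kappa$-coarse-cocycle you need the oscillation of $p\mapsto\psi(A,p)-\psi(B,p)$ near $x$ to be bounded \emph{independently of} $\dist_X(A,B)$. Your center-of-$ABp$ argument produces a bound on the \emph{value} that depends on $\dist_X(A,B)$, and the oscillation bound you deduce from it inherits that dependence; this would only give a cocycle error depending on $\gamma_1,\gamma_2$. The paper obtains the required uniformity by placing the cutting point $u$ far out along the ray toward $x$ --- well past both $o$ and $\gamma^{-1}o$ --- so that $u$ lies within $2\delta$ of each of $[o,p]$, $[o,q]$, $[\gamma^{-1}o,p]$, $[\gamma^{-1}o,q]$; coarse additivity at this common $u$ then makes the tails cancel with error $4\kappa(2\delta{+}1)$ independent of $\gamma$.
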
 

\begin{remark} Similar results appear in earlier work of Dilsavor--Thompson~\cite[Section 3.5]{DT2023} using different language. In fact, most of Theorem~\ref{thm: coarsely additive potentials introduction} can be derived from their results, but for the reader's convenience we include the complete argument. 
\end{remark}

\begin{theorem}[see Theorem~\ref{thm:hyperbolic2_potential}] \label{thm:hyperbolic2_potential introduction} Suppose $\Gamma$ acts co-compactly on $X$ and $\sigma\colon \Gamma \times \partial_\infty X \to \Rb$ is an expanding coarse-cocycle. Then there exists a $\Gamma$-invariant coarsely additive potential $\psi: X \times X \rightarrow \Rb$ where
$$
\sup_{\gamma \in \Gamma, x \in \partial_\infty X} \abs{\sigma_{\psi}(\gamma,x)-\sigma(\gamma,x) } < + \infty.
$$
In particular, $\sigma$ is contained in a coarse GPS-system by Theorem~\ref{thm: coarsely additive potentials introduction}.
\end{theorem}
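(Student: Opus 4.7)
The plan is to transfer the magnitude function of $\sigma$ from $\Gamma$ to $X$ via a $\Gamma$-equivariant retraction onto the orbit of a base point. Fix $o \in X$ and, using cocompactness, choose a Borel fundamental domain $D \subset X$ containing $o$ of diameter at most some $R > 0$. Characterizing a map $r \colon X \to \Gamma$ by $p \in r(p) D$ gives $\dist_X(p, r(p) o) \leq R$ and the equivariance $r(\alpha p) = \alpha r(p)$ for every $\alpha \in \Gamma$. After fixing once and for all a magnitude function $\gamma \mapsto \|\gamma\|_\sigma$, I define
$$\psi(p,q) := \|r(p)^{-1} r(q)\|_\sigma,$$
and note that $\Gamma$-invariance is immediate from the equivariance of $r$.

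Boundedness on bounded sets is easy: if $\dist_X(p,q) \leq t$, then $r(p)^{-1} r(q)$ lies in the finite set $\{\gamma \in \Gamma : \dist_X(o, \gamma o) \leq t + 2R\}$, so $|\psi(p,q)|$ is uniformly bounded. For the properness of $\psi$ it suffices to show that $\|\gamma_n\|_\sigma \to +\infty$ whenever $\gamma_n \to \infty$ in $\Gamma$. I would pass to a subsequence so that $\gamma_n \to \xi$ and $\gamma_n^{-1} \to \eta$ in the compactification $\Gamma \sqcup \partial_\infty X$, and then apply a ping-pong argument in the non-elementary hyperbolic group $\Gamma$ to produce a loxodromic $h$ with fixed points disjoint from $\{\xi,\eta\}$, so that $\tau_n := \gamma_n h$ is loxodromic for large $n$ with $\liminf \dist(\tau_n^-, \tau_n^+) > 0$. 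Since $\|\tau_n\| \to \infty$, the point $\tau_n^{-1} \in \Gamma \sqcup \partial_\infty X$ accumulates on $\tau_n^-$, hence is uniformly separated from $\tau_n^+$; the expanding property then gives $\sigma(\tau_n, \tau_n^+) = \|\tau_n\|_\sigma + O(1)$, and properness of $\sigma$ forces $\|\tau_n\|_\sigma \to +\infty$. Boundedness of $\|h\|_\sigma$ together with the coarse cocycle identity finally yields $\|\gamma_n\|_\sigma \to +\infty$.

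The main obstacle is the coarse additivity condition. With $\alpha = r(p)$, $\beta = r(u)$, $\gamma = r(q)$ and $\eta_1 := \alpha^{-1}\beta$, $\eta_2 := \beta^{-1}\gamma$, the coarse cocycle identity gives
$$\sigma(\alpha^{-1}\gamma, x) = \sigma(\eta_1 \eta_2, x) \approx \sigma(\eta_1, \eta_2 x) + \sigma(\eta_2, x)$$
with additive error controlled by $\kappa$. The hypothesis that $u$ fellow-travels $[p,q]$ combined with $\diam(D) \leq R$ ensures that $\eta_1 o$ is uniformly close to the geodesic $[o, \eta_1 \eta_2 o]$. I would then choose $x \in \partial_\infty X$ as the endpoint of a geodesic ray from $o$ extending $[o, \eta_1 \eta_2 o]$; thin-triangle estimates together with the construction of the compactification from Section~\ref{sec:compactification} guarantee that $x$ is uniformly separated in $\Gamma \sqcup \partial_\infty X$ from both $\eta_2^{-1}$ and $(\eta_1\eta_2)^{-1}$, and that $\eta_2 x$ is uniformly separated from $\eta_1^{-1}$. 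The expanding property then converts each of the three cocycle values into the corresponding magnitude with bounded error, which gives the desired coarse additivity $\|\alpha^{-1}\gamma\|_\sigma = \|\alpha^{-1}\beta\|_\sigma + \|\beta^{-1}\gamma\|_\sigma + O(1)$.

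Finally, to compare $\sigma_\psi$ with $\sigma$ one may arrange $r(o) = e$, so that $\psi(\gamma^{-1}o, p) - \psi(o,p) = \|\gamma\, r(p)\|_\sigma - \|r(p)\|_\sigma$. For each $p$ near $x \in \partial_\infty X$, choose an auxiliary $y \in \partial_\infty X$ avoiding the limits of $r(p)^{-1}$ and $(\gamma\, r(p))^{-1}$; applying the cocycle identity
$$\sigma(\gamma\, r(p), y) \approx \sigma(\gamma, r(p) y) + \sigma(r(p), y)$$
and the expanding property yields $\|\gamma\, r(p)\|_\sigma - \|r(p)\|_\sigma = \sigma(\gamma, r(p) y) + O(1)$, uniformly in $\gamma$. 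Since $r(p) \to x$ in the compactification, convergence-group dynamics give $r(p) y \to x$ for generic $y$, and coarse continuity of $\sigma(\gamma, \cdot)$ then yields $\sigma(\gamma, r(p) y) = \sigma(\gamma, x) + O(1)$. Passing to the $\limsup$ completes the comparison, and the GPS conclusion follows by applying Theorem~\ref{thm: coarsely additive potentials introduction} to the $\Gamma$-invariant coarsely additive potential $\psi$.
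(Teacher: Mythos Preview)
Your construction coincides with the paper's: both set $\psi(p,q)\approx\norm{\alpha^{-1}\beta}_\sigma$ for $\alpha,\beta\in\Gamma$ with $\alpha o,\beta o$ near $p,q$ (the paper averages over all nearby orbit elements rather than selecting one via a fundamental domain, a cosmetic difference). Where the paper simply cites Proposition~\ref{prop:basic properties}\eqref{item:properness}, Proposition~\ref{prop:basic properties}\eqref{item:multiplicative estimate}, and Lemma~\ref{lem:nice magnitude} for properness, additivity, and the final comparison, you re-derive these by hand.

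Your coarse-additivity argument, however, has a real gap. You assert that the endpoint $x$ of a ray from $o$ through $\eta_1\eta_2 o$ is uniformly separated from $(\eta_1\eta_2)^{-1}$ in $\Gamma\sqcup\partial_\infty X$, but the hypothesis does not force this: if $\eta_1\eta_2=h^n g h^{-n}$ with $h$ loxodromic and $h^-$ not fixed by $g^{\pm1}$, then both $(\eta_1\eta_2) o$ and $(\eta_1\eta_2)^{-1}o$ converge to $h^+$, hence so does your ray endpoint, and the separation collapses. One may still split with $\eta_1 o$ near the midpoint of $[o,\eta_1\eta_2 o]$, so this configuration genuinely occurs in your setup; the same example breaks the claimed separation of $x$ from $\eta_2^{-1}$. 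The correct geometric input---which you do not isolate---is that ``$\eta_1 o$ close to $[o,\eta_1\eta_2 o]$'' is equivalent to ``$o$ close to $[\eta_1^{-1}o,\eta_2 o]$'', which forces $\eta_1^{-1}$ and $\eta_2$ (rather than $(\eta_1\eta_2)^{-1}$ and your chosen $x$) to be uniformly separated in the compactification. That is precisely the hypothesis of Proposition~\ref{prop:basic properties}\eqref{item:multiplicative estimate}, which yields $\norm{\eta_1\eta_2}_\sigma=\norm{\eta_1}_\sigma+\norm{\eta_2}_\sigma+O(1)$ directly and is how the paper proceeds.
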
 

One can also interpret coarsely additive potentials as $\Gamma$-invariant coarsely-geodesic quasimetrics on $X$, see Section~\ref{sec:metric perspective} below.

The next two subsections highlight two previously studied examples that can be interpreted in terms of GPS systems associated to coarsely additive potentials.

\subsubsection{H\"older potentials and cocycles} Next we describe the setting studied in work of Paulin--Pollicott--Schapira~\cite[Section 3]{PPS}, see also earlier work of Ledrappier~\cite{Ledrappier} and later work of Dilsavor--Thompson~\cite{DT2023}.

Let $X$ be a simply connected complete Riemannian manifold with pinched negative curvature and suppose $\Gamma \subset \mathsf{Isom}(X)$ is discrete.  
Then let $F \colon T^1 X \to \Rb$ be a $\Gamma$-invariant H\"older function.
For $p, q \in X$ define 
$$
\int_p^q F : = \int_0^T F(\ell'(t))dt
$$
where $\ell \colon [0,T] \to X$ is the unit speed geodesic joining $p$ to $q$. 
Among (many) other things, \cite{PPS} consider counting for the ``magnitudes''
$$
\int_o^{\gamma o} F.
$$
To accomplish this they develop a theory of Patterson--Sullivan measures, Busemann cocycles, and Gromov products in this setting.

By~\cite[Lemma 3.2]{PPS}, the function $(p,q) \mapsto \int_p^q F$  satisfies property~\eqref{item:potential coarsely additive} of Definition~\ref{defn:coarsely additive potential}.
To ensure the two other properties~\eqref{item:potential proper} and \eqref{item:potential bounded} of our definition of coarsely additive potentials, we need to make an extra boundedness assumption on the potential not made in \cite{PPS}:
$$
0 < \inf_{v \in T^1 X} F(v) \leq \sup_{v \in T^1 X} F(v) < +\infty.
$$
Note that for many purposes (as in \cite{PPS}), it is harmless to add a constant to $F$, so the positivity assumption above is not much more restrictive that assuming $F$ is bounded below.

Given the additional boundedness assumption, $(p,q) \mapsto \int_p^q F$ defines a coarsely additive potential.
Further the Busemann cocycle and Gromov product introduced in \cite{PPS} (essentially) correspond to the definitions in Theorem~\ref{thm: coarsely additive potentials introduction}.
Finally, one can in this setting improve, using \cite[Lemma 3.2]{PPS}, the proof of Theorem~\ref{thm: coarsely additive potentials introduction} (see Section~\ref{sec:potentials}) to show that the GPS system we construct is continuous.

\subsubsection{Hitting measures of random walks}

Next let $\Gamma$ be a word hyperbolic group and let $\lambda$ be a finitely-supported probability measure on $\Gamma$ with $\langle \supp \lambda \rangle = \Gamma$.
If $g_1,g_2,\dots \in \Gamma$ are random group elements following the distribution $\lambda$, then the location of the random walk $X_n = g_1 \cdots g_n$ follows the distribution $\lambda^{*n}$. 
The Green metric, introduced in~\cite{BB2007}, is the left-invariant function $\dist_\lambda$ on $\Gamma \times \Gamma$ defined by $\dist_\lambda(x,y) = -\log F(x,y)$, where $F(x,y)$ is the probability that the random walk started at $x$ ever hits $y$.

We claim that $\dist_\lambda$  is a coarsely additive potential. Property~\eqref{item:potential bounded} follows from the fact that 
$$
\dist_\lambda(\alpha,\beta) \leq \inf_{n \geq 1} -\log \lambda^{*n}(\alpha^{-1}\beta)<\infty \quad \text{(since $\supp\lambda$ generates $\Gamma$)}.
$$
Property~\eqref{item:potential proper} follows from \cite[Prop.\ 3.1]{BHM8}. Property~\eqref{item:potential coarsely additive} follows from a result of Ancona (see \cite[Thm.\ 27.11]{Woess}):
for any $r \geq 0$, there exists a positive constant $C(r)$ such that
$$F(\id,\gamma) \leq C(r) F(\id,\gamma') F(\gamma',\gamma)$$
whenever $\gamma,\gamma' \in \Gamma$ and $\gamma'$ has (word) distance at most $r$ from a geodesic segment between $\id$ and $\gamma$ in a Cayley graph. Hence $\dist_\lambda$  is a coarsely additive potential. 

Thus Theorem~\ref{thm: coarsely additive potentials introduction} can be applied to conclude: $(\sigma,\bar\sigma,G) := (\sigma_\lambda,\sigma_{\bar\lambda},G_\lambda)$ is a GPS system for $\Gamma \subset \mathsf{Homeo}(\partial_\infty\Gamma)$, where 
\begin{itemize}
	\item $\sigma_\lambda(\gamma,x) = \limsup_{\alpha \to x} \dist_\lambda(\id,\gamma\alpha) - \dist_\lambda(\id,\alpha)$; 
	\item $\bar\lambda$ is the probability measure on $\Gamma$ defined by $\bar\lambda(\gamma) := \lambda(\gamma^{-1})$; \item $G_\lambda(x,y) := \limsup_{\alpha\to x, \beta\to y} \dist_\lambda(\alpha,\id) + \dist_\lambda(\id,\beta) - \dist_\lambda(\alpha,\beta)$.
\end{itemize}

The cocycle $\sigma_\lambda$ also satisfies
$$
\sigma_\lambda(\gamma,\xi) = -\log \frac{d\gamma^{-1}_*\nu}{d\nu}(\xi)
$$
where $\nu$ is the unique $\lambda$-stationary measure on $\partial\Gamma$, i.e.\ harmonic measure or hitting measure associated to $\lambda$
\cite[Prop.\,2.5]{GMM}. Hence the  coarse $\sigma$-Patterson--Sullivan measures are absolutely continuous with respect to the hitting measure $\nu$
by Theorem~\ref{uniqueness and ergodicity} (we are in the divergent case of Theorem~\ref{our dichotomy} since $\Lambda^{\mathrm{con}}(\Gamma)=\Lambda(\Gamma)$).

\subsubsection{Proper actions on ${\rm CAT}(0)$ visibility spaces}

Finally, we briefly discuss another set of examples our results encompass, involving spaces which need not be uniformly hyperbolic.
Let $X$ be a proper ${\rm CAT}(0)$ space with base point $o\in X$ and visual boundary $\partial X$. Suppose $X$ is \emph{visible}, i.e.\ all $\xi\neq\eta\in\partial X$ can be connected by a bi-infinite geodesic in $X$ (this notion was introduced by Eberlein--O'Neill \cite{Visibility}, see also \cite[Def.\,III.9.28]{BridsonHaefliger}).
Let $\Gamma$ be a discrete group of isometries of $X$.

Then $\Gamma$ acts on $\partial X$ as a convergence group~\cite[Th.\,1]{Karlsson}.
The Busemann functions $(x,y,z)\in X^3\mapsto b_z(x,y)=d(x,z)-d(y,z)$ extend continuously to $(x,y,z)\in X^2\times (X\cup \partial X)$ (see \cite[p.\,267]{BridsonHaefliger}), and $\sigma(\gamma,\xi)=b_\xi(\gamma^{-1}o,o)$ defines a continuous cocycle $\Gamma\times\partial X\to\R$. 
Finally, setting $G(\xi,\eta)=-\inf_{x\in X}(b_\xi+b_\eta)(x,o)$, we obtain a continuous GPS system $(\sigma,\sigma,G)$, see e.g.~ \cite[p.\,948]{Ricks}. 
(Because of the visibility assumption, every geodesic in $X$ is rank one in the sense of \cite{Ricks}.)

If $\Gamma$ acts co-compactly on $X$, then $X$ is Gromov hyperbolic (\cite{Visibility}, see also \cite[III.H.1.4]{BridsonHaefliger}).
Otherwise, $X$ may not be Gromov hyperbolic. For example, given geodesics in the hyperbolic plane at distance at least $1$ from one another, the surface obtained by grafting flat strips (of any widths) along these geodesics is always ${\rm CAT}(0)$ and visible.

We note that in the ${\rm CAT}(0)$ setting, Patterson--Sullivan theory has been successfully developed in the more general setting of rank one actions, see for instance~\cite{Knieper1997,Ricks,Link2018}.

\subsection{Outline of the paper} In many theories of Patterson--Sullivan measures, the measures live on the boundary of a metric space and this 
metric space is used in an essential way in the study these measures. The first part of this paper (Sections~\ref{sec:compactification} to~\ref{sec:the shadow lemma}) 
is devoted to developing a perspective for studying these measures without the presence of a metric space. 

We first observe, in Section~\ref{sec:compactification}, that the set $\Gamma \sqcup M$ has a topology which makes it a compact metrizable space
(the existence of this topology is implicit in work of  Bowditch, see  \cite{Bowditch99}). 
In Section~\ref{sec:expanding cocycles}, we study properties of cocycles and prove that the coarse-cocycles in a coarse GPS system are expanding. 

We also establish the following property, which allows us to regard our cocycles as the ``Busemann cocycle'' on the ``Busemann boundary'' associated to the metric-like function $\rho(\alpha, \beta) = \norm{\alpha^{-1} \beta}_\sigma$ on $\Gamma$. Note that a priori $\rho$ is not symmetric and does not satisfy the triangle inequality.

\begin{lemma}[see Lemma~\ref{lem:nice magnitude}]
Suppose $\Gamma \subset \mathsf{Homeo}(M)$ is a non-elementary convergence group and $\sigma$ is an expanding $\kappa$-coarse-cocycle.  
Then there is a choice of magnitude $\norm{\cdot}_\sigma$ such that: if $x \in \Lambda(\Gamma)$ and $\alpha \in \Gamma$, then 
$$
\limsup_{\gamma \rightarrow x} \abs{ \sigma(\alpha,x)-(\norm{\alpha \gamma}_\sigma-\norm{\gamma}_\sigma)  } \leq 2\kappa. 
$$
\end{lemma}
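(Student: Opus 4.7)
The plan is to construct the magnitude explicitly as $\|\gamma\|_\sigma := \sigma(\gamma, y_0)$ (possibly with bounded correction on a controlled set) for a carefully chosen basepoint $y_0 \in M$, and to derive the claimed estimate by a two-step calculation in which the coarse cocycle identity and coarse continuity each contribute one factor of $\kappa$. Applied to the decomposition $\alpha\gamma = \alpha \cdot \gamma$ at $y_0$, the coarse cocycle identity immediately yields
\[
\bigl|(\|\alpha\gamma\|_\sigma - \|\gamma\|_\sigma) - \sigma(\alpha, \gamma y_0)\bigr| = \bigl|\sigma(\alpha\gamma, y_0) - \sigma(\gamma, y_0) - \sigma(\alpha, \gamma y_0)\bigr| \leq \kappa;
\]
combining this uniform bound with the coarse continuity bound $\limsup_{\gamma\to x} \bigl|\sigma(\alpha, \gamma y_0) - \sigma(\alpha, x)\bigr| \leq \kappa$, which holds provided $\gamma y_0 \to x$ whenever $\gamma \to x$, the triangle inequality yields the desired $2\kappa$ bound.

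The real content is in selecting $y_0$ so that (i) $\sigma(\cdot, y_0)$ is a valid magnitude, i.e., differs from any pre-existing magnitude by a uniformly bounded amount, and (ii) $\gamma y_0 \to x$ whenever $\gamma \to x$ in $\Gamma \sqcup M$. The clean subcase is $\Lambda(\Gamma) \neq M$: I would take $y_0 \in M \setminus \Lambda(\Gamma)$, so that $\dist(y_0, \Lambda(\Gamma)) > 0$ by compactness of $\Lambda(\Gamma)$. Since accumulation points in $\Gamma \sqcup M$ of $\{\gamma^{-1} : \gamma \in \Gamma\}$ lie in $\Lambda(\Gamma)$, all but finitely many $\gamma$ satisfy $\dist(y_0, \gamma^{-1}) \geq \tfrac{1}{2}\dist(y_0, \Lambda(\Gamma))$; the expansion property then bounds the difference between $\sigma(\gamma, y_0)$ and any pre-existing magnitude uniformly for all such $\gamma$, yielding (i) after absorbing a bounded modification on the finite exceptional set. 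For (ii), convergence-group dynamics identify the ``source'' of any subsequence $\gamma_n \to x$ as a point of $\Lambda(\Gamma)$, hence distinct from $y_0$, so $\gamma_n y_0 \to x$.

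The main obstacle is the case $\Lambda(\Gamma) = M$, where no such exterior basepoint is available. To address it, I would use that non-elementarity of $\Gamma$ provides infinitely many points in $\Lambda(\Gamma)$: pick a pairwise $3\epsilon_0$-separated finite collection $y_0, \ldots, y_{k-1}$ and set $\|\gamma\|_\sigma := \sigma(\gamma, y_{i(\gamma)})$ for an index $i(\gamma)$ with $\dist(y_{i(\gamma)}, \gamma^{-1}) \geq \epsilon_0$, which exists by pigeonhole. The delicacy is that $i(\alpha\gamma)$ and $i(\gamma)$ may differ; along any convergent subsequence $\gamma_n \to x$, I would pass to a further subsequence stabilizing both indices and observe that $\gamma_n^{-1}$ and $(\alpha\gamma_n)^{-1} = \gamma_n^{-1}\alpha^{-1}$ share a common limit $s \in \Lambda(\Gamma)$, obtained by applying the convergence-group dynamics of $\gamma_n^{-1}$ to $\alpha^{-1} \in \Gamma$ (which lies off the sink $x$ in the generic case, with edge cases treated separately). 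Then both stabilized basepoints lie uniformly far from $s$, so one may swap between them via the expansion property with error controlled by the expansion constant at scale $\epsilon_0$; a careful choice of the magnitude within the bounded-correction freedom should absorb this swap error, reducing to the single-basepoint computation and preserving the final $2\kappa$ bound.
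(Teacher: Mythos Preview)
Your argument in the subcase $\Lambda(\Gamma)\neq M$ is correct and clean; it is essentially the paper's argument specialized to the situation where the limit of $\gamma_n^{-1}$ avoids the basepoint.

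The gap is in the case $\Lambda(\Gamma)=M$. A discrete basepoint selection cannot reach the sharp $2\kappa$ bound. Suppose $\gamma_n\to x$, so $\gamma_n^{-1}\to s$ and $(\alpha\gamma_n)^{-1}=\gamma_n^{-1}\alpha^{-1}\to s$ as you observe. Even after stabilizing $i(\gamma_n)\equiv i$ and $i(\alpha\gamma_n)\equiv j$ on a subsequence, nothing forces $i=j$: your rule only asks $\dist(y_i,\gamma_n^{-1})\geq\epsilon_0$ and $\dist(y_j,(\alpha\gamma_n)^{-1})\geq\epsilon_0$, and if $s$ sits on the $\epsilon_0$-sphere about some $y_\ell$ the two choices can disagree for all $n$. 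When $i\neq j$ the coarse cocycle identity gives
\[
\|\alpha\gamma_n\|_\sigma-\|\gamma_n\|_\sigma
=\sigma(\alpha,\gamma_n y_j)+\bigl[\sigma(\gamma_n,y_j)-\sigma(\gamma_n,y_i)\bigr]\pm\kappa,
\]
and the bracket is bounded only by the expansion constant at scale $\epsilon_0$, not by $\kappa$. Your proposed remedy of absorbing this via a bounded correction $\phi$ to the magnitude cannot work: such a correction changes $\|\alpha\gamma\|_\sigma-\|\gamma\|_\sigma$ by the coboundary $\phi(\alpha\gamma)-\phi(\gamma)$, whereas the swap error $\sigma(\gamma,y_{i(\alpha\gamma)})-\sigma(\gamma,y_{i(\gamma)})$ genuinely depends on both $\gamma$ and $\alpha$ and is not of this form.

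The paper replaces the discrete choice by a \emph{continuous} one: with two basepoints $p,q$ and a partition of unity $a+b\equiv1$ on $\Gamma\sqcup M$ (with $a$ vanishing near $p$ and $b$ near $q$), it sets
\[
\|\gamma\|_\sigma:=a(\gamma^{-1})\,\sigma(\gamma,p)+b(\gamma^{-1})\,\sigma(\gamma,q).
\]
Since $\gamma_n^{-1}$ and $(\alpha\gamma_n)^{-1}$ share the same limit $s$, the coefficient mismatch $a((\alpha\gamma_n)^{-1})-a(\gamma_n^{-1})$ tends to $0$ by continuity, so the analogue of the bracket above vanishes in the limit and only the cocycle and coarse-continuity errors survive, yielding exactly $2\kappa$. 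This continuous interpolation is the idea your proposal is missing.
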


We use this result and Patterson's original argument to show that Patterson--Sullivan measures exist in the critical dimension. 

\medskip\noindent
{\bf Theorem~\ref{thm:PS measures exist}.} {\em If $\sigma$ is an $\kappa$-coarse expanding cocycle for a non-elementary convergence group $\Gamma\subset \mathsf{Homeo}(M)$ and
$\delta:=\delta_\sigma(\Gamma) < + \infty$, then there exists a $2\kappa\delta$-coarse $\sigma$-Patterson--Sullivan measure of dimension $\delta$, which is supported on the limit set $\Lambda(\Gamma)$. 
}

\medskip

One nearly immediate consequence of the existence of a Patterson--Sullivan measure is a result guaranteeing decrease of critical exponent in the
spirit of Dal'bo--Otal--Peign\'e \cite[Prop.\,2]{DOP}.

\medskip\noindent
{\bf Theorem \ref{thm:subgp div implies smaller crit exp}.} {\em
Suppose $\Gamma \subset \mathsf{Homeo}(M)$ is a non-elementary convergence group, $\sigma$ is an expanding coarse-cocycle, and $\delta_\sigma(\Gamma) < +\infty$. If $G \subset \Gamma$ is a subgroup where $\Lambda(G)$ is a strict subset of $\Lambda(\Gamma)$ and 
$$
\sum_{g \in G} e^{-\delta_\sigma(G)\norm{g}_\sigma}=+\infty,
$$ 
then $\delta_\sigma(G) < \delta_\sigma(\Gamma)$. 
}

\medskip

The next key step in the paper is to define shadows in our setting and prove a version of the Shadow Lemma. 
To define shadows we borrow an idea from the theory of Patterson--Sullivan measures associated to Zariski-dense discrete subgroups in semisimple Lie groups (compare the shadows below to the sets $\gamma B_{\theta, \gamma}^\epsilon$ in~\cite[Lem.\ 8.2]{quint}).

\begin{definition} \label{def:shadows}
Suppose $\Gamma \subset \mathsf{Homeo}(M)$ is a non-elementary convergence group and $\dist$ is a compatible metric on $\Gamma \sqcup M$. Given  $\epsilon>0$ and $\gamma\in\Gamma$, the associated \emph{shadow} is
$$
\mc S_\epsilon(\gamma):=\gamma\left(M-B_\epsilon(\gamma^{-1})\right).
$$
where $B_\epsilon(\gamma^{-1})$ is the open ball centered at $\gamma^{-1}$ of radius $\epsilon$ with respect to the metric $\dist$. 
\end{definition}

In Section~\ref{sec:basic properties of fake shadows}, we establish some basic properties of shadows, relate shadows to a notion of uniformly conical limit points, and compare these shadows to the classically-defined shadows in the Gromov hyperbolic setting.
In Section~\ref{sec:the shadow lemma} we prove our version of the Shadow Lemma:

\medskip\noindent
{\bf The Shadow Lemma} (see Theorem \ref{thm:shadow lemma}) {\em Suppose $\Gamma \subset \mathsf{Homeo}(M)$ is a non-elementary convergence group, 
$\sigma \colon \Gamma \times M \rightarrow \Rb$ is an expanding coarse-cocycle, and $\mu$ is a coarse $\sigma$-Patterson--Sullivan measure of dimension $\delta$.
For any sufficiently small $\epsilon>0$ there exists $C=C(\epsilon) >1$ such that 
for all $\gamma \in \Gamma$,
$$
\frac{1}{C} e^{-\delta \norm{\gamma}_{\sigma}} \leq \mu\left( \mc S_\epsilon(\gamma) \right) \leq C e^{-\delta \norm{\gamma}_{\sigma}}.
$$
}

\medskip

We then establish some standard consequences of the Shadow Lemma in our setting.

\medskip\noindent
{\bf Proposition \ref{prop:some consequences of the shadow lemma}.}
{\em Suppose $\Gamma \subset \mathsf{Homeo}(M)$ is a non-elementary convergence group, $\sigma \colon \Gamma \times M \rightarrow \Rb$ is an expanding coarse-cocycle, and $\mu$ is a coarse $\sigma$-Patterson--Sullivan measure of dimension $\beta$. Then:
\begin{enumerate} 
\item $\beta \geq \delta_\sigma(\Gamma)>0$.
\item If $y \in M$ is a conical limit point, then $\mu(\{y\}) = 0$.
\item If 
$$
\sum_{\gamma \in \Gamma} e^{-\beta \norm{\gamma}_\sigma} < + \infty,
$$
(e.g.\  if  $\beta > \delta_\sigma(\Gamma)$)
then $\mu(\Lambda^{\rm con}(\Gamma))= 0$. 
\item There exists $C>0$ such that 
 $$\#\{\gamma\in\Gamma: \norm\gamma_\sigma\leq R\} \leq Ce^{\delta_\sigma(\Gamma) R}$$
 for any $R>0$.
\end{enumerate} 
}

\medskip

In the second part of the paper, we use the framework developed in the first part to study the ergodicity properties of Patterson--Sullivan measures. 
In Sections \ref{sec:conical} and \ref{sec:uniqueness} we prove Theorem \ref{uniqueness and ergodicity}. In Section~\ref{sec: action on M2} we study the action of $\Gamma$ on $M^{(2)}$.  In Section \ref{sec:flowspace} we introduce a flow space
which admits a measurable action of $\Gamma$. In Section \ref{sec:double ergo} we use this flow space to establish ergodicity of the action of $\Gamma$ on $(M^{(2)}, \nu)$ in Theorem \ref{our dichotomy}. Finally, in Section~\ref{sec:proof of dichotomy} we complete the proof of  Theorem~\ref{our dichotomy}. 

The constructions and arguments in Sections \ref{sec:flowspace} and \ref{sec:double ergo} use ideas from the work of Bader--Furman \cite{BF2017}.

For continuous GPS systems (i.e.\ when $\kappa=0$ and $G$ is continuous in Definition~\ref{def:GPS}), there is a well-defined continuous flow space $\psi^t \colon U_\Gamma \to U_\Gamma$ and when the Poincar\'e series diverges at its critical exponent, there is a  unique Bowen--Margulis--Sullivan measure (see Section~\ref{sec:kappa=0 case} for details). The arguments establishing Theorem \ref{our dichotomy} 
show that the flow is conservative and ergodic in this case.

\medskip\noindent
{\bf Theorem \ref{continuous ergodicity}.} {\em
 If $(\sigma, \bar\sigma, G)$ is a continuous GPS system with $\delta:=\delta_\sigma(\Gamma) < +\infty$ and 
$$
\sum_{\gamma \in \Gamma} e^{-\delta \norm{\gamma}_\sigma} = + \infty,
$$
then the flow $\psi^t$ on $(U_\Gamma, m_\Gamma)$ is conservative and ergodic.}

\medskip

In the third part of the paper, we consider applications of our ergodicity results and examine more deeply relations between expanding cocycles and GPS systems. 

In Section~\ref{sec:rigidity of PS measures} we establish Proposition~\ref{prop:rigidity of PS measures intro}, and in Section~\ref{sec:convexity of critical exponent} we establish Theorem \ref{thm:convexity of entropy in intro}.

The results in the next two sections partly answer the question of whether every expanding (coarse-)cocycle is part of a (coarse) GPS system, in addition to describing a systematic way to find expanding cocycles.
In Section~\ref{sec:symmetric cocycles} we define what it means for a coarse-cocycle to be coarsely-symmetric and prove that any expanding coarsely-symmetric coarse-cocycle is part of a GPS system. 
In Section~\ref{sec:potentials} we study the coarsely additive potentials introduced in Definition~\ref{defn:coarsely additive potential} above.  

Finally, in Appendix~\ref{appendix:conservative and dissipative} we define the notions of conservativity, dissipativity and Hopf decompositions for a general (unimodular) group action. We also prove that quotient measures exist when the action is dissipative, which is an essential point in our construction of a measurable flow space. See \cite{hopfdecompo} for a recent account of the Hopf Decomposition in a more general setting.

\subsection{Other approaches and related results}

In recent work Cantrell--Tanaka \cite{CantrellTanaka2021,CantrellTanaka_measures} study general cocycles on the Gromov boundary $\partial_\infty \Gamma$ of a word hyperbolic group. They show that if two cocycles have a corresponding Gromov product, then it is possible to use Patterson--Sullivan measures to build a $\Gamma$-invariant measure on $\partial^{(2)}\Gamma$ \cite[Prop.\ 2.8]{CantrellTanaka_measures} and prove ergodicity of the $\Gamma$ action~\cite[Thm.\ 3.1]{CantrellTanaka_measures}. They also consider a slightly more restrictive notion of the coarsely additive potentials introduced above (see Remark~\ref{remarke:CT work} above) and show that they give rise to coarse-cocycles satisfying these hypotheses. Our definition of GPS systems can be viewed as an extension of some of their ideas to general convergence groups.

A number of recent papers study Patterson--Sullivan theory for metric spaces where the group need not act as a convergence group on the boundary of the metric space. The most general of these investigations are perhaps independent works of Coulon~\cite{CoulonPS,Coulon2023} and Yang~\cite{Yang} which consider the case when $X$ is a proper geodesic metric space and $\Gamma$ is a group acting properly on $X$ by isometries with a contracting element. In this case the boundary is the horoboundary of $X$ and the cocycle is Busemann cocycle. The group action on this boundary may not be a convergence group action, but satisfies certain contracting properties. 

In many ways our approach is orthogonal to Coulon and Yang's. In our approach, we start with a convergence group action and find large classes of cocycles that are amenable to Patterson--Sullivan theory. In Coulon and Yang's approach, one studies large classes of metric spaces where the Busemann cocycle is amenable to Patterson--Sullivan theory. It would also be interesting to develop a uniform framework which contains both theories.


\part{Foundations}

\section{Convergence groups}\label{sec:compactification}

When $M$ is a compact metrizable space, a subgroup $\Gamma \subset \mathsf{Homeo}(M)$ is called a  (discrete) \emph{convergence group} if for every sequence 
$\{\gamma_n\}$ of distinct elements in $\Gamma$, there exist points $x,y \in M$ and a subsequence $\{\gamma_{n_j}\}$  such that $\gamma_{n_j}|_{M \smallsetminus \{y\}}$ converges locally uniformly to $x$. 
This notion was first introduced in \cite{GM87}, and then further developed in \cite{Tukia1994,Bowditch99}.

Given a convergence group, we define the following:
\begin{enumerate}
\item \label{item:limit set} The \emph{limit set} $\Lambda(\Gamma)$ is the set of points $x \in M$ where there exist $y \in M$ and a sequence $\{\gamma_n\}$ in $\Gamma$ so that $\gamma_n|_{M \smallsetminus \{y\}}$ converges locally uniformly to $x$.
\item A point $x \in \Lambda(\Gamma)$ is a \emph{conical limit point} if there exist distinct points $a,b \in M$ and a sequence of elements $\{\gamma_n\}$ in $\Gamma$ where $\lim_{n \to \infty} \gamma_n(x) = a$ and $\lim_{n \rightarrow \infty} \gamma_n(y) = b$ for all $y \in M \smallsetminus \{x\}$. 
\end{enumerate}
We say that a convergence group $\Gamma$ is \emph{non-elementary} if $\Lambda(\Gamma)$ contains at least 3 points.
  In this case $\Lambda(\Gamma)$ is the smallest $\Gamma$-invariant closed subset of $M$ (see \cite[Th.\,2S]{Tukia1994}).
{\bf For the remainder of our paper, we will assume all of our convergence groups are non-elementary.}

The elements in a convergence group can be characterized as follows.

\begin{fact}[{\cite[Th.\,2B]{Tukia1994}}]
\label{fact:classif loxparell}
Suppose $\Gamma \subset \mathsf{Homeo}(M)$ is a convergence group, then every element $\gamma\in \Gamma$ is either
 \begin{itemize}
  \item \emph{loxodromic}: it has two fixed points $\gamma^+$ and $\gamma^-$ in the limit set $\Lambda(\Gamma)\subset M$ such that $\gamma^{\pm n}|_{M \smallsetminus \{\gamma^{\mp}\}}$ converges locally uniformly to $\gamma^{\pm}$, 
  \item \emph{parabolic}: it has one fixed point $p\in \Lambda(\Gamma)$ such that $\gamma^{\pm n}|_{M \smallsetminus \{p\}}$ converges locally uniformly to $p$, or 
  \item \emph{elliptic}: it has finite order.
 \end{itemize}

\end{fact}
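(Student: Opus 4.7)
The plan is to classify $\gamma \in \Gamma$ by analyzing the dynamics of the iterates $\{\gamma^n\}$. If $\gamma$ has finite order it is elliptic by definition, so assume $\gamma$ has infinite order; then $\{\gamma^n\}_{n\in\Zb}$ consists of distinct elements. Applying the convergence group property to $\{\gamma^n\}_{n\geq 1}$, extract a subsequence $\gamma^{n_j}$ and points $a, p \in M$ with $\gamma^{n_j}|_{M\smallsetminus\{p\}} \to a$ locally uniformly.

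The first key step is to show that both $a$ and $p$ are fixed by $\gamma$. Writing $\gamma^{n_j+1} = \gamma\circ\gamma^{n_j} = \gamma^{n_j}\circ\gamma$ and passing to the limit in both decompositions, the first yields $\gamma^{n_j+1}|_{M\smallsetminus\{p\}}\to\gamma(a)$ while the second yields $\gamma^{n_j+1}|_{M\smallsetminus\{\gamma^{-1}(p)\}}\to a$. Agreement on the non-empty overlap (which is non-empty since $\abs{M}\geq 3$ as $\Gamma$ is non-elementary) forces $\gamma(a)=a$. If $\gamma^{-1}(p)\neq p$, the two open sets would cover $M$, making $\gamma^{n_j+1}$ a sequence of self-homeomorphisms of a space with at least three points that converges uniformly on $M$ to a constant, which is impossible; hence $\gamma(p)=p$. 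The same line of reasoning also shows that $\gamma$ has at most two fixed points in $M$: any fixed point $q\neq p$ satisfies $q = \gamma^{n_j}(q) \to a$, forcing $q=a$.

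We now split cases according to whether $a = p$. If $a = p$, then by the ``subsequential limits are fixed points'' argument above, every convergent subsequence of $\{\gamma^n\}_{n\geq 1}$ also converges to $p$ off $p$, which upgrades to full convergence $\gamma^n|_{M\smallsetminus\{p\}}\to p$; applying the identical argument to $\gamma^{-1}$ (whose fixed points coincide with those of $\gamma$) handles the negative powers and gives the parabolic classification. If $a\neq p$, a parallel analysis of $\{\gamma^{-n}\}$, using $\gamma^{-n_j}\circ\gamma^{n_j}=\id$ together with the local uniform convergence of $\gamma^{n_j}$, shows $\gamma^{-n_j}|_{M\smallsetminus\{a\}}\to p$. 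By fixed-point uniqueness, every convergent subsequence of $\{\gamma^n\}_{n\geq 1}$ has limit pair in $\{(a,p),(p,a)\}$; to rule out the ``wrong'' type, assume some $\gamma^{m_k}|_{M\smallsetminus\{a\}}\to p$ with $m_k\to+\infty$ and examine composed sequences such as $\gamma^{n_j - m_k}$ under well-chosen diagonal pairings of $(j,k)$, using the homeomorphism property of the iterates and local uniform convergence off the distinguished point to derive a contradiction with $\gamma^{n_j}(y)\to a$ and $\gamma^{m_k}(y)\to p$ at a common point $y\in M\smallsetminus\{a,p\}$. Applying the same argument to $\gamma^{-1}$ then controls the negative powers and yields the loxodromic classification. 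Finally, $a, p \in \Lambda(\Gamma)$ follows directly from the definition of the limit set.

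The main obstacle is the last step: promoting subsequential convergence to convergence of the full sequence in the loxodromic case, i.e.\ ruling out ``wrong-type'' subsequences. This requires a delicate composition argument because local uniform convergence only holds off a single point, so one must track carefully whether composed inputs approach the ``bad'' point or sit in a compact neighborhood of the ``good'' one.
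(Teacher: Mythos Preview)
The paper does not prove this statement; it is recorded as a Fact and cited to Tukia~\cite[Th.\,2B]{Tukia1994} without argument. Your outline follows the standard approach and is essentially correct through the identification of the fixed points and the parabolic case.

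In the loxodromic case there are two gaps. First, having shown that both coordinates of any subsequential limit pair are fixed points only gives that the pair lies in $\{a,p\}^2$; you still need to exclude $(a,a)$ and $(p,p)$. This is easy but should be said: if $\gamma^{m_k}|_{M\smallsetminus\{a\}}\to a$, then evaluating at the other fixed point gives $p=\gamma^{m_k}(p)\to a$, forcing $p=a$, a contradiction.

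Second, and more substantively, your proposed elimination of the wrong-type pair $(p,a)$ via ``diagonal pairings'' of composed sequences $\gamma^{n_j-m_k}$ is not carried out, and it is not clear how to make it work: composing two sequences that converge locally uniformly off \emph{different} exceptional points is exactly the situation where such compositions behave badly. A cleaner route is a trapping argument. Fix $y\notin\{a,p\}$. Since every subsequential limit pair of $\{\gamma^n\}$ lies in $\{(a,p),(p,a)\}$ and $y$ avoids both exceptional points, the forward orbit $\{\gamma^n(y)\}_{n\ge1}$ accumulates only on $\{a,p\}$. Choose disjoint open neighborhoods $U'\ni a$ and $V\ni p$, and then (using continuity of $\gamma$ at its fixed point $a$) a smaller open $U\subset U'$ with $a\in U$ and $\gamma(U)\subset U'$. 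For all $n$ beyond some threshold, $\gamma^n(y)\in U\cup V$; once some $\gamma^{n_j}(y)\in U$ (which happens since $\gamma^{n_j}(y)\to a$), the one-step implication $\gamma^n(y)\in U\Rightarrow\gamma^{n+1}(y)\in\gamma(U)\cap(U\cup V)\subset U'\cap(U\cup V)=U$ traps the orbit in $U$ forever, contradicting $\gamma^{m_k}(y)\to p$.
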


We next observe that $\Gamma \sqcup M$ admits a metrizable compact topology. 
This topology plays a similar role in our work as the topology
on the union of a transverse group and its limit set did in \cite{CZZ2023a}. 
Our argument is similar to a construction of Bowditch \cite[p.\,4 \& Prop.\,1.8]{Bowditch99} 
which produces a natural compact topology on $M^{(3)}\sqcup M$, where $M^{(3)} \subset M^3$ is the space of ordered triples of pairwise distinct elements of $M$, by viewing $M^{(3)}\sqcup M$ as a quotient of $M^3$. 

\begin{definition}\label{defn:compactification} Given a convergence group $\Gamma \subset \mathsf{Homeo}(M)$, a \emph{compactifying topology} on $\Gamma \sqcup M$ is a topology such that:
\begin{itemize} 
\item $\Gamma \sqcup M$ is a compact metrizable space.
\item The inclusions $\Gamma \hookrightarrow \Gamma \sqcup M$ and $M \hookrightarrow \Gamma \sqcup M$ are embeddings (where in the first embedding $\Gamma$ has the discrete topology). 
\item $\Gamma$ acts as a convergence group on $\Gamma \sqcup M$. 
\end{itemize} 
A metric $\dist$ on $\Gamma \sqcup M$ is called \emph{compatible} if it induces a compactifying topology.
\end{definition}

\begin{proposition}\label{prop:compactifying} If $\Gamma \subset \mathsf{Homeo}(M)$ is a convergence group, then there exists a unique compactifying topology.
Moreover, with respect to this topology the following hold:
\begin{enumerate}
\item If $\{\gamma_n\} \subset \Gamma$ is a sequence where $\gamma_n\to a \in M$ and $\gamma_n^{-1}\to b\in M$, then $\gamma_{n}|_{M \smallsetminus \{b\}}$ converges locally uniformly to $a$. 
\item A sequence $\{\gamma_n\} \subset \Gamma$ converges to $a \in M$ if and only if 
for every subsequence $\{\gamma_{n_j}\}$ there exist $b \in M$ and a further subsequence $\{\gamma_{n_{j_k}}\}$ such that $\gamma_{n_{j_k}}|_{M \smallsetminus \{b\}}$ converges locally uniformly to $a$.

\item\label{item:contraction of shadows} For any compatible metric $\dist$ and any $\epsilon > 0$ there exists a finite set $F\subset\Gamma$ such that 
$$\gamma \left(M\smallsetminus B_\epsilon(\gamma^{-1})\right) \subset B_\epsilon(\gamma)
$$
for every $\gamma\in\Gamma\smallsetminus F$ (where $B_r(x)$ is the open ball of radius $r$ centered at $x$ with respect to $\dist$).

\item $\Gamma$ is open in $\Gamma \sqcup M$ and its closure is $\Gamma \sqcup \Lambda(\Gamma)$.
\end{enumerate}
\end{proposition}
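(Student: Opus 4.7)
My plan is to define the topology on $\Gamma \sqcup M$ directly via a basis, show the resulting space is Hausdorff, second countable, and sequentially compact (hence compact, and metrizable by Urysohn's theorem), and then deduce the four properties. Uniqueness will follow from property (2): it characterizes the convergent sequences $\gamma_n \to a \in M$ in any compactifying topology, and a metrizable topology is determined by its convergent sequences.

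\textbf{Construction.} Fix a metric $\dist_M$ on $M$ with open balls $U_\epsilon(x)$. As a basis of open sets of $\Gamma \sqcup M$, take: (i) open subsets of $M$; (ii) singletons $\{\gamma\}$ for $\gamma \in \Gamma$; and (iii) for each $a \in M$, $\epsilon > 0$, and finite $F \subset \Gamma$, the hybrid set
\[
\Omega_\epsilon^F(a) := U_\epsilon(a) \cup \bigl\{\gamma \in \Gamma \smallsetminus F : \exists\, b \in M,\ \gamma(M \smallsetminus U_\epsilon(b)) \subset U_\epsilon(a)\bigr\}.
\]
The basis axiom follows from the observation that $\Omega_\epsilon^F(a) \subset \Omega_{\epsilon'}^{F'}(a')$ whenever $\epsilon + \dist_M(a,a') < \epsilon'$ and $F \supset F'$. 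Hausdorffness is routine; the only nontrivial case separates two distinct $a, a' \in M$ by $\Omega_\epsilon^\emptyset(a)$ and $\Omega_\epsilon^\emptyset(a')$ for $\epsilon$ small enough, using that $M$ is infinite (since $\Gamma$ is) and hence not covered by a few small balls. Second countability follows by restricting (iii) to $a$ in a countable dense subset of $M$, $\epsilon$ of the form $1/n$, and $F$ among the finite subsets of the countable group $\Gamma$. For sequential compactness, a distinct sequence $\{\gamma_n\} \subset \Gamma$ admits by the convergence group property on $M$ a subsequence with $\gamma_{n_j}|_{M \smallsetminus \{b\}} \to a$ locally uniformly; then for any $\Omega_\epsilon^F(a)$, the compact set $M \smallsetminus U_\epsilon(b)$ is eventually mapped into $U_\epsilon(a)$, so $\gamma_{n_j} \to a$ in the topology. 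Urysohn's metrization theorem then yields the desired metric.

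\textbf{Properties and uniqueness.} To check that $\Gamma$ acts as a convergence group on $\Gamma \sqcup M$, extract a subsequence with $\gamma_{n_j} \to a$ and $\gamma_{n_j}^{-1} \to b$ in $\Gamma \sqcup M$, and show $\gamma_{n_j}|_{(\Gamma \sqcup M) \smallsetminus \{b\}} \to a$ locally uniformly---on the $M$-factor by the original convergence, and on $\Gamma$ by applying the same argument to the sequences $\{\gamma_{n_j}\alpha\}$ for $\alpha \in \Gamma$. Property (1) is then a direct translation. The forward direction of (2) extracts $\gamma_{n_{j_k}}|_{(\Gamma \sqcup M) \smallsetminus \{y\}} \to a$ locally uniformly via the convergence group action on $\Gamma \sqcup M$, and rules out $y \in \Gamma$ by noting that $\gamma_{n_{j_k}}(M) = M$ cannot fit into arbitrarily small neighborhoods of $a$. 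The backward direction extracts a convergent subsequence by compactness and identifies its limit using the forward direction. Property (3) is a compactness-contradiction: if it fails, pass to subsequences with $\gamma_n \to a$, $\gamma_n^{-1} \to b$, $x_n \to x$ where $\dist(x_n, \gamma_n^{-1}) \geq \epsilon$ and $\dist(\gamma_n x_n, \gamma_n) \geq \epsilon$; then $x \neq b$ and hence $\gamma_n x_n \to a$ by (1), a contradiction. Property (4) is immediate from isolation of $\Gamma$ and the definition of the limit set. The main technical obstacle will be the verification of the convergence group action on $\Gamma \sqcup M$, which requires analyzing sequences $\{\gamma_n \alpha\}$ for $\alpha \in \Gamma$ and extending the locally uniform convergence to the $\Gamma$-factor.
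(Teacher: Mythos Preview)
Your overall strategy matches the paper's: construct a concrete topology on $\Gamma \sqcup M$, verify compactness and metrizability, check the convergence group property on the enlarged space, derive (1)--(4), and deduce uniqueness from (2). The construction, however, differs. The paper fixes three distinct points $x_1,x_2,x_3\in M$ and takes as neighborhoods of $a\in M$ the sets $U\cup\{\gamma:\#(\{\gamma x_1,\gamma x_2,\gamma x_3\}\cap U)\ge 2\}$ for $U$ in a basis of $M$; this three-point criterion (an echo of Bowditch's compactification of $M^{(3)}\sqcup M$) makes the basis axioms and Hausdorffness automatic. Your collapsing-based sets $\Omega_\epsilon^F(a)$ also work, but the Hausdorff separation of $a\neq a'$ needs that two small balls cannot cover $M$; the correct justification is that $\Lambda(\Gamma)$ has at least three points (non-elementarity), not merely that $M$ is infinite.

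There is one real gap. In checking that $\Gamma$ acts as a convergence group on $\Gamma\sqcup M$, you propose to handle the $\Gamma$-factor one $\alpha\in\Gamma$ at a time. But locally uniform convergence must be verified on compact subsets $K\subset(\Gamma\sqcup M)\smallsetminus\{b\}$, and such $K$ can contain infinitely many elements of $\Gamma$ accumulating on some $x\in M\smallsetminus\{b\}$. You therefore also need to handle sequences $\alpha_n\in\Gamma$ with $\alpha_n\to x\in M$ and show $\gamma_{n_j}\alpha_n\to a$; the paper does exactly this in its Case~2 analysis (tracking a point through the compositions). Relatedly, property~(1) is not just a ``direct translation'': to pin down the attracting and repelling points of an abstract convergence-group subsequence as $a$ and $b$, the paper evaluates at $\mathrm{id}\in\Gamma$ (which forces the attractor to be $a$, since $\gamma_n(\mathrm{id})=\gamma_n\to a$) and at $\gamma_n^{-1}\in\Gamma$ (which forces the repeller to be $b$, since $\gamma_n(\gamma_n^{-1})=\mathrm{id}$ cannot converge into $M$). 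This is precisely where the extended action on $\Gamma\sqcup M$, rather than on $M$ alone, does real work.
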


\begin{proof}
We first show that $\Gamma \sqcup M$ has a compactifying topology.

Fix three distinct points $x_1,x_2,x_3\in M$. For any open set $U\subset M$, let $\Gamma_U\subset \Gamma$ be the set of $\gamma$ such that 
$\#(\{\gamma x_1,\gamma x_2,\gamma x_3\}\cap U)\geq2$. Fix a countable basis $\mathcal B$ of open sets of $M$. Let $\mathcal B'$ be the set 
of singletons of $\Gamma$ and subsets of $\Gamma \sqcup M$ of the form $\Gamma_U\cup U$ for some $U\in\mathcal B$. It is straightforward to 
check that the topology generated by $\mathcal B'$ is compact Hausdorff and second-countable. Hence,
it is metrizable by Urysohn's metrization theorem. It remains to show that $\Gamma$ acts on $\Gamma \sqcup M$ as a convergence group.

Suppose $\{\gamma_n\} \subset \Gamma$ is a sequence of distinct elements. Then  there exist points $a,b \in M$ and a subsequence $\{\gamma_{n_j}\}$  such that $\gamma_{n_j}|_{M \smallsetminus \{b\}}$ converges locally uniformly to $a$. We claim that $\gamma_{n_j}|_{\Gamma \sqcup  M \smallsetminus \{b\}}$ converges locally uniformly to $a$. Suppose not. Then after passing to a subsequence we can find $\{z_j\} \subset \Gamma \sqcup M$ where $z_j \rightarrow z \neq b$ and $\gamma_{n_j}(z_j) \rightarrow c \neq a$.  Passing to a further subsequence we can consider two cases: 

\medskip

\noindent \emph{Case 1:} Assume $\{z_j\} \subset M$. Then by the choice of $\{\gamma_{n_j}\}$ we have $\gamma_{n_j}(z_j) \rightarrow a$, which contradicts our assumptions. 

\medskip 

\noindent \emph{Case 2:} Assume $\{z_j\}\subset \Gamma$. First suppose that $z \in \Gamma$, then passing to a subsequence we can suppose that $z_j = z$ for all $j$. At least two $zx_1$, $zx_2$, $zx_3$ do not equal $b$. So after relabelling we can suppose that $zx_1 \neq b$ and $zx_2 \neq b$. Then $(\gamma_{n_j}z_j)(x_1) \rightarrow a$ and $(\gamma_{n_j}z_j)(x_2) \rightarrow a$. So by the definition of the topology  $\gamma_{n_j}z_j \rightarrow a$. So we have a contradiction. 

Next suppose that $z \in M$. Then by definition of the topology and passing to a subsequence we can assume that $z_j|_{M \smallsetminus \{b'\}}$  converges locally uniformly to $z$. Since $z \neq b$, then $(\gamma_{n_j}z_j)|_{M \smallsetminus \{b'\}}$ converges locally uniformly to $a$, which implies that $\gamma_{n_j}z_j \rightarrow a$. So we have a contradiction. 

Thus $\Gamma$ acts on $\Gamma \sqcup M$ as a convergence group and hence $\Gamma \sqcup M$ has a compactifying topology. 

Next we consider $\Gamma \sqcup M$ with some compactifying topology and prove the assertions in the ``moreover'' part of the proposition. Notice that part (2) will imply that there is a unique compactifying topology. Let $\dist$ be a metric which induces this topology.

(1) Assume $\gamma_n \rightarrow a$ and $\gamma_n^{-1} \rightarrow b$. Suppose for a contradiction that $\gamma_{n}|_{M \smallsetminus \{b\}}$ does not converge locally uniformly to $a$. Then after passing to a subsequence  there exist $\epsilon > 0$ and $\{c_n\} \subset M \smallsetminus B(b,\epsilon)$ such that $\{\gamma_n(c_n)\} \subset M \smallsetminus B(a, \epsilon)$. Since $\Gamma$ acts as a convergence group on $\Gamma \sqcup M$, passing to a further subsequence we can suppose that $\gamma_{n}|_{\Gamma \sqcup M \smallsetminus \{b'\}}$ converges locally uniformly to $a'$ for some $a',b' \in \Gamma \sqcup M$. Since $\Gamma$ acts by homeomorphisms on $M$, we must have $b' \in M$ (otherwise when $n$ is large $\gamma_n|_M$ would not map onto $M$). So 
$$
a = \lim_{n \rightarrow \infty} \gamma_n=\lim_{n \rightarrow \infty} \gamma_n(\id) = a'.
$$
Also notice that $\gamma_n(\gamma_n^{-1}) =\id$ for all $n$ and so we must have $b=b'$. Then $\gamma_n(c_n) \rightarrow a$ and we have a contradiction.

(2) $(\Rightarrow)$: Suppose $\gamma_n \rightarrow a$ and fix a subsequence $\{\gamma_{n_j}\}$. Since $\Gamma \sqcup M$ is compact, there exists a subsequence with $\gamma_{n_{j_k}}^{-1} \rightarrow b$. Then by (1), $\gamma_{n_{j_k}}|_{M \smallsetminus \{b\}}$ converges locally uniformly to $a$. 

$(\Leftarrow)$: Suppose $a \in M$ and $\{\gamma_n\} \subset \Gamma$ has the property that for every subsequence $\{\gamma_{n_j}\}$ there exist $b \in M$ and a further subsequence $\{\gamma_{n_{j_k}}\}$ such that $\gamma_{n_{j_k}}|_{M \smallsetminus \{b\}}$ converges locally uniformly to $a$. Since $\Gamma \sqcup M$ is compact, to show that $\gamma_n$ converges to $a$ it suffices to show that every convergent subsequence converges to $a$. So suppose that $\gamma_{n_j} \rightarrow a'$. Passing to a subsequence we can suppose that $\gamma_{n_j}^{-1} \rightarrow b$. Then by (1), $\gamma_{n_{j}}|_{M \smallsetminus \{b\}}$ converges locally uniformly to $a'$. So by hypothesis, $a = a'$.

(3) Fix $\epsilon > 0$ and suppose not. Then there exist a sequence $\{\gamma_n\}$ of distinct elements and a sequence $\{ x_n\} \subset \Gamma \sqcup M$ such that 
$$
\dist(\gamma_n(x_n), \gamma_n) \geq \epsilon \quad \text{and} \quad \dist(x_n, \gamma_n^{-1}) \geq \epsilon.
$$
Passing to a subsequence, we can suppose that $\gamma_n \rightarrow a \in M$ and $\gamma_n^{-1} \rightarrow b \in M$. Then by (1), $\gamma_{n}|_{M \smallsetminus \{b\}}$ converges locally uniformly to $a$. Since  
$$
\lim_{n \rightarrow \infty} \dist(x_n, b) = \lim_{n \rightarrow \infty} \dist(x_n, \gamma_n^{-1}) \geq \epsilon, 
$$
then $\gamma_n(x_n) \rightarrow a$. So 
$$
\epsilon \leq \lim_{n \rightarrow \infty} \dist(\gamma_n(x_n), \gamma_n)= \dist(a,a) = 0
$$
and we have a contradiction.

(4) Since $M$ is compact, it must be closed in $\Gamma \sqcup M$. Hence $\Gamma$ must be open. Part (2) implies that the closure of $\Gamma$ in $\Gamma \sqcup M$ is $\Gamma \sqcup \Lambda(\Gamma)$. 
\end{proof} 

The following lemma will allow us to relate general elements to loxodromic ones and can be viewed as a convergence group action version of the fact (see \cite{AMS}) that elements in a strongly irreducible linear group are uniformly close to well-behaved proximal elements.

\begin{lemma}\label{lem:AMS_Gromov_hyp} Suppose $\Gamma \subset \mathsf{Homeo}(M)$ is a convergence group and $\dist$ is a compatible metric on $\Gamma \sqcup M$. Then there exist $\epsilon > 0$ and a finite set $F \subset \Gamma$ with the following property: for any $\gamma \in \Gamma$ there is some $f \in F$ where $\gamma f$ is loxodromic and 
$$
\min\left\{ \dist( (\gamma f)^+, (\gamma f)^-), \, \dist( \gamma f, (\gamma f)^-), \,  \dist( (\gamma f)^+, (\gamma f)^{-1}) \right\} > \epsilon.
$$
\end{lemma}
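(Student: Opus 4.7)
The plan is to argue by contradiction using compactness of $\Gamma\sqcup M$, together with the following preliminary claim: if $\{\alpha_n\}\subset\Gamma$ is a sequence with $\alpha_n\to a\in M$ and $\alpha_n^{-1}\to b\in M$ in $\Gamma\sqcup M$, and $a\neq b$, then for $n$ sufficiently large $\alpha_n$ is loxodromic with $\alpha_n^+$ and $\alpha_n^-$ contained in any preassigned small neighborhoods of $a$ and $b$. I would prove this using Proposition~\ref{prop:compactifying}(1) to get local uniform convergence $\alpha_n|_{M\smallsetminus\{b\}}\to a$ and $\alpha_n^{-1}|_{M\smallsetminus\{a\}}\to b$; fixing $\eta>0$ with $3\eta<\dist(a,b)$ and small enough that $\Lambda(\Gamma)\not\subset\overline{B_\eta(a)}\cup\overline{B_\eta(b)}$ (possible since $|\Lambda(\Gamma)|\geq 3$), for $n$ large the contracting inclusions $\alpha_n(M\smallsetminus B_\eta(b))\subset B_\eta(a)$ and $\alpha_n^{-1}(M\smallsetminus B_\eta(a))\subset B_\eta(b)$ hold. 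Iterating these and using Fact~\ref{fact:classif loxparell} rules out elliptic and parabolic behavior (parabolic would force the unique fixed point to lie in $\overline{B_\eta(a)}\cap\overline{B_\eta(b)}=\emptyset$; elliptic would force a chosen $y\in\Lambda(\Gamma)\smallsetminus(\overline{B_\eta(a)}\cup\overline{B_\eta(b)})$ to return to itself despite its orbit being trapped in $\overline{B_\eta(a)}$), so $\alpha_n$ is loxodromic with $\alpha_n^\pm$ in the respective closed balls.

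Now suppose the lemma fails: enumerating $\Gamma=\{f_0,f_1,\ldots\}$, setting $F_n:=\{f_0,\ldots,f_n\}$ and $\epsilon_n:=1/n$, we obtain $\gamma_n\in\Gamma$ such that for every $f\in F_n$, either $\gamma_n f$ is non-loxodromic or one of the three distances is $\leq\epsilon_n$. Pass to a subsequence with $\gamma_n\to\gamma_\infty\in\Gamma\sqcup M$. Since $\Gamma$ is discrete in $\Gamma\sqcup M$ (Definition~\ref{defn:compactification}, Proposition~\ref{prop:compactifying}(4)), either $\gamma_\infty\in\Gamma$ or $\gamma_\infty\in M$. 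In the first case, $\gamma_n=\gamma_\infty=:\gamma_*$ for large $n$; picking any loxodromic $h\in\Gamma$ (which exists because $\Gamma$ is non-elementary) and setting $f:=\gamma_*^{-1}h$, for $n$ large enough $f\in F_n$ and $\gamma_n f=h$ has fixed positive separations exceeding $\epsilon_n$, contradicting the choice of $\gamma_n$.

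In the second case, set $a:=\gamma_\infty$ and pass further so $\gamma_n^{-1}\to b\in M$. Since $\Lambda(\Gamma)$ is the smallest closed $\Gamma$-invariant subset of $M$ and $|\Lambda(\Gamma)|\geq 3$, no point of $M$ is fixed by all of $\Gamma$; hence there is $f_*\in\Gamma$ with $f_*(a)\neq b$ (take $f_*=\mathrm{id}$ if $a\neq b$, else $f_*$ not fixing $a$). Setting $\alpha_n:=\gamma_n f_*$, the convergence action on $\Gamma\sqcup M$ gives $\alpha_n\to a$ and $\alpha_n^{-1}=f_*^{-1}\gamma_n^{-1}\to f_*^{-1}(b)$, with $a\neq f_*^{-1}(b)$. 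Applying the preliminary claim, $\alpha_n$ is loxodromic for $n$ large with $\alpha_n^+$ near $a$ and $\alpha_n^-$ near $f_*^{-1}(b)$; together with $\alpha_n\to a$ and $\alpha_n^{-1}\to f_*^{-1}(b)$, the triangle inequality bounds all three distances in the lemma below by some $\epsilon_0>0$ independent of $n$. For $n$ large enough that $f_*\in F_n$ and $\epsilon_n<\epsilon_0$, $f_*$ witnesses the conclusion for $\gamma_n$, a contradiction. The main obstacle is the preliminary claim, specifically the iterative argument ruling out parabolic and elliptic elements using both Fact~\ref{fact:classif loxparell} and non-elementarity; everything else reduces to compactness and routine bookkeeping.
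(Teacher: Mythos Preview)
Your proof is correct. Both your argument and the paper's share the same core ingredient---your ``preliminary claim'' is exactly Lemma~\ref{lem:char of loxodromic} (Tukia), which the paper cites without proof---but the organization is genuinely different.

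The paper builds $F$ constructively: it fixes four limit points $x_1,\dots,x_4$, uses Lemma~\ref{lem:AMStech} to produce elements $g_{i,j}$ contracting $M\smallsetminus B_\epsilon(x_j)$ into $B_\epsilon(x_i)$, and shows that for all but finitely many $\gamma$ one can pick $i,j$ so that $\gamma g_{i,j}$ is loxodromic with the required separation (the four points are needed so that any pair $a,b$ of subsequential limits avoids two of the balls). A fixed loxodromic $h$ then handles the finite exceptional set. You instead run a soft contradiction: exhaust $\Gamma$ by finite sets $F_n$, take $\epsilon_n\to 0$, extract a subsequential limit of the bad $\gamma_n$, and exhibit a \emph{single} $f_*$ (either $\gamma_*^{-1}h$ or an element moving $a$ off $b$) that works for all large $n$.

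Your route avoids Lemma~\ref{lem:AMStech} and the four-point bookkeeping, and is slightly more self-contained since you sketch the preliminary claim directly from Fact~\ref{fact:classif loxparell}. The paper's route is more explicit about what $F$ and $\epsilon$ are, which can be convenient downstream, but for the purposes of this lemma your argument is at least as clean. One small remark: in your parabolic case you correctly need a point $y\notin\overline{B_\eta(a)}\cup\overline{B_\eta(b)}$ to run the forward and backward iterates simultaneously and trap the fixed point in both balls; this is exactly what your choice of $\eta$ via $|\Lambda(\Gamma)|\geq 3$ guarantees.
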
 

To prove the lemma we use the following result of Tukia. 

\begin{lemma}[{\cite[Cor.\ 2E]{Tukia1994}}]\label{lem:char of loxodromic} Suppose $\Gamma \subset \mathsf{Homeo}(M)$ is a convergence group.
If $\{\gamma_n\}$ is a sequence of distinct elements, $\gamma_n\rightarrow a$, $\gamma_n^{-1} \rightarrow b$, and $a \neq b$, then for $n$ sufficiently large $\gamma_n$ is loxodromic and $\gamma_n^+\rightarrow a$, $\gamma_n^- \rightarrow b$. 
\end{lemma}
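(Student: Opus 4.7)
The plan is to argue by contradiction and use a diagonal/exhaustion argument against a carefully chosen countable family of candidate finite sets.

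Enumerate $\Gamma = \{f_1, f_2, \ldots\}$ with $f_1 = \id$, and set $F_n := \{f_1,\ldots,f_n\}$. Suppose the conclusion fails. Then for each $n$ there exists $\gamma_n \in \Gamma$ such that for every $f \in F_n$, either $\gamma_n f$ is not loxodromic, or
$$\min\left\{\dist((\gamma_n f)^+,(\gamma_n f)^-),\,\dist(\gamma_n f,(\gamma_n f)^-),\,\dist((\gamma_n f)^+,(\gamma_n f)^{-1})\right\} \leq 1/n.$$
First I would verify that we may arrange the $\gamma_n$ to be distinct. Indeed, if some $\gamma$ appeared along an infinite subsequence, then for every $f \in \Gamma$, $\gamma f$ would fail the condition for arbitrarily large $n$. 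But for any loxodromic element $g$, the three relevant distances are strictly positive (fixed points are distinct in $M$, while $g$ and $g^{-1}$ lie in $\Gamma \subset \Gamma\sqcup M$). Since $\Gamma$ is non-elementary, we can pick $f$ with $\gamma f$ loxodromic, a contradiction.

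Using compactness of $\Gamma \sqcup M$ and Proposition~\ref{prop:compactifying}, I can pass to a subsequence so that $\gamma_n \to a$ and $\gamma_n^{-1} \to b$ in $\Gamma \sqcup M$; since the $\gamma_n$ are distinct elements of the discrete subspace $\Gamma$, both $a,b \in M$. In the case $a \neq b$, Lemma~\ref{lem:char of loxodromic} directly gives that $\gamma_n$ is loxodromic for large $n$ with $\gamma_n^\pm \to a,b$, so all three distances tend to $\dist(a,b)>0$. Since $\id \in F_n$ for every $n \geq 1$, this contradicts the defining property of $\gamma_n$ once $1/n < \dist(a,b)/2$.

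In the remaining case $a = b$, note that $a \in \Lambda(\Gamma)$ (as the limit of distinct $\gamma_n$); since $\Gamma$ acts minimally on $\Lambda(\Gamma)$ and $|\Lambda(\Gamma)| \geq 3$, the point $a$ is not globally fixed, so some $f = f_k \in \Gamma$ satisfies $f^{-1}(a) \neq a$. I would then verify that $\gamma_n f \to a$ and $(\gamma_n f)^{-1} \to f^{-1}(a)$ in $\Gamma \sqcup M$. The convergence of $(\gamma_n f)^{-1} = f^{-1}\gamma_n^{-1}$ is immediate because $f^{-1}$ is a homeomorphism of $\Gamma \sqcup M$. For $\gamma_n f$, apply Proposition~\ref{prop:compactifying}(1) to deduce that $\gamma_n|_{M \smallsetminus\{a\}}$ converges locally uniformly to $a$; evaluating on the three reference points $x_1,x_2,x_3$ used to define the compactifying topology, at most one of $f(x_1), f(x_2), f(x_3)$ equals $a$, so the remaining two satisfy $\gamma_n f(x_i) \to a$, and the basis description of the topology then forces $\gamma_n f \to a$. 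A second application of Lemma~\ref{lem:char of loxodromic} now gives that $\gamma_n f$ is loxodromic for large $n$ with fixed points converging to $a$ and $f^{-1}(a)$, so the three distances converge to $\dist(a, f^{-1}(a)) > 0$; since $f \in F_n$ for all $n \geq k$, this again contradicts the defining property of $\gamma_n$.

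The step I expect to be most delicate is the verification that $\gamma_n f \to a$ in $\Gamma \sqcup M$ in the case $a=b$: right multiplication by $f$ is not a priori continuous on $\Gamma \sqcup M$, so one has to unwind the explicit basis $\{\Gamma_U \cup U\}$ from the proof of Proposition~\ref{prop:compactifying}. Everything else is a streamlined application of Lemma~\ref{lem:char of loxodromic} together with the compactness and convergence group properties already established.
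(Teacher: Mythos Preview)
Your proof is addressed to the wrong statement. The lemma you were asked to prove is Lemma~\ref{lem:char of loxodromic}, but the argument you wrote is a proof of Lemma~\ref{lem:AMS_Gromov_hyp}: you set up an exhausting family of finite sets $F_n$ and argue that for some fixed $f$ the products $\gamma_n f$ are eventually loxodromic with uniformly separated fixed points. Worse, your argument \emph{invokes} Lemma~\ref{lem:char of loxodromic} twice (once in the case $a\neq b$ and once after replacing $\gamma_n$ by $\gamma_n f$), so as a proof of that lemma it would be circular.

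The paper does not prove Lemma~\ref{lem:char of loxodromic} at all; it is quoted from Tukia~\cite[Cor.\ 2E]{Tukia1994}. If you want to supply a self-contained argument, you need to show directly that an element $\gamma$ which contracts the complement of a small ball around some $b'$ into a small ball around some $a'$ with $a'\neq b'$ must be loxodromic with $\gamma^+$ near $a'$ and $\gamma^-$ near $b'$; nothing in your write-up does this. For what it is worth, the argument you did write is a reasonable alternative proof of Lemma~\ref{lem:AMS_Gromov_hyp}, via a diagonal contradiction rather than the paper's explicit construction of the elements $g_{i,j}$, and the delicate step you flag (that $\gamma_n f\to a$) can be handled cleanly via Proposition~\ref{prop:compactifying}(2) since $(\gamma_n f)|_{M\smallsetminus\{f^{-1}a\}}$ converges locally uniformly to $a$.
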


\begin{lemma}\label{lem:AMStech}
Suppose $\Gamma \subset \mathsf{Homeo}(M)$ is a convergence group and $\dist$ is a compatible metric on $\Gamma \sqcup M$.
Then for all $x,y\in \Lambda(\Gamma)$ and $\epsilon>0$ there is $\gamma \in\Gamma$ such that $\gamma(M-B_\epsilon(x))\subset B_\epsilon(y)$.
\end{lemma}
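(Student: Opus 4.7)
The plan is to reduce the lemma to producing a single sequence $\{\gamma_n\} \subset \Gamma$ such that $\gamma_n|_{M \smallsetminus \{x\}}$ converges locally uniformly to $y$. Once such a sequence is available, the complement $M \smallsetminus B_\epsilon(x)$ is a compact subset of $M \smallsetminus \{x\}$, so the convergence is uniform on it, and for any sufficiently large $n$ one obtains $\gamma_n(M \smallsetminus B_\epsilon(x)) \subset B_\epsilon(y)$; that single $\gamma := \gamma_n$ will be the element the lemma requires.

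To produce such a sequence, I would first invoke the definition of the limit set together with compactness of $\Gamma \sqcup M$ and Proposition~\ref{prop:compactifying}(1) to obtain sequences $\{\alpha_n\}, \{\beta_n\} \subset \Gamma$ and points $a, b \in M$ with $\alpha_n|_{M \smallsetminus \{a\}}$ converging locally uniformly to $x$ and $\beta_n|_{M \smallsetminus \{b\}}$ converging locally uniformly to $y$ (the points $a, b$ arise as subsequential limits of $\alpha_n^{-1}, \beta_n^{-1}$ in $\Gamma \sqcup M$ and lie in $\Lambda(\Gamma)$). The natural candidate is then $\gamma_n := \beta_n \alpha_n^{-1}$: for a compact $K \subset M \smallsetminus \{x\}$, uniform convergence of $\alpha_n^{-1}|_K$ to $a$ drives $\alpha_n^{-1}(K)$ into any prescribed neighborhood of $a$, and then the locally uniform convergence of $\beta_n$ away from $b$ lets one push this through the composition to obtain uniform convergence of $\gamma_n|_K$ to $y$.

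The main obstacle is that this composition argument only works when the two bad points $a$ and $b$ are distinct; if $a = b$, the points $\alpha_n^{-1}(K)$ accumulate precisely on the singular point for $\beta_n$. I would resolve this by a preliminary modification: replace $\beta_n$ with $\beta_n \delta$ for a suitable fixed $\delta \in \Gamma$, which leaves the $y$-limit unchanged but replaces the bad point $b$ by $\delta^{-1}(b)$. Because $\Gamma$ is non-elementary, its orbit $\Gamma \cdot b \subset \Lambda(\Gamma)$ is dense in the infinite set $\Lambda(\Gamma)$ and hence infinite, so some $\delta$ produces $\delta^{-1}(b) \neq a$. After this adjustment I proceed as in the previous paragraph, picking a closed neighborhood $V$ of $a$ disjoint from the new bad point, observing $\alpha_n^{-1}(K) \subset V$ eventually, and using uniform convergence of $\beta_n|_V$ to $y$ to conclude uniform convergence of $\gamma_n|_K$ to $y$. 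This completes the proof plan.
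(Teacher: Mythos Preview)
Your proposal is correct and uses the same underlying mechanism as the paper: convergence-group dynamics plus the density of $\Gamma$-orbits in $\Lambda(\Gamma)$. The execution differs slightly. The paper fixes a single escaping sequence $\{\gamma_n\}$ with $\gamma_n|_{M\smallsetminus\{a\}}\to b$, then uses minimality to choose two \emph{fixed} elements $\alpha,\beta\in\Gamma$ with $\dist(\alpha a,x)<\epsilon/2$ and $\dist(\beta b,y)<\epsilon/2$, and takes $\gamma=\beta\gamma_n\alpha^{-1}$ for large $n$. Because it only needs the repelling and attracting points to be $\epsilon/2$-close to $x$ and $y$ rather than equal to them, no collision case $a=b$ ever arises. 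Your route instead aims for a sequence with repeller exactly $x$ and attractor exactly $y$ by composing two sequences, which is a slightly stronger intermediate statement but forces you to handle the $a=b$ case with the extra adjustment by $\delta$. Both arguments are short and valid; the paper's is marginally more economical, while yours yields the cleaner auxiliary fact that for any $x,y\in\Lambda(\Gamma)$ there is a sequence in $\Gamma$ with $\gamma_n|_{M\smallsetminus\{x\}}\to y$.
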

\begin{proof}
Fix an escaping sequence $\{\gamma_n\}\subset\Gamma$.
Passing to a subsequence we may assume  $\gamma_n|_{M \smallsetminus \{a\}}$ converges locally uniformly to $b$ for some $a,b\in\Lambda(\Gamma)$.
Since $\Gamma$ acts minimally on $\Lambda(\Gamma)$ there are $\alpha,\beta\in\Gamma$ such that $\dist (\alpha a,x)<\epsilon/2$ and $\dist(\beta b,y)<\epsilon/2$.
Then $\beta\gamma_n\alpha^{-1}|_{M \smallsetminus \{\alpha a\}}$  converges locally uniformly to $\beta b$, so for $n$ large enough we have 
\[\beta\gamma_n\alpha^{-1}(M-B_\epsilon(x))\subset \beta\gamma_n\alpha^{-1}(M-B_{\epsilon/2}(\alpha^{-1}a))\subset B_{\epsilon/2}(\beta b)\subset B_\epsilon(y).\qedhere \]
\end{proof}

\begin{proof}[Proof of Lemma~\ref{lem:AMS_Gromov_hyp}] Fix four distinct points $x_1, x_2, x_3,x_4 \in M$ in the limit set of $\Gamma$. Let 
$$
\epsilon := \frac{1}{4} \min_{1 \leq i < j \leq 4} \dist(x_i, x_j).
$$
By Lemma~\ref{lem:AMStech}, for every $i\ne j\in\{1,2,3,4\}$ we can find an element $g_{i,j} \in \Gamma$ such that 
$$
g_{i,j}\Big( M \smallsetminus B_\epsilon\left(x_j\right)\Big) \subset B_\epsilon\left(x_i\right) \quad \text{and} \quad g_{i,j}^{-1}\Big( M \smallsetminus B_\epsilon\left(x_i\right)\Big) \subset B_\epsilon\left(x_j\right).
$$

We claim that there exists a finite set $F_0 \subset \Gamma$ such that: if $\gamma \in \Gamma \smallsetminus F_0$, then there exist $i\ne j\in\{1,2,3,4\}$ 
such that $\gamma g_{i,j}$ is loxodromic and 
$$
\min\left\{ \dist( (\gamma g_{i,j})^+, (\gamma g_{i,j})^-) , \, \dist( \gamma g_{i,j}, (\gamma g_{i,j})^-), \, \dist( (\gamma g_{i,j})^+, (\gamma g_{i,j})^{-1})\right\}> \epsilon. 
$$
Suppose not. Then there exists an escaping sequence $\{\gamma_n\}$ in $\Gamma$ where each $\gamma_n$ does not have this property. 
Passing to a subsequence we can suppose that there is $a,b \in M$ such that $\gamma_n \to a$ and $\gamma_n^{-1} \to b$.

Since the balls $\{ B_{2\epsilon}(x_i)\}_{1 \leq i \leq 4}$ are pairwise disjoint we can pick $i\ne j\in\{1,2,3,4\}$ such that 
$a,b \notin B_{2\epsilon}(x_i) \cup B_{2\epsilon}(x_j)$.  Then $\gamma_n g_{i,j} \to a$ and 
\hbox{$(\gamma_n g_{i,j})^{-1} \rightarrow g_{i,j}^{-1}b\in B_\epsilon(x_j)$.} Then, by our choice of $i$ and $j$,
$$
\dist( a, g_{i,j}^{-1}(b)) > \epsilon.
$$
Thus Lemma~\ref{lem:char of loxodromic} implies that $ \gamma_n g_{i,j}$ is loxodromic for $n$ sufficiently large. Further, $( \gamma_n g_{i,j})^+ \to a$ and $(\gamma_n g_{i,j} )^- \to g_{i,j}^{-1}(b)$. So
for $n$ sufficiently large we have 
$$
\min\left\{ \dist( (\gamma_n g_{i,j})^+, (\gamma_n g_{i,j})^-) , \, \dist( \gamma_n g_{i,j}, (\gamma g_{i,j})^-), \, \dist( (\gamma_n g_{i,j})^+, (\gamma_n g_{i,j})^{-1})\right\}> \epsilon. 
$$
Thus we have a contradiction. Thus there exists a finite set $F_0 \subset \Gamma$ with the desired property. 

Now fix a loxodromic element $h$ with 
$$
\min\left\{ \dist(h^+, h^-), \,  \dist(h, h^-), \,  \dist(h^+, h^{-1}) \right\} > \epsilon.
$$ 
Then the set 
$$
F :=\big\{ g_{i,j} : i\ne j\in\{1,2,3,4\} \big\} \cup \{ f^{-1}h : f \in F_0\}
$$
satisfies the lemma. 
\end{proof}

\section{Cocycles and GPS systems}\label{sec:expanding cocycles}

In this subsection, we record basic properties of coarse-cocycles and GPS systems. We begin with a few simple properties shared by all coarse-cocycles.

\begin{observation}\label{obs:triangle inequality}
Suppose $\Gamma \subset \mathsf{Homeo}(M)$ is a convergence group and $\sigma$ is a $\kappa$-coarse-cocycle. Then:
\begin{enumerate}
\item\label{item:identity}  $\abs{\sigma(\id,x)} \leq \kappa$ for any $x\in M$. 
\item\label{item:inverses} If $\gamma\in \Gamma$ and $x \in M$, then 
$$
\abs{ \sigma(\gamma, \gamma^{-1}x) + \sigma(\gamma^{-1},x)} \leq 2 \kappa.
$$
\end{enumerate}
\end{observation}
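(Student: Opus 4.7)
The plan is to apply the coarse cocycle identity to cleverly chosen pairs of group elements and combine the resulting estimates.

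For part \eqref{item:identity}, I would set $\gamma_1 = \gamma_2 = \id$ in the coarse cocycle identity. This gives
$$\abs{\sigma(\id, x) - \bigl(\sigma(\id, x) + \sigma(\id, x)\bigr)} \leq \kappa,$$
which directly simplifies to $\abs{\sigma(\id, x)} \leq \kappa$.

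For part \eqref{item:inverses}, I would apply the coarse cocycle identity with $\gamma_1 = \gamma$ and $\gamma_2 = \gamma^{-1}$, evaluated at $x$. This yields
$$\abs{\sigma(\id, x) - \bigl(\sigma(\gamma, \gamma^{-1}x) + \sigma(\gamma^{-1}, x)\bigr)} \leq \kappa.$$
Then by the triangle inequality together with part \eqref{item:identity},
$$\abs{\sigma(\gamma, \gamma^{-1}x) + \sigma(\gamma^{-1}, x)} \leq \abs{\sigma(\id, x)} + \kappa \leq 2\kappa,$$
as desired. There is no real obstacle here; both statements are immediate formal consequences of the coarse cocycle identity, and the only ``trick'' is the choice of substitutions.
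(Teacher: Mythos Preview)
Your proof is correct and follows exactly the same approach as the paper: both parts are obtained by substituting $\gamma_1=\gamma_2=\id$ and $\gamma_1=\gamma,\ \gamma_2=\gamma^{-1}$ into the coarse cocycle identity, with part~\eqref{item:inverses} then following from part~\eqref{item:identity} via the triangle inequality.
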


\begin{proof} For part (1), notice that 
 \begin{equation*}
 \abs{\sigma(\id,x)}=\abs{\sigma(\id^2,x)-(\sigma(\id,\id(x))+\sigma(\id,x))}\leq \kappa. 
 \end{equation*}
 Part (2) follows from part (1) and the fact that 
\begin{align*}
\abs{ \sigma(\gamma, \gamma^{-1}x) + \sigma(\gamma^{-1},x) - \sigma(\id,x)} & \leq \kappa. \qedhere
\end{align*}
\end{proof} 

In the majority of our work we will require that our coarse-cocycles are expanding, see Definition~\ref{defn:expanding}. We start by observing the following alternative description of these cocycles. 

\begin{proposition}\label{prop:different definition of expanding}  Suppose $\Gamma \subset \mathsf{Homeo}(M)$ is a convergence group, $\dist$ is a compatible metric on $\Gamma \sqcup M$, and $\sigma$ is a $\kappa$-coarse-cocycle. If there exists a function $h \colon \Gamma \to \Rb$ such that 
\begin{itemize}
\item for every $\epsilon > 0$ there exists $C > 0$ such that
$$
h(\gamma) - C \leq \sigma(\gamma, x) \leq h(\gamma)+C
$$ 
whenever $x \in M$ and $\gamma \in \Gamma$ and $\dist(x,\gamma^{-1}) > \epsilon$, and
\item $\lim_{n \to \infty} h(\gamma_n) = + \infty$ for every escaping sequence $\{ \gamma_n\} \subset \Gamma$, 
\end{itemize}
then $\sigma$ is expanding and we may choose 
$$
\norm{\gamma}_\sigma = h(\gamma). 
$$
\end{proposition}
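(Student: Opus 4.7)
The plan is to first observe that hypothesis (i) is exactly the magnitude condition in Definition~\ref{defn:expanding} once we declare $\norm{\gamma}_\sigma := h(\gamma)$. Consequently, showing that $\sigma$ is expanding reduces entirely to verifying properness: for every sequence of distinct loxodromic elements $\{\gamma_n\} \subset \Gamma$ with $\liminf_n \dist(\gamma_n^-, \gamma_n^+) > 0$, one must show $\sigma(\gamma_n, \gamma_n^+) \to +\infty$.

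Since $\{\gamma_n\}$ consists of distinct elements, it is an escaping sequence in $\Gamma$ (with its discrete topology), so hypothesis (ii) gives $h(\gamma_n) \to +\infty$. Thus, by applying hypothesis (i) with $x = \gamma_n^+$, it suffices to establish that $\liminf_n \dist(\gamma_n^+, \gamma_n^{-1}) > 0$, which I regard as the main technical step. I would argue by contradiction: suppose, after extracting a subsequence, that $\dist(\gamma_n^+, \gamma_n^{-1}) \to 0$. Using compactness of $\Gamma \sqcup M$ I pass to a further subsequence along which $\gamma_n \to a$ and $\gamma_n^{-1} \to b$ in $\Gamma \sqcup M$; because $\{\gamma_n\}$ and $\{\gamma_n^{-1}\}$ escape in the discrete topology on $\Gamma$, both limits $a,b$ lie in $M$.

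Now I would split into two cases. If $a \neq b$, Lemma~\ref{lem:char of loxodromic} applies and yields $\gamma_n^+ \to a$ and $\gamma_n^- \to b$ in $M$, which gives $\dist(\gamma_n^+, \gamma_n^{-1}) \to \dist(a,b) > 0$, contradicting the standing assumption. If $a = b$, then Proposition~\ref{prop:compactifying}(1) shows that $\gamma_n|_{M\smallsetminus\{a\}}$ converges locally uniformly to $a$; after passing to subsequences so that $\gamma_n^+ \to p$ and $\gamma_n^- \to q$ in $M$, any limit distinct from $a$ would be mapped by $\gamma_n$ to $a$, but $\gamma_n^\pm$ are fixed by $\gamma_n$, forcing $p = q = a$ and contradicting $\liminf \dist(\gamma_n^-, \gamma_n^+) > 0$.

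The principal obstacle is this case analysis for $\dist(\gamma_n^+, \gamma_n^{-1})$, which requires careful navigation between the compactified topology on $\Gamma \sqcup M$ and the fixed-point structure of loxodromic elements: Lemma~\ref{lem:char of loxodromic} directly handles $a \neq b$, but the degenerate case $a = b$ must be excluded by a separate convergence-group dynamics argument. Once the lower bound $\liminf_n \dist(\gamma_n^+, \gamma_n^{-1}) > 0$ is secured, hypothesis (i) produces $C$ with $\sigma(\gamma_n, \gamma_n^+) \geq h(\gamma_n) - C$ for all large $n$, and combining with $h(\gamma_n) \to +\infty$ completes the proof of properness and hence the proposition.
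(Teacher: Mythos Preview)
Your proof is correct and follows essentially the same route as the paper. The paper's argument is terser: it simply asserts that $\dist(\gamma_n,\gamma_n^+)\to 0$ and $\dist(\gamma_n^{-1},\gamma_n^-)\to 0$ for any escaping sequence of loxodromics, and then combines this with $\liminf_n \dist(\gamma_n^-,\gamma_n^+)>0$ via the triangle inequality to get $\liminf_n \dist(\gamma_n^+,\gamma_n^{-1})>0$; your case analysis ($a=b$ versus $a\neq b$, using Proposition~\ref{prop:compactifying}(1) and Lemma~\ref{lem:char of loxodromic}) is exactly what is needed to justify that asserted fact, so you are just making explicit what the paper leaves implicit.
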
 

\begin{proof} We just have to show that $\sigma$ is proper. So fix a sequence $\{\gamma_n\} \subset \Gamma$ of distinct loxodromic elements where the sequence of pairs of repelling/attracting points satisfies  $\liminf_{n \rightarrow \infty} \dist(\gamma_n^-, \gamma_n^+) > 0$. Then 
$$
\lim_{n \to \infty} \dist(\gamma_n^{-1}, \gamma_n^-) = \lim_{n \to \infty} \dist(\gamma_n, \gamma_n^+)=0
$$
and so by the first property there exists $C > 0$ such that for all $n \geq 1$, we have
$$
h(\gamma_n) - C \leq \sigma(\gamma_n, \gamma_n^+) 
$$ 
Then the second property implies $\sigma(\gamma_n, \gamma_n^+) \to +\infty$, so $\sigma$ is proper. 
\end{proof}

The next result establishes a number of useful properties for expanding cocycles.

\begin{proposition}\label{prop:basic properties}  Suppose $\Gamma \subset \mathsf{Homeo}(M)$ is a convergence group, $\dist$ is a compatible metric on $\Gamma \sqcup M$, and $\sigma$ is an expanding $\kappa$-coarse-cocycle.  Then:
\begin{enumerate}
\item\label{item:tri inequality} For any finite subset $F \subset \Gamma$, there exists $C > 0$ such that: if $\gamma \in \Gamma$ and $f \in F$, then 
$$
\norm{\gamma}_\sigma -C \leq \norm{\gamma f}_\sigma \leq \norm{\gamma}_\sigma + C \quad \text{and} \quad \norm{\gamma}_\sigma -C \leq \norm{f\gamma}_\sigma \leq \norm{\gamma}_\sigma + C.
$$
\item\label{item:properness} $\lim_{n \rightarrow \infty} \norm{\gamma_n}_\sigma = + \infty$ for every escaping sequence $\{ \gamma_n\} \subset \Gamma$. 
\item\label{item:loxodromic periods} 
If $\gamma\in\Gamma$ is loxodromic, then 
$$
-\kappa + \limsup_{n \rightarrow \infty}  \frac{1}{n} \norm{\gamma^n}_\sigma  \leq \sigma(\gamma,\gamma^+) \leq  \kappa+\liminf_{n \rightarrow \infty}  \frac{1}{n} \norm{\gamma^n}_\sigma
$$
and
$$
-\kappa < \sigma(\gamma, \gamma^+).
$$
\item\label{item:parabolic periods}  If $\gamma\in\Gamma$ is parabolic with fixed point $p \in M$, then
$$-2\kappa\le\sigma(\gamma,p)\leq 4\kappa.$$
\item\label{item:a technical fact}   If $\{\gamma_n\} \subset \Gamma$ is an escaping sequence, $\{y_n\} \subset M$ and $\{ \sigma(\gamma_n, y_n) \}$ is bounded below, then 
$$
\lim_{n \rightarrow \infty} \dist(\gamma_n y_n, \gamma_n) =0.
$$

\item\label{item:multiplicative estimate} For any $\epsilon > 0$ there exists $C > 0$ such that: if $\alpha, \beta \in \Gamma$ and $\dist(\alpha^{-1}, \beta) \geq \epsilon$, then 
$$
\norm{\alpha}_\sigma+\norm{\beta}_\sigma-C \leq \norm{\alpha \beta}_\sigma \leq \norm{\alpha}_\sigma+\norm{\beta}_\sigma+C.
$$

\item\label{item:estimate on difference of transformations} For any $\epsilon>0$ there exists a finite subset $F\subset \Gamma$ such that: if  $\alpha,\beta\in\Gamma$, $\norm\alpha_\sigma\leq \norm\beta_\sigma$ and $\beta^{-1}\alpha\not\in F$, then
$$
\dist (\beta^{-1},\beta^{-1}\alpha)\leq \epsilon.
$$

\end{enumerate}

\end{proposition}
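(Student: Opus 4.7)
The plan is to establish the seven items in the order (1), (2), (5), (3), (6), (7), (4); the recurring tool is the cocycle identity $\sigma(\alpha\beta,y)=\sigma(\alpha,\beta y)+\sigma(\beta,y)+O(\kappa)$, combined with the expanding property applied at a carefully chosen test point.

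For (1), I would select a finite family of test points $\{x_i\}\subset M$ and a uniform $\epsilon>0$ such that, for any $\gamma\in\Gamma$ and any $f\in F$, some $x_i$ is simultaneously $\epsilon$-far from $(\gamma f)^{-1}$ and $fx_i$ is $\epsilon$-far from $\gamma^{-1}$. The cocycle identity and the expanding property then give $\norm{\gamma f}_\sigma=\norm{\gamma}_\sigma+\sigma(f,x_i)+O(1)$, and $\sigma(f,x_i)$ is uniformly bounded over the two finite sets; the case $\norm{f\gamma}_\sigma$ is symmetric. For (2), an escaping sequence $\{\gamma_n\}$ is right-multiplied, via Lemma~\ref{lem:AMS_Gromov_hyp}, by some $f$ in a finite set so that $\alpha_n:=\gamma_n f$ is loxodromic with uniformly separated $\alpha_n^+,\alpha_n^-,\alpha_n^{-1}$; properness gives $\sigma(\alpha_n,\alpha_n^+)\to+\infty$, the expanding property at $\alpha_n^+$ identifies this with $\norm{\alpha_n}_\sigma$, and (1) transfers divergence to $\norm{\gamma_n}_\sigma$. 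Item~(5) follows by passing to subsequences with $\gamma_n\to a$ and $\gamma_n^{-1}\to b$ in $\Gamma\sqcup M$: if the conclusion fails, a further subsequence has $y_n\to b$ and $\gamma_n y_n\to c\neq a$, and Observation~\ref{obs:triangle inequality}(2) combined with the cocycle rewrites $\sigma(\gamma_n,y_n)=-\sigma(\gamma_n^{-1},\gamma_n y_n)+O(\kappa)$; the right-hand side diverges to $+\infty$ by the expanding property at $\gamma_n y_n$ (bounded away from $\gamma_n$) together with (2), contradicting the bounded-below hypothesis.

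For (3), iterating the cocycle gives $\sigma(\gamma^n,\gamma^+)=n\sigma(\gamma,\gamma^+)\pm(n-1)\kappa$, while the expanding property at $\gamma^+$ (uniformly far from $\gamma^{-n}\to\gamma^-$) identifies this with $\norm{\gamma^n}_\sigma+O(1)$; dividing by $n$ yields the limsup/liminf bounds, while properness forces $\sigma(\gamma^n,\gamma^+)\to+\infty$, giving the strict inequality $\sigma(\gamma,\gamma^+)>-\kappa$. For (6), apply the cocycle identity and choose $y$ from a finite family of pairwise separated test points so that $y$ is uniformly far from both $\beta^{-1}$ and $(\alpha\beta)^{-1}$; by Proposition~\ref{prop:compactifying}(3), for $\beta$ outside a finite exceptional set the point $\beta y$ lies in a small ball around $\beta$ and is therefore $\epsilon/2$-far from $\alpha^{-1}$ (using $\dist(\alpha^{-1},\beta)>\epsilon$), so all three expanding estimates apply and yield $\norm{\alpha\beta}_\sigma=\norm{\alpha}_\sigma+\norm{\beta}_\sigma+O(1)$. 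For (7), set $\gamma:=\beta^{-1}\alpha$ so that $\alpha=\beta\gamma$: the lower estimate of (6) gives $\norm{\alpha}_\sigma\geq\norm{\beta}_\sigma+\norm{\gamma}_\sigma-C$ whenever $\dist(\beta^{-1},\gamma)>\epsilon$, and combined with $\norm{\alpha}_\sigma\leq\norm{\beta}_\sigma$ this forces $\norm{\gamma}_\sigma\leq C$; by (2) the set of such $\gamma$ is finite.

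Item~(4) is the main obstacle. Fix any $y\in M\smallsetminus\{p\}$; since $\gamma^{-n}\to p$ in $\Gamma\sqcup M$ while $y\neq p$, for large $n$ the expanding property gives $\sigma(\gamma^n,y)=\norm{\gamma^n}_\sigma+O(1)$. On the other hand, iterating the cocycle and using that $\gamma^k y\to p$ (so that coarse continuity of $\sigma(\gamma,\cdot)$ at $p$ gives $\abs{\sigma(\gamma,\gamma^k y)-\sigma(\gamma,p)}\leq\kappa+o_k(1)$) yields
\[ \sigma(\gamma^n,y)=n\sigma(\gamma,p)+E_n,\qquad \abs{E_n}\leq 2n\kappa+o(n)+O(1). \]
Equating the two expressions and using $\norm{\gamma^n}_\sigma\to+\infty$ from (2), one obtains $\sigma(\gamma,p)\geq-2\kappa$. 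Applying the same analysis to $\gamma^{-1}$ (also parabolic at $p$) gives $\sigma(\gamma^{-1},p)\geq-2\kappa$, and Observation~\ref{obs:triangle inequality}(2) then upgrades this to $\sigma(\gamma,p)\leq 2\kappa-\sigma(\gamma^{-1},p)+O(2\kappa)\leq 4\kappa$. The delicate point is that the pole $\gamma^{-n}$ itself drifts toward $p$, ruling out any diagonal comparison of $\sigma(\gamma^n,p)$ directly with $\norm{\gamma^n}_\sigma$; keeping $y$ fixed and absorbing the per-step coarse-continuity errors into a linear $2n\kappa$ correction is what produces the tight constants $-2\kappa$ and $4\kappa$.
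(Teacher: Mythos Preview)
Your proposal is correct and follows essentially the same approach as the paper, item by item (the paper's order is (1)--(7) sequentially, but your reordering is harmless since the dependencies are respected). One small slip: in your sketch of (5) you write that the right-hand side $-\sigma(\gamma_n^{-1},\gamma_n y_n)+O(\kappa)$ ``diverges to $+\infty$'', but in fact $\sigma(\gamma_n^{-1},\gamma_n y_n)\to+\infty$ by the expanding property, so the right-hand side diverges to $-\infty$, which is what actually contradicts the bounded-below hypothesis.
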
 

\begin{proof}[Proof of~\eqref{item:tri inequality}] For every $\epsilon > 0$ fix $C_\epsilon > 0$ such that: if $x \in M$, $\gamma \in \Gamma$ and $\dist(x,\gamma^{-1}) > \epsilon$, then 
$$
\norm{\gamma}_\sigma - C_\epsilon \leq \sigma(\gamma, x) \leq \norm{\gamma}_\sigma+C_\epsilon.
$$ 

Fix $\epsilon > 0$ such that $M \not\subset B_{\epsilon}(p)$ for any $p \in \Gamma \sqcup M$. 
Since $F$ is finite, there exists $\epsilon' > 0$ such that: if $\dist(p,q) > \epsilon$ and $f \in F$, then $\dist(fp, fq) > \epsilon'$.
Using the fact that $\Gamma$ acts as a convergence group on $\Gamma\sqcup M$ (Definition~\ref{defn:compactification}), one can find a finite subset $F' \subset \Gamma$ such that: if $f \in F$ and $\gamma \in \Gamma \smallsetminus F'$, then $\dist(\gamma^{-1},\gamma^{-1} f^{-1}) \leq \epsilon/2$. Let
$$
A : = \sup\{ \abs{\sigma(\gamma, x)} : x \in M,\, \gamma \in F \cup F' \cup F\cdot F'\}<\infty.
$$

Now fix $\gamma \in \Gamma$ and $f \in F$. For the first set of inequalities, fix $y \in M$ with $\dist(y, f^{-1}\gamma^{-1}) >\epsilon$. Then $\dist(fy, \gamma^{-1}) > \epsilon'$. So 
\begin{align*}
\abs{\norm{\gamma f}_\sigma-\norm{\gamma}_\sigma} & \leq C_\epsilon + C_{\epsilon'}+\abs{\sigma(\gamma f, y)-\sigma(\gamma, fy)} \leq C_\epsilon + C_{\epsilon'}+\kappa + \abs{\sigma(f, y)}  \\
& \leq C_\epsilon + C_{\epsilon'}+\kappa + A.
\end{align*} 
For the second set of inequalities, fix $z \in M$ with $\dist(z, \gamma^{-1}) > \epsilon$. If $\gamma \notin F'$, then $\dist(\gamma^{-1} f^{-1}, z) > \epsilon/2$ and so  
\begin{align*}
\abs{\norm{f\gamma}_\sigma-\norm{\gamma}_\sigma} & \leq C_{\epsilon/2} + C_{\epsilon}+\abs{\sigma(f\gamma, z)-\sigma(\gamma, z)} \leq C_{\epsilon/2} + C_{\epsilon}+\kappa + \abs{\sigma(f, \gamma z)}  \\
& \leq C_{\epsilon/2} + C_{\epsilon}+\kappa + A.
\end{align*} 
If $\gamma \in F'$, then 
\begin{align*}
\abs{\norm{f\gamma}_\sigma-\norm{\gamma}_\sigma} & \leq 2C_\epsilon + 2A. \qedhere
\end{align*} 
\end{proof} 

\begin{proof}[Proof of~\eqref{item:properness}] Fix a finite set $F \subset \Gamma$ and $\epsilon > 0$ as in Lemma~\ref{lem:AMS_Gromov_hyp}. For each $n \geq 1$ fix $f_n \in F$ such that 
$$
\dist( (\gamma_n f_n)^+, (\gamma_n f_n)^{-}) > \epsilon \quad \text{and} \quad \dist( (\gamma_n f_n)^+, (\gamma_n f_n)^{-1}) > \epsilon. 
$$
Then the expanding property implies that  there exists $C > 0$ such that 
$$
\sigma(\gamma_n f_n, (\gamma_n f_n)^+) \leq \norm{\gamma_n f_n}_\sigma + C. 
$$
Since $\sigma$ is proper, then $\norm{\gamma_n f_n}_\sigma \rightarrow + \infty$. So by part~\eqref{item:tri inequality}, $\norm{\gamma_n}_\sigma \rightarrow + \infty$. 
\end{proof} 

\begin{proof}[Proof of~\eqref{item:loxodromic periods}] 

Suppose $\gamma\in \Gamma$ is loxodromic.
Since $\gamma^{-n}\to\gamma^-$ as $n\to\infty$ and $\gamma^+\ne \gamma^-$, the expanding property implies that  there exists $C > 0$ such that 
$$
\norm{\gamma^n}_\sigma - C \leq \sigma(\gamma^n, \gamma^+) \leq \norm{\gamma^n}_\sigma+C
$$
for all $n \geq 1$. Since $\sigma$ is a $\kappa$-coarse-cocycle,
$$
n\sigma(\gamma, \gamma^+) - (n-1)\kappa \leq \sigma(\gamma^n, \gamma^+) \leq n\sigma(\gamma, \gamma^+) + (n-1)\kappa.
$$
Combining the two estimates and sending $n$ to infinity yields the first set of inequalities.

By part \eqref{item:properness}, there exists $N \geq 1$ such that $\norm{\gamma^N}_\sigma > C$. Then 
\begin{equation*}
\sigma(\gamma, \gamma^+)\geq \frac{1}{N} \Big(\sigma(\gamma^N, \gamma^+)-(N-1)\kappa\Big) \geq \frac{1}{N} \left(  \norm{\gamma^N}_\sigma-C \right) -\kappa> -\kappa. \qedhere
\end{equation*}
\end{proof}

\begin{proof}[Proof of~\eqref{item:parabolic periods}] 
Suppose $\gamma\in\Gamma$ is parabolic with fixed point $p\in M$.
Fix $y\in M \smallsetminus \{p\}$.
Since $\gamma^{\pm n}\to p$ when $n\to\infty$, by the expanding property there exists $C>0$ such that 
$$
\sigma(\gamma^{\pm n}, y) \geq \norm{\gamma^{\pm n}}_\sigma-C
$$
for all $n\geq 1$. Since, by part ~\eqref{item:properness}, $\norm{\gamma^{\pm n}}_\sigma\to+\infty$, both $\sigma(\gamma^n,y)$ and $\sigma(\gamma^{-n},y)$ are nonnegative for $n$ large. Since
$$\left|\sigma(\gamma^{\pm m},y)-\left(\sigma(\gamma^{\pm 1},\gamma^{\pm (m-1)}y)+\sigma(\gamma^{\pm (m-1)},y)\right)\right|\le \kappa$$ 
for all $m\ge 2$, we see that
\begin{align*}
\limsup_{n \rightarrow \infty} & \frac{1}{n} \left( \sigma(\gamma^{\pm1}, \gamma^{\pm(n-1)}y)+ \sigma(\gamma^{\pm1}, \gamma^{\pm(n-2)}y)+\dots + \sigma(\gamma^{\pm1},y)\right) \\
& \geq \limsup_{n \rightarrow \infty} \frac{1}{n}\left( \sigma(\gamma^{\pm n},y) - (n-1)\kappa\right) \geq - \kappa. 
\end{align*} 
Since $\sigma$ is $\kappa$-coarsely continuous and $\gamma^{\pm n}y\to p$, we see that 
$$\limsup_{n \rightarrow \infty} \frac{1}{n} \left( \sigma(\gamma^{\pm1}, \gamma^{\pm(n-1)}y)+ \sigma(\gamma^{\pm1}, \gamma^{\pm(n-2)}y)+\dots + \sigma(\gamma^{\pm1},y) \right) \le\sigma(\gamma^{\pm1}, p)+\kappa.$$
Thus, $\sigma(\gamma^{\pm1},p)\ge -2\kappa.$

Finally, by the coarse-cocycle identity, see Observation~\ref{obs:triangle inequality}\eqref{item:inverses},
$$
\sigma(\gamma,p) + \sigma(\gamma^{-1},p)  \leq 2\kappa.
$$
Hence, $\sigma(\gamma^{\pm1},p) \leq 4\kappa$. 
\end{proof} 

\begin{proof}[Proof of~\eqref{item:a technical fact}] 
 We prove the contrapositive: if $\{\dist(\gamma_ny_n,\gamma_n)\}$ does not converge to $0$, then 
 $$
\liminf_{n \rightarrow \infty} \sigma(\gamma_n, y_n) =-\infty.
$$
Passing to a subsequence, we may suppose that there exists $\epsilon > 0$ such that 
$$
\dist(\gamma_{n} y_{n},\gamma_{n}) \geq \epsilon
$$
for all $n \geq 1$. Then by the expanding property, there exists $C > 0$ such that 
$$
\sigma(\gamma_n^{-1}, \gamma_n y_n) \geq \norm{\gamma_n^{-1}}_\sigma - C
$$
for all $n \geq 1$. In particular, by part~\eqref{item:properness}, we have
$$  \liminf_{n \rightarrow \infty} \sigma(\gamma_n^{-1}, \gamma_n y_n) = +\infty.
$$
Since 
$$\abs{\sigma(\gamma_n, y_n) -\sigma(\gamma_n^{-1}, \gamma_n y_n)} \leq 2\kappa$$
for all $n$ (see Observation~\ref{obs:triangle inequality}), this implies that 
$\liminf_{n \rightarrow \infty} \sigma(\gamma_n, y_n) = -\infty$.
\end{proof}

\begin{proof}[Proof of~\eqref{item:multiplicative estimate}] Suppose not. Then there exist sequences  $\{\alpha_n\}$ and $\{\beta_n\}$ in $\Gamma$ where $\dist(\alpha_n^{-1}, \beta_n) \geq \epsilon$ and 
$$
\lim_{n \rightarrow \infty} \abs{\norm{\alpha_n}_\sigma+\norm{\beta_n}_\sigma- \norm{\alpha_n \beta_n}_\sigma} = +\infty.
$$
If $\norm{\beta_n}_\sigma$ is bounded, then $\{\beta_n\}$ is a finite set by part~\eqref{item:properness}. 
Part~\eqref{item:tri inequality} then implies that $\norm{\alpha_n\beta_n}_\sigma-\norm{\alpha_n}_\sigma$
 is bounded,
and hence that  $\norm{\alpha_n}_\sigma+\norm{\beta_n}_\sigma- \norm{\alpha_n \beta_n}_\sigma$ is bounded. So $\{\beta_n\}$ must be an escaping sequence. Similarly, $\{\alpha_n\}$ must be an escaping sequence.

Fix $\epsilon_0> 0$ such that $M\not\subset B_{\epsilon_0}(p)\cup B_{\epsilon_0}(q)$ for  any $p,q\in \Gamma \sqcup M$. 
We may assume without loss of generality that $\epsilon_0\ge\epsilon$. Fix $C  > 0$ such that: if $x \in M$, $\gamma \in \Gamma$ and $\dist(x,\gamma^{-1}) > \epsilon$, then 
$$
\norm{\gamma}_\sigma - C \leq \sigma(\gamma, x) \leq \norm{\gamma}_\sigma+C.
$$ 

For each $n$, fix $x_n \in M \smallsetminus \left(  B_{\epsilon}(\beta_n^{-1})\cup B_{\epsilon}(\beta_n^{-1}\alpha_n^{-1})\right)$. Then 
$$
\norm{\beta_n}_\sigma -C \leq \sigma(\beta_n, x_n) \leq \norm{\beta_n}_\sigma +C
$$
and
$$
\norm{\alpha_n\beta_n}_\sigma -C \leq  \sigma(\alpha_n\beta_n, x_n) \leq \norm{\alpha_n\beta_n}_\sigma +C.
$$
Since $\dist(x_n, \beta_n^{-1}) > \epsilon$,  the convergence group property implies that $\dist(\beta_n x_n,\beta_n) \rightarrow 0$. So 
$$
\liminf_{n \rightarrow \infty} \dist(\alpha_n^{-1}, \beta_n x_n) \geq \epsilon. 
$$
So by the expansion property, there exists $C^\prime > 0$ such that 
$$
\norm{\alpha_n}_\sigma - C^\prime \leq \sigma(\alpha_n, \beta_n x_n) \leq \norm{\alpha_n}_\sigma + C^\prime
$$
for all $n \geq 1$. But then 
\begin{align*}
\abs{\norm{\alpha_n}_\sigma+\norm{\beta_n}_\sigma- \norm{\alpha_n \beta_n}_\sigma}& \leq 2C+C' + \abs{ \sigma(\alpha_n, \beta_n x_n)+\sigma(\beta_n, x_n)-\sigma(\alpha_n \beta_n, x_n)} \\
& \leq 2C+C' + \kappa
\end{align*}
and we have a contradiction. 
\end{proof} 

\begin{proof}[Proof of~\eqref{item:estimate on difference of transformations}] 
By  Part~\eqref{item:multiplicative estimate} there exists $C > 0$ such that: if $\alpha, \beta \in \Gamma$ and $\dist(\alpha^{-1}, \beta) \geq \epsilon$, then 
$$
\norm{\alpha}_\sigma+\norm{\beta}_\sigma-C \leq \norm{\alpha \beta}_\sigma.
$$
Then let 
$$
F:=\{\gamma\in \Gamma:\norm\gamma_\sigma\leq C\}.
$$
Notice that $F$ is finite by  part~\eqref{item:properness}.

Now if  $\alpha,\beta\in\Gamma$, $\norm\alpha_\sigma\leq \norm\beta_\sigma$, and $\beta^{-1}\alpha \notin F$, then 
$$
\norm{\beta}_\sigma+\norm{\beta^{-1}\alpha}_\sigma-C > \norm{\alpha}_\sigma=\norm{\beta \beta^{-1}\alpha}_\sigma.
$$
So by our choice of $C$ we must have  $\dist (\beta^{-1},\beta^{-1}\alpha) < \epsilon$.
\end{proof}

Finally,  we observe the coarse-cocycles in a coarse GPS system are expanding.

\begin{proposition}\label{prop:GPS implies expanding} If $\Gamma \subset \mathsf{Homeo}(M)$ is a convergence group and $(\sigma, \bar{\sigma}, G)$ is a coarse GPS system, then:
\begin{enumerate} 
\item\label{item:where sigma is large} $\sigma$ and $\bar{\sigma}$ are expanding.

\item\label{item:norm vs dual norm} There exists $C > 0$ such that 
$$
\norm{\gamma^{-1}}_{\bar{\sigma}}-C \leq \norm{\gamma}_\sigma \leq \norm{\gamma^{-1}}_{\bar{\sigma}} + C
$$
for all $\gamma \in \Gamma$. 
\item\label{item:gromprod is what it should be}   If $G$ is coarsely continuous, then there exists $C'>0$ such that 
$$
\limsup_{\alpha \rightarrow a, \beta \rightarrow b}\abs{G(a,b)-\left(\norm{\alpha^{-1}}_\sigma+\norm\beta_\sigma-\norm{\alpha^{-1}\beta}_\sigma\right)} \leq C' 
$$
for all $(a,b)\in \Lambda(\Gamma)^{(2)}$. 

\end{enumerate} 

\end{proposition}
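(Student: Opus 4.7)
The plan for (1) is to invoke Proposition~\ref{prop:different definition of expanding}: I set $h(\gamma) := \sigma(\gamma, x_\gamma)$ where $x_\gamma \in M$ is any point with $\dist(x_\gamma, \gamma^{-1}) \geq \epsilon_0$, with $\epsilon_0 > 0$ fixed small enough that such $x_\gamma$ exists for every $\gamma$ (possible since $\Lambda(\Gamma)$ has at least three points). The core technical step is a \emph{Key Lemma}: for every $\epsilon > 0$ there is $C = C(\epsilon)$ such that $\abs{\sigma(\gamma, y) - \sigma(\gamma, y')} \leq C$ whenever $\dist(y, \gamma^{-1}), \dist(y', \gamma^{-1}) \geq \epsilon$. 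I argue by contradiction: if an escaping sequence $\gamma_n$ and points $y_n, y'_n$ violate this, pass to a subsequence so $\gamma_n \to a$, $\gamma_n^{-1} \to b$ in $\Gamma \sqcup M$, fix any $c \in M \smallsetminus \{a\}$, and set $x_n := \gamma_n^{-1}(c)$, so that $x_n \to b$ and $\gamma_n x_n = c$. Subtracting two instances of the GPS identity gives
\[
\sigma(\gamma_n, y_n) - \sigma(\gamma_n, y'_n) \approx [G(c, \gamma_n y_n) - G(c, \gamma_n y'_n)] - [G(x_n, y_n) - G(x_n, y'_n)] + O(\kappa).
\]
Each of the four pairs eventually lies in $\{(p, q) \in M^{(2)} : \dist(p, q) \geq \delta\}$ for some $\delta > 0$; this is a compact subset of $M^{(2)}$ where $G$ is bounded, giving the contradiction.

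For the growth condition, given an escaping $\gamma_n$, I apply Lemma~\ref{lem:AMS_Gromov_hyp} to extract (after a subsequence) a fixed $f$ in a finite set with $\gamma_n f$ loxodromic and attracting/repelling fixed points uniformly separated. Properness of $\sigma$ forces $\sigma(\gamma_n f, (\gamma_n f)^+) \to +\infty$, and the cocycle identity with $\sigma(f, \cdot)$ bounded yields $\sigma(\gamma_n, z_n) \to +\infty$ for $z_n := f(\gamma_n f)^+$. Since the separated fixed points ensure $\dist(z_n, \gamma_n^{-1})$ is bounded below, the Key Lemma propagates divergence to any $y_n$ with $\dist(y_n, \gamma_n^{-1}) \geq \epsilon$. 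Applying the same argument to the swapped system $(\bar\sigma, \sigma, G \circ \mathrm{swap})$ handles $\bar\sigma$, establishing (1).

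For (2), substitute $x = \gamma^{-1} x'$ into the GPS identity and use $\bar\sigma(\gamma, \gamma^{-1} x') \approx -\bar\sigma(\gamma^{-1}, x')$ (Observation~\ref{obs:triangle inequality}) to get
\[
-\bar\sigma(\gamma^{-1}, x') + \sigma(\gamma, y) \approx G(x', \gamma y) - G(\gamma^{-1} x', y) + O(\kappa).
\]
Choosing $x'$ with $\dist(x', \gamma) \geq 3\epsilon$ and $y$ with $\dist(y, \gamma^{-1}) \geq 3\epsilon$, Proposition~\ref{prop:compactifying}\eqref{item:contraction of shadows} places $\gamma y \in B_\epsilon(\gamma)$ and $\gamma^{-1} x' \in B_\epsilon(\gamma^{-1})$, so both $G$-terms live on $\{(p, q) : \dist(p, q) \geq 2\epsilon\}$ and are bounded. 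The expansion estimates from (1) then give $\bar\sigma(\gamma^{-1}, x') = \norm{\gamma^{-1}}_{\bar\sigma} + O(1)$ and $\sigma(\gamma, y) = \norm{\gamma}_\sigma + O(1)$, yielding (2).

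For (3), fix $\alpha_n \to a$ and $\beta_n \to b$ in $\Gamma$ with $a \neq b \in \Lambda(\Gamma)$, and pass to subsequences so $\alpha_n^{-1} \to a'$ and $\beta_n^{-1} \to b'$. A direct convergence-group computation (using $\alpha_n^{-1}\beta_n(z) = \alpha_n^{-1}(\beta_n z) \to a'$ for $z \neq b'$) shows $\alpha_n^{-1}\beta_n \to a'$ and $\beta_n^{-1}\alpha_n \to b'$ in $\Gamma \sqcup M$. Pick distinct $x_0, y_0 \in M$ with $\dist(x_0, a'), \dist(y_0, b') \geq 3\epsilon$. Combining the cocycle identity $\sigma(\alpha_n, \alpha_n^{-1}\beta_n y_0) \approx \sigma(\beta_n, y_0) - \sigma(\alpha_n^{-1}\beta_n, y_0)$ with the GPS identity for $\alpha_n$ at $(x_0, \alpha_n^{-1}\beta_n y_0)$, and applying the expansion estimates (using (2) for $\bar\sigma(\alpha_n, x_0) \approx \norm{\alpha_n^{-1}}_\sigma$), yields
\[
G(\alpha_n x_0, \beta_n y_0) = \norm{\alpha_n^{-1}}_\sigma + \norm{\beta_n}_\sigma - \norm{\alpha_n^{-1}\beta_n}_\sigma + G(x_0, \alpha_n^{-1}\beta_n y_0) + O(1).
\]
The term $G(x_0, \alpha_n^{-1}\beta_n y_0)$ is bounded since $\alpha_n^{-1}\beta_n y_0 \to a'$ stays at distance $\geq 2\epsilon$ from $x_0$. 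Coarse continuity of $G$ together with $\alpha_n x_0 \to a$ and $\beta_n y_0 \to b$ gives $G(\alpha_n x_0, \beta_n y_0) = G(a, b) + O(1)$, completing (3). The principal obstacle throughout is guaranteeing that every $G$-term we encounter lives on a compact subset of $M^{(2)}$ at uniformly positive distance from the diagonal (since $G$ is merely locally bounded), which forces the careful choice of auxiliary points $c, x_n, x_0, y_0$ via the contraction property and Lemma~\ref{lem:AMS_Gromov_hyp}.
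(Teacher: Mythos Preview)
Your proof is correct and follows essentially the same strategy as the paper: for (1) you subtract two instances of the GPS identity to compare $\sigma(\gamma,y)$ and $\sigma(\gamma,y')$, bounding the resulting four $G$-terms via local boundedness on compact subsets of $M^{(2)}$; parts (2) and (3) likewise proceed by the same blend of the GPS relation, Observation~\ref{obs:triangle inequality}, and the contraction property of Proposition~\ref{prop:compactifying}. One remark: your growth-condition argument via Lemma~\ref{lem:AMS_Gromov_hyp} and Proposition~\ref{prop:different definition of expanding} is an unnecessary detour, since properness of $\sigma,\bar\sigma$ is already part of Definition~\ref{def:GPS}---once your Key Lemma gives the magnitude condition, the definition of ``expanding'' is verified directly.
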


\begin{proof}[Proof of~\eqref{item:where sigma is large}] For each $\gamma \in \Gamma$ fix $y_\gamma \in M$ such that $\epsilon_0: = \inf_{\gamma \in \Gamma} \dist(y_\gamma, \gamma^{-1}) > 0$. Then define $\norm{\gamma}_\sigma : = \sigma(\gamma, y_\gamma)$.

By symmetry it suffices to show that $\sigma$ is expanding. Fix $\epsilon > 0$. We wish to find $C > 0$ such that 
$$
\norm{\gamma}_\sigma - C \leq \sigma(\gamma,x) \leq \norm{\gamma}_\sigma + C
$$
whenever $\dist(\gamma^{-1},x) > \epsilon$. 

Fix $\epsilon' \in (0,\min\{\epsilon,\epsilon_0\})$ such that $M\not\subset B_{\epsilon'}(p)\cup B_{\epsilon'}(q)$ for all $p,q\in \Gamma \sqcup M$. Let 
$$
C_1:=\sup\{ \abs{G(p,q)} : \dist(p,q) \geq \epsilon'/2\}.
$$
Notice that $C_1 < + \infty$ since $G$ is locally bounded. By Proposition~\ref{prop:compactifying} there exists a finite subset $F\subset \Gamma$ such that 
\begin{equation}\label{eqn:gamma inverse action in proof of expanding}
\gamma^{-1}\left(M- B_{\epsilon'/2}(\gamma)\right)\subset B_{\epsilon'/2}(\gamma^{-1}) \quad \text{and} \quad \gamma\left(M- B_{\epsilon'/2}(\gamma^{-1})\right)\subset B_{\epsilon'/2}(\gamma)
\end{equation}
for any $\gamma\in\Gamma \smallsetminus F$. 
Then let
$$
C_2:=\sup\left\{ \abs{\norm \gamma_\sigma - \sigma(\gamma,x)} :  \gamma\in F \text{ and } x\in M\right\}.
$$
We claim that $C : = \max\{C_2, 4C_1+2\kappa\}$ suffices. 

Fix $\gamma \in\Gamma$ and $x\in M-B_\epsilon(\gamma^{-1})$. If $\gamma \in F$, then 
$$
\norm{\gamma}_\sigma - C_2 \leq \sigma(\gamma,x) \leq \norm{\gamma}_\sigma + C_2.
$$
Otherwise, by the definition of $\epsilon'$ there exists $z'\in M- (B_{\epsilon'}(\gamma y_\gamma)\cup B_{\epsilon'}(\gamma))$. Then let $z := \gamma^{-1}(z^\prime)$. By Equation~\eqref{eqn:gamma inverse action in proof of expanding},
$$
z \in B_{\epsilon'/2}(\gamma^{-1}), \ \gamma x \in B_{\epsilon'/2}(\gamma), \ \gamma y_\gamma \in B_{\epsilon'/2}(\gamma).  
$$
Hence
\begin{align*}
\dist(\gamma z,\gamma x) & = \dist(z', \gamma x) \geq \dist(z', \gamma) - \dist(\gamma, \gamma x) > \epsilon'-\epsilon'/2 = \epsilon'/2, \\ 
\dist(z,x) & \geq \dist(\gamma^{-1},x) - \dist(z,\gamma^{-1}) > \epsilon - \epsilon^\prime/2 > \epsilon^\prime/2, \\
\dist(\gamma z, \gamma y_\gamma) &= \dist(z', \gamma y_\gamma) > \epsilon', \\
\dist(z, y_\gamma) &> \dist(\gamma^{-1}, y_\gamma) - \dist(z,\gamma^{-1}) > \epsilon_0 - \epsilon'/2 > \epsilon'/2.
\end{align*}
Then 
\begin{align*}
\abs{ \sigma(\gamma,x) - \norm \gamma_\sigma} & = \abs{\bar\sigma(\gamma,z) +\sigma(\gamma,x)  - ( \bar\sigma(\gamma,z)+\sigma(\gamma,y_\gamma))} \\
 & \leq \abs{G(\gamma z,\gamma x) - G(z,x) - G(\gamma z,\gamma y_\gamma) + G(z,y_\gamma)}+2\kappa \\
 &\leq 4C_1+2\kappa. \qedhere
\end{align*}

\end{proof}

\begin{proof}[Proof of~\eqref{item:norm vs dual norm}]Fix $\epsilon>0$ such that $M\not\subset B_{\epsilon}(x)\cup B_{\epsilon}(y)$ for all $x,y\in \Gamma \sqcup M$.  Since $\sigma$ and $\bar \sigma$ are expanding, there exists $C > 0$ such that 
$$
 \norm{\gamma}_\sigma-C \leq \sigma(\gamma, x) \leq \norm{\gamma}_\sigma+ C \quad \text{and} \quad \norm{\gamma}_{\bar\sigma}-C \leq \bar\sigma(\gamma, x) \leq \norm{\gamma}_{\bar\sigma}+ C
$$
whenever $\dist(x,\gamma^{-1}) > \epsilon$. Also let 
$$
D_1: = \sup \{ \abs{G(x,y)} : \dist(x,y) \geq \epsilon/2\}. 
$$
Notice that $D < + \infty$ since $G$ is locally bounded.  By Proposition~\ref{prop:compactifying} there also exists a finite subset $F\subset \Gamma$ such that 
\begin{equation}\label{eqn:gamma inverse action in proof of expanding 2}
\gamma^{-1}\left(M- B_{\epsilon/2}(\gamma)\right)\subset B_{\epsilon/2}(\gamma^{-1})\quad \text{and} \quad \gamma\left(M- B_{\epsilon/2}(\gamma^{-1})\right)\subset B_{\epsilon/2}(\gamma)
\end{equation}
for any $\gamma\in\Gamma \smallsetminus F$. Let 
$$
D_2 : = \sup\{ \abs{\norm{\gamma}_\sigma - \norm{\gamma^{-1}}_{\bar\sigma}} : \gamma \in F\}. 
$$

Fix $\gamma \in \Gamma$. If $\gamma \in F$, then 
$$
\norm{\gamma^{-1}}_{\bar\sigma}-D_2 \leq \norm{\gamma}_\sigma \leq  \norm{\gamma^{-1}}_{\bar\sigma}+D_2.
$$
So assume $\gamma \notin F$. Fix $y \in M$ with $\dist(y, \gamma^{-1}) > \epsilon$ and fix $x' \in M- (B_{\epsilon}(\gamma y)\cup B_{\epsilon}(\gamma))$. Let $x: = \gamma^{-1}(x')$. Arguing as in the proof of part~\eqref{item:where sigma is large}, 
$$
\dist(\gamma x, \gamma y) = \dist(x', \gamma y) > \epsilon \quad \text{and} \quad \dist(x,y) > \epsilon/2. 
$$
Then using  the GPS system property (Definition~\ref{def:GPS}) in the second inequality and Observation~\ref{obs:triangle inequality}\eqref{item:inverses} in the third,
\begin{align*}
\norm{\gamma}_\sigma & \leq \sigma(\gamma, y)+C \leq G(\gamma x, \gamma y) - G(x,y) - \bar{\sigma}(\gamma, x) +\kappa+C\\
& \leq G(\gamma x, \gamma y) - G(x,y) + \bar{\sigma}(\gamma^{-1}, \gamma x) +3\kappa+C \\
& \leq 2D_1 + \norm{\gamma^{-1}}_{\bar{\sigma}} +3\kappa+2C= \norm{\gamma^{-1}}_{\bar{\sigma}} +D+3\kappa +2C.
\end{align*}

The same reasoning can be used to show that 
\begin{align*}
\norm{\gamma^{-1}}_{\bar{\sigma}} & \leq \norm{\gamma}_{\sigma}+D +3\kappa+2C. \qedhere
\end{align*}
\end{proof}

\begin{proof}[Proof of~\eqref{item:gromprod is what it should be}] Fix $\epsilon>0$ such that $M\not\subset B_{\epsilon}(x)\cup B_{\epsilon}(y)$ for all $x,y\in \Gamma \sqcup M$. For notational convenience, given $\alpha, \beta \in \Gamma$ let 
$$
G(\alpha, \beta) : = \norm{\alpha^{-1}}_\sigma+\norm\beta_\sigma-\norm{\alpha^{-1}\beta}_\sigma.
$$

Fix $a\neq b\in \Lambda(\Gamma)$ and sequences $\{\alpha_n\},\{\beta_n\} \subset \Gamma$ converging to $a,b$ respectively. Passing to a subsequence we can  assume that  $\alpha_n^{-1}\to a_-$ and $\beta_n^{-1}\to b_-$.
Note that $a\neq b$ implies that $\alpha_n^{-1}\beta_n\to a_-$ and $\beta_n^{-1}\alpha_n\to b_-$.

Fix $x,y,z\in M$ such that 
\begin{equation*}
\dist(x,a),\ \dist(y,b_-),\ \dist(z,b),\ \dist(z,a_-) >\epsilon.
\end{equation*}

Passing to a further subsequence and using the facts that $\alpha_n^{-1}x\to a_-$, $\alpha_nz\to a$ and $\beta^{-1}_n\alpha_n z\to b_-$, we can assume that 
\begin{equation*}
\dist(\alpha_n,a),\ \dist(\beta_n^{-1},b_-),\ \dist(\alpha_n^{-1}x,a_-),\ \dist(\alpha_n^{-1}\beta_n,a_-),\ \dist(\alpha_nz,a),\ \dist(\beta_n^{-1}\alpha_nz,b_-)  <\frac\epsilon2.
\end{equation*}
This implies that
\begin{equation*}
 \dist(\alpha_n,x),\ \dist(\beta_n^{-1},y),\ \dist(\alpha_n^{-1}x,z),\ \dist(\alpha_n^{-1}\beta_n,z),\ \dist(\alpha_n z,x),\ \dist(\beta_n^{-1}\alpha_n z,y) \geq\frac\epsilon2.
\end{equation*}

Then using the constant $C$ from part~\eqref{item:norm vs dual norm} we have
\[ G(\alpha_n,\beta_n) \leq \norm{\alpha_n^{-1}}_{\sigma} + \norm{\beta_n}_\sigma - \norm{\beta_n^{-1}\alpha_n}_{\bar\sigma} + C. \]
Since $\sigma, \bar\sigma$ are expanding, there exists $C_\epsilon$ such that 
$$
\norm\gamma_\sigma-C_\epsilon \leq \sigma(\gamma,p)\leq \norm\gamma_\sigma+C_\epsilon \quad \text{and} \quad \norm\gamma_{\bar\sigma}-C_\epsilon \leq \bar\sigma(\gamma,p)\leq \norm\gamma_{\bar\sigma}+C_\epsilon
$$
whenever $\dist(p,\gamma^{-1})\geq \tfrac\epsilon2$.

Then
\[ G(\alpha_n,\beta_n) \leq \sigma(\alpha_n^{-1},x) + \sigma(\beta_n,y) - \bar\sigma(\beta_n^{-1}\alpha_n,z) + C +  3C_\epsilon.\]
Using the Observation~\ref{obs:triangle inequality}\eqref{item:inverses} (in the first inequality below) and the fact that $\bar\sigma$ is a $\kappa$-coarse-cocycle (in the second), it follows that
\begin{align*}
 G(\alpha_n,\beta_n)
 & \leq \sigma(\alpha_n^{-1},x) + \sigma(\beta_n,y) + \bar\sigma(\alpha^{-1}_n\beta_n, \beta_n^{-1}\alpha_n z) + C + 3C_\epsilon + 2\kappa\\
 & \leq \sigma(\alpha_n^{-1},x) + \sigma(\beta_n,y) + \bar\sigma(\alpha_n^{-1},\alpha_n z) + \bar\sigma(\beta_n,\beta_n^{-1}\alpha_n z) + C + 3C_\epsilon + 3\kappa.
\end{align*}
Next we use the GPS system property (Definition~\ref{def:GPS}), which implies
\begin{align*}
G(\alpha_n,\beta_n) & \leq  \left( \sigma(\alpha_n^{-1},x) + \bar\sigma(\alpha_n^{-1},\alpha_n z)\right)+\left(\sigma(\beta_n,y) + \bar\sigma(\beta_n,\beta_n^{-1}\alpha_n z)\right) + C + 3C_\epsilon + 3\kappa\\
&\leq \left(G(z,\alpha_n^{-1}x) - G(\alpha_n z,x)\right) + \left(G(\alpha_n z, \beta_n y) - G(\beta_n^{-1}\alpha_n z,y)\right) + C + 3C_\epsilon + 5\kappa.
\end{align*}
Finally since $G$ is locally bounded there is $C_\epsilon'>0$ such that $\abs{G(p,q)}\leq C_\epsilon'$ whenever $\dist(p,q)\geq\tfrac\epsilon2$.
Then
\[  G(\alpha_n,\beta_n) \leq G(\alpha_n z,\beta_n y) + C + 3C_\epsilon + 5\kappa + 3C'_\epsilon.\]

We get a lower bound for $G(\alpha_n,\beta_n)$ in a similar way:
\begin{align*}
 G(\alpha_n,\beta_n) & \geq \norm{\alpha_n^{-1}}_{\sigma} + \norm{\beta_n}_\sigma - \norm{\beta_n^{-1}\alpha_n}_{\bar\sigma} - C \\
 & \geq \sigma(\alpha_n^{-1},x) + \sigma(\beta_n,y) - \bar\sigma(\beta_n^{-1}\alpha_n,z) - C - 3C_\epsilon \\
 & \geq \sigma(\alpha_n^{-1},x) + \sigma(\beta_n,y) + \bar\sigma(\alpha^{-1}_n\beta_n, \beta^{-1}\alpha_n z) - C - 3C_\epsilon - 2\kappa\\
 & \geq \sigma(\alpha_n^{-1},x) + \sigma(\beta_n,y) + \bar\sigma(\alpha_n^{-1},\alpha_n z) + \bar\sigma(\beta_n,\beta_n^{-1}\alpha_n z) - C - 3C_\epsilon - 3\kappa\\
 & \geq G(z,\alpha_n^{-1}x) - G(\alpha_n z,x) + G(\alpha_n z,\beta_n y) - G(\beta_n^{-1}\alpha_n z,y) - C - 3C_\epsilon - 5\kappa\\ 
 & \geq G(\alpha_n z,\beta_n y) - C - 3C_\epsilon - 5\kappa - 3C'_\epsilon.
\end{align*}

As $\alpha_n z\to a$ and $\beta_n y\to b$, we conclude using the coarse continuity of $G$.
\end{proof}


\section{Patterson--Sullivan measures}


Using the results established in Proposition~\ref{prop:basic properties}, we can carry out a modification of the standard  construction of a Patterson--Sullivan measure 
due to Patterson \cite{Patterson} in the presence of an expanding coarse-cocycle.

\begin{theorem}
\label{thm:PS measures exist} If $\sigma$ is an expanding $\kappa$-coarse-cocycle for a convergence group $\Gamma\subset \mathsf{Homeo}(M)$ and
$\delta:=\delta_\sigma(\Gamma) < + \infty$, then there exists a $2\kappa\delta$-coarse $\sigma$-Patterson--Sullivan measure of dimension $\delta$, 
which is supported on the limit set $\Lambda(\Gamma)$.
\end{theorem}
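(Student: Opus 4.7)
The plan is to adapt Patterson's classical construction to the coarse setting, with the magnitude normalization from Lemma~\ref{lem:nice magnitude} yielding the specific constant $2\kappa\delta$. To begin, choose a function $h \colon [0,\infty) \to (0,\infty)$ so that $Q^h(s) := \sum_{\gamma \in \Gamma} h(\norm{\gamma}_\sigma) e^{-s\norm{\gamma}_\sigma}$ diverges at $s = \delta$ and converges for $s > \delta$: take $h \equiv 1$ if $Q_\sigma(\delta) = +\infty$, and otherwise apply Patterson's original slowly-varying-function trick, which depends only on the real sequence $\{\norm{\gamma}_\sigma\}_{\gamma \in \Gamma}$ and gives $h(t+a)/h(t) \to 1$ as $t \to +\infty$ for every fixed $a$. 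For $s > \delta$, define the probability measure on the compact space $\Gamma \sqcup M$
$$\mu_s := \frac{1}{Q^h(s)} \sum_{\gamma \in \Gamma} h(\norm{\gamma}_\sigma) e^{-s\norm{\gamma}_\sigma}\, D_\gamma,$$
where $D_\gamma$ is the unit Dirac mass at $\gamma \in \Gamma \subset \Gamma \sqcup M$. By weak-$*$ compactness, extract a subsequential limit $\mu := \lim_n \mu_{s_n}$ along some $s_n \searrow \delta$. Since $Q^h(s_n) \to +\infty$, any finite subset of $\Gamma$ has vanishing $\mu_{s_n}$-mass, so $\mu(\Gamma) = 0$; as $\mu$ is supported on the closure of $\Gamma$ in $\Gamma \sqcup M$, Proposition~\ref{prop:compactifying}(4) yields $\supp(\mu) \subseteq \Lambda(\Gamma)$.

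For any $\gamma \in \Gamma$, the reindexing $\beta := \gamma\alpha$ gives $\gamma_*\mu_s = \psi_s^\gamma \cdot \mu_s$ with density
$$\psi_s^\gamma(\beta) = \frac{h(\norm{\gamma^{-1}\beta}_\sigma)}{h(\norm{\beta}_\sigma)}\, e^{-s(\norm{\gamma^{-1}\beta}_\sigma - \norm{\beta}_\sigma)}.$$
By Proposition~\ref{prop:basic properties}\eqref{item:tri inequality}, $\abs{\norm{\gamma^{-1}\beta}_\sigma - \norm{\beta}_\sigma}$ is uniformly bounded in $\beta$, so the slow variation of $h$ forces $h(\norm{\gamma^{-1}\beta}_\sigma)/h(\norm{\beta}_\sigma) \to 1$ as $\norm{\beta}_\sigma \to +\infty$. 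Combined with Lemma~\ref{lem:nice magnitude}, which gives $\limsup_{\beta \to x}\abs{\sigma(\gamma^{-1},x) - (\norm{\gamma^{-1}\beta}_\sigma - \norm{\beta}_\sigma)} \leq 2\kappa$ for every $x \in \Lambda(\Gamma)$, this yields the key local estimate: for every $x \in \Lambda(\Gamma)$ and every $\epsilon > 0$ there exist a neighborhood $U \subset \Gamma \sqcup M$ of $x$ and $s_0 > \delta$ such that
$$e^{-2\kappa\delta - \epsilon}\, e^{-\delta\sigma(\gamma^{-1},x)} \leq \psi_s^\gamma(\beta) \leq e^{2\kappa\delta + \epsilon}\, e^{-\delta\sigma(\gamma^{-1},x)}$$
for every $s \in (\delta, s_0)$ and all but finitely many $\beta \in U \cap \Gamma$.

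The conclusion then follows from the Lebesgue differentiation theorem applied to $\gamma_*\mu$ and $\mu$: at $\mu$-a.e. $x \in \Lambda(\Gamma)$,
$$\frac{d\gamma_*\mu}{d\mu}(x) = \lim_{r \to 0^+} \frac{\gamma_*\mu(B_r(x))}{\mu(B_r(x))}.$$
For $r$ small enough that $B_r(x) \subset U$, and chosen among the cocountably many radii with $\mu(\partial B_r(x)) = 0$, weak-$*$ convergence of $\mu_{s_n}$ to $\mu$ combined with the density estimate above (and with the observation that the contribution from the ``bad'' $\beta \in U \cap \Gamma$ vanishes in the limit $n \to \infty$) gives
$$\gamma_*\mu(B_r(x)) \in \bigl[e^{-2\kappa\delta - \epsilon},\, e^{2\kappa\delta + \epsilon}\bigr] \cdot e^{-\delta\sigma(\gamma^{-1},x)} \cdot \mu(B_r(x)).$$
Sending first $r \to 0$ and then $\epsilon \to 0$ yields the $2\kappa\delta$-coarse Patterson--Sullivan identity $\mu$-a.e.

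The main obstacle is obtaining the sharp constant $2\kappa\delta$ rather than $3\kappa\delta$: a naive covering argument (which on each open set of a finite cover replaces $\sigma(\gamma^{-1},x)$ by $\sigma(\gamma^{-1},y_i)$ at a chosen reference point) loses an additional factor of $e^{\pm\kappa\delta}$ from the coarse continuity of $\sigma(\gamma^{-1},\cdot)$. The Lebesgue differentiation step circumvents this by evaluating the Radon--Nikodym derivative at the exact limit point $x$, so that the $2\kappa$ constant of Lemma~\ref{lem:nice magnitude} is transferred cleanly. Secondary technicalities are routine: Patterson's slowly-varying function construction carries over verbatim since it only uses the real-valued data $\{\norm{\gamma}_\sigma\}$, and the ``bad'' atoms in each $\mu_{s_n}$ form a finite set whose mass vanishes as $n \to \infty$.
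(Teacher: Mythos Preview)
Your overall architecture---Patterson's construction on $\Gamma\sqcup M$, the slowly-varying function, and the appeal to Lemma~\ref{lem:nice magnitude} for the sharp constant $2\kappa$---matches the paper exactly. The gap is in the final step, where you invoke ``the Lebesgue differentiation theorem applied to $\gamma_*\mu$ and $\mu$'' to conclude that
\[
\frac{d\gamma_*\mu}{d\mu}(x)=\lim_{r\to 0^+}\frac{\gamma_*\mu(B_r(x))}{\mu(B_r(x))}.
\]
This both presupposes $\gamma_*\mu\ll\mu$ (which is part of what you are proving) and, more seriously, assumes a differentiation theorem that is \emph{not available} on an arbitrary compact metric space. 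Differentiation of Radon measures by balls requires a Vitali or Besicovitch covering property for $\mu$; neither is automatic here, since $M$ is merely compact metrizable and $\mu$ has no a~priori doubling or geometric structure. Without such a covering lemma, the pointwise ball-ratio estimate you obtain cannot be upgraded to a Radon--Nikodym bound. (The paper does eventually prove a Vitali-type lemma, Proposition~\ref{prop:properties of fake shadows}\eqref{item:Vish covering lemma}, but only for the shadows and only \emph{after} the Patterson--Sullivan measure has been constructed.)

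The paper sidesteps this entirely by a semicontinuity argument: it packages $\norm{\gamma^{-1}\beta}_\sigma-\norm{\beta}_\sigma$ into upper- and lower-semicontinuous functions $f_\gamma^\pm$ on $\Gamma\sqcup M$ (using $\limsup$ and $\liminf$ along $\beta\to x$), approximates $f_\gamma^+$ from above by continuous functions, and passes the inequality $\gamma_*\mu_s\ge \chi_\gamma e^{-s f_\gamma^+}\mu_s$ through weak-$*$ convergence and monotone convergence. This yields $\gamma_*\mu\ge \chi_\gamma e^{-\delta f_\gamma^+}\mu$ directly as an inequality of measures, with no differentiation needed; Lemma~\ref{lem:nice magnitude} then says $|f_\gamma^\pm-\sigma(\gamma^{-1},\cdot)|\le 2\kappa$ on $\Lambda(\Gamma)$, giving the constant $2\kappa\delta$. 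This technique is robust on any compact metrizable space and avoids exactly the covering issue your argument runs into.
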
 

To establish the above theorem we will need a special magnitude, given by the following lemma which allows us to view the coarse-cocycle as a ``Busemann cocycle'' associated to the ``distance'' $\rho(\alpha,\beta) = \norm{\alpha^{-1}\beta}_\sigma$ on $\Gamma$. 

\begin{lemma}\label{lem:nice magnitude}
Suppose $\Gamma \subset \mathsf{Homeo}(M)$ is a convergence group and $\sigma$ is an expanding $\kappa$-coarse-cocycle.  
Then there is a choice of magnitude $\norm{\cdot}_\sigma$ such that: if $x \in \Lambda(\Gamma)$ and $\alpha \in \Gamma$, then 
$$
\limsup_{\gamma \rightarrow x} \abs{ \sigma(\alpha,x)-(\norm{\alpha \gamma}_\sigma-\norm{\gamma}_\sigma)  } \leq 2\kappa. 
$$
Hence for any choice of magnitude $\norm{\cdot}_\sigma'$ there exists $C > 0$ such that: if $x \in \Lambda(\Gamma)$ and $\alpha \in \Gamma$, then 
$$
\limsup_{\gamma \rightarrow x} \abs{ \sigma(\alpha,x)-(\norm{\alpha \gamma}_\sigma'-\norm{\gamma}_\sigma')  } \leq C.
$$
\end{lemma}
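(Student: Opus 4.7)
The plan is to define a magnitude of the form $\norm{\gamma}_\sigma := \sigma(\gamma, y_\gamma)$ for a carefully chosen assignment $\gamma\mapsto y_\gamma\in M$ with $\inf_{\gamma\in\Gamma}\dist(y_\gamma,\gamma^{-1})>0$ (making it a valid magnitude via the expanding property), and to deduce the $2\kappa$ estimate directly from the $\kappa$-coarse cocycle identity and $\kappa$-coarse continuity. The selection rule will be chosen so that for any sequence $\gamma_n\to x$, after passing to a subsequence, $y_{\gamma_n}$ and $y_{\alpha\gamma_n}$ eventually coincide.

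The key geometric observation is that if $\gamma_n\to x$ in $\Gamma\sqcup M$, then after passing to a subsequence so that $\gamma_n^{-1}\to b$ (by compactness), we also have $(\alpha\gamma_n)^{-1}=\gamma_n^{-1}\alpha^{-1}\to b$ in $\Gamma\sqcup M$. Indeed, Proposition~\ref{prop:compactifying}(1) gives that $\gamma_n^{-1}|_{M\smallsetminus\{x\}}$ converges locally uniformly to $b$, hence for $q\neq\alpha x$ we get $\gamma_n^{-1}\alpha^{-1}(q)=\gamma_n^{-1}(\alpha^{-1}q)\to b$, and Proposition~\ref{prop:compactifying}(2) then yields $(\alpha\gamma_n)^{-1}\to b$. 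Granted a selection for which $y_{\gamma_n}=y_{\alpha\gamma_n}=:y_*$ for $n$ large with $y_*\neq b$, the $\kappa$-coarse cocycle identity gives
\[
\norm{\alpha\gamma_n}_\sigma-\norm{\gamma_n}_\sigma = \sigma(\alpha\gamma_n,y_*)-\sigma(\gamma_n,y_*) = \sigma(\alpha,\gamma_n y_*)\pm\kappa;
\]
since $y_*\neq b$, Proposition~\ref{prop:compactifying}(1) yields $\gamma_n y_*\to x$, and then $\kappa$-coarse continuity of $\sigma(\alpha,\cdot)$ gives $\sigma(\alpha,\gamma_n y_*)\to\sigma(\alpha,x)\pm\kappa$. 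Combining produces the $2\kappa$ bound.

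The selection rule is easiest when $M\smallsetminus\Lambda(\Gamma)\neq\emptyset$: fix $z_0\in M\smallsetminus\Lambda(\Gamma)$ and set $y_\gamma:=z_0$ whenever $\dist(z_0,\gamma^{-1})\geq\epsilon_0$ (a cofinite set by Proposition~\ref{prop:compactifying}(4), since $\gamma^{-1}$ can accumulate only in $\Lambda(\Gamma)$, for $\epsilon_0<\dist(z_0,\Lambda(\Gamma))$), and define $y_\gamma$ arbitrarily on the finite complement; then $y_{\gamma_n}=y_{\alpha\gamma_n}=z_0$ automatically for $n$ large. In the general case one fixes finitely many points $p_1,\ldots,p_N\in M$ and uses an indexed-minimum rule $y_\gamma:=p_{i(\gamma)}$ with $i(\gamma):=\min\{i:\dist(p_i,\gamma^{-1})\geq\epsilon_0\}$; for a generic choice of threshold so that $b$ avoids the spheres $\{y:\dist(p_i,y)=\epsilon_0\}$, the joint convergence $\gamma_n^{-1},(\alpha\gamma_n)^{-1}\to b$ forces $i(\gamma_n)=i(\alpha\gamma_n)$ eventually.

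The ``hence'' conclusion follows at once: by the expanding property, any two valid magnitudes differ by a uniformly bounded function $h:\Gamma\to\Rb$, so
\[
\abs{(\norm{\alpha\gamma}_\sigma'-\norm{\gamma}_\sigma')-(\norm{\alpha\gamma}_\sigma-\norm{\gamma}_\sigma)}=\abs{h(\alpha\gamma)-h(\gamma)}\leq 2\|h\|_\infty,
\]
so the claim holds with $C:=2\kappa+2\|h\|_\infty$. The main obstacle is arranging $y_{\gamma_n}=y_{\alpha\gamma_n}$ \emph{exactly} on tails of subsequences, not just with $y_{\gamma_n}$ and $y_{\alpha\gamma_n}$ tending to a common point: approximate equality would cost an extra $2\kappa$ via coarse continuity applied to $\sigma(\gamma_n,\cdot)$, degrading the bound from $2\kappa$ to $4\kappa$. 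This is what forces the discrete selection strategy above.
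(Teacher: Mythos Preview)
Your overall strategy and the special case $M\smallsetminus\Lambda(\Gamma)\neq\emptyset$ are correct, and the ``hence'' clause is fine. But the general case has a genuine gap. The points $p_1,\dots,p_N$ and the threshold $\epsilon_0$ must be fixed \emph{once} when defining the magnitude, while the subsequential limit $b=\lim\gamma_n^{-1}$ can be \emph{any} point of $\Lambda(\Gamma)$ (which may equal $M$). There is no single $\epsilon_0$ for which the spheres $\{y:\dist(p_i,y)=\epsilon_0\}$ miss all of $\Lambda(\Gamma)$. When $b$ lies on such a sphere, say around $p_1$, both $\dist(p_1,\gamma_n^{-1})$ and $\dist(p_1,(\alpha\gamma_n)^{-1})$ tend to $\epsilon_0$ but can sit on opposite sides of $\epsilon_0$; even after passing to further subsequences one may have $i(\gamma_n)\equiv 1$ while $i(\alpha\gamma_n)\equiv j>1$ along the whole tail (the two sequences $\gamma_n^{-1}$ and $\gamma_n^{-1}\alpha^{-1}$ are different sequences converging to $b$ and need not approach from the same side). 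Then $y_{\gamma_n}\neq y_{\alpha\gamma_n}$ and you must compare $\sigma(\gamma_n,p_1)$ with $\sigma(\gamma_n,p_j)$, which costs an expanding-constant rather than $\kappa$---exactly the degradation you flagged.

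The paper circumvents this by replacing the discrete selection with a \emph{continuous} weighted average: fix two points $p\neq q$ and a continuous partition of unity $a+b\equiv 1$ (built from a cutoff of $\dist(\cdot,p)$ and $\dist(\cdot,q)$), and set
\[
\norm{\gamma}_\sigma:=a(\gamma^{-1})\,\sigma(\gamma,p)+b(\gamma^{-1})\,\sigma(\gamma,q).
\]
Since $\gamma_n^{-1}$ and $(\alpha\gamma_n)^{-1}$ converge to the same point, continuity gives $a((\alpha\gamma_n)^{-1})-a(\gamma_n^{-1})\to 0$; using $a+b\equiv 1$, the resulting cross term is $(a((\alpha\gamma_n)^{-1})-a(\gamma_n^{-1}))(\sigma(\gamma_n,p)-\sigma(\gamma_n,q))$, which is (small)$\times$(bounded) and hence vanishes in the limit. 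This is precisely the smoothness your indexed-minimum rule lacks, and it is what recovers the sharp $2\kappa$ bound.
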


\begin{proof}
Let $\dist$ be a compatible metric on $\Gamma \sqcup M$.
Fix two distinct points $p\neq q\in M$ and let $\epsilon:=\dist(p,q)/2$.
Next let $\psi \colon \Rb \to [0,1]$ be a continuous function with ${\rm supp}(\psi) \subset (\epsilon/2,+\infty)$ and $\psi \equiv 1$ on $[\epsilon, +\infty)$.
Note that any $x\in \Gamma\sqcup M$ is a distance at least $\epsilon$ from either $p$ or $q$, so  $\psi(\dist(x,p))+\psi(\dist(x,q))>0$.
Let 
\begin{equation*}
  a(x):=\frac{\psi(\dist(x,p))}{\psi(\dist(x,p))+\psi(\dist(x,q))} \quad\text{and}\quad b(x):=\frac{\psi(\dist(x,q))}{\psi(\dist(x,p))+\psi(\dist(x,q))}.
\end{equation*}
Finally, for $\gamma \in \Gamma$ define
\begin{equation*}
\norm{\gamma}_\sigma := a(\gamma^{-1})\sigma(\gamma,p)+b(\gamma^{-1})\sigma(\gamma,q).
\end{equation*}

We claim that $\norm{\cdot}_\sigma$ is a magnitude function. Since $\sigma$ is expanding there is some magnitude function $\norm{\cdot}_\sigma'$ and there is some $C > 0$ such that: if $x \in M$, $\gamma \in \Gamma$ and $\dist(x, \gamma^{-1}) > \epsilon/2$, then 
$$
\norm{\gamma}_\sigma' - C \leq \sigma(\gamma, x) \leq \norm{\gamma}_\sigma' + C.
$$
Notice that if $a(\gamma^{-1}) \neq 0$, then $\dist(p,\gamma^{-1}) > \epsilon/2$ and if $b(\gamma^{-1}) \neq 0$, then $\dist(q,\gamma^{-1}) > \epsilon/2$. Then, since $a+b \equiv 1$, we have 
$$
\sup_{\gamma \in \Gamma} \abs{\norm{\gamma}_\sigma-\norm{\gamma}_\sigma'} \leq 2C.
$$
So $\norm{\cdot}_\sigma$ is a magnitude function.

To show that this magnitude function has the desired property, fix $x\in \Lambda(\Gamma)$ and $\alpha\in \Gamma$.
Then fix $\{\gamma_n\}\subset\Gamma$ such that $\gamma_n \rightarrow x$ and 
\[L:=\limsup_{\gamma\to x}\abs{\norm{\alpha\gamma}_\sigma-\norm{\gamma}_\sigma-\sigma(\alpha,x)}=\lim_{n \rightarrow \infty }\abs{\norm{\alpha\gamma_n}_\sigma-\norm{\gamma_n}_\sigma-\sigma(\alpha,x)}.\]
Passing to a subsequence we can assume that $\gamma_n^{-1}\to y\in M$. Then $\gamma_n^{-1}\alpha^{-1}\to y$ too.

First suppose that $y$ is distinct from both $p$ and $q$. By the coarse cocycle property, $\norm{\alpha\gamma_n}_\sigma$ is $\kappa$-close to
\[a(\gamma_n^{-1}\alpha^{-1})(\sigma(\alpha,\gamma_np)+\sigma(\gamma_n,p))+b(\gamma_n^{-1}\alpha^{-1})(\sigma(\alpha,\gamma_nq)+\sigma(\gamma_n,q))\]
Note that $p,q\neq y$ implies $\gamma_np,\gamma_nq\to x$.
By the $\kappa$-coarse continuity of $\sigma$ (and since $a+b \equiv 1$) we get that
\[\limsup_{n \rightarrow \infty} \abs{a(\gamma_n^{-1}\alpha^{-1})\sigma(\alpha,\gamma_np)+b(\gamma_n^{-1}\alpha^{-1})\sigma(\alpha,\gamma_nq)-\sigma(\alpha,x)}\leq \kappa.\]
Thus $L$ is bounded above by
\[2\kappa + \limsup_n\abs{(a(\gamma_n^{-1}\alpha^{-1})-a(\gamma_n^{-1}))\sigma(\gamma_n,p)+(b(\gamma_n^{-1}\alpha^{-1})-b(\gamma_n^{-1}))\sigma(\gamma_n,q)}.\]
Next observe that since $a+b\equiv 1$ we have
\[a(\gamma_n^{-1}\alpha^{-1})-a(\gamma_n^{-1})=-b(\gamma_n^{-1}\alpha^{-1})+b(\gamma_n^{-1}).\]
Hence  $L$ is bounded above by
\[2\kappa + \limsup_{n \rightarrow \infty} \abs{(b(\gamma_n^{-1}\alpha^{-1})-b(\gamma_n^{-1}))(\sigma(\gamma_n,q)-\sigma(\gamma_n,p))}.\]
Since $y$ is distinct from both $p$ and $q$, the expanding property implies that 
$$
\abs{\sigma(\gamma_n,q)-\sigma(\gamma_n,p)}
$$
 is bounded.
Moreover, since $\gamma_n^{-1}\alpha^{-1}$ and $\gamma_n^{-1}$ both converge to $y$, the continuity of $b$ implies that 
$$
\lim_{n \rightarrow \infty} b(\gamma_n^{-1}\alpha^{-1})-b(\gamma_n^{-1})= 0.
$$
Therefore  
\[L\leq 2\kappa.\]

Next suppose that $y$ coincides with $p$ or $q$, say $q$ (the other case is similar).
Then for $n$ large enough $b(\gamma_n^{-1})=b(\gamma_n^{-1}\alpha^{-1})=0$.
Thus for $n$ large,
\[\norm{\alpha\gamma_n}_\sigma-\norm{\gamma_n}_\sigma-\sigma(\alpha,x)=\sigma(\alpha\gamma_n,p)-\sigma(\gamma_n,p)-\sigma(\alpha,x).\]
By the $\kappa$-coarse cocycle property this is $\kappa$-close to $\sigma(\alpha,\gamma_np)-\sigma(\alpha,x)$.
Using that $\gamma_np\to x$ and the $\kappa$-coarse continuity property, we have 
$$
\limsup_{n \rightarrow \infty} \abs{\sigma(\alpha,\gamma_np)-\sigma(\alpha,x)}\leq \kappa,
$$
and hence $L\leq2\kappa$. 
\end{proof}

\begin{proof}[Proof of Theorem~\ref{thm:PS measures exist}]
Endow $\Gamma \sqcup M$ with the compactifying topology and let $\dist$ be a compatible distance on $\Gamma \sqcup M$ (see Proposition~\ref{prop:compactifying}). 
For $x \in \Gamma \sqcup M$, let $\mathcal D_x$ denote the Dirac measure centered at $x$. 
Finally, fix a magnitude $\norm{\cdot}_\sigma$ satisfying Lemma~\ref{lem:nice magnitude}.

By~\cite[Lemma 3.1]{Patterson}, there exists a non-decreasing function $\chi\colon \mathbb R \to\mathbb R_{\geq1}$ such that 
\begin{enumerate}[label=(\alph*)]
 \item \label{item:PS meas chi subexp} For every $\epsilon>0$ there exists $R>0$ such that $\chi(r+t)\leq e^{\epsilon t}\chi(r)$ for any $r\geq R$ and $t\geq 0$,
 \item \label{item PS meas chi div} $\sum_{g \in \Gamma} \chi(\norm g_\sigma ) e^{-\delta \norm g_\sigma } =+\infty$
\end{enumerate}
(when $\sum_{g \in \Gamma}  e^{-\delta \norm{g}_\sigma} = +\infty$, we can take $\chi \equiv 1$).

For $s >\delta$, define a Borel probability measure on $\Gamma \sqcup M$ by 
$$
\mu_s := \frac{1}{Q^\chi_\sigma(s)} \sum_{g \in \Gamma} \chi(\norm g_\sigma ) e^{-s \norm g_\sigma  } \mathcal D_g,
$$
where $Q^\chi_\sigma(s):=\sum_{g \in \Gamma} \chi(\norm g_\sigma  ) e^{-s \norm g_\sigma  } $, which is finite by  property~\ref{item:PS meas chi subexp}. 
Then fix $s_n \searrow \delta$ such that $\mu_{s_n} \rightarrow \mu$ in the weak-$*$ topology. 
We claim that $\mu$ is a $2\kappa\delta$-coarse $\sigma$-Patterson--Sullivan measure of dimension $\delta$ on $M$.

By property~\ref{item PS meas chi div} of $\chi$,
$$
\lim_{s \searrow \delta} Q_\sigma^\chi(s)=+\infty.
$$
Hence $\mu$ is supported on $\Lambda(\Gamma)$  by Proposition~\ref{prop:compactifying}.

To verify the Radon--Nikodym derivative condition, fix $\gamma \in \Gamma$. Then define $\chi_\gamma \colon \Gamma \sqcup M \to \Rb$ by
$$
\chi_\gamma(x)=\begin{cases} 
\chi(\norm{\gamma^{-1}x}_\sigma)) / \chi(\norm x_\sigma ) & x\in \Gamma, \\
1 & x \in M. 
\end{cases}
$$
By Proposition~\ref{prop:basic properties}\eqref{item:tri inequality} there exists $C' =C'(\gamma)> 0$ such that 
\begin{equation}\label{eqn:estimate for h(gamma g)}
 - C' \leq \norm{\gamma^{-1}g}_\sigma-\norm g_\sigma  \leq C'
\end{equation} 
for all $g \in \Gamma$.
So  Property~\ref{item:PS meas chi subexp} of $\chi$ implies that $\chi_\gamma$ is continuous. 

Next define $f_\gamma^\pm \colon \Gamma \sqcup M \to \Rb$ by 
$$
f_\gamma^\pm(x) = \begin{cases} 
\norm{\gamma^{-1} x}_\sigma - \norm x_\sigma  & x \in \Gamma, \\
\pm\limsup\limits_{\substack{g_m \rightarrow x, \ \{g_m\} \subset \Gamma}}\pm( \norm{\gamma^{-1}g_m}_\sigma - \norm{g_m}_\sigma) & x \in \Lambda(\Gamma), \\
 \mp C' & x \in M \smallsetminus \Lambda(\Gamma).
\end{cases}
$$
 Equation~\eqref{eqn:estimate for h(gamma g)} implies that $f_\gamma^+$ (resp.\ $f_\gamma^-$) is upper (resp.\ lower) semicontinuous and hence Borel measurable (a priori we did not assume $x\mapsto \sigma(\gamma^{-1},x)$ is measurable).

By Lemma~\ref{lem:nice magnitude},
if $x \in \Lambda(\Gamma)$, then 
$$
\abs{ \sigma(\gamma^{-1},x)-f_{\gamma}^\pm(x)  } \leq 2\kappa.
$$
Also, recall that $\chi_\gamma \equiv 1$ on $\Lambda(\Gamma)$.
Then, since $\mu$ and $\gamma_*\mu$ are supported on $\Lambda(\Gamma)$, to show that $\mu$ and $\gamma_*\mu$ are absolutely continuous with
$$
 e^{-2\kappa\delta-\delta \sigma(\gamma^{-1}, \cdot)}\le\frac{d\gamma_*\mu}{d\mu}\le e^{2\kappa\delta-\delta \sigma(\gamma^{-1}, \cdot)}
$$
(almost everywhere),
it suffices to show that
\[\chi_\gamma e^{-\delta f_\gamma^+} \mu \leq \gamma_*\mu\leq \chi_\gamma e^{-\delta f_\gamma^-} \mu.\]

Since $f_\gamma^+$ is upper semicontinuous there exists a sequence $\{f_m\}$ of continuous functions  on $\Gamma \sqcup M$ where $f_1 \geq f_2 \geq \dots$ and $f_m \rightarrow f_\gamma^+$ pointwise. 

Note that
\begin{align*}
\gamma_*\mu_s & = \frac{1}{Q^\chi_\sigma(s)} \sum_{g \in \Gamma} \chi(\norm{g}_\sigma) e^{-s \norm{g}_\sigma} \dirac_{\gamma g}  = \frac{1}{Q^\chi_\sigma(s)} \sum_{g' \in \Gamma} \chi(\norm{\gamma^{-1} g'}_\sigma) e^{-s \norm{\gamma^{-1} g'}_\sigma} \dirac_{g'}  \\
& =  \frac{1}{Q^\chi_\sigma(s)} \sum_{g'\in\Gamma} \chi_\gamma(g') e^{-s f_\gamma^\pm(g')} \chi(\norm{g'}_\sigma) e^{-s \norm{g'}_\sigma} \dirac_{g'}  = \chi_\gamma e^{-s f_\gamma^\pm} \mu_s. 
\end{align*}
So $\gamma_*\mu_s \geq \chi_\gamma e^{-s f_m} \mu_s$ for all $m$. Since $f_m$ is continuous, we have 
$$
\lim_{n \rightarrow \infty} \norm{\chi_\gamma e^{-s_n f_m}-\chi_\gamma e^{-\delta f_m}}_\infty = 0
$$
and hence
$$
\gamma_* \mu = \lim_{n \rightarrow \infty} \gamma_*\mu_{s_n} \geq \lim_{n \rightarrow \infty}\chi_\gamma e^{-s_n f_m} \mu_{s_n} =\chi_\gamma e^{-\delta f_m} \mu
$$
for all $m$. Then the monotone convergence theorem implies 
$$
\gamma_* \mu \geq \lim_{m \rightarrow \infty} \chi_\gamma e^{-\delta f_m} \mu = \chi_\gamma e^{-\delta f_\gamma^+} \mu.
$$
A very similar argument shows that $\gamma_* \mu \leq \chi_\gamma e^{-\delta f_\gamma^-} \mu$.

\end{proof} 

One can use the above Patterson--Sullivan measure to obtain the following classical entropy gap result (see \cite[Prop.\,2]{DOP}).

\begin{theorem}\label{thm:subgp div implies smaller crit exp}
Suppose $\Gamma \subset \mathsf{Homeo}(M)$ is a convergence group, $\sigma$ is an expanding coarse-cocycle, and $\delta_\sigma(\Gamma) < +\infty$. If $G \subset \Gamma$ is a subgroup where $\Lambda(G)$ is a strict subset of $\Lambda(\Gamma)$ and 
$$
\sum_{g \in G} e^{-\delta_\sigma(G)\norm{g}_\sigma}=+\infty,
$$ 
then $\delta_\sigma(G) < \delta_\sigma(\Gamma)$. 
\end{theorem}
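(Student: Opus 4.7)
The plan is to argue by contradiction. Suppose $\delta := \delta_\sigma(G) = \delta_\sigma(\Gamma) < +\infty$. By Theorem~\ref{thm:PS measures exist} applied to $\Gamma$ there is a coarse $\sigma$-Patterson--Sullivan measure $\mu$ of dimension $\delta$ supported on $\Lambda(\Gamma)$; quasi-invariance of $\mu$ combined with the minimality of the $\Gamma$-action on $\Lambda(\Gamma)$ forces $\supp(\mu) = \Lambda(\Gamma)$. Since $\Lambda(G)$ is a proper closed subset of $\Lambda(\Gamma)$, one can then choose an open set $V \subset M$ with $V \cap \Lambda(\Gamma) \neq \emptyset$, $\overline V \cap \Lambda(G) = \emptyset$, $\mu(V) > 0$, and $d_0 := \tfrac12 \dist(\overline V, \Lambda(G)) > 0$ for a compatible metric $\dist$.

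The contradiction will come from two incompatible estimates on $\sum_{g \in G}\mu(gV)$. For the lower bound, since $g^{-1}$ escapes to $\Lambda(G)$ for all but finitely many $g \in G$, we have $\dist(x, g^{-1}) \geq d_0$ for every $x \in V$ and every such $g$. The expanding property (Definition~\ref{defn:expanding}) then uniformly bounds $|\sigma(g,x) - \norm{g}_\sigma|$ for such $x$ and $g$, and the coarse Patterson--Sullivan property yields $\mu(gV) \asymp e^{-\delta \norm{g}_\sigma} \mu(V)$. Summing and invoking $\sum_{g \in G} e^{-\delta \norm{g}_\sigma} = +\infty$ gives $\sum_{g \in G}\mu(gV) = +\infty$.

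For the upper bound, I would first show that $F_V := \{g \in G : gV \cap V \neq \emptyset\}$ is finite: by Proposition~\ref{prop:compactifying}\eqref{item:contraction of shadows} applied with $\epsilon = d_0$, for all but finitely many $g \in G$ one has $gV \subset B_{d_0}(g) \subset B_{2d_0}(\Lambda(G))$, which is disjoint from $V$ by the choice of $d_0$. The key observation is then that if $x \in g_1V \cap g_2V$, then $g_2^{-1}g_1$ sends $g_1^{-1}x \in V$ to $g_2^{-1}x \in V$, so $g_2^{-1}g_1 \in F_V$; hence for any fixed admissible $g_1$ all admissible $g$ lie in $g_1 F_V^{-1}$, giving $\#\{g \in G : x \in gV\} \leq |F_V|$ for every $x \in M$. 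Exchanging sum and integral now yields
$$\sum_{g \in G}\mu(gV) = \int_M \#\{g \in G : x \in gV\}\, d\mu(x) \leq |F_V| < +\infty,$$
contradicting the lower bound. The crux of the argument is this uniform multiplicity bound, which combines Proposition~\ref{prop:compactifying}\eqref{item:contraction of shadows} with the positive separation of $V$ from $\Lambda(G)$.
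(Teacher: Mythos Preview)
Your argument is correct and follows essentially the same strategy as the paper: construct a coarse $\sigma$-Patterson--Sullivan measure $\mu$ for $\Gamma$ of dimension $\delta_\sigma(\Gamma)$, pick an open set $V$ meeting $\Lambda(\Gamma)$ but bounded away from $\Lambda(G)$, and derive a contradiction by comparing a divergent lower bound for $\sum_{g\in G}\mu(gV)$ (from the expanding property and the quasi-conformal estimate) with a finite upper bound coming from bounded multiplicity of the $G$-translates of $V$. The only cosmetic difference is that the paper obtains the multiplicity bound by invoking the general fact that $G$ acts properly discontinuously on $M\smallsetminus\Lambda(G)$, whereas you re-derive this for the specific set $V$ via Proposition~\ref{prop:compactifying}\eqref{item:contraction of shadows}; the two are equivalent here.
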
 

\begin{proof} Fix an open set $U \subset M$ such that $U \cap \Lambda(\Gamma) \neq \emptyset$ and $\overline{U} \cap \Lambda(G) = \emptyset$. 
By the definition of  $\Lambda(G)$, $G$ acts properly discontinuously on $M \smallsetminus \Lambda(G)$. 
Hence there exists $N > 0$ such that every point in $M$ is contained in at most $N$ different $G$-translates of $U$.

By the previous result, there exist a constant $C_2 > 0$ and a $C_2$-coarse $\sigma$-Patterson--Sullivan measure $\mu$ for $\Gamma$ of dimension $\delta_\sigma(\Gamma)$ supported on $\Lambda(\Gamma)$. Hence
$$
e^{-C_2-\delta_\sigma(\Gamma) \sigma(\gamma^{-1}, \cdot)} \leq \frac{d\gamma_* \mu}{d\mu} \leq e^{C_2 -\delta_\sigma(\Gamma) \sigma(\gamma^{-1}, \cdot)} 
$$
for all $\gamma \in \Gamma$

Suppose for a contradiction that $\delta_\sigma(G) = \delta_\sigma(\Gamma)$. 
Since $\Gamma$ acts minimally on $\Lambda(\Gamma)$ we must have $\mu(U) > 0$.
Since the Hausdorff distance between $\overline U$ and $G$ in $\Gamma\sqcup M$ is positive, by the expanding property there is $C_3>0$ such that $\abs{\sigma(g,x)-\norm{g}_\sigma}\leq C_3$ for all $x\in \overline U$ and $g\in G$.
 Then
\begin{align*}
N & \ge \sum_{g \in G} \mu(gU)=  \sum_{g \in G} (g^{-1}_*)\mu(U) \ge e^{-C_2}  \sum_{g \in G} \int_U e^{-\delta_\sigma(\Gamma)\sigma(g, x)} d\mu(x) \\
& \ge  \frac{\mu(U)}{e^{C_2+\delta_\sigma(G)C_3}} \sum_{g \in G} e^{-\delta_\sigma(G)\norm{g}_\sigma}=+\infty.
\end{align*}
So we have a contradiction. 
\end{proof}


\section{Shadows and their properties}\label{sec:basic properties of fake shadows}

In this section we define our shadows, establish some of their basic properties, relate them to a notion of uniformly conical limit points, and compare these shadows to the classically-defined shadows in the Gromov hyperbolic setting. 

\subsection{Basic properties}
Suppose for the rest of the section that $\Gamma \subset \mathsf{Homeo}(M)$ is a convergence group. Fix a compatible metric $\dist$ on $\Gamma \sqcup M$ and let $B_r(x) \subset \Gamma \sqcup M$ denote the open ball of radius $r > 0$ centered at $x$. 
Given $\epsilon>0$ and $\gamma\in\Gamma$, the associated \emph{shadow} is (see Definition~\ref{def:shadows})
$$
\mc S_\epsilon(\gamma):=\gamma\left(M-B_\epsilon(\gamma^{-1})\right).
$$
Notice that if  $\epsilon>\epsilon'>0$, then  $\mc S_\epsilon(\gamma)\subset\mc S_{\epsilon'}(\gamma)$.

We first establish basic properties of our shadows which are analogues of standard properties of classical shadows.

\begin{proposition}\label{prop:properties of fake shadows} If $\epsilon>0$ and $\sigma \colon \Gamma \times M \rightarrow \Rb$ is an expanding coarse-cocycle, then: 
\begin{enumerate}
\item\label{item:cocycle big on the fake shadow} There exists $C_1 > 0$ such that: if $x \in \mc S_\epsilon(\gamma)$, then 
$$
\norm{\gamma}_{\sigma} - C_1 \leq \sigma(\gamma, \gamma^{-1}(x)) \leq \norm{\gamma}_{\sigma} + C_1.
$$
\item\label{item:collapsing fake shadows}  If $\{\gamma_n\} \subset \Gamma$ is an escaping sequence, then 
$$
\lim_{n \rightarrow \infty} {\rm diam} \, \mc S_\epsilon(\gamma_n)= 0 \quad \text{and} \quad \lim_{n \rightarrow \infty} \inf_{x \in \mc S_\epsilon(\gamma_n)} \dist(\gamma_n, x)=0
$$
where the diameter is with respect to $\dist$. In particular, the Hausdorff distance with respect to $\dist$ between the sets $\{\gamma_n\}$ and $\mc S_\epsilon(\gamma_n)$ converges to zero.

\item\label{item:multiplicative property of intersecting fake shadows}

There exists $C_2 > 0$ such that: if $\alpha, \beta \in \Gamma$, $\norm{\alpha}_{\sigma} \leq \norm{\beta}_{\sigma}$, and 
$$ \mc S_\epsilon(\alpha) \cap \mc S_\epsilon(\beta) \neq \emptyset,
$$
 then 
$$
 \norm{\alpha^{-1}\beta}_{\sigma} + \norm{\alpha}_{\sigma} - C_2 \leq \norm{\beta}_{\sigma}\leq  \norm{\alpha^{-1}\beta}_{\sigma} + \norm{\alpha}_{\sigma} + C_2.
$$

\item\label{item:nesting shadows} There exists $0 < \epsilon' < \epsilon$ such that: if  $\alpha, \beta \in \Gamma$,  $\norm\alpha_\sigma\leq\norm\beta_\sigma$, and $\mc S_\epsilon(\alpha)\cap \mc S_\epsilon(\beta)\neq\emptyset$, then
$$
\mc S_\epsilon(\beta) \subset \mc S_{\epsilon'}(\alpha). 
$$
\item\label{item:Vish covering lemma}   There exists $0 < \epsilon' < \epsilon $ such that:  if $I \subset \Gamma$, then there exists $J \subset I$ such that the shadows $\{ \mc S_\epsilon(\gamma) : \gamma \in J\}$ are disjoint and 
$$
\bigcup_{\gamma \in I} \mc S_\epsilon(\gamma) \subset \bigcup_{\gamma \in J} \mc S_{\epsilon'}(\gamma). 
$$
\end{enumerate}

\end{proposition}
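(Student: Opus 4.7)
Set $\gamma := \alpha^{-1}\beta$ throughout. Parts~\eqref{item:cocycle big on the fake shadow} and~\eqref{item:collapsing fake shadows} are nearly immediate. For~\eqref{item:cocycle big on the fake shadow}, any $x \in \mc S_\epsilon(\gamma)$ has the form $\gamma y$ with $\dist(y, \gamma^{-1}) \geq \epsilon$, so the expanding property directly yields the bound on $\sigma(\gamma, \gamma^{-1}x) = \sigma(\gamma, y)$. For~\eqref{item:collapsing fake shadows}, Proposition~\ref{prop:compactifying}\eqref{item:contraction of shadows} applied at any scale $\delta \in (0,\epsilon]$ gives a finite $F_\delta$ such that $\mc S_\epsilon(\gamma) \subset \mc S_\delta(\gamma) \subset B_\delta(\gamma)$ for $\gamma \notin F_\delta$; since an escaping sequence eventually avoids every $F_\delta$, both the diameter of $\mc S_\epsilon(\gamma_n)$ and its distance to $\gamma_n$ tend to zero, giving the Hausdorff statement for free.

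For~\eqref{item:multiplicative property of intersecting fake shadows}, pick $z \in \mc S_\epsilon(\alpha) \cap \mc S_\epsilon(\beta)$. By part~\eqref{item:cocycle big on the fake shadow} one has $\sigma(\alpha, \alpha^{-1}z) \approx \norm{\alpha}_\sigma$ and $\sigma(\beta, \beta^{-1}z) \approx \norm{\beta}_\sigma$ up to a uniform constant, and the coarse cocycle identity applied to $\beta = \alpha \gamma$ at the point $\beta^{-1}z$ gives $\sigma(\gamma, \beta^{-1}z) \approx \norm{\beta}_\sigma - \norm{\alpha}_\sigma$. To conclude one wants $\sigma(\gamma, \beta^{-1}z) \approx \norm{\gamma}_\sigma$ via the expanding property, which requires $\beta^{-1}z$ to be bounded away from $\gamma^{-1} = \beta^{-1}\alpha$. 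Using $\norm{\alpha}_\sigma \leq \norm{\beta}_\sigma$ and Proposition~\ref{prop:basic properties}\eqref{item:estimate on difference of transformations} with parameter $\epsilon/2$, this holds outside a fixed finite set of $\gamma$; the finitely many exceptional $\gamma$ have bounded $\norm{\gamma}_\sigma$ and force $|\norm{\beta}_\sigma - \norm{\alpha}_\sigma|$ to be bounded by Proposition~\ref{prop:basic properties}\eqref{item:tri inequality}, so they are harmless and may be absorbed into $C_2$.

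Part~\eqref{item:nesting shadows} is where I expect the main technical work. Given $z_0 \in \mc S_\epsilon(\beta)$, write $\alpha^{-1}z_0 = \gamma y_0$ with $y_0 \in M - B_\epsilon(\beta^{-1})$; the goal is to show $\dist(\gamma y_0, \alpha^{-1}) \geq \epsilon'$ for some $\epsilon'<\epsilon$ chosen uniformly in $\alpha,\beta$. The strategy is to fix small $\epsilon_1 \in (0,\epsilon)$, set $\epsilon_2 := (\epsilon - \epsilon_1)/2$, and use Proposition~\ref{prop:basic properties}\eqref{item:estimate on difference of transformations} with parameter $\epsilon_1$ to obtain $\dist(\beta^{-1}, \gamma^{-1}) \leq \epsilon_1$ for $\gamma$ outside some finite $F_1$, which pushes $y_0$ into $M - B_{\epsilon_2}(\gamma^{-1})$; Proposition~\ref{prop:compactifying}\eqref{item:contraction of shadows} at scale $\epsilon_2$ then forces $\gamma y_0 \in \mc S_{\epsilon_2}(\gamma) \subset B_{\epsilon_2}(\gamma)$ for $\gamma$ outside another finite $F_2$. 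The same reasoning applied to any intersection point $w \in \mc S_\epsilon(\alpha) \cap \mc S_\epsilon(\beta)$ shows $\alpha^{-1}w \in B_{\epsilon_2}(\gamma)$, and combined with $\dist(\alpha^{-1}w, \alpha^{-1}) \geq \epsilon$ this yields $\dist(\gamma, \alpha^{-1}) \geq \epsilon - \epsilon_2$, hence $\dist(\gamma y_0, \alpha^{-1}) \geq \epsilon - 2\epsilon_2 = \epsilon_1$, so $\epsilon' := \epsilon_1$ suffices in the generic case. For the finitely many exceptional $\gamma_0 \in F_1 \cup F_2$, each is a fixed homeomorphism of the compact space $\Gamma \sqcup M$, so uniform continuity of $\gamma_0^{-1}$ produces some $\epsilon_0(\gamma_0) > 0$ such that $\dist(y_0,\beta^{-1})\geq\epsilon$ implies $\dist(\gamma_0 y_0,\gamma_0\beta^{-1}) = \dist(\gamma_0 y_0,\alpha^{-1})\geq\epsilon_0(\gamma_0)$; the minimum of $\epsilon_1$ with these finitely many $\epsilon_0(\gamma_0)$ is a uniform $\epsilon'<\epsilon$ as required.

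Finally,~\eqref{item:Vish covering lemma} is a standard Vitali-type selection based on~\eqref{item:nesting shadows}. Proposition~\ref{prop:basic properties}\eqref{item:properness} ensures that $\{\gamma \in \Gamma : \norm{\gamma}_\sigma \leq R\}$ is finite for every $R$, so any $I \subset \Gamma$ can be enumerated $\gamma_1, \gamma_2, \ldots$ with $\norm{\gamma_n}_\sigma$ nondecreasing. Define $J$ inductively by including $\gamma_n$ exactly when $\mc S_\epsilon(\gamma_n)$ is disjoint from every $\mc S_\epsilon(\gamma_m)$ with $m < n$ and $\gamma_m \in J$. By construction the shadows in $J$ are pairwise disjoint, and for any $\gamma_n \in I \smallsetminus J$ the failure to include $\gamma_n$ produces some $\gamma_m \in J$ with $m < n$ and $\mc S_\epsilon(\gamma_m) \cap \mc S_\epsilon(\gamma_n) \neq \emptyset$; since $\norm{\gamma_m}_\sigma \leq \norm{\gamma_n}_\sigma$, part~\eqref{item:nesting shadows} then gives $\mc S_\epsilon(\gamma_n) \subset \mc S_{\epsilon'}(\gamma_m)$, completing the covering.
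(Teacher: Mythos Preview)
Your proposal is correct and uses essentially the same ingredients as the paper: the expanding property for~\eqref{item:cocycle big on the fake shadow}, Proposition~\ref{prop:compactifying}\eqref{item:contraction of shadows} for~\eqref{item:collapsing fake shadows}, the coarse cocycle identity together with Proposition~\ref{prop:basic properties}\eqref{item:estimate on difference of transformations} for~\eqref{item:multiplicative property of intersecting fake shadows} and~\eqref{item:nesting shadows}, and the greedy Vitali selection for~\eqref{item:Vish covering lemma}. The only difference is stylistic: the paper argues parts~\eqref{item:multiplicative property of intersecting fake shadows} and~\eqref{item:nesting shadows} by contradiction (extracting escaping sequences and passing to the limit), whereas you argue directly by splitting off a finite exceptional set of values of $\alpha^{-1}\beta$ and handling those via uniform continuity and Proposition~\ref{prop:basic properties}\eqref{item:tri inequality}; the two arguments unwind to the same computation.
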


\begin{proof} Part~\eqref{item:cocycle big on the fake shadow} follows from the definition of expanding coarse-cocycles and part~\eqref{item:collapsing fake shadows} is a consequence of Proposition~\ref{prop:compactifying}\eqref{item:contraction of shadows}.

Part~\eqref{item:multiplicative property of intersecting fake shadows}: suppose for a contradiction that the claim is false.
Then for each $n \geq 1$ there exist  $\alpha_n, \beta_n \in \Gamma$ such that $\norm{\alpha_n}_{\sigma} \leq \norm{\beta_n}_{\sigma}$,  
$$
 \mc S_\epsilon(\alpha_n) \cap \mc S_\epsilon(\beta_n) \neq \emptyset \quad \text{and} \quad \abs{\norm{\beta_n}_{\sigma} - \norm{\alpha_n^{-1}\beta_n}_{\sigma} - \norm{\alpha_n}_{\sigma}} \geq n.
$$

By Proposition~\ref{prop:basic properties}\eqref{item:tri inequality}, both $\{\alpha_n^{-1}\beta_n\}$ and $\{\alpha_n\}$ must be escaping.
Since $\norm{\beta_n}_{\sigma}\geq \norm{\alpha_n}_{\sigma}$, this implies that $\{\beta_n\}$ is also escaping.
Then by Proposition~\ref{prop:basic properties}\eqref{item:estimate on difference of transformations} we have  
$$
\lim_{n \rightarrow \infty} \dist(\beta_n^{-1}\alpha_n,\beta^{-1}_n) = 0. 
$$

For each $n$, fix $x_n\in \mc S_\epsilon(\alpha_n) \cap \mc S_\epsilon(\beta_n)$.
By definition, $\dist(\beta^{-1}_nx_n,\beta^{-1}_n)\geq \epsilon$ and so 
$$
\dist(\beta^{-1}_nx_n,\beta^{-1}_n\alpha_n)\geq \dist(\beta^{-1}_nx_n,\beta^{-1}_n)-\dist(\beta_n^{-1}\alpha_n,\beta^{-1}_n)\geq \epsilon/2
$$
 for $n$ large enough. Also,  $\dist(\alpha^{-1}_nx_n,\alpha^{-1}_n)\geq \epsilon$ for any $n$.

Since $\sigma$ is expanding, there exists $C>0$ such that 
$$
\abs{\sigma(\gamma,x)- \norm{\gamma}_\sigma}\leq C
$$
 for all $\gamma\in \Gamma$ and $x\in M-B_{\epsilon/2}(\gamma^{-1})$. 
Using the coarse-cocycle property, 
\begin{align*}
 \vert\norm{\beta_n}_{\sigma} &- \norm{\alpha_n^{-1}\beta_n}_{\sigma} - \norm{\alpha_n}_{\sigma}\vert\\
& \leq \abs{\sigma(\beta_n,\beta_n^{-1}x_n) -  \sigma(\alpha_n,\alpha_n^{-1}x_n) -\sigma(\alpha_n^{-1}\beta_n,\beta_n^{-1}x_n)}+3C\\
&=\abs{\sigma(\alpha_n\alpha_n^{-1}\beta_n,\beta_n^{-1}x_n) -  \sigma(\alpha_n,\alpha_n^{-1}x_n) -\sigma(\alpha_n^{-1}\beta_n,\beta_n^{-1}x_n)}+3C\\
 &\leq \kappa+3C
\end{align*}
and we have a contradiction. 
Thus part ~\eqref{item:multiplicative property of intersecting fake shadows}  is true.

Part~\eqref{item:nesting shadows}: suppose for a contradiction that there exist $\{\alpha_n\}$, $\{\beta_n\}\subset\Gamma$ and $\epsilon_n\to 0$ such that $\norm{\alpha_n}_\sigma\leq \norm{\beta_n}_\sigma$, $\mc S_{\epsilon}(\alpha_n)\cap \mc S_\epsilon(\beta_n)\neq\emptyset$, and $\mc S_\epsilon(\beta_n)\not\subset \mc S_{\epsilon_n}(\alpha_n)$. Then
\begin{equation}\label{eqn:key inclusion in nested shadows}
\alpha_n^{-1}\beta_n(M-B_\epsilon(\beta_n^{-1}))=\alpha_n^{-1}\mc S_\epsilon(\beta_n) \not\subset \alpha_n^{-1}\mc S_{\epsilon_n}(\alpha_n)=M-B_{\epsilon_n}(\alpha^{-1}_n)
\end{equation}
for all $n \geq 1$. 

We first show that $\{ \alpha_n^{-1}\beta_n\}$ must be escaping. 
Suppose not. 
Then we can assume that $\gamma : = \alpha_n^{-1}\beta_n$ for all $n$. 
Since $\Gamma \sqcup M$ is compact, there exists $\delta > 0$ such that $\gamma B_\epsilon(x) \supset B_\delta(\gamma x)$ for all $x \in \Gamma\sqcup M$.
Then $\gamma B_\epsilon(\beta_n^{-1}) \supset B_\delta(\alpha_n^{-1})$ for all $n \geq 1$.
Hence 
\begin{equation*}
\alpha_n^{-1}\beta_n(M-B_\epsilon(\beta_n^{-1}))=M-\gamma B_\epsilon(\beta_n^{-1}) \subset M-B_{\delta}(\alpha^{-1}_n)
\end{equation*}
for all $n \geq 1$. So we have a contradiction with Equation~\eqref{eqn:key inclusion in nested shadows}.

Then $\{\beta_n^{-1}\alpha_n\}$ is also escaping and so by Proposition~\ref{prop:basic properties}\eqref{item:estimate on difference of transformations} we have 
$$
\lim_{n \rightarrow \infty} \dist(\beta_n^{-1},\beta_n^{-1}\alpha_n)=0.
$$
Thus for $n$ large enough
$$
\alpha_n^{-1}\mc S_\epsilon(\beta_n) = \alpha_n^{-1}\beta_n\left( M-B_\epsilon(\beta_n^{-1})\right)\subset \alpha_n^{-1}\beta_n\left(M-B_{\epsilon/2}(\beta_n^{-1}\alpha_n)\right) = \mc S_{\epsilon/2}(\alpha_n^{-1}\beta_n). 
$$
Then, applying part~\eqref{item:collapsing fake shadows} to the escaping sequence $\{\alpha_n^{-1}\beta_n\}$, we obtain that the diameter of 
$$
\alpha_n^{-1}\mc S_\epsilon(\beta_n) \subset \mc S_{\epsilon/2}(\alpha_n^{-1}\beta_n)
$$
tends to zero, and hence is less than $\epsilon/2$ for $n$ large enough. Further, by assumption, $\alpha_n^{-1}\mc S_\epsilon(\beta_n)$ intersects $\alpha_n^{-1}\mc S_\epsilon(\alpha_n)=M-B_{\epsilon}(\alpha_n^{-1})$ for all $n$. Hence 
$$
\alpha_n^{-1}\mc S_\epsilon(\beta_n) \subset M-B_{\epsilon/2}(\alpha_n^{-1})
$$
for $n$ large enough, which implies that $\mc S_\epsilon(\beta_n)\subset \mc S_{\epsilon_n}(\alpha_n)$ for $n$ large enough. Thus we have a contradiction. 

Part~\eqref{item:Vish covering lemma}: using part~\eqref{item:nesting shadows}, the proof of this part of the proposition is standard, see e.g.~\cite[Lem.\,3.15]{Folland1999}. 

Let $\epsilon'$ be as in part~\eqref{item:nesting shadows}. Enumerate $I = \{ \gamma_1, \gamma_2, \dots\}$ such that 
$$
\norm{\gamma_1}_\sigma \leq \norm{\gamma_2}_\sigma \leq \dots
$$
(this is possible by Proposition~\ref{prop:basic properties}\eqref{item:properness}.) Inductively define $j_1 < j_2 < \dots$ as follows: let $j_1=1$, then supposing $j_1, \dots, j_k$ have been selected pick $j_{k+1}$ to be the smallest index greater than $j_k$ such that 
$$
\mc S_\epsilon(\gamma_{j_{k+1}}) \cap \bigcup_{i=1}^k \mc S_\epsilon(\gamma_{j_i}) = \emptyset. 
$$

We claim that $J= \{ \gamma_{j_1}, \gamma_{j_2}, \dots \}$ suffices (it is possible for $J$ to be finite). 
By definition the shadows $\{ \mc S_\epsilon(\gamma) : \gamma \in J\}$ are disjoint. 
Further if $\gamma_n \notin J$, then there exists some index $j_k < n$ such that 
$$
\mc S_\epsilon(\gamma_n) \cap \mc S_\epsilon(\gamma_{j_k})  \neq \emptyset
$$
(otherwise we would have $\gamma_n \in J$). Then part~\eqref{item:nesting shadows} implies that 
$$
\mc S_\epsilon(\gamma_n) \subset  \mc S_{\epsilon'}(\gamma_{j_k}) .
$$
So 
 \begin{equation*}
\bigcup_{\gamma \in I} \mc S_\epsilon(\gamma) \subset \bigcup_{\gamma \in J} \mc S_{\epsilon'}(\gamma). \qedhere
\end{equation*}

\end{proof}

\subsection{Uniformly conical limit points}

Next we introduce a notion of uniformly conical limit points and relate them to the shadows defined above.

\begin{definition}\label{defn:of uniform conical points} Given $\epsilon > 0$, the \emph{$\epsilon$-uniform conical limit set}, denoted $\Lambda_\epsilon^{\rm con}(\Gamma)$, is the set of points $x \in M$ such that there exist $a,b \in M$  and a sequence of elements $\{\gamma_n\}$ in $\Gamma$ where $\dist(a,b) \geq \epsilon$, $\lim_{n \to \infty} \gamma_n x = b$, and $\lim_{n \rightarrow \infty} \gamma_n y = a$ for all $y \in M \smallsetminus \{x\}$. 
\end{definition} 

Notice that by definition 
\begin{equation}\label{eqn:decomposition of conical limit set}
\Lambda^{\rm con}(\Gamma)  = \bigcup_{\epsilon > 0} \Lambda^{\rm con}_{\epsilon}(\Gamma)=\bigcup_{n =1}^\infty \Lambda^{\rm con}_{\frac{1}{n}}(\Gamma).
\end{equation}

We also observe that these limit sets are invariant. 

\begin{observation}\label{obs:invariance of uniform conical limit sets} If $\epsilon > 0$, then $\Lambda_\epsilon^{\rm con}(\Gamma)$ is $\Gamma$-invariant. 
\end{observation}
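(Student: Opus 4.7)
The plan is to verify invariance directly from the definition by pre-composing the given sequence with $\alpha^{-1}$. Fix $x \in \Lambda_\epsilon^{\rm con}(\Gamma)$ and $\alpha \in \Gamma$; we want to show $\alpha x \in \Lambda_\epsilon^{\rm con}(\Gamma)$. Pick $a, b \in M$ with $\dist(a,b) \geq \epsilon$ and a sequence $\{\gamma_n\} \subset \Gamma$ witnessing that $x \in \Lambda_\epsilon^{\rm con}(\Gamma)$, so $\gamma_n x \to b$ and $\gamma_n y \to a$ for every $y \in M \smallsetminus \{x\}$.

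Consider the new sequence $\eta_n := \gamma_n \alpha^{-1}$. Then $\eta_n(\alpha x) = \gamma_n x \to b$. On the other hand, for any $y \in M \smallsetminus \{\alpha x\}$, the point $\alpha^{-1} y$ lies in $M \smallsetminus \{x\}$, so $\eta_n(y) = \gamma_n(\alpha^{-1} y) \to a$. Since the same pair $(a,b)$ (with $\dist(a,b) \geq \epsilon$) works, $\alpha x$ lies in $\Lambda_\epsilon^{\rm con}(\Gamma)$ by definition.

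There is no real obstacle here: the definition of $\Lambda_\epsilon^{\rm con}(\Gamma)$ only constrains the target points $a, b$ and not the source $x$, so modifying the witnessing sequence on the right by $\alpha^{-1}$ immediately transfers the property from $x$ to $\alpha x$. In particular, the elements $\eta_n$ are pairwise distinct for $n$ large (since $\{\gamma_n\}$ must be escaping to produce the convergence behavior), so $\{\eta_n\}$ is a genuine sequence of group elements of the type required.
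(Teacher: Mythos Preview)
Your proof is correct and follows exactly the same approach as the paper: both replace the witnessing sequence $\{\gamma_n\}$ by $\{\gamma_n\alpha^{-1}\}$ and observe that the same limit points $a,b$ work for $\alpha x$. The extra remark about distinctness of the $\eta_n$ is harmless but not required by the definition.
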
 

\begin{proof} Fix $x \in \Lambda_\epsilon^{\rm con}(\Gamma)$ and $\gamma \in \Gamma$. Then there exist $a,b \in M$  and a sequence of elements $\{\gamma_n\}$ in $\Gamma$ where $\dist(a,b) \geq \epsilon$, $\lim_{n \to \infty} \gamma_n x = b$, and $\lim_{n \rightarrow \infty} \gamma_n y = a$ for all $y \in M \smallsetminus \{x\}$. Then $\lim_{n \to \infty} \gamma_n\gamma^{-1}(\gamma x) = b$ and $\lim_{n \rightarrow \infty} \gamma_n\gamma^{-1}(y) = a$ for all $y \in M \smallsetminus \{\gamma x\}$. So $\gamma x \in  \Lambda_\epsilon^{\rm con}(\Gamma)$. 
\end{proof} 

Next we relate the shadows to this notion of uniformly conical limit set.

\begin{lemma}\label{lem:shadows versus uniform conical limit sets} \
\begin{enumerate}
\item If $x \in \Lambda_{\epsilon}^{\rm con}(\Gamma)$ and $0 < \epsilon' < \epsilon$, then there exists an escaping sequence $\{\gamma_n\}\subset\Gamma$  such that 
$x\in \bigcap_n\mc S_{\epsilon'}(\gamma_n)$.

\item If there exists an escaping sequence $\{\gamma_n\}\subset\Gamma$  such that $x\in \bigcap_n\mc S_{\epsilon}(\gamma_n)$, then $x \in \Lambda_{\epsilon}^{\rm con}(\Gamma)$.
\end{enumerate}
\end{lemma}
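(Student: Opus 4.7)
The key observation underlying both directions is that the shadow $\mc S_\delta(\gamma)=\gamma(M\smallsetminus B_\delta(\gamma^{-1}))$ admits the equivalent description $x\in\mc S_\delta(\gamma)\iff \dist(\gamma^{-1}x,\gamma^{-1})\geq\delta$. Substituting $\gamma\mapsto \gamma^{-1}$, this rewrites as $x\in\mc S_\delta(\gamma^{-1})\iff \dist(\gamma x,\gamma)\geq \delta$. Since the dynamical definition of $\Lambda_\epsilon^{\rm con}(\Gamma)$ is phrased in terms of $\gamma x$ and $\gamma$ (viewed in $\Gamma\sqcup M$), the natural sequence to produce in part~(1) is the inverse of the sequence from the definition, and conversely for part~(2).

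For part~(1), I would begin from the definition of $\Lambda_\epsilon^{\rm con}(\Gamma)$: take the sequence $\{\gamma_n\}\subset\Gamma$ together with $a,b\in M$ satisfying $\gamma_n x\to b$, $\gamma_n y\to a$ for every $y\neq x$, and $\dist(a,b)\geq\epsilon$. Proposition~\ref{prop:compactifying}(2) applied to $\{\gamma_n\}$ (whose convergent subsequences in $\Gamma\sqcup M$ necessarily have limit $a$ by uniqueness of locally uniform limits) yields $\gamma_n\to a$ in $\Gamma\sqcup M$; in particular $\{\gamma_n\}$ is escaping. Now extract any subsequence along which $\gamma_n^{-1}\to e\in\Gamma\sqcup M$. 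Escaping forces $e\in M$, so Proposition~\ref{prop:compactifying}(1) gives $\gamma_n|_{M\smallsetminus\{e\}}\to a$ locally uniformly; combined with $\gamma_n x\to b\neq a$ this forces $e=x$. By Proposition~\ref{prop:compactifying}(2) applied to $\{\gamma_n^{-1}\}$, this shows $\gamma_n^{-1}\to x$ in $\Gamma\sqcup M$. Therefore $\dist(\gamma_n x,\gamma_n)\to\dist(b,a)\geq\epsilon>\epsilon'$, so for all sufficiently large $n$ we have $\dist(\gamma_n x,\gamma_n)\geq\epsilon'$, i.e.\ $x\in\mc S_{\epsilon'}(\gamma_n^{-1})$. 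After discarding finitely many terms, the escaping sequence $\{\gamma_n^{-1}\}$ is the one sought.

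For part~(2), suppose $\{\gamma_n\}$ is escaping with $x\in\mc S_\epsilon(\gamma_n)$ for every $n$, i.e.\ $\dist(\gamma_n^{-1}x,\gamma_n^{-1})\geq\epsilon$. Set $\tilde\gamma_n:=\gamma_n^{-1}$, which is likewise escaping. By compactness of $\Gamma\sqcup M$ (and escaping), pass to a subsequence on which $\tilde\gamma_n\to a\in M$, $\tilde\gamma_n^{-1}\to y\in M$ and $\tilde\gamma_n x\to b\in M$. The shadow hypothesis passes to the limit to give $\dist(a,b)\geq\epsilon$. Proposition~\ref{prop:compactifying}(1) yields $\tilde\gamma_n|_{M\smallsetminus\{y\}}\to a$ locally uniformly, so if $x\neq y$ we would obtain $b=a$, contradicting $\dist(a,b)\geq\epsilon>0$. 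Hence $y=x$, and therefore $\tilde\gamma_n z\to a$ for every $z\in M\smallsetminus\{x\}$ while $\tilde\gamma_n x\to b$; this is precisely the data witnessing $x\in\Lambda_\epsilon^{\rm con}(\Gamma)$.

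The only mildly subtle point is the bookkeeping between $\gamma_n$ and $\gamma_n^{-1}$: because the shadow formula involves $\gamma^{-1}$, one must invert the sequence when moving between the shadow picture and the dynamical picture of $\Lambda_\epsilon^{\rm con}(\Gamma)$. Beyond that, both directions are essentially direct applications of the convergence group formalism of Proposition~\ref{prop:compactifying}, with the shadow condition becoming precisely the statement that the two limits $a=\lim\gamma_n$ (resp.\ $\lim\tilde\gamma_n$) and $b=\lim\gamma_n x$ (resp.\ $\lim\tilde\gamma_n x$) are separated by at least $\epsilon$.
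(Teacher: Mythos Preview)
Your proof is correct and follows essentially the same approach as the paper's: both directions boil down to the equivalence $x\in\mc S_\delta(\gamma)\iff\dist(\gamma^{-1}x,\gamma^{-1})\geq\delta$ combined with the convergence-group compactification from Proposition~\ref{prop:compactifying}. The paper is slightly terser (it does not bother to show $\gamma_n^{-1}\to x$ in part~(1), since only $\gamma_n\to a$ is needed to get $\dist(\gamma_n x,\gamma_n)\to\dist(b,a)\geq\epsilon$), but your extra detail does no harm.
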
 

\begin{proof} 
First suppose that $x\in\Lambda^{\rm con}_\epsilon(\Gamma)$. Then there exist $a,b \in M$  and a sequence of elements $\{\gamma_n\}$ in $\Gamma$ where $\dist(a,b) \geq \epsilon$, $\lim_{n \to \infty} \gamma_n^{-1}x = b$, and $\lim_{n \rightarrow \infty} \gamma_n^{-1} y = a$ for all $y \in M \smallsetminus \{x\}$. Thus $\gamma_n^{-1} \rightarrow a \neq b$. So if $\epsilon' < \epsilon$, then  $\dist(\gamma_n^{-1} x,\gamma_n^{-1}) >  \epsilon'$ for $n$ sufficiently large. Thus 
$$
x =\gamma_n \gamma_n^{-1}(x) \in \gamma_n( M-B_{\epsilon'}(\gamma_n^{-1})) = \mc S_{\epsilon'}(\gamma_n)
$$
for $n$ sufficiently large.

Next suppose that $x\in \bigcap_n\mc S_\epsilon(\gamma_n)$ for some $\epsilon>0$ and some escaping $\{\gamma_n\}\subset\Gamma$. 
Passing to a subsequence we can assume that $\gamma_n^{-1}x\to a$, $\gamma_n^{-1}\to b$ and $\gamma_n\to c$.
In particular $\gamma_n^{-1}y\to b$ for any $y\in M \smallsetminus \{c\}$.
Since $x\in \mc S_\epsilon(\gamma_n)$ for every $n$, we have by definition $\dist(\gamma_n^{-1}x,\gamma_n^{-1})\geq \epsilon$.
Passing to the limit we get $\dist(a,b)\geq \epsilon$, so  $a\neq b$.
Moreover $x=c$, as otherwise $\{\gamma_n^{-1} x\}$ would have to converge to $b$. Hence $x\in\Lambda^{\rm con}_\epsilon(\Gamma)$.
\end{proof}

\subsection{Comparison to classical shadows} Suppose that $X$ is a proper geodesic Gromov hyperbolic space. Let $\Gamma \subset \mathsf{Isom}(X)$ be a discrete group. Then $\Gamma$ acts as a convergence group on the Gromov boundary $\partial_\infty X$ of $X$. 

Given $b,p \in X$ and $r > 0$ the associated shadow $\Oc_r(b,p) \subset \partial_\infty X$ is the set of all $x \in \partial_\infty X$ where there is some geodesic ray $\ell \colon [0,\infty) \rightarrow X$ where $\ell(0)=b$, $\lim_{t \rightarrow \infty}\ell(t) = x$, and $\ell$ intersects the open ball of radius $r>0$ centered at $p$. 

\begin{proposition} Fix a compatible metric $\dist$ on $\Gamma \sqcup \partial_\infty X$, and for $\epsilon > 0$ and $\gamma \in \Gamma$ let $\mc S_\epsilon(\gamma) \subset \partial_\infty X$ denote the shadow defined above. 
\begin{enumerate}
\item For any $b \in X$ and $r > 0$ there exists $\epsilon > 0$ such that 
$$
\Oc_r(b, \gamma(b)) \subset \mc S_\epsilon(\gamma)
$$
for all $\gamma \in \Gamma$. 
\item For any $b \in X$ and $\epsilon > 0$  there exists $r > 0$ such that 
$$
\mc S_\epsilon(\gamma) \subset \Oc_r(b, \gamma(b))
$$
for all $\gamma \in \Gamma$. 
\end{enumerate}
\end{proposition}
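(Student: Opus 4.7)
The plan is to argue both containments by contradiction via escaping sequences, using two standard facts about the proper geodesic $\delta$-hyperbolic space $X$: (i) up to additive errors depending only on $\delta$, $x \in \Oc_r(b, p)$ is equivalent to the Gromov product inequality $(p \mid x)_b \geq d(b, p) - r$; and (ii) for an escaping sequence $\{\gamma_n\} \subset \Gamma$ and $\xi \in \partial_\infty X$, convergence $\gamma_n \to \xi$ in $\Gamma \sqcup \partial_\infty X$ is equivalent to $\gamma_n(b) \to \xi$ in the Gromov compactification $X \cup \partial_\infty X$, characterized by $(\gamma_n(b) \mid \xi)_b \to +\infty$. The equivalence in (ii) follows from the uniqueness assertion in Proposition~\ref{prop:compactifying}, since $\gamma \mapsto \gamma(b)$ visibly induces a compactifying topology on $\Gamma \sqcup \partial_\infty X$ and hence is the compactifying topology.

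\textbf{Part (1).} Suppose the conclusion fails for some $r > 0$. Taking $\epsilon_n = 1/n$ produces $\gamma_n \in \Gamma$ and $x_n \in \Oc_r(b, \gamma_n(b))$ with $\dist(\gamma_n^{-1}(x_n), \gamma_n^{-1}) < 1/n$. Since $\Gamma$ is open in $\Gamma \sqcup \partial_\infty X$ (Proposition~\ref{prop:compactifying}), this is impossible when $\gamma_n$ is eventually constant, so $\{\gamma_n\}$ is escaping. Passing to a subsequence, $\gamma_n^{-1} \to \xi \in \partial_\infty X$; then $\gamma_n^{-1}(x_n) \to \xi$ in $\partial_\infty X$, and fact (ii) gives $\gamma_n^{-1}(b) \to \xi$ in $X \cup \partial_\infty X$. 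Applying $\gamma_n^{-1}$ to a geodesic ray witnessing $x_n \in \Oc_r(b, \gamma_n(b))$ produces a geodesic ray from $\gamma_n^{-1}(b)$ to $\gamma_n^{-1}(x_n)$ passing within $r$ of $b$, forcing $(\gamma_n^{-1}(b) \mid \gamma_n^{-1}(x_n))_b \leq r + O(\delta)$. But the convergence of both $\gamma_n^{-1}(b)$ and $\gamma_n^{-1}(x_n)$ to $\xi$, together with the four-point $\delta$-hyperbolicity condition, forces this same Gromov product to tend to $+\infty$, a contradiction.

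\textbf{Part (2).} Suppose the conclusion fails for some $\epsilon > 0$, yielding $\gamma_n \in \Gamma$ and $x_n \in \mc S_\epsilon(\gamma_n) \smallsetminus \Oc_n(b, \gamma_n(b))$. Since $\Oc_r(b, p) = \partial_\infty X$ whenever $r > d(b, p)$, the sequence $\{\gamma_n\}$ must be escaping; passing to subsequences, $\gamma_n \to \xi_+$ and $\gamma_n^{-1} \to \xi_-$ in $\Gamma \sqcup \partial_\infty X$, and fact (ii) gives $\gamma_n(b) \to \xi_+$ in $X \cup \partial_\infty X$, so $d(b, \gamma_n(b)) \to +\infty$. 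As discussed in Section~\ref{sec:motivating examples}, the Busemann coarse-cocycle $\beta(\gamma, x) = b_x(\gamma^{-1}(b))$ sits inside a coarse GPS system, and hence is expanding (Proposition~\ref{prop:GPS implies expanding}) with magnitude $\norm{\gamma}_\beta = d(b, \gamma(b))$. Setting $y_n = \gamma_n^{-1}(x_n)$, the hypothesis $\dist(y_n, \gamma_n^{-1}) \geq \epsilon$ gives $\beta(\gamma_n, y_n) \geq d(b, \gamma_n(b)) - C(\epsilon)$. The identity $b_{y_n}(\gamma_n^{-1}(b)) = d(b, \gamma_n^{-1}(b)) - 2(\gamma_n^{-1}(b) \mid y_n)_b$ (exact for the Gromov product at infinity up to $O(\delta)$) then yields $(\gamma_n^{-1}(b) \mid y_n)_b \leq C(\epsilon)/2 + O(\delta)$. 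Isometry-invariance rewrites this as $(b \mid x_n)_{\gamma_n(b)} \leq C(\epsilon)/2 + O(\delta)$, and the exact three-point identity $(b \mid x_n)_{\gamma_n(b)} + (\gamma_n(b) \mid x_n)_b = d(b, \gamma_n(b))$ (up to $O(\delta)$ since $x_n \in \partial_\infty X$) gives $(\gamma_n(b) \mid x_n)_b \geq d(b, \gamma_n(b)) - C(\epsilon)/2 - O(\delta)$. By fact (i), this places $x_n \in \Oc_{r_0}(b, \gamma_n(b))$ for some $r_0 = r_0(\epsilon, \delta)$ independent of $n$, contradicting $x_n \notin \Oc_n(b, \gamma_n(b))$ for large $n$. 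The main technical point throughout is the identification of the two compactifications in fact (ii); once this is settled, both parts are clean applications of the thin-triangles property.
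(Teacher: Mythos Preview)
Your argument is correct. Part (1) is essentially the paper's proof: both run a contradiction on an escaping sequence and use that $\gamma_n^{-1}(b)$ and $\gamma_n^{-1}(x_n)$ converge to the same boundary point, which is incompatible with a geodesic from one to the other passing through a fixed ball. The paper phrases this by extracting a limit bi-infinite geodesic whose endpoints must be distinct; you phrase it with Gromov products and the four-point inequality. These are interchangeable. Your ``fact (ii)'' identifying the compactifying topology with the one coming from $\gamma\mapsto\gamma(b)$ via the uniqueness in Proposition~\ref{prop:compactifying} is exactly the point the paper is using implicitly when it writes ``$\gamma_n^{-1}\to a$'' and then speaks of a geodesic with endpoint $a$.

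Part (2) is where you genuinely diverge. The paper simply says ``very similar to the proof of (1)'', meaning another direct limit argument: extract $y_n\to y$, $\gamma_n^{-1}\to a$ with $\dist(y,a)\ge\epsilon$, note that geodesic rays from $\gamma_n^{-1}(b)$ to $y_n$ must fellow-travel a fixed bi-infinite geodesic from $a$ to $y$ and hence stay within a uniform distance of $b$, contradicting $r_n\to\infty$. You instead invoke the abstract framework of the paper: the Busemann cocycle sits in a coarse GPS system (Section~\ref{sec:motivating examples}), so it is expanding by Proposition~\ref{prop:GPS implies expanding}, and the expanding inequality translates via the Busemann/Gromov-product identity into a uniform bound on $(b\mid x_n)_{\gamma_n(b)}$, hence membership in a fixed classical shadow. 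This is a legitimate shortcut---it offloads the limit argument to the proof of Proposition~\ref{prop:GPS implies expanding}---but it is heavier machinery than necessary for what is ultimately a thin-triangles computation, and it relies on the unproved assertion in the introduction that $(\beta,\beta,G)$ is a GPS system with $\norm{\gamma}_\beta=\dist_X(b,\gamma b)$. That assertion is standard, so the argument stands, but you should be aware that you are citing the paper's framework to recover a classical fact that the framework was designed to generalize.
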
 

\begin{proof} (1): Suppose that no such $\epsilon> 0$ exists. Then there exist $\{\gamma_n\}\subset\Gamma$ and $\{\epsilon_n\}$ such that $\epsilon_n\to 0$ and $\Oc_r(b, \gamma_n(b)) \not\subset \mc S_{\epsilon_n}(\gamma_n)$ for all $n$. Equivalently, for each $n$ there exists 
$$
x_n \in \Oc_r(\gamma_n^{-1}(b), b) \smallsetminus \Big( \partial_\infty X - B_{\epsilon_n}(\gamma_n^{-1})\Big) = \Oc_r(\gamma_n^{-1}(b), b) \cap B_{\epsilon_n}(\gamma_n^{-1}). 
$$
Passing to a subsequence we can suppose that $x_n\rightarrow x$ and $\gamma_n^{-1} \rightarrow a$. Then by definition there exists a geodesic line $\ell \colon \Rb \rightarrow X$ where $\lim_{t \rightarrow \infty} \ell(t) = a$, $\lim_{t \rightarrow -\infty} \ell(t) = x$, and $\ell$ intersects the closed ball of radius $r$ centered at $b$. In particular, $a \neq x$ and hence $x_n \notin  B_{\epsilon_n}(\gamma_n^{-1})$ for $n$ sufficiently large. So we have a contradiction. 

(2): This is very similar to the proof of (1). 
\end{proof}


\section{The Shadow Lemma and its consequences}\label{sec:the shadow lemma}


In this section we establish a version of the classical Shadow Lemma. We then derive some of its immediate consequences.

\begin{theorem}[The Shadow Lemma]\label{thm:shadow lemma} Suppose $\Gamma \subset \mathsf{Homeo}(M)$ is a convergence group, $\sigma \colon \Gamma \times M \rightarrow \Rb$ is an expanding coarse-cocycle, and $\mu$ is a coarse $\sigma$-Patterson--Sullivan measure of dimension $\beta$.
For any sufficiently small $\epsilon>0$ there exists $C=C(\epsilon) >1$ such that 
$$
\frac{1}{C} e^{-\beta \norm{\gamma}_{\sigma}} \leq \mu\left( \mc S_\epsilon(\gamma) \right) \leq C e^{-\beta \norm{\gamma}_{\sigma}}
$$
for all $\gamma \in \Gamma$. 
\end{theorem}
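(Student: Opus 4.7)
The approach is to express $\mu(\mc S_\epsilon(\gamma))$ in terms of the Radon--Nikodym derivative of $(\gamma^{-1})_*\mu$ with respect to $\mu$, pull out a factor of $e^{-\beta\norm{\gamma}_\sigma}$ using the expanding property, and then control the remaining mass uniformly in $\gamma$. Since $\mc S_\epsilon(\gamma) = \gamma(M \setminus B_\epsilon(\gamma^{-1}))$,
\[
\mu(\mc S_\epsilon(\gamma)) = (\gamma^{-1})_*\mu\bigl(M \setminus B_\epsilon(\gamma^{-1})\bigr) = \int_{M \setminus B_\epsilon(\gamma^{-1})} \frac{d(\gamma^{-1})_*\mu}{d\mu}\, d\mu.
\]
Applying Definition~\ref{defn:patterson-sullivan} with $\gamma^{-1}$ in place of $\gamma$ sandwiches the integrand between $e^{\pm C}e^{-\beta\sigma(\gamma,\cdot)}$. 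On the domain of integration $\dist(x,\gamma^{-1}) \geq \epsilon$, so Definition~\ref{defn:expanding} furnishes a constant $C_1 = C_1(\epsilon) > 0$ with $\abs{\sigma(\gamma,x) - \norm{\gamma}_\sigma} \leq C_1$. Setting $C_2 := C + \abs{\beta}C_1$, this gives
\[
e^{-C_2} e^{-\beta\norm{\gamma}_\sigma}\,\mu\bigl(M \setminus B_\epsilon(\gamma^{-1})\bigr) \leq \mu(\mc S_\epsilon(\gamma)) \leq e^{C_2} e^{-\beta\norm{\gamma}_\sigma}\,\mu\bigl(M \setminus B_\epsilon(\gamma^{-1})\bigr),
\]
and the upper bound follows immediately from $\mu(M) = 1$.

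The remaining and main task is the uniform lower bound
\[
\eta := \inf_{\gamma \in \Gamma} \mu\bigl(M \setminus B_{\epsilon_0}(\gamma^{-1})\bigr) > 0
\]
for some $\epsilon_0 > 0$; by monotonicity in $\epsilon$, the same $\eta$ then works for all $\epsilon \leq \epsilon_0$. I would argue by contradiction: if no such $\epsilon_0$ exists, there are $\gamma_n \in \Gamma$ and $\epsilon_n \searrow 0$ with $\mu(B_{\epsilon_n}(\gamma_n^{-1})) \to 1$. After extracting a subsequence, $\gamma_n^{-1} \to b \in \Gamma \sqcup M$. Because $\Gamma$ is open in $\Gamma \sqcup M$ (Proposition~\ref{prop:compactifying}) and $\mu$ is supported on $M$, the case $b \in \Gamma$ is impossible; hence $b \in M$. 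For every $r > 0$ one then has $B_{\epsilon_n}(\gamma_n^{-1}) \subset B_r(b)$ eventually, so $\mu(B_r(b)) = 1$, and letting $r \to 0$ yields $\mu = \dirac_b$.

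The main obstacle is excluding this collapse. From $\mu = \dirac_b$ and the mutual absolute continuity $\gamma_*\mu \sim \mu$ in Definition~\ref{defn:patterson-sullivan}, one deduces that the Dirac measures $\dirac_{\gamma b}$ and $\dirac_b$ agree for every $\gamma$, i.e.\ $b$ is a global fixed point of $\Gamma$. This contradicts non-elementarity: using Lemma~\ref{lem:AMS_Gromov_hyp} (applied to suitable $\gamma$), one can produce two loxodromic elements whose four fixed points are pairwise distinct, and no single point can be fixed by both. Once the lower bound $\eta$ is established, setting $C := e^{C_2}/\eta$ gives the desired two-sided estimate with a common constant.
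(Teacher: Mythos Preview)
Your proof is correct and follows essentially the same strategy as the paper: rewrite $\mu(\mc S_\epsilon(\gamma))$ as an integral of the Radon--Nikodym derivative over $M\smallsetminus B_\epsilon(\gamma^{-1})$, use the expanding property to pull out $e^{-\beta\norm{\gamma}_\sigma}$, and reduce the lower bound to a uniform estimate $\inf_\gamma \mu(M\smallsetminus B_{\epsilon_0}(\gamma^{-1}))>0$. The paper organizes this last step slightly differently, proving as a separate lemma that for any $\eta>\sup_x\mu(\{x\})$ there is $\epsilon>0$ with $\mu(M\smallsetminus B_\epsilon(\gamma))\geq 1-\eta$ for all $\gamma$, and then observing that $\sup_x\mu(\{x\})<1$ since otherwise $\mu$ is a Dirac mass, which is impossible for a non-elementary group; your version collapses these two steps into a single contradiction argument but reaches the same conclusion by the same mechanism. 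Your justification that a global fixed point is impossible (via two loxodromic elements with disjoint fixed-point pairs) is more explicit than the paper's one-line appeal to non-elementarity, but both are valid.
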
 

Using the results established in Sections~\ref{sec:expanding cocycles} and ~\ref{sec:basic properties of fake shadows}, the proof of the shadow lemma is essentially the 
same as Sullivan's original argument \cite[Prop.\,3]{Sullivan1979}.

\begin{lemma}\label{lem:big K has positive mass} For every $\eta > \sup_{x \in M} \mu(\{x\})$ there exists  $\epsilon>0$ such that
$$
 \mu\left(\gamma^{-1} \mc S_\epsilon(\gamma) \right) = \mu\left( M-B_\epsilon(\gamma) \right) \geq  1-\eta
$$
for all $\gamma \in \Gamma$. 
\end{lemma}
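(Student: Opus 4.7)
The plan is to argue by contradiction. First note that the equality in the statement is essentially a matter of definition: since $\mc S_\epsilon(\gamma) = \gamma(M - B_\epsilon(\gamma^{-1}))$, applying $\gamma^{-1}$ gives $\gamma^{-1}\mc S_\epsilon(\gamma) = M - B_\epsilon(\gamma^{-1})$, and since the desired estimate is asserted for \emph{all} $\gamma \in \Gamma$, the versions with $\gamma$ and with $\gamma^{-1}$ are equivalent. So the real content is the uniform bound
$$
\sup_{\gamma \in \Gamma} \mu\bigl(B_\epsilon(\gamma) \cap M\bigr) \leq \eta
$$
for all sufficiently small $\epsilon>0$.

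Suppose this fails. Then I can extract sequences $\epsilon_n \searrow 0$ and $\gamma_n \in \Gamma$ with $\mu(B_{\epsilon_n}(\gamma_n) \cap M) > \eta$. By compactness of $\Gamma \sqcup M$ (Proposition~\ref{prop:compactifying}), after passing to a subsequence I may assume $\gamma_n \to x$ for some $x \in \Gamma \sqcup M$. The proof then splits into two cases depending on whether the limit lies in $\Gamma$ or in $M$.

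In the first case ($x \in \Gamma$), I use that $\Gamma$ is open in $\Gamma \sqcup M$ with the discrete subspace topology (Proposition~\ref{prop:compactifying}\eqref{item:contraction of shadows} and the surrounding statements), so that eventually $\gamma_n = x$ and moreover some fixed ball $B_\delta(x)$ is disjoint from $M$; then for $n$ large enough that $\epsilon_n < \delta$ we have $B_{\epsilon_n}(\gamma_n) \cap M = \emptyset$, contradicting $\mu(B_{\epsilon_n}(\gamma_n) \cap M) > \eta > 0$. In the second case ($x \in M$), for any fixed $\delta > 0$ the triangle inequality gives $B_{\epsilon_n}(\gamma_n) \subset B_\delta(x)$ for all large $n$, so
$$
\eta \leq \limsup_{n \to \infty} \mu\bigl(B_{\epsilon_n}(\gamma_n) \cap M\bigr) \leq \mu\bigl(B_\delta(x) \cap M\bigr).
$$
Letting $\delta \searrow 0$ and using continuity of the finite measure $\mu$ from above (since $\bigcap_{\delta > 0} B_\delta(x) = \{x\}$) yields $\eta \leq \mu(\{x\}) \leq \sup_{y \in M}\mu(\{y\}) < \eta$, contradicting our choice of $\eta$.

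The only mild obstacle is handling both cases uniformly, which is easy once one remembers that the compactifying topology on $\Gamma \sqcup M$ makes $\Gamma$ discrete and $M$ closed; the rest is a standard measure-theoretic continuity argument. No properties of the cocycle $\sigma$ or of the Patterson--Sullivan condition are used beyond the fact that $\mu$ is a finite Borel measure on the compact metrizable space $\Gamma \sqcup M$ supported in $M$.
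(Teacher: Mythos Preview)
Your proof is correct and follows essentially the same approach as the paper's: argue by contradiction, pass to a convergent subsequence $\gamma_n \to x$ in $\Gamma \sqcup M$, and conclude that $\mu(\{x\}) \geq \eta$. The only difference is cosmetic: the paper handles both cases at once by observing that $B_{\epsilon_n}(\gamma_n) \subset B_{\epsilon_n + \dist(\gamma_n,x)}(x)$ and passing to the limit (the case $x \in \Gamma$ is absorbed since then $\mu(\{x\} \cap M)=0 < \eta$), whereas you split explicitly into $x \in \Gamma$ and $x \in M$. One small citation fix: the fact that $\Gamma$ is open and discrete in $\Gamma \sqcup M$ is Proposition~\ref{prop:compactifying}(4) together with Definition~\ref{defn:compactification}, not item~\eqref{item:contraction of shadows}.
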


\begin{proof}
Otherwise there would exist $\{\gamma_n\}\subset\Gamma$ and $\{\epsilon_n\}$ such that $\epsilon_n\to 0$ and $\mu\left( M\cap B_{\epsilon_n}(\gamma_n) \right) \geq  \eta$ for all $n$. Passing to a subsequence we can suppose that $\gamma_n \rightarrow x \in \Gamma \sqcup M$. Let $\delta_n : = \dist(\gamma_n, x)$ and pass to a further subsequence so that $\{ \epsilon_n+\delta_n\}$ is decreasing. Then 
$$
\mu(\{x\}) = \lim_{n \rightarrow \infty} \mu\left( M\cap B_{\epsilon_n+\delta_n}(x) \right) \geq \eta,
$$
which contradicts our choice of $\eta$. 
\end{proof}

\begin{proof}[Proof of Theorem~\ref{thm:shadow lemma}] Notice that 
$$
\sup_{x \in M} \mu(\{x\}) < 1. 
$$
Otherwise, $\mu$ would be a Patterson--Sullivan measure supported on a single point, which is impossible since $\Gamma$ is non-elementary.
Hence by Lemma~\ref{lem:big K has positive mass}, there exists $\epsilon_0>0$ such that 
$$
\delta_0: = \inf_{\gamma \in \Gamma} \mu\Big( \gamma^{-1}S_{\epsilon_0}(\gamma) \Big)
$$
is positive.

Fix $\epsilon<\epsilon_0$.
By Proposition~\ref{prop:properties of fake shadows}\eqref{item:cocycle big on the fake shadow} there exists $C_1 > 1$ such that: if $\gamma \in \Gamma$, then 
$$
\frac{1}{C_1} e^{-\beta\norm{\gamma}_{\sigma} } \leq \frac{d \left(\gamma^{-1}\right)_*\mu}{d\mu} \leq C_1 e^{-\beta\norm{\gamma}_{\sigma} } 
$$
almost everywhere on $\gamma^{-1}\mc S_{\epsilon}(\gamma)$.

Fix $\gamma \in \Gamma$. Then 
\begin{align*}
\mu\big( \mc S_{\epsilon}(\gamma) \big) & =  \left(\gamma^{-1}\right)_*\mu\big(\gamma^{-1}\mc S_{\epsilon}(\gamma) \big)  = \int_{\gamma^{-1}\mc S_{\epsilon}(\gamma)} \frac{d \left(\gamma^{-1}\right)_*\mu}{d\mu} d\mu.
\end{align*}
Hence 
\begin{equation*}
\frac{\delta_0}{C_1}e^{-\beta\norm{\gamma}_{\sigma} }  \leq \mu\big( \mc S_{\epsilon}(\gamma) \big) \leq C_1 e^{-\beta\norm{\gamma}_{\sigma} }. \qedhere
\end{equation*}

\end{proof}

The following results now also follow from the standard arguments from the classical case.

\begin{proposition}\label{prop:some consequences of the shadow lemma} Suppose $\Gamma \subset \mathsf{Homeo}(M)$ is a convergence group, $\sigma \colon \Gamma \times M \rightarrow \Rb$ is an expanding coarse-cocycle, and $\mu$ is a coarse $\sigma$-Patterson--Sullivan measure of dimension $\beta$. Then:
\begin{enumerate} 
\item \label{item:dim>=critexp} $\beta \geq \delta_\sigma(\Gamma)>0$.
\item If $y \in M$ is a conical limit point, then $\mu(\{y\}) = 0$.
\item \label{item:convergent => conical limset null} If 
$$
\sum_{\gamma \in \Gamma} e^{-\beta \norm{\gamma}_\sigma} < + \infty,
$$
(e.g.\  if  $\beta > \delta_\sigma(\Gamma)$)
then $\mu(\Lambda^{\rm con}(\Gamma))= 0$. 
\item There exists $C>0$ such that 
 $$\#\{\gamma\in\Gamma: \norm\gamma_\sigma\leq R\} \leq Ce^{\delta_\sigma(\Gamma) R}$$
 for any $R>0$.
\end{enumerate}

\end{proposition}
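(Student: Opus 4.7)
The plan is to derive each of (1)--(4) as a direct consequence of the Shadow Lemma (Theorem~\ref{thm:shadow lemma}), following the classical Sullivan recipe adapted to the coarse setting. Throughout I fix $\epsilon>0$ small enough that the Shadow Lemma applies to $\mu$ and, when needed, to the critical-dimension Patterson--Sullivan measure produced by Theorem~\ref{thm:PS measures exist}. Let $C>1$ be the associated Shadow Lemma constant.

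I begin with part (1), proving $\delta_\sigma(\Gamma)\le\beta$ by a bounded-multiplicity count. For $n\in\mathbb Z$ set $A_n=\{\gamma\in\Gamma:n\le\norm{\gamma}_\sigma<n+1\}$. By Proposition~\ref{prop:properties of fake shadows}\eqref{item:multiplicative property of intersecting fake shadows}, if $\alpha,\beta'\in A_n$ satisfy $\mc S_\epsilon(\alpha)\cap\mc S_\epsilon(\beta')\ne\emptyset$ then $\norm{\alpha^{-1}\beta'}_\sigma\le 1+C_2$; combined with Proposition~\ref{prop:basic properties}\eqref{item:properness} (and the countability of $\Gamma$, which is a discrete subspace of the separable space $\Gamma\sqcup M$), this forces $\alpha^{-1}\beta'$ into a finite set of some cardinality $K=K(\epsilon)$. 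Hence the multiplicity $N(x):=\#\{\gamma\in A_n:x\in\mc S_\epsilon(\gamma)\}$ is uniformly bounded by $K$, so integrating and applying the Shadow Lemma lower bound gives
$$
\frac{1}{C}e^{-\beta(n+1)}\cdot\#A_n\le\sum_{\gamma\in A_n}\mu(\mc S_\epsilon(\gamma))=\int_M N\,d\mu\le K.
$$
Thus $\#A_n\le CKe^{\beta(n+1)}$, so $Q_\sigma(s)<\infty$ for every $s>\beta$, proving $\delta_\sigma(\Gamma)\le\beta$. For $\delta_\sigma(\Gamma)>0$ I argue by contradiction: assuming $\delta_\sigma(\Gamma)=0$, Theorem~\ref{thm:PS measures exist} produces a PS measure $\mu_0$ of dimension $0$; the Shadow Lemma then yields $\mu_0(\mc S_\epsilon(\gamma))\ge 1/C$ uniformly in $\gamma$. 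For each $a\in\Lambda(\Gamma)$ take an escaping sequence $\gamma_n\to a$ in $\Gamma\sqcup M$ (possible by Proposition~\ref{prop:compactifying}\eqref{item:contraction of shadows}); by Proposition~\ref{prop:properties of fake shadows}\eqref{item:collapsing fake shadows} the shadows $\mc S_\epsilon(\gamma_n)$ collapse into arbitrary neighborhoods of $a$, so regularity gives $\mu_0(\{a\})\ge 1/C$, and since $\Lambda(\Gamma)$ is infinite for non-elementary convergence groups this contradicts $\mu_0(M)=1$.

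Parts (2) and (3) both follow by combining Lemma~\ref{lem:shadows versus uniform conical limit sets}(1) with the Shadow Lemma upper bound. For (2), a conical limit point $x$ lies in $\Lambda^{\rm con}_{\epsilon_0}(\Gamma)$ for some $\epsilon_0>0$; shrinking $\epsilon$ if necessary so that $\epsilon<\epsilon_0$, the lemma yields an escaping sequence $\{\gamma_n\}$ with $x\in\mc S_\epsilon(\gamma_n)$, and hence
$$
\mu(\{x\})\le\mu(\mc S_\epsilon(\gamma_n))\le Ce^{-\beta\norm{\gamma_n}_\sigma}\xrightarrow{n\to\infty}0,
$$
using Proposition~\ref{prop:basic properties}\eqref{item:properness} together with the positivity $\beta>0$ already established in part (1). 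For (3) the same lemma gives, for each $\epsilon_0>0$,
$$
\Lambda^{\rm con}_{\epsilon_0}(\Gamma)\subset\bigcap_{N\ge 1}\bigcup_{\substack{\gamma\in\Gamma\\\norm{\gamma}_\sigma\ge N}}\mc S_\epsilon(\gamma),
$$
and the Shadow Lemma bounds the measure of this set by the convergent tail $\sum_{\norm{\gamma}_\sigma\ge N}Ce^{-\beta\norm{\gamma}_\sigma}$, which tends to $0$; equation~(\ref{eqn:decomposition of conical limit set}) then yields $\mu(\Lambda^{\rm con}(\Gamma))=0$.

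Finally, part (4) is exactly the counting argument from the proof of part (1), applied to the critical-dimension PS measure supplied by Theorem~\ref{thm:PS measures exist}: substituting $\beta=\delta_\sigma(\Gamma)$ gives $\#A_n\le C'e^{\delta_\sigma(\Gamma)(n+1)}$, and summing the resulting geometric series yields $\#\{\gamma:\norm{\gamma}_\sigma\le R\}=\sum_{n\le R}\#A_n\le C''e^{\delta_\sigma(\Gamma) R}$. The only place the proof requires genuinely new input beyond the Shadow Lemma is the positivity $\delta_\sigma(\Gamma)>0$ in part (1): at dimension zero the Shadow Lemma lower bound produces no decay in $\gamma$, so I must exploit both the existence of a critical PS measure (Theorem~\ref{thm:PS measures exist}) and the infinitude of $\Lambda(\Gamma)$ to extract a contradiction. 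Everything else is an unwinding of the multiplicity count or a direct Borel--Cantelli-type tail estimate.
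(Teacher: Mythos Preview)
Your proof is essentially correct and follows the same approach as the paper: a bounded-multiplicity count (the paper packages this as Lemma~\ref{lem:upper counting bound}) for parts (1) and (4), and direct Shadow Lemma tail bounds combined with Lemma~\ref{lem:shadows versus uniform conical limit sets} for parts (2) and (3). Your argument for $\delta_\sigma(\Gamma)>0$ via atoms is a valid variant; the paper's implicit route is shorter---applying the same counting bound at $\beta=0$ to the critical measure from Theorem~\ref{thm:PS measures exist} gives $\#\{\gamma:\norm\gamma_\sigma\le R\}\le C$ for all $R$, contradicting $\#\Gamma=\infty$ directly.

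There is one small imprecision in your part (3). You fix a single $\epsilon$ at the outset and then claim the inclusion $\Lambda^{\rm con}_{\epsilon_0}(\Gamma)\subset\bigcap_N\bigcup_{\norm\gamma_\sigma\ge N}\mc S_\epsilon(\gamma)$ ``for each $\epsilon_0>0$''. But Lemma~\ref{lem:shadows versus uniform conical limit sets}(1) only yields this when $\epsilon<\epsilon_0$, so with $\epsilon$ fixed you only get $\mu(\Lambda^{\rm con}_{\epsilon_0}(\Gamma))=0$ for $\epsilon_0>\epsilon$. Since the sets $\Lambda^{\rm con}_{\epsilon_0}(\Gamma)$ increase as $\epsilon_0\to 0$, this does not cover the full conical limit set. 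The fix (which the paper implements) is to take a sequence $\epsilon_m\to 0$ of shadow parameters, each small enough for the Shadow Lemma to apply with its own constant $C_m$, and then use $\Lambda^{\rm con}(\Gamma)\subset\bigcup_m\bigcap_N\bigcup_{\norm\gamma_\sigma\ge N}\mc S_{\epsilon_m}(\gamma)$.
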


The proof of the proposition requires a lemma. 

\begin{lemma}\label{lem:upper counting bound}
 Then there exists $C>0$ such that 
 $$\#\{\gamma\in\Gamma: \norm\gamma_\sigma\leq R\} \leq Ce^{\beta R}$$
 for any $R>0$.
\end{lemma}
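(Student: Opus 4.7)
The plan is to combine the Shadow Lemma with the Vitali-type covering in Proposition~\ref{prop:properties of fake shadows}\eqref{item:Vish covering lemma}, applied in dyadic pieces of the magnitude function. First I fix $\epsilon>0$ small enough that Theorem~\ref{thm:shadow lemma} yields a constant $C_0>0$ with
$$\mu(\mc S_\epsilon(\gamma)) \geq C_0^{-1} e^{-\beta\norm{\gamma}_\sigma} \qquad \text{for every } \gamma \in \Gamma.$$
By properness of the magnitude (Proposition~\ref{prop:basic properties}\eqref{item:properness}) each dyadic layer $A_k := \{\gamma \in \Gamma : k \leq \norm{\gamma}_\sigma < k+1\}$ is automatically finite, and $I_R = A_0 \sqcup \dots \sqcup A_{\lceil R\rceil}$, so it suffices to bound each $|A_k|$ and sum.

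To bound $|A_k|$, I would apply Proposition~\ref{prop:properties of fake shadows}\eqref{item:Vish covering lemma} to $A_k$, producing $J_k \subset A_k$ with pairwise disjoint shadows. Disjointness combined with the Shadow Lemma immediately gives
$$|J_k| \cdot C_0^{-1} e^{-\beta(k+1)} \leq \sum_{\gamma \in J_k} \mu(\mc S_\epsilon(\gamma)) \leq 1,$$
so $|J_k| \leq C_0 e^{\beta(k+1)}$. To upgrade this to a bound on $|A_k|$, I use the greedy enumeration underlying the proof of Proposition~\ref{prop:properties of fake shadows}\eqref{item:Vish covering lemma}: for each $\gamma \in A_k \setminus J_k$ there is $\tilde\gamma \in J_k$ with $\norm{\tilde\gamma}_\sigma \leq \norm{\gamma}_\sigma$ and $\mc S_\epsilon(\gamma) \cap \mc S_\epsilon(\tilde\gamma) \neq \emptyset$. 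Proposition~\ref{prop:properties of fake shadows}\eqref{item:multiplicative property of intersecting fake shadows} then forces
$$\norm{\tilde\gamma^{-1}\gamma}_\sigma \leq \norm{\gamma}_\sigma - \norm{\tilde\gamma}_\sigma + C_2 \leq 1 + C_2,$$
so $\gamma \in \tilde\gamma \cdot F$, where $F := \{\eta \in \Gamma : \norm{\eta}_\sigma \leq 1+C_2\}$ is finite (again by properness). Thus $|A_k| \leq |F|\cdot|J_k| \leq |F| C_0 e^{\beta(k+1)}$, and summing the geometric series yields $|I_R| \leq C e^{\beta R}$.

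The main obstacle is the dyadic decomposition itself: a single application of the Vitali lemma to all of $I_R$ only controls the selected disjoint subfamily $J_R$, so one must stratify by magnitude in order to convert disjoint-shadow bounds into bounds on the full family via the finite multiplier $F$. A minor separate point is the degenerate case $\beta=0$, where the geometric sum collapses to a linear-in-$R$ bound; this can be excluded at the outset since it would force $\mu(\mc S_\epsilon(\gamma)) \geq c > 0$ uniformly, which is incompatible with Proposition~\ref{prop:properties of fake shadows}\eqref{item:collapsing fake shadows} (shadows of escaping sequences collapse to points) together with $\mu(M)=1$.
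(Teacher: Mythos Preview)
Your proof is correct and follows essentially the same strategy as the paper: stratify into unit-magnitude layers, bound each layer using the Shadow Lemma together with Proposition~\ref{prop:properties of fake shadows}\eqref{item:multiplicative property of intersecting fake shadows}, and sum the geometric series. The only difference is that the paper skips the Vitali step entirely: it observes directly that shadows from a single layer cover each point with multiplicity at most $C_3:=\#\{\gamma:\norm{\gamma}_\sigma\leq C_2\}$, so $\sum_{\gamma\text{ in layer}}\mu(\mc S_\epsilon(\gamma))\leq C_3$, which yields the layer count immediately from the Shadow Lemma lower bound---your route through Proposition~\ref{prop:properties of fake shadows}\eqref{item:Vish covering lemma} and its greedy construction reaches the same finite-multiplier bound but less directly.
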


\begin{proof}
 By the Shadow Lemma (Theorem~\ref{thm:shadow lemma}) there exist $\epsilon>0$ and $C_1>1$ such that
\begin{equation*}
  \mu(\mc S_{\epsilon}(\gamma)) \geq C_1^{-1} e^{-\beta \norm\gamma_\sigma}
 \end{equation*}
 for all $\gamma \in \Gamma$. By Proposition~\ref{prop:properties of fake shadows}\eqref{item:multiplicative property of intersecting fake shadows}, there exists $C_2$ such that: if $\gamma,\gamma'\in \Gamma$, 
 $$
 \abs{\norm\gamma_\sigma - \norm{\gamma'}_\sigma} \leq 1,
 $$
 and $\mc S_\epsilon(\gamma)\cap\mc S_\epsilon(\gamma')\neq\emptyset$, then
 $\norm{\gamma^{-1}\gamma'}_\sigma \leq C_2.$
 Let
 $$C_3 := \#\{\gamma\in\Gamma: \norm\gamma_\sigma \leq C_2\}$$
 (which is finite by Proposition~\ref{prop:basic properties}\eqref{item:properness}).
 Then, for all $x\in M$ and $R>0$,
 \begin{equation*}
  \#\{\gamma\in \Gamma: R-1\leq \norm\gamma_\sigma\leq R \text{ and } x\in\mc S_\epsilon(\gamma)\} \leq C_3.
 \end{equation*}
 Then
  \begin{align*}
  \#\{\gamma\in \Gamma: R-1\leq \norm\gamma_\sigma & \leq R\} =  \sum_{\substack{\gamma\in\Gamma \\ R-1\leq \norm\gamma_\sigma\leq R}} 1 \leq C_1 e^{\beta R} \sum_{\substack{\gamma\in\Gamma \\ R-1\leq \norm\gamma_\sigma\leq R}}\mu(\mc S_\epsilon(\gamma)) \\
  & \leq C_1C_3 \mu(M) e^{\beta R}= C_1 C_3 e^{\beta R}.
 \end{align*}
We complete the proof by summing this inequality over $\mathbb N$.
\end{proof}

\begin{proof}[Proof of (1)] This follows immediately from  Lemma~\ref{lem:upper counting bound} and the definition of the critical exponent $\delta_\sigma(\Gamma)$.
\end{proof} 

\begin{proof}[Proof of (2)] Suppose $y$ is a conical limit point. 
By Equation \eqref{eqn:decomposition of conical limit set} and Lemma~\ref{lem:shadows versus uniform conical limit sets}, there exist $\epsilon>0$ and an escaping sequence $\{\gamma_n\} \subset \Gamma$ such that $y \in \mc S_{\epsilon}(\gamma_n)$ for all $n$. 
Hence, by the Shadow Lemma (Theorem~\ref{thm:shadow lemma}), there exists $C > 0$ such that
$$
\mu(\{y\}) \leq \mu(\mc S_{\epsilon}(\gamma_n)) \leq C e^{-\beta \norm{\gamma_n}_\sigma}
$$
for all $n$. Since $\sigma$ is expanding, Proposition~\ref{prop:basic properties}\eqref{item:properness} implies that $\norm{\gamma_n}_\sigma \to +\infty$. Hence $\mu(\{y\}) = 0$ since $\beta>0$ by part (1).
\end{proof}

\begin{proof}[Proof of (3)] 
By Lemma \ref{lem:shadows versus uniform conical limit sets}  for every $m_0 > 0$ we have 
$$
\Lambda^{\rm con}(\Gamma) \subset \bigcup_{m \geq m_0} \bigcap_{n \geq 1} \bigcup_{\norm{\gamma}_\sigma \geq n}  \mc S_{1/m}(\gamma).
$$
By the Shadow Lemma (Theorem~\ref{thm:shadow lemma}),  for all $m$ sufficiently large there exists $C_m > 0$ such that 
$$
 \mu(\mc S_{1/m}(\gamma)) \leq C_m e^{-\beta \norm{\gamma}_\sigma}
$$
for all $\gamma \in \Gamma$. Hence for all $m$ sufficiently large, 
$$
\mu \left(  \bigcap_{n \geq 1} \bigcup_{\norm{\gamma}_\sigma \geq n} \mc S_{1/m}(\gamma) \right) \leq \lim_{n \rightarrow \infty} \sum_{\norm{\gamma}_\sigma \geq n} C_m e^{-\beta \norm{\gamma}_\sigma}
$$
which equals zero by assumption. Thus $\mu( \Lambda^{\rm con}(\Gamma) ) = 0$. 
\end{proof}

\begin{proof}[Proof of (4)] By Theorem~\ref{thm:PS measures exist} there exists a Patterson--Sullivan measure with dimension $\delta_\sigma(\Gamma)$. Then part (4) follows immediately from applying Lemma~\ref{lem:upper counting bound} to this measure. 
\end{proof}


\part{Dynamics of Patterson--Sullivan measures}


\section{Conical limit points have full measure in the divergent case}\label{sec:conical}


In this section we show that any Patterson--Sullivan measure with dimension equal to the critical exponent is supported on the conical limit set when the associated Poincar\'e series diverges at its critical exponent. 
The proof is similar to Roblin's argument  for the analogous result for Busemann cocycles in ${\rm CAT}(-1)$ spaces~\cite{roblin}, in that we use a variant of the Borel--Cantelli Lemma. 
However, we use a different variant of the lemma and apply it to a different collection of sets. This approach seems slightly simpler and was also used in~\cite{CZZ2023a}.

\begin{proposition}\label{prop:uniform conical has full measure} Suppose $\Gamma \subset \mathsf{Homeo}(M)$ is a convergence group and $\sigma \colon \Gamma \times M \rightarrow \Rb$ is an expanding coarse-cocycle with $\delta:=\delta_\sigma(\Gamma) < +\infty$. If $\mu$ is a coarse $\sigma$-Patterson--Sullivan measures of dimension $\delta$ and
$$
\sum_{\gamma \in \Gamma} e^{-\delta \norm{\gamma}_\sigma} = + \infty,
$$
 then $\mu\left( \Lambda^{\rm con}(\Gamma) \right) = 1$.
\end{proposition}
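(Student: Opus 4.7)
Fix $\epsilon > 0$ small enough that the Shadow Lemma (Theorem~\ref{thm:shadow lemma}) applies, and set
$$L_\epsilon := \big\{x \in M : x \in \mc S_\epsilon(\gamma) \text{ for infinitely many } \gamma \in \Gamma\big\} = \bigcap_{N \geq 0}\, \bigcup_{\norm\gamma_\sigma > N} \mc S_\epsilon(\gamma).$$
By Lemma~\ref{lem:shadows versus uniform conical limit sets}(2), $L_\epsilon \subset \Lambda^{\mathrm{con}}(\Gamma)$, so it suffices to prove $\mu(L_\epsilon) = 1$. I would proceed in two steps, separating a Borel--Cantelli-style argument from a measure-theoretic bootstrap.

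The first step would establish the following ``positive mass'' lemma, valid for every coarse $\sigma$-Patterson--Sullivan measure $\nu$ of dimension $\delta$: $\nu(L_\epsilon) > 0$. The Shadow Lemma applied to $\nu$ yields the two-sided bound $\nu(\mc S_\epsilon(\gamma)) \asymp e^{-\delta \norm\gamma_\sigma}$, so by the divergence hypothesis $\sum_\gamma \nu(\mc S_\epsilon(\gamma)) = +\infty$. To promote this to $\nu(\limsup_\gamma \mc S_\epsilon(\gamma)) > 0$ I would exploit the overlap structure supplied by Proposition~\ref{prop:properties of fake shadows}: the nesting property~\eqref{item:nesting shadows} shows that if two shadows intersect then the one with larger magnitude is contained in a mild enlargement of the other; the multiplicative estimate~\eqref{item:multiplicative property of intersecting fake shadows} gives $\norm\beta_\sigma \approx \norm\alpha_\sigma + \norm{\alpha^{-1}\beta}_\sigma$ on intersecting shadows with $\norm\alpha_\sigma\leq\norm\beta_\sigma$; and the Vitali-type extraction~\eqref{item:Vish covering lemma} replaces each tail family $\{\mc S_\epsilon(\gamma)\}_{\norm\gamma_\sigma > N}$ by a disjoint sub-collection whose $\epsilon'$-enlargements still cover it. Together with a bookkeeping argument in the spirit of Kochen--Stone, these ingredients yield a uniform lower bound $\nu\big(\bigcup_{\norm\gamma_\sigma > N}\mc S_\epsilon(\gamma)\big)\geq c > 0$ independent of $N$, which passes to $\nu(L_\epsilon)\geq c$ by continuity of measure along the decreasing intersection.

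The second step would then bootstrap from positive to full measure by a $\Gamma$-invariant restriction argument. Assume for contradiction that $\mu(E) > 0$ where $E := M \smallsetminus \Lambda^{\mathrm{con}}(\Gamma)$. By Observation~\ref{obs:invariance of uniform conical limit sets}, $\Lambda^{\mathrm{con}}(\Gamma)$, and hence $E$, is $\Gamma$-invariant; a direct computation using this invariance shows that the normalized restriction $\mu' := \mu|_E/\mu(E)$ inherits the Radon--Nikodym bounds of $\mu$ and is therefore itself a coarse $\sigma$-Patterson--Sullivan measure of dimension $\delta$. Applying the ``positive mass'' lemma to $\mu'$ yields $\mu'(L_\epsilon) > 0$; but $L_\epsilon \subset \Lambda^{\mathrm{con}}(\Gamma) = M \smallsetminus E$ forces $\mu'(L_\epsilon) = 0$, a contradiction. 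Hence $\mu(\Lambda^{\mathrm{con}}(\Gamma)) = 1$.

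The technical heart---and main obstacle---is the Borel--Cantelli variant in the first step. The shadows are far from pairwise independent: intersecting shadows nest rather than cross generically, so the trivial overlap bound $\nu(\mc S_\epsilon(\alpha)\cap\mc S_\epsilon(\beta))\leq\nu(\mc S_\epsilon(\beta))$ is too weak for a direct Kochen--Stone application in the critical divergent regime. One must organize the estimates around the Vitali extraction and the tree-like hierarchy on intersecting shadow configurations imposed by Proposition~\ref{prop:properties of fake shadows}. The second step, by contrast, is a short formal deduction, but it depends essentially on the first step's conclusion applying to \emph{every} Patterson--Sullivan measure of dimension $\delta$, not only to the originally chosen $\mu$.
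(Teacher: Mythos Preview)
Your two-step architecture---a Borel--Cantelli positive-mass lemma followed by the restriction bootstrap---is exactly the paper's strategy, and your second step matches the paper verbatim.

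Where you diverge is in your assessment of Step~1. You write that the trivial bound $\nu(\mc S_\epsilon(\alpha)\cap\mc S_\epsilon(\beta))\leq\nu(\mc S_\epsilon(\beta))$ is too weak for a direct Kochen--Stone argument and that one must instead ``organize the estimates around the Vitali extraction and the tree-like hierarchy.'' In fact the paper shows the trivial bound \emph{is} enough, once paired with the multiplicative estimate~\eqref{item:multiplicative property of intersecting fake shadows}. Enumerate $\Gamma=\{\gamma_n\}$ by increasing magnitude and set $A_n=\mc S_\epsilon(\gamma_n)$. If $n\leq m$ and $A_n\cap A_m\neq\emptyset$, then~\eqref{item:multiplicative property of intersecting fake shadows} gives $\norm{\gamma_m}_\sigma\geq\norm{\gamma_n}_\sigma+\norm{\gamma_n^{-1}\gamma_m}_\sigma-C_2$, so the Shadow Lemma turns the trivial bound into the quasi-independence estimate
\[
\mu(A_n\cap A_m)\leq\mu(A_m)\leq C_1e^{-\delta\norm{\gamma_m}_\sigma}\leq C_3\,e^{-\delta\norm{\gamma_n}_\sigma}e^{-\delta\norm{\gamma_n^{-1}\gamma_m}_\sigma}.
\]
Reindexing the double sum via $\eta=\gamma_n^{-1}\gamma_m$ gives $\sum_{m,n\leq N}\mu(A_n\cap A_m)\lesssim\big(\sum_{n\leq N}e^{-\delta\norm{\gamma_n}_\sigma}\big)\big(\sum_{n\leq f(N)}e^{-\delta\norm{\gamma_n}_\sigma}\big)$, and a short auxiliary lemma (again using~\eqref{item:multiplicative property of intersecting fake shadows}) shows the second factor is comparable to the first. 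Kochen--Stone then applies directly. Neither the nesting property~\eqref{item:nesting shadows} nor the Vitali covering~\eqref{item:Vish covering lemma} is used here; the paper reserves those for the Lebesgue differentiation argument in Section~\ref{sec:uniqueness}.
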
 

\begin{proof}
We first show that $\mu(\Lambda^{\rm con}(\Gamma)) > 0$. 
To accomplish this we use the following variant of the Borel--Cantelli Lemma.

\begin{lemma}[Kochen--Stone Borel--Cantelli Lemma, \cite{KochenStone}]\label{lem: KS BC lemma}
Let $(X,\nu)$ be a finite measure space. If $\{ A_n\}$ is a sequence of measurable sets where 
$$
\sum_{n =1}^\infty \nu(A_n) = +\infty \quad \text{and} \quad \liminf_{N \rightarrow\infty} \frac{ \sum_{1 \le m,n \le N} \nu(A_n \cap A_m)}{(\sum_{n=1}^N \nu(A_n))^2} < +\infty,
$$
then  
$$
\nu\left( \{ x \in M : x \text{ is contained in infinitely many of } A_1, A_2, \dots \}\right) > 0. 
$$
\end{lemma} 

Fix a compatible metric $\dist$ on $\Gamma \sqcup M$, and for $\epsilon > 0$ and $\gamma \in \Gamma$ let $\mc S_\epsilon(\gamma) \subset M$ denote the shadow defined in Section~\ref{sec:basic properties of fake shadows}. Using the Shadow Lemma (Theorem~\ref{thm:shadow lemma}), fix $\epsilon>0$ and a constant $C_1 > 1$ such that 
$$
\frac{1}{C_1} e^{-\delta \norm{\gamma}_\sigma} \leq \mu\Big( \mc S_\epsilon(\gamma) \Big) \leq C_1 e^{-\delta \norm{\gamma}_\sigma}
$$
for all $\gamma \in \Gamma$. Next fix an enumeration $\Gamma = \{ \gamma_n\}$ such that 
$$
\norm{\gamma_1}_\sigma \leq \norm{\gamma_2}_\sigma \leq \dots 
$$
(this is possible by Proposition~\ref{prop:basic properties}\eqref{item:properness}) and let
$$
A_n :=\mc S_\epsilon(\gamma_n).
$$
We will show that the sets $\{A_n\}$ satisfy the hypothesis of the Kochen--Stone lemma. 

One part is easy: by assumption
$$
\sum_{n=1}^\infty \mu(A_n) \geq \frac{1}{C_1} \sum_{\gamma \in \Gamma} e^{-\delta \norm{\gamma}_\sigma} = +\infty.
$$

The other part is only slightly more involved. Using Proposition~\ref{prop:properties of fake shadows}\eqref{item:multiplicative property of intersecting fake shadows} there exists $C_2 > 0$ such that: if $1 \leq n \leq m$ and $A_n \cap A_m \neq \emptyset$, then 
$$
\norm{\gamma_n}_\sigma +\norm{\gamma_n^{-1}\gamma_m}_\sigma \leq \norm{\gamma_m}_\sigma+C_2. 
$$
Hence, in this case, $\norm{\gamma_n^{-1}\gamma_m}_\sigma \leq \norm{\gamma_m}_\sigma+C_2$ and 
$$
\mu( A_n \cap A_m) \leq \mu(A_m) \leq C_1 e^{-\delta \norm{\gamma_m}_\sigma}\leq  C_3e^{-\delta \norm{\gamma_n}_\sigma}e^{-\delta \norm{\gamma_n^{-1}\gamma_m}_\sigma}
$$
where $C_3 := C_1e^{\delta C_2}$. So, if $f(N) := \max\{ n : \norm{\gamma_n} \leq \norm{\gamma_N}+C_2\}$, then
\begin{align*}
\sum_{m,n=1}^N \mu(A_n \cap A_m) & \leq 2 \sum_{1 \leq n \leq m \leq N} \mu(A_n \cap A_m) \leq 2C_3  \sum_{1 \leq n \leq m \leq N} e^{-\delta \norm{\gamma_n}_\sigma}e^{-\delta \norm{\gamma_n^{-1}\gamma_m}_\sigma} \\
& \leq 2C_3 \sum_{n=1}^N e^{-\delta \norm{\gamma_n}_\sigma} \sum_{n=1}^{f(N)}  e^{-\delta \norm{\gamma_n}_\sigma}.
\end{align*}
Thus to apply the Kochen--Stone lemma, it suffices to observe the following. 

\begin{lemma} There exists $C_4 > 0$ such that: 
$$
\sum_{n=1}^{f(N)}  e^{-\delta \norm{\gamma_n}_\sigma} \leq C_4 \sum_{n=1}^N e^{-\delta \norm{\gamma_n}_\sigma}
$$
for all $N \geq 1$. 
\end{lemma}

\begin{proof} Notice if $N < n \leq m \leq f(N)$ and $A_n \cap A_m \neq \emptyset$, then 
$$
\norm{\gamma_n^{-1}\gamma_m}_\sigma \leq \norm{\gamma_m}_\sigma-\norm{\gamma_n}_\sigma +C_2 \leq 2C_2.  
$$
So if $D:=\#\{ \gamma \in \Gamma: \norm{\gamma}_\sigma \leq 2C_2\}$, then 
$$
\sum_{n=N+1}^{f(N)}  e^{-\delta \norm{\gamma_n}_\sigma} \leq C_1\sum_{n=N+1}^{f(N)}  \mu(A_n) \leq C_1 D \mu\left( \bigcup_{n=N+1}^{f(N)} A_n \right) \leq C_1 D. 
$$
Hence 
\begin{equation*}
 \sum_{n=1}^{f(N)}  e^{-\delta \norm{\gamma_n}_\sigma} \leq \left(1+ C_1D e^{\delta \norm{\gamma_1}_\sigma} \right) \sum_{n=1}^N e^{-\delta \norm{\gamma_n}_\sigma}. \qedhere
\end{equation*}
\end{proof} 

So by the Kochen--Stone lemma the set 
$$
X:=\{ x \in M : x \text{ is contained in infinitely many of } A_1, A_2, \dots \}
$$
has positive $\mu$-measure. By Lemma~\ref{lem:shadows versus uniform conical limit sets}, $X \subset \Lambda^{\rm con}(\Gamma)$.
Hence $\mu(\Lambda^{\rm con}(\Gamma)) > 0$.

Now suppose for a contradiction that $\mu(\Lambda^{\rm con}(\Gamma)) <1 $. 
Let $Y : = M -\Lambda^{\rm con}(\Gamma)$ and define a measure $\hat{\mu}$ on $M$ by 
$$
\hat{\mu}(\cdot) = \frac{1}{\mu(Y)} \mu(Y \cap \cdot).
$$ 
This is also a coarse $\sigma$-Patterson--Sullivan measure of dimension $\delta$ and so the argument above implies that 
$$
0 < \hat{\mu}( \Lambda^{\rm con}(\Gamma)) = \mu( Y \cap \Lambda^{\rm con}(\Gamma)) =0.
$$ 
So we have a contradiction. 
\end{proof}


\section{Ergodicity and  uniqueness of Patterson--Sullivan measures}\label{sec:uniqueness}


In this section we establish uniqueness and ergodicity of Patterson--Sullivan measures in the divergent case. Our argument is similar to the proof of statement (g) in~\cite[p.\,22]{roblin}.

For the rest of the section, suppose $\Gamma \subset \mathsf{Homeo}(M)$ is a convergence group and $\sigma \colon \Gamma \times M \rightarrow \Rb$ is an expanding coarse-cocycle with $\delta:=\delta_\sigma(\Gamma) < +\infty$.

\begin{theorem}\label{thm:ergodicity on single M} If $\mu$ is a $C$-coarse $\sigma$-Patterson--Sullivan measure of dimension $\delta$ and
$$
\sum_{\gamma \in \Gamma} e^{-\delta \norm{\gamma}_\sigma} = + \infty,
$$
 then:
 \begin{enumerate}
 \item $\Gamma$ acts ergodically on $(M, \mu)$.
 \item $\mu$ is coarsely unique in the following sense: if $\lambda$ is a $C$-coarse $\sigma$-Patterson--Sullivan measure of dimension $\delta$, then $e^{-4C} \mu \leq \lambda \leq e^{4C}\mu$.
  \item $\mu(\Lambda_\epsilon^{\rm con}(\Gamma)) = 1$ when $\epsilon >0$ is sufficiently small (recall that $\Lambda_\epsilon^{\rm con}(\Gamma)$ was defined in Definition~\ref{defn:of uniform conical points}).  \end{enumerate} 
 \end{theorem}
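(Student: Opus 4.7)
The plan is to first establish part (1), and then deduce parts (3) and (2) from it.

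For part (1), suppose $A\subset M$ is a $\Gamma$-invariant Borel set with $0<\mu(A)<1$; I will derive a contradiction via a Lebesgue-differentiation argument along shadows. The key input is a Vitali-type differentiation theorem: combining the Vitali covering property of shadows from Proposition~\ref{prop:properties of fake shadows}\eqref{item:Vish covering lemma} with the Shadow Lemma (Theorem~\ref{thm:shadow lemma}) to supply a doubling-type bound, one obtains that for $\mu$-a.e.\ $x\in\Lambda^{\mathrm{con}}(\Gamma)$ (which has full $\mu$-measure by Proposition~\ref{prop:uniform conical has full measure}) and any Borel $E\subset M$, along any escaping sequence $\{\gamma_n\}\subset\Gamma$ with $x\in\bigcap_n \mc S_\epsilon(\gamma_n)$ (which exists by Lemma~\ref{lem:shadows versus uniform conical limit sets}; the shadows shrink to $x$ by Proposition~\ref{prop:properties of fake shadows}\eqref{item:collapsing fake shadows}) one has
\[
\lim_{n\to\infty}\frac{\mu(E\cap \mc S_\epsilon(\gamma_n))}{\mu(\mc S_\epsilon(\gamma_n))} = \mathbf 1_E(x).
\]
Taking $E=A$ at $\mu$-a.e.\ $x\in A^c$ forces this ratio to tend to $0$ along some shadow sequence. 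On the other hand, $\Gamma$-invariance of $A$ gives
\[
\mu(A\cap \mc S_\epsilon(\gamma)) = \int_{A\cap (M\setminus B_\epsilon(\gamma^{-1}))} \frac{d(\gamma^{-1})_*\mu}{d\mu}\,d\mu,
\]
which together with Proposition~\ref{prop:properties of fake shadows}\eqref{item:cocycle big on the fake shadow} and the Shadow Lemma yields
\[
\frac{\mu(A\cap \mc S_\epsilon(\gamma))}{\mu(\mc S_\epsilon(\gamma))} \asymp \frac{\mu\bigl(A\cap (M\setminus B_\epsilon(\gamma^{-1}))\bigr)}{\mu(M\setminus B_\epsilon(\gamma^{-1}))}
\]
with constants independent of $\gamma$. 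By Lemma~\ref{lem:big K has positive mass}, choosing $\epsilon>0$ small enough so that $\mu(B_\epsilon(p))<\mu(A)$ for every $p\in\Gamma\sqcup M$, the right-hand side is uniformly bounded below by a positive constant, contradicting the convergence to $0$. Hence $\mu(A)\in\{0,1\}$.

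Part (3) is then immediate: by Observation~\ref{obs:invariance of uniform conical limit sets} and Equation~\eqref{eqn:decomposition of conical limit set}, the sets $\Lambda_{1/n}^{\mathrm{con}}(\Gamma)$ are $\Gamma$-invariant and exhaust $\Lambda^{\mathrm{con}}(\Gamma)$, which has full $\mu$-measure; part (1) then forces one of them to have full measure, and hence so does every smaller one.

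For part (2), let $\lambda$ be another $C$-coarse $\sigma$-Patterson--Sullivan measure of dimension $\delta$. First I would observe that $\nu=\tfrac12(\mu+\lambda)$ is again such a measure (the two-sided bounds on Radon--Nikodym derivatives are preserved under convex combinations); applying the argument below to $f=d\mu/d\nu$ gives $\mu\sim\nu$, hence $\mu\sim\lambda$. With $g := d\lambda/d\mu$ well-defined, combining the two Patterson--Sullivan conditions yields
\[
\frac{g(\gamma^{-1}y)}{g(y)} = \frac{d\gamma_*\lambda/d\lambda}{d\gamma_*\mu/d\mu}(y) \in [e^{-2C},\,e^{2C}]
\]
on a set of full $\mu$-measure, for every $\gamma\in\Gamma$. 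Since $\Gamma$ is countable, the pointwise supremum $g^*(y):=\sup_\gamma g(\gamma y)$ and infimum $g_*(y):=\inf_\gamma g(\gamma y)$ are Borel and $\Gamma$-invariant, hence $\mu$-a.e.\ equal to constants $c^*, c_*$ by part (1). The inequalities $g_*\leq g\leq g^*$ combined with $g^*\leq e^{2C}g$ and $g_*\geq e^{-2C}g$ force $c^*\leq e^{4C}c_*$, and $\int g\,d\mu=1$ forces $c_*\leq 1\leq c^*$; hence $c_*\geq e^{-4C}$ and $c^*\leq e^{4C}$, as desired.

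The principal obstacle is the Vitali-style differentiation statement in the first paragraph: while the Vitali covering property of shadows (Proposition~\ref{prop:properties of fake shadows}\eqref{item:Vish covering lemma}) is the correct tool, turning it into a pointwise density statement requires a careful combination with the Shadow Lemma to obtain the necessary doubling-type estimate $\mu(\mc S_{\epsilon}(\gamma))\asymp\mu(\mc S_{\epsilon'}(\gamma))$. The remainder is a fairly standard combination of the Shadow Lemma, the cocycle identity, and the ergodic-theoretic rigidity of bounded quasi-invariant functions.
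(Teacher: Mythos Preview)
Your approach is essentially the paper's. For (1), both arguments establish a Lebesgue differentiation theorem along shadows using the Vitali covering property together with the Shadow Lemma doubling bound $\mu(\mc S_\epsilon(\gamma))\asymp\mu(\mc S_{\epsilon'}(\gamma))$; this is exactly the paper's Lemma~\ref{lem:Leb diff theorem}. The paper then records a slightly more general ``zooming in'' consequence (Lemma~\ref{lem:zooming in on density points}: $\mu(\gamma_n^{-1}E)\to 1$ for a.e.\ $x\in E$, with $E$ not assumed invariant), which it reuses later in Section~\ref{sec:rigidity of PS measures}; you instead pass directly to the contradiction for invariant $A$ via the change-of-variables identity and Lemma~\ref{lem:big K has positive mass}, which is the same mechanism (you are implicitly using that $\mu$ has no atoms, which follows from Propositions~\ref{prop:some consequences of the shadow lemma} and~\ref{prop:uniform conical has full measure}). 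Part (3) is identical to the paper.

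For Part (2), the paper uses the same convex-combination idea but extracts the constant via level sets $\{r_j\le d\mu_t/d\mu_s\le r_j+\epsilon_j\}$ of the Radon--Nikodym derivative rather than your orbit-sup/inf $g^*,g_*$; both exploit ergodicity plus the $e^{\pm 2C}$ quasi-invariance in the same way. One small gap: your step ``$\mu\sim\nu$, hence $\mu\sim\lambda$'' is not immediate --- from $\mu\sim\nu$ you only get $\lambda\ll\mu$, not the reverse. You also need $\lambda\sim\nu$, which follows by the symmetric argument applied to $d\lambda/d\nu$ (using ergodicity of $\nu$, which you have from Part (1)). The paper sidesteps this by working with $\mu_s,\mu_t$ for $s,t\in(0,1)$, which are automatically mutually absolutely continuous, and then letting $s,t$ tend to $0,1$.
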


The rest of the section is devoted to the proof of Theorem~\ref{thm:ergodicity on single M}. We will prove that $\Gamma$ acts ergodically on $(M, \mu)$ and then use ergodicity to deduce the other claims. To prove ergodicity we will first establish a version of the Lebesgue differentiation theorem (as in ~\cite[p.\,27, Lem.\,2]{roblin}, see also ~\cite[Sublemma 8.7]{DK2022}). 

Fix a compatible metric $\dist$ on $\Gamma \sqcup M$.

\begin{lemma}\label{lem:Leb diff theorem} 
Suppose $\epsilon_0>0$  satisfies the Shadow Lemma (Theorem~\ref{thm:shadow lemma}). If $f \in L^1(M, \mu)$, then for 
$\mu$-almost every $x \in M$ we have
$$
f(x) = \lim_{n \rightarrow \infty} \frac{1}{\mu(\mc S_\epsilon(\gamma_n))} \int_{\mc S_\epsilon(\gamma_n)} f(y) d\mu(y)
$$
for every $0 < \epsilon \leq \epsilon_0$ and escaping sequence $\{\gamma_n\} \subset \Gamma$ with 
$$
x \in \bigcap_{n \geq 1} \mc S_\epsilon(\gamma_n).
$$
\end{lemma}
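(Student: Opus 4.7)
The plan is to mimic the classical proof of Lebesgue's differentiation theorem, with the Vitali-type covering lemma Proposition~\ref{prop:properties of fake shadows}(5) playing the role of the Euclidean Vitali covering and the Shadow Lemma (Theorem~\ref{thm:shadow lemma}) serving as a doubling-type bound. The proof splits into the standard three ingredients: a weak-type maximal inequality, pointwise convergence for continuous functions, and an $L^1$ density argument.

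First, for a fixed $\epsilon \in (0,\epsilon_0]$, I would define the maximal function
\[
M_\epsilon f(x) := \sup\Bigl\{\tfrac{1}{\mu(\mc S_\epsilon(\gamma))}\int_{\mc S_\epsilon(\gamma)}|f|\,d\mu : \gamma \in \Gamma,\ x \in \mc S_\epsilon(\gamma)\Bigr\},
\]
and establish a weak-type $(1,1)$ inequality $\mu\{M_\epsilon f > \lambda\} \leq C_\epsilon\|f\|_1/\lambda$. For each $x$ in the level set one picks a witness shadow $\mc S_\epsilon(\gamma_x)$ on which the mean of $|f|$ exceeds $\lambda$; Proposition~\ref{prop:properties of fake shadows}(5) then yields an auxiliary parameter $\epsilon' < \epsilon$ and a disjoint subfamily $J$ whose $\mc S_{\epsilon'}$-enlargements still cover the level set. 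Theorem~\ref{thm:shadow lemma} gives $\mu(\mc S_{\epsilon'}(\gamma)) \leq C\mu(\mc S_\epsilon(\gamma))$ uniformly in $\gamma$, and combining this doubling-type estimate with the disjointness of $\{\mc S_\epsilon(\gamma) : \gamma \in J\}$ and the witness inequality yields the bound.

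Second, for $f \in C(M)$, Proposition~\ref{prop:properties of fake shadows}(2) shows that $\diam \mc S_\epsilon(\gamma_n) \to 0$ along every escaping sequence, so uniform continuity on the compact space $M$ forces the shadow averages to converge to $f(x)$; this step is simultaneously valid for every $\epsilon$. Third, given $f \in L^1(M,\mu)$, I would approximate $f$ by continuous $g_m \to f$ in $L^1$ at summable rate, decompose the average into a continuous piece and an error, apply the preceding step to the continuous piece, and control the error outside a set of small measure using the weak-type bound on $M_\epsilon(f-g_m)$. A standard Borel--Cantelli argument on the exceptional sets then produces a single full-measure set $X_\epsilon$ on which the limit equals $f(x)$ along every admissible escaping sequence.

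To obtain one full-measure set $X$ valid for all $\epsilon \in (0,\epsilon_0]$ simultaneously, I would take a countable dense $Q \subset (0,\epsilon_0]$ and set $X := \bigcap_{\epsilon \in Q} X_\epsilon$. For an arbitrary $\epsilon$ and an escaping sequence $\{\gamma_n\}$ with $x \in \bigcap_n \mc S_\epsilon(\gamma_n)$, choose $\epsilon_k \in Q$ with $\epsilon_k \leq \epsilon$; then $\mc S_{\epsilon_k}(\gamma_n) \supseteq \mc S_\epsilon(\gamma_n)$, so $x \in \bigcap_n \mc S_{\epsilon_k}(\gamma_n)$, and the shadow-measure comparability from Theorem~\ref{thm:shadow lemma} yields $M_\epsilon h(x) \leq CM_{\epsilon_k}h(x)$, so the $L^1$ approximation argument carries over. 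The main obstacle I anticipate is verifying the weak-type maximal inequality cleanly: the enlargement in the Vitali-type covering is via the parameter $\epsilon'$ rather than a radius dilation, and one must check that the doubling-type estimate from the Shadow Lemma is enough to push the classical argument through.
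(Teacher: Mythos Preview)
Your proposal is correct and follows essentially the same route as the paper's proof: weak-type maximal inequality via the Vitali-type covering of Proposition~\ref{prop:properties of fake shadows}\eqref{item:Vish covering lemma} together with the Shadow Lemma for the doubling-type bound, pointwise convergence for continuous functions using Proposition~\ref{prop:properties of fake shadows}\eqref{item:collapsing fake shadows}, and $L^1$ density. The paper organizes the last step slightly differently---it works with the ``oscillation'' functions $A_jf(x)=\limsup\frac{1}{\mu(\mc S_{\epsilon_j}(\gamma))}\int_{\mc S_{\epsilon_j}(\gamma)}|f(y)-f(x)|\,d\mu(y)$ for the countable scale family $\epsilon_j=\epsilon_0/j$ and shows each $A_jf=0$ a.e., so that the full-measure set is $\bigcap_j\{A_jf=0\}$ and the transfer to arbitrary $\epsilon$ is the one-line bound $\limsup|\text{avg}_\epsilon-f(x)|\le c^{-1}A_jf(x)=0$---whereas you phrase it via maximal functions and Borel--Cantelli on the approximants, but the content is the same.
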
 

\begin{proof} Using Proposition~\ref{prop:properties of fake shadows}\eqref{item:Vish covering lemma}, the proof is very similar to the proof of the Lebesgue differentiation theorem, see e.g. ~\cite[Th.\,3.18]{Folland1999}.

Let $\epsilon_j : =\epsilon_0/j$. For $f \in L^1(M, \mu)$ and $j \geq 1$, define $A_jf, B_jf \colon M \rightarrow [0,+\infty]$ by setting 
$$
A_jf(x) = \lim_{R\to\infty}\sup_{\substack{\norm\gamma_\sigma\geq R\\ x\in\mc S_{\epsilon_j}(\gamma)}} \frac{1}{\mu(\mc S_{\epsilon_j}(\gamma))} \int_{\mc S_{\epsilon_j}(\gamma)} \abs{f(y)-f(x)} d\mu(y)$$
and
$$B_jf(x) =  \lim_{R\to\infty}\sup_{\substack{\norm\gamma_\sigma\geq R\\ x\in\mc S_{\epsilon_j}(\gamma)}} \frac{1}{\mu(\mc S_{\epsilon_j}(\gamma))} \int_{\mc S_{\epsilon_j}(\gamma)} \abs{f(y)} d\mu(y) 
$$
if $x\in\Lambda^{\rm con}_{2\epsilon_j}(\Gamma)$ and $A_jf(x)=B_jf(x)=0$ otherwise. Notice that if $x \in \Lambda^{\rm con}_{2\epsilon_0}(\Gamma)$, then Lemma~\ref{lem:shadows versus uniform conical limit sets} implies that the supremums above involve non-empty sets. Thus the functions $A_jf$, $B_jf$ are indeed non-negative. 

Now fix $f \in L^1(M, \mu)$. We claim that $A_jf(x) = 0$ for $\mu$-almost every $x \in M$. To show this it suffices to fix $\alpha > 0$ and show that 
$$
\mu(\{x \in M : A_jf(x) > \alpha\})=0. 
$$  

Fix $\eta > 0$ and let $g$ be a continuous function with 
$$
\int_M \abs{f-g} d\mu < \eta.
$$
Then 
$$
0 \leq A_jf(x) \leq B_j(f-g)(x) +\abs{f(x)-g(x)} + A_jg(x).
$$
Since $g$ is continuous, Proposition~\ref{prop:properties of fake shadows}\eqref{item:collapsing fake shadows} implies that $A_jg(x) = 0$. Hence 
$$
\{ x \in M : A_jf(x) > \alpha\} \subset N_j \cup \{ x \in M : \abs{f(x)-g(x)} > \alpha/2\}
$$
where 
$$N_j : = \{ x \in M : B_j(f-g)(x) > \alpha/2\}.
$$
The measure of the second set is easy to bound: 
\begin{equation}\label{eqn:Chevy's bound}
\mu( \{ x \in M : \abs{f(x)-g(x)} > \alpha/2\} ) \leq \frac{2}{\alpha} \int_M \abs{f-g} d \mu <  \frac{2\eta}{\alpha}.
\end{equation} 
To bound the measure of the first set, notice that for every $x \in N_j$, there exists $\gamma_x \in \Gamma$ such that $x \in \mc S_{\epsilon_j}(\gamma_x)$ and 
$$
\mu(\mc S_{\epsilon_j}(\gamma_x) )\leq \frac{2}{\alpha} \int_{\mc S_{\epsilon_j}(\gamma_x)} \abs{f(y)-g(y)} d\mu(y).
$$
Using Proposition~\ref{prop:properties of fake shadows}\eqref{item:Vish covering lemma}, we can find $N_j' \subset N_j$ and $\epsilon_j'<\epsilon_j$ such that the shadows 
$\{ \mc S_{\epsilon_j}(\gamma_x) : x \in N_j'\}$ are disjoint (which implies $N_j'$ is countable) and 
$$
N_j \subset \bigcup_{x \in N_j} \mc S_{\epsilon_j}(\gamma_x) \subset \bigcup_{x \in N_j'} \mc S_{\epsilon_j'}(\gamma_x).
$$
Applying the Shadow Lemma (Theorem~\ref{thm:shadow lemma}), there exists a constant $C_j > 1$ such that 
$$
\mu(  \mc S_{\epsilon_j'}(\gamma)) \leq C_j\mu( \mc S_{\epsilon_j}(\gamma))
$$
for all $\gamma \in \Gamma$. Then 
\begin{align*}
\mu(N_j) & \leq \sum_{x \in N_j'} \mu(  \mc S_{\epsilon_j'}(\gamma_x)) \leq C_j \sum_{x \in N_j'} \mu(  \mc S_{\epsilon_j}(\gamma_x)) \\
& \leq \frac{2C_j}{\alpha} \sum_{ x \in N_j'}  \int_{ \mc S_{\epsilon_j}(\gamma_x)} \abs{f(y)-g(y)} d\mu(y) \\
& = \frac{2C_j}{\alpha} \int_{\bigcup_{x \in N_j'}  \mc S_{\epsilon_j}(\gamma_x)} \abs{f(y)-g(y)} d\mu(y) \leq \frac{2C_j\eta}{\alpha}. 
\end{align*}
Then using Equation~\eqref{eqn:Chevy's bound},
$$
\mu(\{x \in M : A_jf(x) > \alpha\}) \leq \frac{2(1+C_j)\eta}{\alpha}.
$$
Since $\eta > 0$ was arbitrary, we have $\mu(\{x \in M : A_jf(x) > \alpha\}) =0$. Since $\alpha > 0$ was arbitrary, we see that $A_jf(x) = 0$ for $\mu$-almost every $x \in M$. 

Next we show that the full $\mu$-measure set $\bigcap_{j \geq 1}\{ x : A_jf(x) = 0\}\cap \Lambda^{\rm con}(\Gamma)$ satisfies the lemma.
To that end, fix $x\in \Lambda^{\rm con}(\Gamma)$ with $A_jf(x) = 0$ for all $j \geq 1$, $\epsilon \in (0,\epsilon_0]$ and an escaping sequence $\{\gamma_n\}$ where 
$$
x \in  \bigcap_{n \geq 1} \mc S_\epsilon(\gamma_n).
$$
Fix $j \geq 1$ such that $\epsilon_j < \epsilon$, so $x\in \Lambda^{\rm con}_{2\epsilon_j}(\Gamma)$. 
Then $\mc S_{\epsilon_j}(\gamma) \supset \mc S_{\epsilon}(\gamma)$ for any $\gamma\in\Gamma$. 
By the Shadow Lemma (Theorem~\ref{thm:shadow lemma}), there exists a constant $c > 0$ such that 
$$
\mu(  \mc S_{\epsilon}(\gamma)) \geq c \mu( \mc S_{\epsilon_j}(\gamma))
$$
for all $\gamma \in \Gamma$. Then 
\begin{align*}
\limsup_{n \rightarrow \infty} & \abs{ f(x) - \frac{1}{\mu(\mc S_\epsilon(\gamma_n))} \int_{\mc S_\epsilon(\gamma_n)} f(y) d\mu(y)} \\
& \leq \limsup_{n \rightarrow \infty} \frac{1}{\mu(\mc S_\epsilon(\gamma_n))} \int_{\mc S_\epsilon(\gamma_n)} \abs{f(x)-f(y)} d\mu(y) \\ 
&\leq  \limsup_{n \rightarrow \infty} \frac{c^{-1}}{\mu(\mc S_{\epsilon_j}(\gamma_n))} \int_{\mc S_{\epsilon_j}(\gamma_n)} \abs{f(x)-f(y)} d\mu(y) \\
& \leq c^{-1} A_jf(x) = 0. \qedhere
\end{align*}

\end{proof} 

Next we use Lemma~\ref{lem:Leb diff theorem} to prove the following. 

\begin{lemma}\label{lem:zooming in on density points} Suppose $\epsilon_0>0$  satisfies the Shadow Lemma (Theorem~\ref{thm:shadow lemma}).
If $E \subset M$ is measurable, then for 
$\mu$-almost every $x \in E$ we have
$$
1 = \lim_{n \rightarrow \infty} \mu(\gamma_n^{-1}E)
$$
for every $0 < \epsilon \leq \epsilon_0$ and  escaping sequence $\{\gamma_n\} \subset \Gamma$ with 
$$
x \in \bigcap_{n \geq 1} \mc S_\epsilon(\gamma_n).
$$
\end{lemma}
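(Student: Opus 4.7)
The strategy is to apply Lemma~\ref{lem:Leb diff theorem} to the indicator $f=\mathbf{1}_E$. For $\mu$-almost every $x\in E$, every $\epsilon\in(0,\epsilon_0]$, and every escaping sequence $\{\gamma_n\}\subset\Gamma$ with $x\in\bigcap_n\mc S_\epsilon(\gamma_n)$, this yields
$$
\lim_{n\to\infty}\frac{\mu(E\cap \mc S_\epsilon(\gamma_n))}{\mu(\mc S_\epsilon(\gamma_n))}=1,
$$
which combined with the Shadow Lemma (Theorem~\ref{thm:shadow lemma}) produces the estimate $\mu(E^c\cap\mc S_\epsilon(\gamma_n)) = o(e^{-\delta\norm{\gamma_n}_\sigma})$.

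The next step is to transfer this shadow-density statement to $\mu(\gamma_n^{-1}E)=(\gamma_n)_*\mu(E)=\int_E R_n\,d\mu$, where $R_n:=d(\gamma_n)_*\mu/d\mu$. By Proposition~\ref{prop:properties of fake shadows}\eqref{item:cocycle big on the fake shadow} together with Observation~\ref{obs:triangle inequality}\eqref{item:inverses}, there is a constant $c=c(\epsilon)>0$ such that $R_n$ is trapped in $[c^{-1},c]\cdot e^{\delta\norm{\gamma_n}_\sigma}$ on $\mc S_\epsilon(\gamma_n)$. This gives $\int_{E^c\cap\mc S_\epsilon(\gamma_n)}R_n\,d\mu=o(1)$. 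Using the identity $\int_{\mc S_\epsilon(\gamma_n)}R_n\,d\mu=\mu(M\setminus B_\epsilon(\gamma_n^{-1}))$ and splitting $\mu(\gamma_n^{-1}E)$ along $\mc S_\epsilon(\gamma_n)$ then yields
$$
\mu(\gamma_n^{-1}E)\ =\ \mu\bigl(M\setminus B_\epsilon(\gamma_n^{-1})\bigr)+\mu\bigl(\gamma_n^{-1}E\cap B_\epsilon(\gamma_n^{-1})\bigr)-o(1).
$$

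The main obstacle is to show that the residual term $\mu(\gamma_n^{-1}E^c\cap B_\epsilon(\gamma_n^{-1}))$ tends to zero, for once this is done the display above rearranges into $\mu(\gamma_n^{-1}E)\to 1$. Passing to a subsequence with $\gamma_n^{-1}\to q\in M$, I would exploit the weak-$*$ convergence $(\gamma_n)_*\mu\to\delta_x$, which follows from dominated convergence once we know $\mu(\{q\})=0$: in the divergent case $\mu$ is supported on the conical limit set by Proposition~\ref{prop:uniform conical has full measure}, and conical points carry no atoms by Proposition~\ref{prop:some consequences of the shadow lemma}(2). The residual mass equals $(\gamma_n)_*\mu\bigl(E^c\cap(M\setminus\mc S_\epsilon(\gamma_n))\bigr)$, and the delicate point is to combine the weak-$*$ concentration of $(\gamma_n)_*\mu$ at $x\in E$ with the measure-theoretic density of $E$ at $x$ supplied by Lemma~\ref{lem:Leb diff theorem} to see that this contribution vanishes; the subtlety is precisely that the density is measure-theoretic (via shadows) rather than topological, so the Portmanteau theorem alone does not suffice.
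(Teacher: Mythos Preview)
Your setup through the display
\[
\mu(\gamma_n^{-1}E)=\mu\bigl(M\setminus B_\epsilon(\gamma_n^{-1})\bigr)+\mu\bigl(\gamma_n^{-1}E\cap B_\epsilon(\gamma_n^{-1})\bigr)-o(1)
\]
is correct and matches the paper's argument. But you then correctly identify the remaining obstacle---controlling $\mu(\gamma_n^{-1}E^c\cap B_\epsilon(\gamma_n^{-1}))$---and explicitly concede that you do not know how to close it: weak-$*$ convergence $(\gamma_n)_*\mu\to\delta_x$ says nothing about the mass of the merely measurable set $E^c$, and the shadow-density at $x$ only controls what happens \emph{inside} $\mc S_\epsilon(\gamma_n)$, not in $B_\epsilon(\gamma_n^{-1})$. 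So as written, this is a genuine gap.

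The missing idea is that Lemma~\ref{lem:Leb diff theorem} gives a \emph{single} full-measure set $N$ that works for \emph{every} $\epsilon'\in(0,\epsilon_0]$, and shadows are nested: $\mc S_\epsilon(\gamma)\subset\mc S_{\epsilon/j}(\gamma)$ for all $j\ge 1$. Hence if $x\in N\cap E$ lies in $\bigcap_n\mc S_\epsilon(\gamma_n)$, it automatically lies in $\bigcap_n\mc S_{\epsilon/j}(\gamma_n)$ as well, so your entire argument can be rerun with $\epsilon/j$ in place of $\epsilon$. This yields, for each fixed $j$,
\[
\mu\bigl(\gamma_n^{-1}E^c\cap (M\setminus B_{\epsilon/j}(\gamma_n^{-1}))\bigr)\xrightarrow[n\to\infty]{}0,
\]
and therefore $\limsup_n\mu(\gamma_n^{-1}E^c)\le \sup_n\mu\bigl(B_{\epsilon/j}(\gamma_n^{-1})\bigr)$. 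Since $\mu$ has no atoms (Proposition~\ref{prop:uniform conical has full measure} plus Proposition~\ref{prop:some consequences of the shadow lemma}(2)) and $\Gamma\sqcup M$ is compact, $\sup_p\mu(B_{\epsilon/j}(p))\to 0$ as $j\to\infty$. Letting $j\to\infty$ finishes the proof. The weak-$*$ route is a dead end here; the shrinking-shadow trick is what makes the argument go through.
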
 

\begin{proof} Applying Lemma~\ref{lem:Leb diff theorem} to $1_E$, there is a full $\mu$-measure set $N \subset M$ such that 
$$
1 =\lim_{n \rightarrow \infty} \frac{1}{\mu(\mc S_\epsilon(\gamma_n))} \int_{\mc S_\epsilon(\gamma_n)} 1_E(y) d\mu(y)= \lim_{n \rightarrow \infty} \frac{\mu(E \cap \mc S_{\epsilon}(\gamma_n)) }{\mu(\mc S_{\epsilon}(\gamma_n))}
$$
whenever $x \in N \cap E$,  $0 < \epsilon \leq \epsilon_0$ and $\{\gamma_n\} \subset \Gamma$ is an escaping sequence with 
$$
x \in \bigcap_{n \geq 1} \mc S_{\epsilon}(\gamma_n).
$$

We claim that the full $\mu$-measure set $N$ satisfies the lemma. To that end, fix $x \in E \cap N$, $0 < \epsilon \leq \epsilon_0$ and  escaping sequence $\{\gamma_n\} \subset \Gamma$ with 
$$
x \in \bigcap_{n \geq 1} \mc S_\epsilon(\gamma_n).
$$

Notice that 
$$
x \in \bigcap_{n \geq 1} \mc S_{\epsilon/j}(\gamma_n).
$$
for all $j \geq 1$, since   $\mc S_{\epsilon}(\gamma) \subset \mc S_{\epsilon/j}(\gamma)$. So we have 
$$
1 = \lim_{n \rightarrow \infty} \frac{\mu(E \cap \mc S_{\epsilon/j}(\gamma_n)) }{\mu(\mc S_{\epsilon/j}(\gamma_n))}
$$
for every $j \geq 1$. Now
\begin{align*}
\mu & (E \cap \mc S_{\epsilon/j}(\gamma_n))  = \mu(\mc S_{\epsilon/j}(\gamma_n))-\mu(E^c \cap \mc S_{\epsilon/j}(\gamma_n)) \\
& =  \mu(\mc S_{\epsilon/j}(\gamma_n))-(\gamma_n^{-1})_*\mu(\gamma_n^{-1}E^c \cap \gamma_n^{-1}\mc S_{\epsilon/j}(\gamma_n)). 
\end{align*}
Hence 
\begin{align*}
0 = \lim_{n \rightarrow \infty} \frac{ (\gamma_n^{-1})_*\mu(\gamma_n^{-1}E^c \cap \gamma_n^{-1}\mc S_{\epsilon/j}(\gamma_n))}{(\gamma_n^{-1})_*\mu(\gamma_n^{-1}\mc S_{\epsilon/j}(\gamma_n))}.
\end{align*}
By Proposition~\ref{prop:properties of fake shadows}\eqref{item:cocycle big on the fake shadow} , there exists $C_j > 1$ (independent of $n$) such that 
$$
\frac{1}{C_j} e^{-\norm{\gamma_n}_\sigma} \leq \frac{d(\gamma_n^{-1})_*\mu}{d\mu} \leq C_j e^{-\norm{\gamma_n}_\sigma} 
$$
almost everywhere on $ \gamma_n^{-1}\mc S_{\epsilon/j}(\gamma_n)$. Thus
\begin{align*}
0 = \lim_{n \rightarrow \infty} \frac{ \mu(\gamma_n^{-1}E^c \cap \gamma_n^{-1}\mc S_{\epsilon/j}(\gamma_n))}{\mu(\gamma_n^{-1}\mc S_{\epsilon/j}(\gamma_n))}.
\end{align*}
Recall that $\gamma_n^{-1}\mc S_{\epsilon/j}(\gamma_n) = M - B_{\epsilon/j}(\gamma_n)$. Further, by Proposition~\ref{prop:uniform conical has full measure} and Proposition~\ref{prop:some consequences of the shadow lemma}, $\mu$ has no atoms. Hence 
$$
\lim_{j \rightarrow \infty} \inf_{n \geq 1} \mu\left( \gamma_n^{-1}\mc S_{\epsilon/j}(\gamma_n)) \right) = 1.
$$
Thus $\mu(\gamma_n^{-1}E^c) \rightarrow 0$, which implies that $\mu(\gamma_n^{-1}E) \rightarrow 1$. 
\end{proof} 

Now we are ready to prove the three assertions in Theorem~\ref{thm:ergodicity on single M}.

\begin{lemma}  $\Gamma$ acts ergodically on $(M, \mu)$. \end{lemma}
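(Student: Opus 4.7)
The proof will be very short, since all the heavy lifting has been done in Lemma \ref{lem:zooming in on density points} and Proposition \ref{prop:uniform conical has full measure}. The plan is as follows. Let $E \subset M$ be a measurable $\Gamma$-invariant set with $\mu(E) > 0$; I will show $\mu(E) = 1$.

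I first locate a ``good'' point $x \in E$. By Proposition \ref{prop:uniform conical has full measure}, $\mu$-almost every point of $E$ is a conical limit point, and by the decomposition in Equation \eqref{eqn:decomposition of conical limit set} together with countable subadditivity, there exists some $\epsilon > 0$---which I choose small enough to apply both the Shadow Lemma (Theorem \ref{thm:shadow lemma}) and Lemma \ref{lem:zooming in on density points}---such that $\mu(E \cap \Lambda^{\rm con}_{2\epsilon}(\Gamma)) > 0$. Intersecting with the full-measure set from Lemma \ref{lem:zooming in on density points}, I obtain a point $x \in E \cap \Lambda^{\rm con}_{2\epsilon}(\Gamma)$ at which the conclusion of Lemma \ref{lem:zooming in on density points} holds.

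Next I apply Lemma \ref{lem:shadows versus uniform conical limit sets} with radius $\epsilon < 2\epsilon$ to produce an escaping sequence $\{\gamma_n\} \subset \Gamma$ with $x \in \bigcap_n \mc S_\epsilon(\gamma_n)$. Lemma \ref{lem:zooming in on density points} then yields $\mu(\gamma_n^{-1} E) \to 1$. Since $E$ is $\Gamma$-invariant, $\gamma_n^{-1} E = E$, so $\mu(E) = 1$, which proves ergodicity.

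There is no genuine obstacle; the only care required is to match the various $\epsilon$-thresholds so that the conical density point, the escaping sequence guaranteed by uniform conicality, and the differentiation-type Lemma \ref{lem:zooming in on density points} can be applied simultaneously.
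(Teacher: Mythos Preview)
Your proof is correct and follows essentially the same approach as the paper. The paper's proof is a single sentence—``Lemma~\ref{lem:zooming in on density points} implies that any $\Gamma$-invariant set with positive $\mu$-measure has full measure''—and you have simply unpacked the details of that implication: locating a density point $x \in E$ which is also $2\epsilon$-uniformly conical (via Proposition~\ref{prop:uniform conical has full measure} and Equation~\eqref{eqn:decomposition of conical limit set}), producing an escaping sequence through Lemma~\ref{lem:shadows versus uniform conical limit sets}, and then invoking $\Gamma$-invariance of $E$.
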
 

\begin{proof} Lemma~\ref{lem:zooming in on density points} implies that any  $\Gamma$-invariant set  with positive $\mu$-measure has full measure. 
\end{proof} 

\begin{lemma}  If $\lambda$ is a $C$-coarse $\sigma$-Patterson--Sullivan measure of dimension $\delta$, then $e^{-4C} \mu \leq \lambda \leq e^{4C}\mu$.
\end{lemma}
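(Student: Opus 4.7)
The plan is to first establish mutual absolute continuity of $\mu$ and $\lambda$, then analyze the Radon--Nikodym derivative using the ergodicity assertion (1) proved just above, which in fact applies to every $C$-coarse $\sigma$-Patterson--Sullivan measure of dimension $\delta$ (the divergence hypothesis depends only on $\sigma$, not on the measure).

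For absolute continuity, I would decompose $\lambda = \lambda_a + \lambda_s$ with $\lambda_a \ll \mu$ and $\lambda_s \perp \mu$. The PS bound forces $\gamma_*\mu \sim \mu$ for every $\gamma$ (since $\sigma(\gamma^{-1},\cdot)$ is bounded on the compact space $M$), so this decomposition is $\Gamma$-equivariant; if $N$ is a $\mu$-null carrier of $\lambda_s$, then $\tilde N := \bigcup_{\gamma\in\Gamma}\gamma N$ is a $\Gamma$-invariant $\mu$-null set still carrying $\lambda_s$. Applying part (1) to $\lambda$ itself gives $\lambda(\tilde N) \in \{0,1\}$, i.e., either $\lambda \ll \mu$ or $\lambda \perp \mu$. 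I rule out the singular alternative via the convex combination $\rho := \tfrac{1}{2}(\mu+\lambda)$, which is readily verified to be a $C$-coarse $\sigma$-Patterson--Sullivan measure of dimension $\delta$ (the PS bounds are preserved by convex combinations): applying part (1) to $\rho$ shows it is ergodic, but if $\lambda \perp \mu$ then $\rho(\tilde N) = 1/2$, contradicting ergodicity of $\rho$ on the $\Gamma$-invariant set $\tilde N$.

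With mutual absolute continuity in hand, set $h := d\lambda/d\mu$, which is $\mu$-a.e.\ strictly positive. A direct change-of-variables computation yields
$$
\frac{h\circ\gamma^{-1}}{h} = \frac{d\gamma_*\lambda/d\lambda}{d\gamma_*\mu/d\mu} \in [e^{-2C}, e^{2C}] \quad \mu\text{-a.e.}
$$
for every $\gamma\in\Gamma$, since the factors $e^{-\delta\sigma(\gamma^{-1},\cdot)}$ cancel. A level-set argument then finishes the proof: setting $M_0 := \mu\text{-ess sup }h$, for any $0 < a < M_0$ the set $\{h > a\}$ has positive $\mu$-measure, its $\Gamma$-saturation is $\Gamma$-invariant and contained in $\{h > ae^{-2C}\}$ by the quasi-invariance bound, and hence has full $\mu$-measure by ergodicity of $\mu$. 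Letting $a \nearrow M_0$ yields $h \geq M_0 e^{-2C}$ $\mu$-a.e., so $h \in [M_0 e^{-2C}, M_0]$ $\mu$-a.e.; integrating against $\mu$ and using $\int h\,d\mu = \lambda(M) = 1 = \mu(M)$ forces $M_0 e^{-2C} \leq 1 \leq M_0$, whence $h \in [e^{-2C}, e^{2C}] \subseteq [e^{-4C}, e^{4C}]$ as required.

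The main obstacle is excluding mutual singularity: ergodicity of $\mu$ and of $\lambda$ individually does not suffice, since disjointly supported ergodic measures are abundant in general. The clean way around this is precisely to use that part (1) applies to every $C$-coarse $\sigma$-PS measure of dimension $\delta$, so that it applies to the ``mixing'' measure $\rho$ and forbids it from having a $\Gamma$-invariant set of mass exactly $1/2$; once this is arranged, the rest of the proof is a standard quasi-invariance plus ergodicity package.
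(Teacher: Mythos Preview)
Your proof is correct and shares the paper's key idea (convex combinations of $C$-coarse PS measures are again $C$-coarse PS measures, so ergodicity from part~(1) applies to them), but the two arguments deploy this idea differently. The paper never establishes $\lambda\ll\mu$ directly: instead it works with $\mu_t=(1-t)\mu+t\lambda$ for $t\in(0,1)$, which are trivially mutually absolutely continuous, runs the quasi-invariance/level-set argument on $d\mu_t/d\mu_s$, and only at the end lets $s,t\to 0,1$. You instead use the mixture $\rho=\tfrac12(\mu+\lambda)$ solely to rule out the singular alternative in the Lebesgue decomposition, and then work with $h=d\lambda/d\mu$ directly. Your level-set argument via $M_0=\operatorname{ess\,sup}h$ is also sharper than the paper's ``pick a short interval in the range of $h$'' version: you obtain $h\in[M_0e^{-2C},M_0]$ with $M_0\in[1,e^{2C}]$, hence $h\in[e^{-2C},e^{2C}]$, whereas the paper's argument only yields $h\in[e^{-4C},e^{4C}]$. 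So your route is slightly more direct and in fact proves the stronger bound $e^{-2C}\mu\le\lambda\le e^{2C}\mu$; the stated lemma follows a fortiori. (One small aside: $\gamma_*\mu\sim\mu$ is already part of Definition~\ref{defn:patterson-sullivan}, so you need not invoke boundedness of $\sigma(\gamma^{-1},\cdot)$.)
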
 

\begin{proof} For any $t \in [0,1]$ the measure $\mu_t := (1-t)\mu+t\lambda$ is also a $C$-coarse $\sigma$-Patterson--Sullivan measure of dimension $\delta$. 
Indeed, for any $\gamma\in\Gamma$, letting $f(x)=e^{-\delta\sigma(\gamma^{-1},x)}$ we have $C^{-1}f\mu\leq\gamma_*\mu\leq Cf\mu$ and $C^{-1}f\lambda\leq\gamma_*\lambda\leq Cf\lambda$, so 
\[\gamma_*\mu_t=\gamma_*((1-t)\mu+t\lambda)=(1-t)\gamma_*\mu+t\gamma_*\lambda\leq (1-t)Cf\mu+tCf\lambda=Cf\mu_t,\]
and similarly $\gamma_*\mu_t\geq C^{-1}f\mu_t$.

Fix $s,t \in (0,1)$. Then the measures $\mu_s$ and $\mu_t$ are absolutely continuous. Since $\mu_t$ and $\mu_s$ are both coarse Patterson--Sullivan measures of the same dimension, the Radon-Nikodym derivative $\frac{d \mu_t}{d\mu_s}$ is coarsely $\Gamma$-invariant, more precisely: for any $\gamma \in \Gamma$ we have
$$
e^{-2C} \frac{d \mu_t}{d\mu_s} \leq \frac{d \mu_t}{d\mu_s} \circ \gamma \leq e^{2C} \frac{d \mu_t}{d\mu_s}
$$
$\mu_s$-almost everywhere. 

Next fix $\epsilon_j \searrow 0$. Then for each $j$ there exists $r_j \in \Rb$ such that the set $A_j:=\{ r_j \leq \frac{d \mu_t}{d\mu_s} \leq r_j + \epsilon_j\}$ has positive $\mu_s$-measure. Then $\Gamma \cdot A_j$ is $\Gamma$-invariant and hence, by ergodicity, must have full measure. Further,  
$$
\Gamma \cdot A_j \subset \left\{ e^{-2 C}r_j \leq \frac{d \mu_t}{d\mu_s} \leq e^{2C} r_j +e^{2C} \epsilon_j\right\}
$$
and so 
$$
e^{-2C}r_jd\mu_s \leq d \mu_t \leq (e^{2C} r_j +e^{2C}\epsilon_j )d\mu_s.
$$
Since $\mu_t$ and $\mu_s$ are both probability measures, we must have 
$$
e^{-2C}r_j \leq 1 \quad \text{and} \quad e^{2C} r_j +e^{2C} \epsilon_j \geq 1
$$
for all $j$. Thus any limit point of $\{r_j\}$ is in $[e^{-2C}, e^{2C}]$, which implies that 
$$
e^{-4C}d\mu_s \leq d \mu_t \leq e^{4C} d\mu_s.
$$

Since $s,t \in (0,1)$ were arbitrary, we then see that $e^{-4C} \mu \leq \lambda \leq e^{4C}\mu$.
\end{proof}

\begin{lemma} $\mu(\Lambda_\epsilon^{\rm con}(\Gamma)) = 1$ when $\epsilon >0$ is sufficiently small. \end{lemma}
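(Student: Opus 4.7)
The plan is to chain together three facts already established: (i) $\mu$ gives full measure to $\Lambda^{\rm con}(\Gamma)$ in the divergent case, (ii) the uniform conical limit sets exhaust $\Lambda^{\rm con}(\Gamma)$ as $\epsilon \searrow 0$, and (iii) each $\Lambda^{\rm con}_\epsilon(\Gamma)$ is $\Gamma$-invariant, so ergodicity forces a $0$/$1$ dichotomy for its $\mu$-measure.

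More precisely, I will first invoke Proposition~\ref{prop:uniform conical has full measure} to get $\mu(\Lambda^{\rm con}(\Gamma))=1$. Next I will use the identity
\[
\Lambda^{\rm con}(\Gamma) = \bigcup_{n\geq 1} \Lambda^{\rm con}_{1/n}(\Gamma)
\]
from Equation~\eqref{eqn:decomposition of conical limit set}, noting that this is an increasing union since $\Lambda^{\rm con}_\epsilon(\Gamma) \subset \Lambda^{\rm con}_{\epsilon'}(\Gamma)$ whenever $\epsilon' \leq \epsilon$ (immediate from Definition~\ref{defn:of uniform conical points}). By continuity of the probability measure $\mu$ from below, there is some $n_0\geq 1$ with $\mu(\Lambda^{\rm con}_{1/n_0}(\Gamma))>0$.

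To upgrade positive measure to full measure, I will combine Observation~\ref{obs:invariance of uniform conical limit sets}, which says $\Lambda^{\rm con}_{1/n_0}(\Gamma)$ is $\Gamma$-invariant, with the ergodicity of the $\Gamma$-action on $(M,\mu)$ proved in the previous lemma. This forces $\mu(\Lambda^{\rm con}_{1/n_0}(\Gamma))=1$. Finally, for any $\epsilon\le 1/n_0$ we have $\Lambda^{\rm con}_{1/n_0}(\Gamma)\subset \Lambda^{\rm con}_\epsilon(\Gamma)$, so $\mu(\Lambda^{\rm con}_\epsilon(\Gamma))=1$, which is exactly the claim.

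There is no real obstacle here: every ingredient has already been assembled, and the argument is essentially a one-line application of ``increasing union $+$ ergodic $0/1$ law''. The only point worth being careful about is the direction of monotonicity of $\Lambda^{\rm con}_\epsilon(\Gamma)$ in $\epsilon$ and the fact that Proposition~\ref{prop:uniform conical has full measure} is applicable in the divergent setting assumed in Theorem~\ref{thm:ergodicity on single M}, both of which are immediate.
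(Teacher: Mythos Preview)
Your proof is correct and follows essentially the same approach as the paper: invoke Proposition~\ref{prop:uniform conical has full measure} to get full measure on $\Lambda^{\rm con}(\Gamma)$, use the exhaustion $\Lambda^{\rm con}(\Gamma)=\bigcup_{\epsilon>0}\Lambda^{\rm con}_\epsilon(\Gamma)$ to find some $\epsilon$ with positive measure, then apply $\Gamma$-invariance (Observation~\ref{obs:invariance of uniform conical limit sets}) and ergodicity to upgrade to full measure. Your version is slightly more explicit about the monotonicity in $\epsilon$ and the continuity-from-below step, but the argument is identical.
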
 

\begin{proof} Proposition~\ref{prop:uniform conical has full measure} implies that \hbox{$\mu(\Lambda^{\rm con}(\Gamma)) = 1$}. Since $\Lambda^{\rm con}(\Gamma)=\bigcup_{\epsilon>0}\Lambda^{\rm con}_\epsilon(\Gamma)$, this implies that  \hbox{$\mu(\Lambda_\epsilon^{\rm con}(\Gamma)) > 0$} when $\epsilon>0$ is sufficiently small. 
By Observation~\ref{obs:invariance of uniform conical limit sets}, the set $\Lambda_\epsilon^{\rm con}(\Gamma)$ is $\Gamma$-invariant. 
Hence, by ergodicity, $\mu(\Lambda_\epsilon^{\rm con}(\Gamma)) = 1$ for all sufficiently small $\epsilon>0$.
\end{proof}

\section{BMS measures on $M^{(2)}$, conservativity and dissipativity}\label{sec: action on M2}

Suppose $\Gamma\subset \mathsf{Homeo}(M)$ is a convergence group and, as before, let 
$$
M^{(2)} := \{ (x,y) \in M^2 : x \neq y\}.
$$
In this section we study the action of $\Gamma$ on $M^{(2)}$. 

\subsection{BMS measures}\label{sec:BMS on M2}

We first observe that a coarse GPS system can be used to produce a $\Gamma$-invariant measure on $M^{(2)}$. To that end, suppose $(\sigma,\bar\sigma,G)$ is a coarse GPS system, and that  $\mu$ and $\bar\mu$ are 
coarse Patterson--Sullivan measures of dimension $\delta\geq 0$ for $\sigma$ and $\bar\sigma$ respectively.

We use a lemma from~\cite{BF2017} to show that $\bar\mu \otimes \mu$ can be scaled to become $\Gamma$-invariant. Note that this lemma is unnecessary in the continuous case, i.e.\ when $\kappa=0$ and $G$ is continuous in Definition~\ref{def:GPS}.

\begin{lemma}\label{lem:finding the good G} There exists a Borel measurable function $\tilde{G} \colon M^{(2)} \to \Rb$ such that $(\sigma, \bar \sigma, \tilde G)$ is a coarse GPS system and the measure 
\[\nu : = e^{\delta \tilde{G}}\bar\mu \otimes \mu|_{M^{(2)}}.\]
on $M^{(2)}$ is locally finite and $\Gamma$-invariant. 
We call $\nu$ a \emph{Bowen--Margulis--Sullivan (BMS) measure of dimension $\delta$} on $M^{(2)}$ associated to $(\sigma,\bar\sigma,G,\mu,\bar\mu)$. 
\end{lemma}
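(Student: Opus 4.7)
The plan is to exhibit $\tilde G$ as a bounded measurable correction of $G$, using a cohomology-vanishing lemma from~\cite{BF2017} to absorb the approximation errors built into the coarse Patterson--Sullivan and coarse GPS properties.

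First, combining the coarse $\sigma$- and $\bar\sigma$-Patterson--Sullivan properties of $\mu$ and $\bar\mu$ with the coarse GPS identity applied to $\gamma^{-1}$, one obtains
\[
e^{-2C-\delta\kappa} \leq e^{\delta\left(G(x,y)-G(\gamma^{-1}x,\gamma^{-1}y)\right)} \cdot \frac{d\gamma_*(\bar\mu\otimes\mu)}{d(\bar\mu\otimes\mu)}(x,y) \leq e^{2C+\delta\kappa}
\]
for every $\gamma\in\Gamma$ and $\bar\mu\otimes\mu$-a.e. $(x,y)\in M^{(2)}$. Hence the candidate measure $\nu_0 := e^{\delta G}\bar\mu\otimes\mu|_{M^{(2)}}$ is $\Gamma$-quasi-invariant, and its log-Radon--Nikodym cocycle
\[
c(\gamma,q) := \log \frac{d\gamma_*\nu_0}{d\nu_0}(q), \qquad \norm{c(\gamma,\cdot)}_\infty \leq 2C+\delta\kappa,
\]
is a uniformly bounded measurable cocycle on $\Gamma\times M^{(2)}$. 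Local finiteness of $\nu_0$ is immediate, since $G$ is locally bounded on $M^{(2)}$ by hypothesis and $\bar\mu\otimes\mu$ is finite.

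Next, I would invoke the lemma from~\cite{BF2017} to produce a bounded measurable function $h\colon M^{(2)} \to \Rb$ solving the coboundary equation
\[
c(\gamma,q) = h(\gamma^{-1}q) - h(q)
\]
for $\bar\mu\otimes\mu$-a.e.\ $q$ and every $\gamma\in\Gamma$. Setting $\tilde G := G + h/\delta$ and $\nu := e^{\delta\tilde G}\bar\mu\otimes\mu|_{M^{(2)}} = e^h\nu_0$, a direct computation using the choice of $h$ then yields $\gamma_*\nu = \nu$ for every $\gamma\in\Gamma$.

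The remaining claims follow quickly. Since $h$ is bounded, $\tilde G$ differs from $G$ by a bounded function; hence $(\sigma,\bar\sigma,\tilde G)$ is a coarse GPS system with defect at most $\kappa + 2\norm{h}_\infty/\delta$, and $\nu = e^h\nu_0$ inherits local finiteness from $\nu_0$. The main technical obstacle lies in the middle step: applying the Bader--Furman lemma correctly. One must assemble a jointly measurable representative of $c$ on $\Gamma \times M^{(2)}$---possible because $\Gamma$ is countable---and verify that $c$ lies within bounded distance of the measurable coboundary of $\delta G$, which is exactly the regime where the lemma produces the required primitive $h$ without any amenability hypothesis on $\Gamma$.
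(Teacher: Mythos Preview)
Your approach is essentially the same as the paper's, but there is one genuine gap: you use $G$ directly to form $\nu_0 = e^{\delta G}\bar\mu\otimes\mu$, and later set $\tilde G = G + h/\delta$. However, in the definition of a coarse GPS system (Definition~\ref{def:GPS}), $G$ is only assumed to be \emph{locally bounded}, not Borel measurable. Without measurability of $G$, the expression $e^{\delta G}\bar\mu\otimes\mu$ does not define a measure, the Radon--Nikodym derivative $c(\gamma,\cdot)$ makes no sense, and your $\tilde G$ would fail to be Borel measurable as the statement requires. The paper handles this by first replacing $G$ with its upper regularization $H(x,y):=\limsup_{p\to x,\,q\to y}G(p,q)$, which is upper semicontinuous (hence Borel) and still satisfies the coarse GPS identity with the same $\kappa$; only then does it form $\nu_0=e^{\delta H}\bar\mu\otimes\mu$ and run the Bader--Furman coboundary argument. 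Once you insert this step, your outline coincides with the paper's proof.

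A minor additional slip: with your sign conventions, $\Gamma$-invariance of $e^h\nu_0$ requires $c(\gamma,q)=h(q)-h(\gamma^{-1}q)$, not $c(\gamma,q)=h(\gamma^{-1}q)-h(q)$. This is harmless (replace $h$ by $-h$), but worth correcting.
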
 

\begin{proof} Define $H \colon M^{(2)} \to \Rb$ by  
$$
H(x,y) = \limsup_{p \rightarrow x, q \rightarrow y} G(p,q).
$$
Since $(\sigma, \bar \sigma, G)$ is a coarse GPS system, we see that $(\sigma, \bar \sigma, H)$ is a coarse GPS system. 
By construction $H$ is upper semicontinuous and hence Borel measurable (while $G$ may not be).

Let $\nu_0 : = e^{\delta H} \bar\mu \otimes \mu$ and
$$
\rho(\gamma, x, y) := -\frac{1}{\delta} \log \frac{d\gamma^{-1}_*\nu_0}{d\nu_0}(x,y). 
$$
By uniqueness of Radon--Nikodym derivatives, there is a full $(\bar\mu\otimes\mu)$-measure $\Gamma$-invariant Borel measurable subset $E\subset M^{(2)}$ such that $\rho(\gamma, x, y)$ is defined for all $\gamma\in\Gamma$ and $(x,y)\in E$,
and $\rho(\gamma\gamma',x,y)=\rho(\gamma,\gamma'x,\gamma'y)+\rho(\gamma',x,y)$ for any additional $\gamma'\in\Gamma$.
We extend $\rho$ to a cocycle on the whole set $M^{(2)}$ by setting it to zero on the complement of $E$. Further,  since $(\sigma,\bar\sigma, H)$ is a coarse GPS system and $\mu$ and $\bar\mu$ are coarse Patterson--Sullivan 
measures for $\sigma$ and  $\bar\sigma$, one may check that $\rho$ is bounded on a set of full measure. So up to changing $\rho$ on a null measure set we may assume that
$$
\sup_{\gamma \in \Gamma, (x,y) \in M^{(2)}} \abs{\rho(\gamma,x,y)} < +\infty.
$$
By~\cite[Lem.\,3.4]{BF2017} there exists a bounded Borel function $\phi \colon M^{(2)} \to \Rb$ such that
\begin{equation}\label{eqn:property of rho when defining BMS}
\rho(\gamma, x,y) = \phi(\gamma x,\gamma y) - \phi(x,y)
\end{equation}
for all $\gamma \in \Gamma$ and $(x,y) \in M^{(2)}$. Then let $\tilde G = H + \phi$.

Since $\phi$ is bounded, $\tilde G$ is at bounded distance from $H$, which immediately implies that $(\sigma,\bar\sigma,\tilde G)$ is a coarse GPS system. 
The fact that $\nu: = e^{\delta \tilde{G}}\bar\mu \otimes \mu$ is locally finite on $M^{(2)}$ comes from the fact that $\tilde G$ is locally finite. Using Equation~\eqref{eqn:property of rho when defining BMS} one can show that $\nu$ is $\Gamma$-invariant. 
\end{proof}

\subsection{Conservative--dissipative dichotomy for BMS measures} In this section, we consider the conservativity/dissipativity of the $\Gamma$ action on $M^{(2)}$. 

We say that an orbit  $\Gamma \cdot (x,y)\subset M^{(2)}$ is \emph{escaping} if $\{\gamma\in\Gamma:\gamma(x,y)\in K\}$ is finite for any compact subset $K\subset M^{(2)}$.

\begin{lemma}\label{lem:conical do not escape}
 An orbit $\Gamma \cdot (x,y)\subset M^{(2)}$ is escaping if and only both $x$ and $y$ are not conical limit points.
\end{lemma}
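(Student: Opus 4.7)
The plan is to prove both directions by contrapositive, using the convergence group structure on $\Gamma$ (specifically Proposition~\ref{prop:compactifying}) to extract a sequence $\{\gamma_n\}$ and its north-south type limits $p,q \in M$ whenever an orbit fails to be escaping. The statement is really a direct unpacking of the definition of a conical limit point once one has the convergence property in hand.

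First I would handle the easier direction: if $x$ is a conical limit point (symmetrically for $y$), then by definition there exist distinct $a,b \in M$ and $\{\gamma_n\} \subset \Gamma$ with $\gamma_n x \to b$ and $\gamma_n z \to a$ locally uniformly for $z \in M \smallsetminus \{x\}$. Applying this to $z = y \neq x$ gives $\gamma_n(x,y) \to (b,a) \in M^{(2)}$, so the orbit accumulates in a compact subset of $M^{(2)}$ and is therefore not escaping.

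For the converse, suppose the orbit $\Gamma \cdot (x,y)$ is not escaping. Then there is a sequence of distinct elements $\{\gamma_n\} \subset \Gamma$ and $(a,b) \in M^{(2)}$ with $\gamma_n(x,y) \to (a,b)$. By the convergence group property, after passing to a subsequence there exist $p,q \in M$ such that $\gamma_n|_{M \smallsetminus \{q\}}$ converges locally uniformly to $p$. I would then split into cases based on whether $x$ or $y$ equals $q$: the case $x = y = q$ is impossible since $(x,y) \in M^{(2)}$; the case $x \neq q$ and $y \neq q$ forces $\gamma_n x \to p$ and $\gamma_n y \to p$, so $a = b$, contradicting $(a,b) \in M^{(2)}$; and the remaining two (symmetric) cases, say $x = q \neq y$, give $\gamma_n y \to p = b$ while $\gamma_n x \to a \neq b = p$, which is exactly the definition of $x$ being a conical limit point.

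There is no real obstacle here; the only mild subtlety is organizing the case analysis so that the non-escaping hypothesis $(a,b) \in M^{(2)}$ (i.e.\ $a \neq b$) is used decisively to rule out the case where neither coordinate equals the exceptional point $q$. Everything else follows immediately from Proposition~\ref{prop:compactifying} and the definition of conical limit point recalled in Section~\ref{sec:compactification}.
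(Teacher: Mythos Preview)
Your proposal is correct and follows essentially the same approach as the paper's proof: both directions are argued by contrapositive, extracting a sequence via the convergence group property and using the fact that at most one of $x,y$ can be the exceptional point to identify which coordinate is conical. The only cosmetic difference is that the paper phrases the non-escaping hypothesis as $\dist(\gamma_n x,\gamma_n y)\geq\epsilon$ rather than convergence to a point of $M^{(2)}$, but this is equivalent after passing to a subsequence.
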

\begin{proof}
Let $\dist$ be a compatible metric on $\Gamma \sqcup M$.

Suppose one of $x$ and $y$ is conical, say $x$.
Then there exists $\{\gamma_n\}\subset \Gamma$ and $a\neq b\in M$ such that $\gamma_nx\to a$ while $\gamma_nz\to b$ for any $z\in M\smallsetminus\{x\}$.
In particular $\gamma_n(x,y)\to (a,b)\in M^{(2)}$, so $\Gamma(x,y)$ is not escaping.

Next, suppose $\Gamma\cdot (x,y)$ is not escaping. Then $\dist(\gamma_nx,\gamma_ny)\ge\epsilon$  for some $\epsilon>0$ and some escaping sequence $\{\gamma_n\}\subset \Gamma$.
Passing to a subsequence, there are $c,b\in M$ such that $\gamma_nz\to b$ for any $z\in M\smallsetminus\{c\}$.
Since  $\{\gamma_nx\}$ and $\{\gamma_ny\}$ cannot both converge to $b$,
one of $x$ and $y$ must be equal to $c$, say $x$, so $\gamma_n y\to b$. 
Then, after  passing to a further subsequence, $\{\gamma_nx\}$ converge to a point  $a$ such that $\dist(a,b)\ge \epsilon$, so $a\in M\smallsetminus\{b\}$. Therefore,  $x$ is conical.
\end{proof}

As a corollary we obtain the following dichotomy, which is a part of our Hopf--Tsuji--Sullivan dichotomy (Theorem~\ref{our dichotomy}).

\begin{corollary}\label{cor:conservativity dissipativity dichotomy}
 Let $(\sigma,\bar\sigma,G)$ be a coarse GPS system and let $\nu$ be a BMS measure of dimension $\delta$ on $M^{(2)}$. 
 \begin{itemize}
 \item If $\sum_{\gamma \in \Gamma} e^{-\delta\norm\gamma_\sigma}=+\infty$, then the action of $\Gamma$ on $(M^{(2)},\nu)$ is conservative.
 \item If $\sum_{\gamma \in \Gamma} e^{-\delta\norm\gamma_\sigma}<+\infty$, then the action of $\Gamma$ on $(M^{(2)},\nu)$  is dissipative.
 \end{itemize}
\end{corollary}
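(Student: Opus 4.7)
The plan is to use Lemma~\ref{lem:conical do not escape} as a bridge: I will show that $\nu$-almost every orbit has the appropriate topological behavior (escaping in the convergent case, non-escaping in the divergent case) and then convert this to the measure-theoretic conservative--dissipative dichotomy using the general theory collected in Appendix~\ref{appendix:conservative and dissipative}.

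First I would note that, by Proposition~\ref{prop:GPS implies expanding}\eqref{item:norm vs dual norm}, the $\sigma$- and $\bar\sigma$-Poincar\'e series at $\delta$ converge and diverge together, so the hypothesis transfers between the two cocycles. In the divergent case, applying Proposition~\ref{prop:uniform conical has full measure} to both cocycles yields $\mu(\Lambda^{\mathrm{con}}(\Gamma))=\bar\mu(\Lambda^{\mathrm{con}}(\Gamma))=1$; since $\nu$ is absolutely continuous with respect to $\bar\mu\otimes\mu|_{M^{(2)}}$, this means $\nu$-a.e.\ $(x,y)$ has both coordinates conical, and Lemma~\ref{lem:conical do not escape} gives that $\nu$-a.e.\ orbit fails to escape. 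In the convergent case, Proposition~\ref{prop:some consequences of the shadow lemma}\eqref{item:convergent => conical limset null} yields $\mu(\Lambda^{\mathrm{con}}(\Gamma))=\bar\mu(\Lambda^{\mathrm{con}}(\Gamma))=0$, so $\nu$-a.e.\ $(x,y)$ has neither coordinate conical, and Lemma~\ref{lem:conical do not escape} gives that $\nu$-a.e.\ orbit is escaping.

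To finish, I would convert these topological statements into the measure-theoretic dichotomy. For dissipativity (convergent case), I would fix a compact exhaustion $K_1\subset K_2\subset\dots$ of $M^{(2)}$ together with an enumeration of $\Gamma$, and let $W$ consist of the points $(x,y)$ with escaping orbit which are the minimal representative (in the enumeration) of $\Gamma(x,y)\cap K_n$ for $n$ the smallest index with $\Gamma(x,y)\cap K_n\neq\emptyset$; this is a Borel wandering set whose $\Gamma$-translates cover a full $\nu$-measure subset, witnessing dissipativity. For conservativity (divergent case), I would use that $\nu$-a.e.\ orbit not only fails to escape but accumulates at a point of $\mathrm{supp}(\nu)$: if $x\in\Lambda^{\mathrm{con}}(\Gamma)$ and $\{\gamma_n\}\subset\Gamma$ realizes conical convergence at $x$, then $\gamma_n(x,y)\to(a,b)\in\Lambda(\Gamma)^{(2)}$, which lies in $\mathrm{supp}(\bar\mu\otimes\mu)\cap M^{(2)}=\mathrm{supp}(\nu)$. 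Choosing a small relatively compact neighborhood $U$ of $(a,b)$ with $0<\nu(U)<\infty$ gives $\sum_{\gamma}\mathbf{1}_U(\gamma(x,y))=\infty$, placing $(x,y)$ in the conservative part of the Hopf decomposition.

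The main obstacle will be this last conversion in the divergent case, since topological non-escaping does not automatically imply measure-theoretic recurrence---the accumulation points could a priori fall outside $\mathrm{supp}(\nu)$. This is resolved by the explicit form of conical convergence, which forces the accumulation to lie in $\Lambda(\Gamma)^{(2)}$, precisely where $\nu$ is supported, so recurrence to a set of positive finite $\nu$-measure is automatic.
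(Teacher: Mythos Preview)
Your proposal is correct and follows the same route as the paper up to the final step: both arguments reduce, via Proposition~\ref{prop:GPS implies expanding}\eqref{item:norm vs dual norm}, Proposition~\ref{prop:uniform conical has full measure} (resp.\ Proposition~\ref{prop:some consequences of the shadow lemma}\eqref{item:convergent => conical limset null}), and Lemma~\ref{lem:conical do not escape}, to the statement that $\nu$-a.e.\ orbit is non-escaping (resp.\ escaping).

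The difference lies only in the conversion to the measure-theoretic dichotomy. The paper simply invokes Lemma~\ref{lem:topHopf}: for \emph{any} $\Gamma$-invariant locally finite measure on a locally compact second-countable space, the set of non-escaping orbits is the conservative part and the set of escaping orbits is the dissipative part of the Hopf decomposition. You instead redo pieces of that lemma by hand. Your dissipativity argument is essentially the compact-exhaustion construction already in the proof of Lemma~\ref{lem:topHopf}. For conservativity, however, your ``main obstacle''---that accumulation points might fall outside $\mathrm{supp}(\nu)$---is a red herring. The proof of Lemma~\ref{lem:topHopf} uses a positive \emph{continuous} integrable $f$, which is bounded below on every compact set, so recurrence to \emph{any} compact set already forces $\sum_\gamma f(\gamma v)=+\infty$; where the orbit accumulates relative to $\mathrm{supp}(\nu)$ is irrelevant. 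Your detour through $\Lambda(\Gamma)^{(2)}=\mathrm{supp}(\nu)$ is correct but unnecessary, and the final clause ``placing $(x,y)$ in the conservative part'' via $\sum_\gamma \mathbf 1_U(\gamma(x,y))=\infty$ is not quite self-contained, since $\mathbf 1_U$ is not everywhere positive and hence not a valid test function for Fact~\ref{fact:Hopf decompo} on its own---one still needs to compare it to a positive continuous integrable $f$, which is exactly what Lemma~\ref{lem:topHopf} does.
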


\begin{proof}
By definition, $\nu=e^{\delta \tilde G}\bar\mu\otimes\mu|_{M^{(2)}}$ where $\mu$ and $\bar\mu$ are coarse Patterson--Sullivan measures for $\sigma$ and $\bar\sigma$, and $\tilde G\colon M^{(2)}\to\R$ is as in Lemma~\ref{lem:finding the good G}.
 Suppose $\sum e^{-\delta\norm\gamma_\sigma}=+\infty$ (\resp $<+\infty$).
 By Proposition~\ref{prop:uniform conical has full measure} (\resp Proposition~\ref{prop:some consequences of the shadow lemma}\eqref{item:convergent => conical limset null}) and Proposition~\ref{prop:GPS implies expanding}\eqref{item:norm vs dual norm}, $\mu$ and $\bar\mu$ give full measure to $\Lambda^{\rm con}(\Gamma)$ (\resp $M-\Lambda^{\rm con}(\Gamma)$). Hence $\nu$ gives full measure to $\Lambda^{\rm con}(\Gamma)^{(2)}$ (\resp $(M-\Lambda^{\rm con}(\Gamma))^{(2)}$) in $M^{(2)}$ , and hence gives full measure to the set of $\Gamma$-orbits in $M^{(2)}$ that do not escape (\resp that do escape) by Lemma~\ref{lem:conical do not escape}, which is the conservative (\resp dissipative) part in the Hopf decomposition of Lemma~\ref{lem:topHopf}.
\end{proof}


\section{A flow space}\label{sec:flowspace}


In this section, we use our Patterson--Sullivan measure to define a flow space which admits a measurable action by $\Gamma$. In the presence of a GPS system we construct a $\Gamma$-invariant flow-invariant measure on this flow space. The construction of the measurable action comes from work of Bader--Furman~\cite{BF2017}. 

For the rest of the section suppose $\Gamma \subset \mathsf{Homeo}(M)$ is a convergence group and $\sigma \colon \Gamma \times M \rightarrow \Rb$ is an expanding coarse-cocycle. As in Section~\ref{sec: action on M2}, let 
$$
M^{(2)} := \{ (x,y) \in M^2 : x \neq y\}.
$$
The space $M^{(2)}\times \mathbb R$ has a natural flow defined by 
$$
\psi^t(x,y,s) = (x,y,s+t). 
$$

\subsection{An action of $\Gamma$ on $M^{(2)}\times\R$}\label{sec: action on M2xR} In this section we show that any Patterson--Sullivan measure induces a measurable action of $\Gamma$ on $M^{(2)} \times \Rb$.

Suppose $\mu$ is a coarse $\sigma$-Patterson--Sullivan measure of dimension $\delta$. Then let $\sigma_{\rm PS} \colon \Gamma \times M \rightarrow \Rb$ be the measurable cocycle defined by 
$$
\sigma_{\rm PS}(\gamma, x) = -\frac{1}{\delta} \log \frac{ d \gamma^{-1}_* \mu}{d \mu}(x).
$$

\begin{observation}\label{obs:everywhere defined PS cocycle} We can assume that  $\sigma_{\rm PS}$ is everywhere defined and that  $\sigma_{\rm PS}$ is a cocycle:
$$
\sigma_{\rm PS}(\gamma_1\gamma_2, x)= \sigma_{\rm PS}(\gamma_1, \gamma_2x)+\sigma_{\rm PS}(\gamma_2, x)
$$
for all $\gamma_1, \gamma_2 \in \Gamma$ and $x \in M$. 
\end{observation}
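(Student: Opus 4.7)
The plan is to choose Borel representatives of the Radon--Nikodym derivatives $\frac{d\gamma^{-1}_*\mu}{d\mu}$ --- which are strictly positive $\mu$-almost everywhere, since the coarse Patterson--Sullivan condition forces $\mu$ and $\gamma^{-1}_*\mu$ to be mutually absolutely continuous --- to obtain a function $\sigma_{\rm PS}\colon \Gamma\times M\to \Rb$ that is initially defined only for $\mu$-almost every $x$. I would then modify $\sigma_{\rm PS}$ on a $\Gamma$-invariant $\mu$-null set to make it everywhere defined and a strict cocycle.

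For the cocycle identity, the chain rule for Radon--Nikodym derivatives applied to $(\gamma_1\gamma_2)^{-1}_*\mu = (\gamma_2^{-1})_*(\gamma_1^{-1})_*\mu$ gives, for each fixed pair $(\gamma_1,\gamma_2)\in\Gamma^2$,
\[
\frac{d(\gamma_1\gamma_2)^{-1}_*\mu}{d\mu}(x)
= \frac{d\gamma_1^{-1}_*\mu}{d\mu}(\gamma_2 x)\cdot
\frac{d\gamma_2^{-1}_*\mu}{d\mu}(x)
\]
for $\mu$-almost every $x \in M$. Taking $-\tfrac{1}{\delta}\log$, there is thus a conull Borel set $E_{\gamma_1,\gamma_2}\subset M$ on which the desired cocycle identity holds pointwise. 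Since $\Gamma$ is countable, the intersection $E_0:=\bigcap_{(\gamma_1,\gamma_2)\in\Gamma^2} E_{\gamma_1,\gamma_2}$ is still a conull Borel subset of $M$, and then $E:=\bigcap_{\gamma\in\Gamma}\gamma E_0$ is conull and, by construction, $\Gamma$-invariant.

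Finally, I would redefine $\sigma_{\rm PS}(\gamma,x):=0$ for every $\gamma\in\Gamma$ and every $x\in M\smallsetminus E$. On $E$ the cocycle identity and the Radon--Nikodym formula hold by construction. On $M\smallsetminus E$ the identity holds trivially: $\Gamma$-invariance of $M\smallsetminus E$ guarantees that $\gamma_2 x\in M\smallsetminus E$ whenever $x\in M\smallsetminus E$, so all three terms vanish. The resulting $\sigma_{\rm PS}$ is everywhere defined, Borel measurable, and a strict cocycle on $\Gamma\times M$, while still agreeing $\mu$-almost everywhere with $-\tfrac{1}{\delta}\log\frac{d\gamma^{-1}_*\mu}{d\mu}$, which is all that is needed in the sequel. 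There is no substantive obstacle; the only point to be careful about is the $\Gamma$-invariance of the modification set, which is obtained by the final intersection over all translates.
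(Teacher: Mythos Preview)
Your proposal is correct and follows essentially the same approach as the paper's proof: both find a $\Gamma$-invariant conull set $E$ on which the cocycle identity holds (via the chain rule for Radon--Nikodym derivatives and countability of $\Gamma$) and then redefine $\sigma_{\rm PS}\equiv 0$ on the complement. Your write-up simply spells out in two steps (intersect over pairs, then over translates) what the paper compresses into one sentence.
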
 

\begin{proof} By uniqueness of Radon--Nikodym derivatives, there exists a $\Gamma$-invariant set $E \subset M$ where $\mu(E) = 1$ and 
$$
\sigma_{\rm PS}(\gamma_1\gamma_2, x)= \sigma_{\rm PS}(\gamma_1, \gamma_2x)+\sigma_{\rm PS}(\gamma_2, x)
$$
for all $\gamma_1, \gamma_2 \in \Gamma$ and $x \in E$. Since $\mu(E^c) = 0$, 
we may assume that $\sigma_{\rm PS}|_{\Gamma \times E^c} \equiv 0$. Then  $\sigma_{\rm PS}$ is an everywhere defined cocycle.
\end{proof} 

Using Observation~\ref{obs:everywhere defined PS cocycle} we can define a $\Gamma$ action on $M^{(2)} \times \Rb$ by 
$$
\gamma \cdot (x,y, t) = (\gamma x, \gamma y, t+\sigma_{\rm PS}(\gamma, y)). 
$$ 
Notice that this action commutes with the flow $\psi^t$.

\subsection{A measure on the flow space} Now we assume that $\sigma$ is part of a coarse GPS system $(\sigma,\bar\sigma,G)$ and $\bar \mu$ is a coarse $\bar\sigma$-Patterson--Sullivan measure of dimension $\delta$. In this case, we will construct a flow-invariant measure on $M^{(2)} \times \Rb$.

Let $\nu=e^{\delta \tilde G}\bar\mu\otimes\mu|_{M^{(2)}}$ be a $\Gamma$-invariant BMS measure associated to $(\sigma,\bar\sigma,G,\mu,\bar\mu)$ as in Section~\ref{sec:BMS on M2},
where $\tilde G:M^{(2)}\to\Rb$ is measurable and $(\sigma,\bar\sigma,\tilde G)$ is a coarse GPS system.

Then let $\tilde m := \nu \otimes dt$, which is a measure on $M^{(2)}\times\R$. 
Notice that:
\begin{enumerate}
\item Since $\tilde{G}$ is locally bounded on $M^{(2)}$, the measure $\tilde m$ is locally finite on $M^{(2)} \times \Rb$. 
\item $\tilde m$ is $\Gamma$-invariant and $\psi^t$-invariant. 
\end{enumerate}

Next we show that the action of $\Gamma$ on  $(M^{(2)}\times \R, \tilde m)$ is dissipative (see Appendix~\ref{appendix:conservative and dissipative} for the definition).

Since $\mu$ is a coarse Patterson--Sullivan measure, there exists $C > 0$ such that for each $\gamma \in \Gamma$ there is some $M_\gamma \subset M$ with $\mu(M_\gamma)=1$ and 
\begin{equation*}
\sup_{x \in M_{\gamma}} \abs{ \sigma_{\rm PS}(\gamma,x)- \sigma(\gamma,x)} < C.
\end{equation*}
Let 
\begin{equation}\label{eqn:defn of M'}
M' := \bigcap_{\alpha \in \Gamma} \alpha \left( \bigcap_{\gamma \in \Gamma} M_\gamma \right). 
\end{equation}
Then $M'$ is $\Gamma$-invariant, $\mu(M')=1$, and 
\begin{equation}\label{eqn:difference between cocycles}
\sup_{x \in M', \gamma \in \Gamma} \abs{ \sigma_{\rm PS}(\gamma,x)- \sigma(\gamma,x)} < C.
\end{equation}

Finally let
$$
Z := \{ (x,y,t) \in M^{(2)} \times \Rb : y \in M'\}. 
$$
Then $Z$ is $\Gamma$-invariant and $\psi^t$-invariant, and has full $\tilde m$-measure.

The next result implies that if $v \in Z$, then its $\Gamma$-orbit is escaping, i.e.\ $\{\gamma:\gamma v\in K\}$ is finite for any compact set $K$.
In particular, $\tilde m$-almost every orbit is escaping.

\begin{proposition} 
\label{prop:Gamma action is proper}
For any compact subset $K \subset M^{(2)} \times \Rb$ the set 
$$
\{ \gamma \in \Gamma : (K \cap Z) \cap \gamma(K \cap Z) \neq \emptyset \}
$$
is finite. 
In particular, the action of $\Gamma$ on $(M^{(2)}\times \R, \tilde m)$ is dissipative.
\end{proposition}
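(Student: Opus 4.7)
The plan is to argue by contradiction. Suppose there is an infinite sequence $\{\gamma_n\}$ of pairwise distinct elements of $\Gamma$ and points $v_n=(x_n,y_n,t_n)\in K\cap Z$ with $\gamma_nv_n\in K\cap Z$. Compactness of $K$ in $M^{(2)}\times\Rb$, after passing to a subsequence, produces limits $(x_n,y_n)\to(x_\infty,y_\infty)\in M^{(2)}$ and $(\gamma_nx_n,\gamma_ny_n)\to(x_\infty',y_\infty')\in M^{(2)}$, together with convergence of the time coordinates $t_n$ and $t_n+\sigma_{\rm PS}(\gamma_n,y_n)$ in $\Rb$. In particular $\sigma_{\rm PS}(\gamma_n,y_n)$ remains bounded.

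The next step is to transfer this boundedness to the geometric cocycle $\sigma$. Since $v_n\in Z$ forces $y_n\in M'$, the bound \eqref{eqn:difference between cocycles} gives that $\sigma(\gamma_n,y_n)$ is bounded as well. As $\{\gamma_n\}$ is escaping, Proposition~\ref{prop:basic properties}\eqref{item:properness} yields $\norm{\gamma_n}_\sigma,\norm{\gamma_n^{-1}}_\sigma\to+\infty$, and after passing to a further subsequence we may assume $\gamma_n\to a$ and $\gamma_n^{-1}\to b$ in $\Gamma\sqcup M$ for some $a,b\in M$.

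Two applications of Proposition~\ref{prop:basic properties}\eqref{item:a technical fact} then close the argument. Applied to the bounded sequence $\sigma(\gamma_n,y_n)$, it yields $\dist(\gamma_ny_n,\gamma_n)\to 0$, and therefore $y_\infty'=a$. The coarse cocycle identity (Observation~\ref{obs:triangle inequality}\eqref{item:inverses}) then shows that $\sigma(\gamma_n^{-1},\gamma_ny_n)$ is also bounded, so a second application of Proposition~\ref{prop:basic properties}\eqref{item:a technical fact} to the escaping sequence $\{\gamma_n^{-1}\}$ gives $\dist(y_n,\gamma_n^{-1})\to 0$, whence $y_\infty=b$. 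Since $(x_\infty,y_\infty)\in M^{(2)}$ forces $x_\infty\neq b$, Proposition~\ref{prop:compactifying}(1) implies $\gamma_nx_n\to a$, so $x_\infty'=a=y_\infty'$, contradicting $(x_\infty',y_\infty')\in M^{(2)}$. The dissipativity assertion is then immediate: the finiteness statement on compact subsets lets one construct a $\Gamma$-wandering set of full $\tilde m$-measure by a standard maximal-disjointness argument inside $Z$, and Appendix~\ref{appendix:conservative and dissipative} identifies the action as dissipative.

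The main obstacle is the disconnect between the two cocycles: the $\Gamma$-action on $M^{(2)}\times\Rb$ is defined via $\sigma_{\rm PS}$, which is only defined almost everywhere and carries no intrinsic geometric content, whereas the expanding and properness structure lives with $\sigma$. The $\Gamma$-invariant set $M'$ of \eqref{eqn:defn of M'} is engineered precisely so that $\sigma_{\rm PS}$ and $\sigma$ agree up to the additive constant $C$ on every $\Gamma$-orbit meeting $M'$, and restricting to the full-measure set $Z$ is what lets us apply Proposition~\ref{prop:basic properties}\eqref{item:a technical fact} to orbits of the $\sigma_{\rm PS}$-twisted action.
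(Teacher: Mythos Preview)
Your argument is correct and follows essentially the same contradiction scheme as the paper: extract convergent subsequences from compactness of $K$, use the bound \eqref{eqn:difference between cocycles} on $M'$ to pass from $\sigma_{\rm PS}$ to $\sigma$, and then exploit Proposition~\ref{prop:basic properties}\eqref{item:a technical fact}. The only difference is in how the contradiction is closed. The paper applies Proposition~\ref{prop:basic properties}\eqref{item:a technical fact} once (to $\gamma_n^{-1}$ and $y_n$) to locate $x=a$ via the convergence group property, and then uses the expanding estimate $\sigma(\gamma_n^{-1},y_n)\geq\norm{\gamma_n^{-1}}_\sigma-C'\to+\infty$ to contradict boundedness of the cocycle directly. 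You instead apply Proposition~\ref{prop:basic properties}\eqref{item:a technical fact} twice (linked by Observation~\ref{obs:triangle inequality}\eqref{item:inverses}) to force $y_\infty'=a$ and $y_\infty=b$, and obtain the contradiction from $x_\infty'=y_\infty'$ in $M^{(2)}$. Both finishes are clean; yours avoids invoking the expanding inequality explicitly at the end, at the cost of a second pass through Proposition~\ref{prop:basic properties}\eqref{item:a technical fact}.

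One small remark on the dissipativity sentence: no maximal-disjointness argument is needed. The finiteness you proved says every $\Gamma$-orbit through $Z$ is escaping, and since $M^{(2)}\times\Rb$ is $\sigma$-compact, each compact set (intersected with $Z$) is exactly wandering; Lemma~\ref{lem:topHopf} then gives dissipativity.
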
 

\begin{proof} Suppose for a contradiction that there exist a compact set $K \subset M^{(2)} \times \Rb$ and a sequence $\{\gamma_n\}$ of distinct elements of $\Gamma$ such that 
$$
(K \cap Z) \cap \gamma_n(K \cap Z) \neq \emptyset
$$
for all $n$. Passing to a subsequence we can assume that $\gamma_n \rightarrow a \in M$ and $\gamma_n^{-1} \rightarrow b \in M$, i.e. $\gamma_n^{-1}|_{M \smallsetminus \{a\}}$ converges locally uniformly to $b$.

For each $n$, fix 
$$
(x_n, y_n, t_n) \in (K \cap Z) \cap \gamma_n(K \cap Z).
$$
Passing to a subsequence we can suppose that $x_n \rightarrow x$ and $y_n \rightarrow y\neq x$. 

Since $\{t_n\}$ is bounded and
$$
(\gamma_n^{-1}x_n, \gamma_n^{-1}y_n, t_n + \sigma_{\rm PS}(\gamma_n^{-1}, y_n))=\gamma_n^{-1}(x_n, y_n, t_n)  \in K \cap Z,
$$
we see that $\{\sigma_{\rm PS}(\gamma_n^{-1}, y_n)\}$ is bounded. Then Equation~\eqref{eqn:difference between cocycles} implies that $\{ \sigma(\gamma_n^{-1}, y_n)\}$ is bounded. Then Proposition~\ref{prop:basic properties}\eqref{item:a technical fact} implies that $\gamma_n^{-1} y_n \to b$. 
Since 
$$
\liminf_{n \to \infty} \dist(\gamma_n^{-1} x_n, \gamma_n^{-1} y_n) > 0
$$
and $\gamma_n^{-1}|_{M \smallsetminus \{a\}}$ converges locally uniformly to $b$, we must have $x=a$. Hence $y \neq a$. 
Since $\sigma$ is expanding, there exists $C' > 0$ such that 
$$
\sigma(\gamma_n^{-1}, y_n) \geq \norm{\gamma_n^{-1}}_\sigma-C'
$$
for all $n \geq 1$. By Proposition~\ref{prop:basic properties}\eqref{item:properness}, this quantity diverges to $+\infty$ as $n\to\infty$, which contradicts our earlier observation that $\{ \sigma(\gamma_n^{-1}, y_n)\}$ must be bounded.
\end{proof}

\subsection{The quotient flow space and quotient measure} \label{sec:quotient BMS}

In this section we show that the quotient $\Gamma\backslash M^{(2)}\times\R$ is a reasonable measure space, the flow descends to a measurable flow on the quotient, and the measure $\tilde m$ descends to a flow-invariant measure on the quotient.

Endow the quotient $\Gamma\backslash M^{(2)}\times\R$ with the quotient sigma-algebra (of the Borel sigma-algebra). 
By Proposition~\ref{prop:Gamma action is proper}, the action of $\Gamma$ is dissipative with respect to the measure $\tilde m=\nu\otimes dt=e^{\delta \tilde G}\bar\mu\otimes\mu\otimes dt|_{M^{(2)} \times \Rb}$.
Thus by the discussion in Section~\ref{sec:quotient measures} the space $\Gamma\backslash M^{(2)}\times\R$ admits a quotient measure $m$, which we also call a \emph{BMS measure} associated to $(\sigma,\bar\sigma, G, \mu,\bar\mu)$.

Recall that the flow $\psi^t(x,y,s) = (x,y,t+s)$  commutes with the $\Gamma$ action. So $\psi^t$ descends to a measurable flow on the quotient space $\Gamma\backslash M^{(2)}\times\R$,  which we also denote by $\psi^t$. Since $\tilde m$ is  $\psi^t$-invariant, the uniqueness of quotient measures, again see Section~\ref{sec:quotient measures}, implies that $m$ is $\psi^t$-invariant.

Finally, by the discussion in Section~\ref{sec:funddom}, $\Gamma\backslash M^{(2)}\times\R$ has a $\psi^t$-invariant full $m$-measure subset that is standard (i.e. measurably embeds into $[0,1]$).

\subsection{The continuous case}\label{sec:kappa=0 case} The construction above involves a number of choices, for instance a different choice of Patterson--Sullivan measure could lead to a different $\Gamma$ action on $M^{(2)} \times \Rb$ and hence a different quotient space. 

In this section we show that in the continuous case, some of the technicalities and all of the choices made in the above construction can be avoided. 

First suppose that $\sigma \colon \Gamma \times M \rightarrow \Rb$ is an expanding $0$-coarse-cocycle. Then, in the discussion above, can assume that  $\sigma_{\rm PS} = \sigma$, $M'=M$, and $Z=M^{(2)} \times \Rb$. Then Proposition~\ref{prop:Gamma action is proper} implies that $\Gamma$ acts properly discontinuously on $M^{(2)} \times \Rb$ and hence the quotient
$$
U_\Gamma : = \Gamma \backslash \Lambda(\Gamma)^{(2)}\times \Rb,
$$
is a metrizable locally compact topological space. Further the flow $\psi^t$ descends to a continuous flow, also called $\psi^t$, on $U_\Gamma$. 

Next we assume that $\sigma$ is part of a \emph{continuous GPS system} $(\sigma,\bar \sigma, G)$, that is  $\kappa=0$ and $G$ is continuous in Definition~\ref{def:GPS}. We also assume that $\delta : = \delta_\sigma(\Gamma) < +\infty$ and $\sum_{\gamma \in \Gamma} e^{-\delta\norm{\gamma}_\sigma} = +\infty$. By Theorems~\ref{thm:PS measures exist} and~\ref{thm:ergodicity on single M} there are unique probability measures $\mu$, $\bar \mu$ on $M$ which satisfy 
$$
\frac{d\gamma_* \mu}{d\mu} =e^{-\delta \sigma(\gamma^{-1}, \cdot)} \quad \text{and} \quad \frac{d\gamma_* \bar\mu}{d\bar\mu} =e^{-\delta \bar\sigma(\gamma^{-1}, \cdot)}.
$$
Then, since $(\sigma,\bar \sigma, G)$ is a continuous GPS system, the measure $\nu:=e^{\delta G} \bar \mu \otimes \mu|_{M^{(2)}}$ on $M^{(2)}$ is $\Gamma$-invariant. 
Note that $\nu$ is supported on $\Lambda(\Gamma)^{(2)}$, see Theorem~\ref{thm:ergodicity on single M}.

Finally, the measure $\tilde m: = e^{\delta G} d\bar\mu \otimes d\mu \otimes dt$ on $\Lambda(\Gamma)^{(2)} \times \Rb$ descends to a $\psi^t$-invariant Borel measure $m_\Gamma$ on $U_\Gamma$. In this construction, no choices were made and so we call $m$ \textbf{the} \emph{Bowen--Margulis--Sullivan (BMS) measure associated to $(\sigma, \bar\sigma, G)$} and denote it by $m_\Gamma$.


\section{Ergodicity of product measures} \label{sec:double ergo}


In this section we prove ergodicity of the product action for coarse GPS systems whose Poincar\'e series diverge at the critical exponent.

\begin{theorem}\label{thm:ergodicity on product} Suppose $(\sigma, \bar{\sigma}, G)$ is a coarse GPS system with $\delta:=\delta_\sigma(\Gamma) < +\infty$ and $\mu$, $\bar \mu$ are coarse Patterson--Sullivan measures of dimension $\delta$ for $\sigma$, $\bar{\sigma}$ respectively. If 
$$
\sum_{\gamma \in \Gamma} e^{-\delta \norm{\gamma}_\sigma} = + \infty,
$$
then $\Gamma$ acts ergodically on $(M^{(2)}, \bar\mu \otimes \mu)$.
\end{theorem}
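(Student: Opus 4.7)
The plan is to pass to the measurable flow space $(\Omega, m) = (\Gamma\backslash (M^{(2)}\times\Rb), m)$ constructed in Section~\ref{sec:flowspace} and establish ergodicity of the flow $\psi^t$ there via a Hopf-style argument, and then translate this back to $\Gamma$-ergodicity on $M^{(2)}$.

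First, I would observe that $\nu := e^{\delta \tilde G}\bar\mu\otimes\mu|_{M^{(2)}}$ is equivalent as a measure class to $\bar\mu\otimes\mu$ (since $\tilde G$ is real-valued and locally bounded), so $\Gamma$-ergodicity on $(M^{(2)},\bar\mu\otimes\mu)$ is equivalent to $\Gamma$-ergodicity on $(M^{(2)},\nu)$. The lift $f \mapsto \tilde f(x,y,t) := f(x,y)$ is a bijection between $\Gamma$-invariant $L^\infty$ functions on $(M^{(2)},\nu)$ and simultaneously $\Gamma$- and $\psi^t$-invariant $L^\infty$ functions on $(M^{(2)}\times\Rb, \nu\otimes dt)$; by the construction of $(\Omega, m)$ in Section~\ref{sec:quotient BMS}, these descend bijectively to $\psi^t$-invariant $L^\infty$ functions on $(\Omega, m)$. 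Hence it suffices to establish ergodicity of $\psi^t$ on $(\Omega, m)$.

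Next I would deduce conservativity of $\psi^t$ on $(\Omega, m)$ from conservativity of $\Gamma$ on $(M^{(2)},\nu)$, the latter being Corollary~\ref{cor:conservativity dissipativity dichotomy} under the divergence hypothesis. Indeed, a $\psi^t$-wandering set in $\Omega$ would lift to a $\Gamma$-saturated, $\psi^t$-wandering Borel set in $M^{(2)}\times\Rb$, which by $\psi^t$-invariance is of the form $A\times\Rb$ with $A\subset M^{(2)}$ being $\Gamma$-invariant; a Fubini computation in the $\Rb$-direction using the definition of quotient measure then forces $A$ to be $\Gamma$-wandering with respect to $\nu$, a contradiction.

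The heart of the proof is a Hopf-type argument for the conservative flow $\psi^t$ on $(\Omega, m)$. Let $\hat f$ be a bounded $\psi^t$-invariant measurable function and let $f\colon M^{(2)}\to \Rb$ be the corresponding bounded $\Gamma$-invariant function. For $\bar\mu$-a.e.\ $x\in M$ the slice $f_x(y):=f(x,y)$ lies in $L^\infty(M,\mu)$, and $\Gamma$-invariance of $f$ translates into $f_{\gamma x} = f_x\circ \gamma^{-1}$ almost everywhere. I would then apply Lemma~\ref{lem:zooming in on density points} on the $\mu$-side along escaping sequences $\{\gamma_n\}\subset \Gamma$ with $y\in\bigcap_n \mc S_\epsilon(\gamma_n)$ for fixed conical $y$, to show that $f_x$-averages over shadows $\mc S_\epsilon(\gamma_n)$ converge to $f_x(y)$. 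Using the $\Gamma$-equivariance of $f_x$ and the Shadow Lemma (Theorem~\ref{thm:shadow lemma}), these shadow-averages can be compared for different choices of $x$, yielding that $f(x,y)$ is essentially independent of $x$. Running the symmetric argument on the $\bar\mu$-side---valid because $\bar\sigma$ is expanding by Proposition~\ref{prop:GPS implies expanding}\eqref{item:where sigma is large}, $\bar\mu(\Lambda^{\rm con}(\Gamma))=1$ by Proposition~\ref{prop:uniform conical has full measure}, and the $\bar\sigma$-series also diverges by Proposition~\ref{prop:GPS implies expanding}\eqref{item:norm vs dual norm}---shows that $f$ is also essentially independent of $y$. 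Combined with ergodicity of $\Gamma$ on $(M,\mu)$ from Theorem~\ref{thm:ergodicity on single M}, this forces $f$ to be constant $\bar\mu\otimes\mu$-a.e.

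The main obstacle will be the Hopf comparison step. One must exploit the GPS identity $\bar\sigma(\gamma,x)+\sigma(\gamma,y)\approx G(\gamma x,\gamma y)-G(x,y)$ to align the roles of $\sigma$ (controlling $y$-slices via $\mu$) and $\bar\sigma$ (controlling $x$-slices via $\bar\mu$); simultaneously one has to absorb the bounded discrepancy between the measurable cocycle $\sigma_{\rm PS}$ and the coarse-cocycle $\sigma$---see Equation~\eqref{eqn:difference between cocycles}---into the $\Rb$-flow without disturbing the pointwise Lebesgue differentiation. The coarse (rather than continuous) setting adds further bookkeeping because all identities for $\sigma$, $\bar\sigma$, $G$ hold only up to bounded additive error.
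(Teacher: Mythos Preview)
Your reduction in the first paragraph (ergodicity of $\Gamma$ on $(M^{(2)},\bar\mu\otimes\mu)$ is equivalent to ergodicity of $\psi^t$ on $(\Omega,m)$) is fine and matches the paper's setup, but after that the proposal has two problems.

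First, your conservativity argument is broken. A $\psi^t$-wandering set in $\Omega$ lifts to a $\Gamma$-saturated set, but that set is certainly not $\psi^t$-invariant, so it is not of the form $A\times\Rb$. More to the point, you never actually use conservativity in your ``heart'' paragraph anyway: there you work directly with a $\Gamma$-invariant $f\in L^\infty(M^{(2)})$ and attempt to show it is constant via Lebesgue differentiation, without ever taking time averages. So the flow and conservativity are set up but never used.

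Second, and more seriously, the comparison step ``these shadow-averages can be compared for different choices of $x$'' does not work for a merely measurable $f$. Unwinding your sketch: by $\Gamma$-invariance and change of variables,
\[
\frac{1}{\mu(\mc S_\epsilon(\gamma_n))}\int_{\mc S_\epsilon(\gamma_n)} f(x_i,z)\,d\mu(z)
\;\approx\;
\int_{M-B_\epsilon(\gamma_n^{-1})} f(\gamma_n^{-1}x_i,w)\,d\mu(w),
\]
and while $\gamma_n^{-1}x_1$ and $\gamma_n^{-1}x_2$ both converge to the same point $b$, you cannot conclude the right-hand sides agree in the limit without continuity of $f$ in the first variable. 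This is exactly the obstruction the paper has to work around. Its solution is not to take a general invariant $f$, but to apply the Hopf \emph{ratio} ergodic theorem (with a carefully built weight $\rho$; see Section~\ref{sec:weight}) to functions of the form $P(f\otimes 1_{[c,d]})$ with $f\in C_c(M^{(2)})$. The resulting ergodic averages $\Phi\circ P(f\otimes 1_{[c,d]})$ can then be compared along stable leaves $\{v^+=w^+\}$ using the contraction Lemma~\ref{lem:contraction} together with the \emph{continuity} of $f$ (Lemma~\ref{gh bounds}); ergodicity of $\Gamma$ on $(M,\mu)$ then forces these to be constant (Proposition~\ref{prop:ergodic averages}), and an approximation argument yields ergodicity on $M^{(2)}$. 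Your outline is missing both the contraction lemma and the use of continuous test functions, and there is no evident way to repair the argument without them.
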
 

As described in Section~\ref{sec:kappa=0 case}, in the continuous case there is a canonical flow space and in this case our arguments will yield the following, see Section \ref{subsec:continuous case} for the proof. 

\begin{theorem}
\label{continuous ergodicity} If $(\sigma, \bar\sigma, G)$ is a continuous GPS system with $\delta:=\delta_\sigma(\Gamma) < +\infty$ and 
$$
\sum_{\gamma \in \Gamma} e^{-\delta \norm{\gamma}_\sigma} = + \infty,
$$
then the flow $\psi^t$ on $(U_\Gamma, m_\Gamma)$ is conservative and ergodic, where $m_\Gamma$ is the  BMS measure
associated to  $(\sigma, \bar\sigma, G)$ defined in Section~\ref{sec:kappa=0 case}.
\end{theorem}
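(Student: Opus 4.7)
The plan is to reduce both assertions to corresponding statements about the $\Gamma$-action on $M^{(2)}$, using the clean structure of the continuous setting described in Section~\ref{sec:kappa=0 case}. In that setting $\Gamma$ acts properly discontinuously on $\Lambda(\Gamma)^{(2)} \times \Rb$, so $U_\Gamma$ is a locally compact metrizable quotient on which $\psi^t$ descends to a continuous flow, and $m_\Gamma$ is the (unambiguous) push-forward of $\tilde m = e^{\delta G}\,d\bar\mu \otimes d\mu \otimes dt$. Moreover, the Patterson--Sullivan cocycle of Section~\ref{sec: action on M2xR} may be taken equal to $\sigma$ pointwise, so the lifted $\Gamma$-action is simply $\gamma \cdot (x,y,s) = (\gamma x, \gamma y, s + \sigma(\gamma, y))$.

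For ergodicity, the argument is essentially formal. Let $A \subset U_\Gamma$ be $\psi^t$-invariant and let $\tilde A \subset \Lambda(\Gamma)^{(2)} \times \Rb$ be its preimage. Since $\psi^t$ acts by translation on the $\Rb$-factor and commutes with $\Gamma$, the $\psi^t$-invariance of $\tilde A$ forces $\tilde A = B \times \Rb$ for some Borel $B \subset \Lambda(\Gamma)^{(2)}$, and $\Gamma$-invariance of $\tilde A$ yields $\Gamma$-invariance of $B$. Theorem~\ref{thm:ergodicity on product} gives that $\Gamma$ is ergodic on $(M^{(2)}, \bar\mu \otimes \mu)$, so $B$ is $(\bar\mu \otimes \mu)$-null or $(\bar\mu \otimes \mu)$-conull; since $e^{\delta G}$ is positive and continuous, this translates, via the definition of the quotient measure, to $m_\Gamma(A) = 0$ or $m_\Gamma(A^c) = 0$.

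For conservativity, I would argue directly in the lift. Fix a compact $K_0 \subset \Lambda(\Gamma)^{(2)}$ with $\nu(K_0) > 0$ and $R > 0$; it suffices to show that for $\nu$-a.e.\ $(x,y) \in K_0$ and every $s \in [-R,R]$, the $\psi^t$-orbit of $(x,y,s)$ in $U_\Gamma$ returns to the image of $K_0 \times [-R, R]$ for arbitrarily large $|t|$. By Corollary~\ref{cor:conservativity dissipativity dichotomy}, for $\nu$-a.e.\ $(x,y) \in K_0$ there are infinitely many $\gamma_n \in \Gamma$ with $\gamma_n(x,y) \in K_0$. Passing to a subsequence and invoking the convergence group property gives $\gamma_n^{-1} \to z$ for some $z \in M$; since the two components $\gamma_n x$, $\gamma_n y$ have distinct limits in the compact $K_0$, necessarily $z \in \{x, y\}$. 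If $z = x$, then the expanding property of $\sigma$ combined with Proposition~\ref{prop:basic properties}\eqref{item:properness} yields $\sigma(\gamma_n, y) \to +\infty$; if instead $z = y$, the compatibility relation of Definition~\ref{def:GPS}, combined with the continuity of $G$ on $K_0$, gives $\sigma(\gamma_n, y) = -\bar\sigma(\gamma_n, x) + O(1) \to -\infty$ via the expanding property of $\bar\sigma$. In either case the return times $t_n := -s - \sigma(\gamma_n, y)$ diverge to $\mp \infty$, placing $\gamma_n \psi^{t_n}(x,y,s)$ in $K_0 \times [-R, R]$; since conservativity of the measure-preserving flow is symmetric under $t \mapsto -t$ (see Appendix~\ref{appendix:conservative and dissipative}), recurrence in either direction suffices.

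The main obstacle is the measure-theoretic bookkeeping in the conservativity argument: one must argue on a full-$\nu$-measure set at each step and exploit that among infinitely many returning $\gamma_n$ at least one of the two cases above must occur infinitely often. Ergodicity is, by contrast, an essentially formal consequence of Theorem~\ref{thm:ergodicity on product} and the product structure of the lifted flow space.
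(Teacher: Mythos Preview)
Your ergodicity argument is exactly the paper's: lift a flow-invariant set to $B \times \Rb$, note $B$ is $\Gamma$-invariant, and invoke Theorem~\ref{thm:ergodicity on product}.

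For conservativity you take a different route. The paper simply observes that in the continuous case the flow space $(U_\Gamma, m_\Gamma)$ coincides with the measurable flow space $(\Gamma \backslash M^{(2)} \times \Rb, m)$ built in Section~\ref{sec:flowspace}, and then quotes Lemma~\ref{lem:average value of rho} (already proved in the coarse setting) to get $\int_0^T \rho(\psi^t v)\,dt \to +\infty$ almost everywhere. Your argument instead lifts conservativity of the $\Gamma$-action on $(M^{(2)},\nu)$ (Corollary~\ref{cor:conservativity dissipativity dichotomy}) to conservativity of the flow by hand: for a.e.\ $(x,y)$ in a compact $K_0$ there are infinitely many $\gamma_n$ with $\gamma_n(x,y)\in K_0$; a subsequence has $\gamma_n^{-1}\to z\in\{x,y\}$, and the expanding property (plus, in the case $z=y$, the exact GPS identity and continuity of $G$ on $K_0$) forces $\sigma(\gamma_n,y)\to\pm\infty$, producing unbounded return times. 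This is correct, and the paper in fact acknowledges this alternative route in the remark following Lemma~\ref{lem:average value of rho} (citing \cite[Fact~2.29]{BlayacPS} rather than spelling it out). The paper's approach is shorter because it recycles a lemma already needed for ergodicity; yours is more self-contained for the continuous case and makes the connection between conservativity on $M^{(2)}$ and conservativity of the flow explicit, but requires the measure-theoretic bookkeeping you mention (covering $\Lambda(\Gamma)^{(2)}\times\Rb$ by countably many $K_0\times[-R,R]$ and appealing to Lemma~\ref{lem:topHopf}).
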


The general strategy of the proof goes back to Sullivan's original work in real hyperbolic geometry~\cite{Sullivan1979}. In particular, we use the Hopf ratio ergodic theorem to prove ergodicity of the flow space introduced in Section~\ref{sec:flowspace}, which in turn will imply ergodicity of the action of $\Gamma$ on $M^{(2)}$. Some of our arguments also use ideas from work of Bader--Furman~\cite{BF2017}.

 \subsection{Notation}
 
 We will freely use the notation and objects introduced in Sections~\ref{sec: action on M2} and~\ref{sec:flowspace}, in particular: 
\begin{enumerate} 
\item the measurable cocycle $\sigma_{\rm PS}$ introduced in Section~\ref{sec: action on M2xR}, the associated action of $\Gamma$ on $M^{(2)} \times \Rb$ given by 
$$
\gamma \cdot (x,y,t) = (\gamma x, \gamma y, t + \sigma_{\rm PS}(\gamma, y)),
$$
and the associated measurable quotient $\Gamma \backslash M^{(2)} \times \Rb$; 
\item the $\Gamma$-invariant measure $\nu=e^{\delta \tilde G} \bar \mu \otimes \mu$ on $M^{(2)}$ constructed in Section~\ref{sec:BMS on M2};
\item the flow $\psi^t(x,y,s) = (x,y,t+s)$ on $M^{(2)} \times \Rb$ and the quotient flow, also denoted by $\psi^t$, on $\Gamma \backslash M^{(2)} \times \Rb$; 
\item the flow-invariant measure $\tilde m = \nu \otimes dt$ on $M^{(2)} \times \Rb$ and the associated flow-invariant quotient measure $m$ on $\Gamma \backslash M^{(2)} \times \Rb$ described in Section~\ref{sec:quotient BMS};
\item the set $M'\subset M$ defined in Equation~\eqref{eqn:defn of M'}, which is $\Gamma$-invariant, has full $\mu$-measure, and where 
\begin{equation}\label{eqn:difference between cocycles 2}
C : = \sup_{\gamma\in\Gamma, \ y\in M'}\abs{\sigma(\gamma,y)-\sigma_{\rm PS}(\gamma,y)} < +\infty. 
\end{equation}
\end{enumerate}

We will also use the following notation from Section~\ref{sec:quotient measures}. For $f \in L^1(M^{(2)}\times\R,\tilde m)$, let $\tilde P(f)$ be the $\tilde m$-almost everywhere defined function on $M^{(2)} \times \Rb$ given by 
$$
\tilde P(f)(v) = \sum_{\gamma \in \Gamma} f(\gamma \cdot v)
$$
and let $P(f)$ be the $m$-almost everywhere defined function on the quotient given by $P(f)([v]) = \tilde P(f)(v)$. By Equation~\eqref{eqn:defining property of quotient measures}, 
\begin{equation}
\label{eqn:integral of P}
\int P(f) dm = \int f d\tilde m
\end{equation}
for all $f \in L^1(M^{(2)} \times \Rb, \tilde m)$ and the map 
$$
P \colon L^1(M^{(2)} \times \Rb, \tilde m) \to L^1(\Gamma \backslash M^{(2)} \times \Rb,m)
$$
is continuous. We also observe that
\begin{equation}
\label{eqn: flow upstairs and downstairs}
\tilde P(f)(\psi^t(v)) = P(f)(\psi^t([v]))
\end{equation}
whenever both sides are defined. 

Finally, given $\theta \in L^1(\Rb)$ and $f \in L^1(M^{(2)}, \nu)$, let $f \otimes \theta \in L^1(M^{(2)} \times \Rb, \tilde m)$ denote the function 
$$
(f \otimes \theta)(x,y,t) = f(x,y) \, \theta(t). 
$$
Notice that with $a,b$ fixed, the map
$$
f \in L^1(M^{(2)},\nu) \mapsto f\otimes 1_{[a,b]} \in L^1(M^{(2)}\times\R,\tilde m)
$$
is a continuous operator.

\subsection{Constructing a weight function for the Hopf ratio ergodic theorem} 
\label{sec:weight}
In this section we construct a weight function to use in the Hopf ratio ergodic theorem.  

We begin by relating conical limit points to recurrence properties of the flow. To that end, fix a compatible metric $\dist$ on $\Gamma \sqcup M$. Then given $\epsilon>0$, let
$$
K_{\epsilon} : = \{ (x,y,0) : \dist(x,y) \geq \epsilon\}. 
$$

\begin{proposition}\label{prop:quantitative recurrence} Fix $0 < \epsilon' < \epsilon$ and $y \in M'$. 
\begin{enumerate}
\item If $y\in \Lambda^{\rm con}_\epsilon(\Gamma)$, then there exists a sequence of distinct elements $\{\gamma_n\} \subset \Gamma$  such that: for any 
$x \in M \smallsetminus\{y\}$, there is a sequence $\{t_n\} \subset \Rb$ with $t_n \to +\infty$ so that $(x,y,t_n) \in \gamma_n( K_{\epsilon'}) $ for  $n$ sufficiently large.
\item  If there exist a sequence of distinct elements $\{\gamma_n\} \subset \Gamma$, $x \in M \smallsetminus \{y\}$, and a sequence $\{t_n\} \subset \Rb$ so that $\{t_n\}$ is bounded below and $(x,y,t_n) \in \gamma_n K_\epsilon $ for all $n$, then $y \in \Lambda^{\rm con}_\epsilon(\Gamma)$. 
\end{enumerate}

\end{proposition}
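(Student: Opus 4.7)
The plan is to unpack the $\Gamma$-action on $M^{(2)} \times \Rb$: the condition $(x,y,t) \in \gamma_n(K_\eta)$ amounts to $\dist(\gamma_n^{-1}x, \gamma_n^{-1}y) \geq \eta$ together with the time relation $t = -\sigma_{\rm PS}(\gamma_n^{-1},y)$ (equivalently $t = \sigma_{\rm PS}(\gamma_n, \gamma_n^{-1}y)$). Since $y \in M'$ and $M'$ is $\Gamma$-invariant, Equation~\eqref{eqn:difference between cocycles 2} lets me replace $\sigma_{\rm PS}$ by $\sigma$ at a uniform additive cost $C$. With this translation, both parts reduce to statements about the expanding coarse-cocycle $\sigma$ and the convergence dynamics on $\Gamma \sqcup M$.

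For part (1), I would take a sequence of distinct $\alpha_n \in \Gamma$ witnessing $y \in \Lambda_\epsilon^{\rm con}(\Gamma)$, with $\alpha_n y \to b$ and $\alpha_n z \to a$ for every $z \neq y$ and $\dist(a,b) \geq \epsilon$. Proposition~\ref{prop:compactifying} gives $\alpha_n \to a$ in $\Gamma \sqcup M$, and, after passing to a subsequence, $\alpha_n^{-1} \to y$. Setting $\gamma_n := \alpha_n^{-1}$, for any $x \neq y$ one has $\dist(\gamma_n^{-1}x, \gamma_n^{-1}y) \to \dist(a,b) \geq \epsilon$, so the spatial condition holds for large $n$. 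The required time is $t_n := \sigma_{\rm PS}(\alpha_n^{-1}, \alpha_n y)$ (independent of $x$), and as $\alpha_n y \in M'$ the comparison~\eqref{eqn:difference between cocycles 2} reduces the task to showing $\sigma(\alpha_n^{-1}, \alpha_n y) \to +\infty$. Since $\dist(\alpha_n y, \alpha_n) \to \dist(b,a) \geq \epsilon$, the expanding property yields $\sigma(\alpha_n^{-1}, \alpha_n y) = \norm{\alpha_n^{-1}}_\sigma + O(1)$, and this diverges by Proposition~\ref{prop:basic properties}\eqref{item:properness}.

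For part (2), I would pass to a subsequence so that $\gamma_n^{-1}x \to a$, $\gamma_n^{-1}y \to b$ (necessarily with $\dist(a,b) \geq \epsilon$, so $a \neq b$), and so that $\gamma_n \to p$ and $\gamma_n^{-1} \to q$ in $\Gamma \sqcup M$; as $\{\gamma_n\}$ are distinct, $p, q \in M$. The key step is to prove $p = y$. Indeed, if $p \neq y$ then $\dist(y, \gamma_n) \to \dist(y,p) > 0$, so the expanding property gives $\sigma(\gamma_n^{-1}, y) = \norm{\gamma_n^{-1}}_\sigma + O(1) \to +\infty$; combined with $y \in M'$ and~\eqref{eqn:difference between cocycles 2}, this forces $t_n = -\sigma_{\rm PS}(\gamma_n^{-1},y) \to -\infty$, contradicting the lower bound on $\{t_n\}$. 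Thus $p = y$, and Proposition~\ref{prop:compactifying}(1) applied to $\{\gamma_n^{-1}\}$ yields that $\gamma_n^{-1}|_{M \smallsetminus \{y\}}$ converges locally uniformly to $q$. In particular $a = q$ (since $x \neq y$), while $\gamma_n^{-1}y \to b$; together with $\dist(a,b) \geq \epsilon$ this exhibits $y$ as an $\epsilon$-uniformly conical limit point via $\{\gamma_n^{-1}\}$.

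The main technical care required is the systematic comparison between the measurable cocycle $\sigma_{\rm PS}$, which governs the flow-space action, and the geometric cocycle $\sigma$, to which the expanding property applies. The $\Gamma$-invariant full-$\mu$-measure subset $M'$ is exactly what enables this swap with a uniform error, and once this bookkeeping is in place the argument becomes a clean dictionary between (uniform) $\epsilon$-conicality of $y$ and the recurrence of the time coordinate along $\{(x,y,t_n)\} \subset \gamma_n(K_{\epsilon'})$.
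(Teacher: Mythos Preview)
Your proposal is correct and follows essentially the same argument as the paper's proof, differing only in notation: you write $\alpha_n$ for what the paper calls $\gamma_n^{-1}$ in part (1), and your labels $a,b,p,q$ correspond to the paper's $b^-,a,b^+,b^-$ in part (2). Both proofs unpack the condition $(x,y,t_n)\in\gamma_n(K_\eta)$ as a distance constraint on $(\gamma_n^{-1}x,\gamma_n^{-1}y)$ together with the time relation $t_n=\sigma_{\rm PS}(\gamma_n,\gamma_n^{-1}y)$, use Equation~\eqref{eqn:difference between cocycles 2} to pass between $\sigma_{\rm PS}$ and $\sigma$, and then invoke the expanding property together with Proposition~\ref{prop:basic properties}\eqref{item:properness} to control the time coordinate.
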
 

\begin{proof} (1) If $y\in \Lambda^{\rm con}_\epsilon(\Gamma)$, there exist $a,b \in M$ so that $\dist(a,b)\ge \epsilon$ and a sequence $\{\gamma_n\} \subset \Gamma$ 
such that  $\gamma_n^{-1} y \to a$ and $\gamma_n^{-1} x \to b$ for all $x \in M \smallsetminus \{y\}$. In particular, $\gamma_n^{-1} \to b$ and $\gamma_n \to y$.

Fix $x \in M \smallsetminus \{y\}$. Then $(\gamma_n^{-1} x, \gamma_n^{-1} y) \rightarrow (b,a)$ and so $(\gamma_n^{-1} x, \gamma_n^{-1} y,0) \in K_{\epsilon'}$ for  all
sufficiently large $n$. Then 
$$
(x,y, \sigma_{\rm PS}(\gamma_n, \gamma_n^{-1}y))=\gamma_n( \gamma_n^{-1} x, \gamma_n^{-1} y, 0)   \in \gamma_n (K_{\epsilon'}) 
$$
for sufficiently large $n$. Since $\gamma_n^{-1}y \to a$, $\gamma_n^{-1} \to b$, and $a \neq b$,
Equation~\eqref{eqn:difference between cocycles 2} and the expanding property for $\sigma$ imply that 
$$
\lim_{n \rightarrow \infty} \sigma_{\rm PS}(\gamma_n, \gamma_n^{-1}y) \geq -C + \lim_{n \rightarrow \infty} \sigma(\gamma_n, \gamma_n^{-1}y)=+\infty.
$$
Hence if $t_n := \sigma_{\rm PS}(\gamma_n, \gamma_n^{-1}y)$, then $t_n \rightarrow +\infty$ and $(x,y,t_n) \in \gamma_n(K_{\epsilon'})$ for  all sufficiently large $n$.

(2) Now suppose there exist a sequence of distinct elements $\{\gamma_n\} \subset \Gamma$, $x \in M \smallsetminus \{y\}$, and a sequence $\{t_n\} \subset \Rb$ 
so that $\{t_n\}$ is bounded below and $(x,y,t_n) \in \gamma_n (K_\epsilon) $ for all $n$.
Passing to subsequence, we may assume that $\gamma_n^{-1} y \to a \in M$ and $\gamma_n^{\pm 1} \rightarrow b^\pm$. 

Notice that $\gamma_n^{-1}(x,y,t_n) \in K_\epsilon$ and so 
$$
\{\sigma_{PS}(\gamma_n^{-1}, y) + t_n \}
$$
is bounded. Since $\{t_n\}$ is bounded below, $\{\sigma_{PS}(\gamma_n^{-1}, y)\}$ is bounded above. Then Equation~\eqref{eqn:difference between cocycles 2} implies that $\{\sigma(\gamma_n^{-1}, y)\}$ is bounded above. Then, since $\sigma$ is expanding, Proposition~\ref{prop:basic properties}\eqref{item:properness} implies that $\dist(\gamma_n, y) \rightarrow 0$. Thus $\liminf_{n \rightarrow \infty} \dist(x,\gamma_n) > 0$ and so by the convergence group property, $\gamma_n^{-1} x \rightarrow b^-$. By assumption, $\dist(\gamma_n^{-1} x, \gamma_n^{-1} y)\ge \epsilon$ for all $n$, so 
$\dist(a,b^-)\ge \epsilon$, and hence $y\in \Lambda^{\rm con}_\epsilon(\Gamma)$. 
\end{proof} 

Given $v \in M^{(2)} \times \Rb$ we let 
$$v^\pm \in M$$
 denote the associated ``forward/backward endpoints'' of $v$, that is $v = (v^-, v^+, t)$ for some $t \in \Rb$. 

Using Theorem~\ref{thm:ergodicity on single M}, we can fix $\epsilon_0>0$ sufficiently small so that  
$$
\mu(\Lambda_{\epsilon_0}^{\rm con}(\Gamma)) = 1.
$$
Then by Proposition~\ref{prop:quantitative recurrence}, there exists a compact subset $K \subset M^{(2)} \times \Rb$ such that for every $v\in M^{(2)} \times \Rb$ with $v^+ \in \Lambda_{\epsilon_0}^{\rm con}(\Gamma)\cap M'$ there exist sequences $\{\gamma_n\} \subset \Gamma$ and  $\{t_n\} \subset [0,\infty)$ where $t_n \rightarrow +\infty$ and 
$$
\psi^{t_n}(v) \in \gamma_n( K)
$$
for all $n \geq 1$. 
Then fix a non-negative $\rho_0 \in C_c(M^{(2)})$ and $R > 0$ such that 
$$
\rho_0 \otimes 1_{[-R,R]} \geq 1
$$
on $\bigcup_{t \in [0,1]}\psi^t(K)$. 
Then let 
$$
\tilde\rho:= \tilde P(\rho_0 \otimes 1_{[-R,R]}) \quad \text{and} \quad \rho:=P(\rho_0 \otimes 1_{[-R,R]}).
$$
Notice that $\rho \in L^1( \Gamma \backslash M^{(2)} \times \Rb, m)$.

\begin{lemma}\label{lem:average value of rho} If $v \in M^{(2)} \times \Rb$ and $v^+ \in \Lambda_{\epsilon_0}^{\rm con}(\Gamma)\cap M'$, then 
$$
\lim_{T \rightarrow \infty} \int_0^T \tilde\rho(\psi^t(v)) dt = +\infty. 
$$
In particular, 
$$
\lim_{T \rightarrow \infty} \int_0^T \rho(\psi^t(v)) dt = +\infty
$$
for $m$-almost every $v \in \Gamma \backslash M^{(2)} \times \Rb$ and so the quotient flow $\psi^t \colon \Gamma \backslash M^{(2)} \times \Rb \to  \Gamma \backslash M^{(2)} \times \Rb$ is conservative (see Fact~\ref{fact:Hopf decompo}).
\end{lemma}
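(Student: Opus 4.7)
The plan is to combine the recurrence provided by Proposition~\ref{prop:quantitative recurrence}(1) with the construction of the weight $\rho_0\otimes 1_{[-R,R]}$, which was chosen precisely so that it is $\geq 1$ on the thickened compact set $\bigcup_{t\in[0,1]}\psi^t(K)$. The recurrence produces times $t_n\to+\infty$ for which $\psi^{t_n}(v)\in\gamma_n(K)$; combining these two facts will yield $\tilde\rho(\psi^t(v))\geq 1$ on each interval $[t_n,t_n+1]$, and a disjointness argument for (a subsequence of) these intervals delivers the divergence.

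More concretely, I would first fix $v$ with $v^+\in\Lambda_{\epsilon_0}^{\rm con}(\Gamma)\cap M'$, invoke Proposition~\ref{prop:quantitative recurrence}(1) to obtain distinct $\gamma_n\in\Gamma$ and $t_n\to+\infty$ with $\psi^{t_n}(v)\in\gamma_n(K)$, and use the fact that the $\Gamma$-action commutes with $\psi^t$ to obtain
$$
\gamma_n^{-1}\psi^t(v)=\psi^{t-t_n}(\gamma_n^{-1}\psi^{t_n}(v))\in\bigcup_{s\in[0,1]}\psi^s(K)
$$
for every $t\in[t_n,t_n+1]$. By the choice of $\rho_0$ and $R$, the single term $\alpha=\gamma_n^{-1}$ in the $\Gamma$-sum defining $\tilde\rho$ is already $\geq 1$, and since all remaining terms are non-negative we conclude $\tilde\rho(\psi^t(v))\geq 1$ on this interval. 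Passing to a subsequence with $t_{n_{k+1}}>t_{n_k}+1$ makes the intervals $[t_{n_k},t_{n_k}+1]$ pairwise disjoint, so $\int_0^T\tilde\rho(\psi^t(v))\,dt$ picks up at least one unit of mass for every $k$ with $t_{n_k}+1\leq T$, forcing divergence as $T\to+\infty$.

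For the descent to the quotient, I would use that $\mu(\Lambda_{\epsilon_0}^{\rm con}(\Gamma))=1$ by the choice of $\epsilon_0$ and $\mu(M')=1$ by construction of $M'$, so the hypothesis $v^+\in\Lambda_{\epsilon_0}^{\rm con}(\Gamma)\cap M'$ holds on a set of full $\tilde m$-measure. Equation~\eqref{eqn: flow upstairs and downstairs} identifies $\rho\circ\psi^t$ on the quotient with $\tilde\rho\circ\psi^t$ upstairs, which transports the divergence to $m$-almost every point of $\Gamma\backslash M^{(2)}\times\Rb$. Conservativity of $\psi^t$ on $(\Gamma\backslash M^{(2)}\times\Rb,m)$ is then immediate from Fact~\ref{fact:Hopf decompo}: a non-negative $\rho\in L^1(m)$ whose forward $\psi^t$-integral diverges almost everywhere witnesses that the dissipative part in the Hopf decomposition is $m$-null. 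The principal delicacy, and essentially the only one, is that $\tilde\rho$ is a priori only almost-everywhere defined; however, being a sum of non-negative terms it makes perfectly good sense in $[0,\infty]$ at every point of $M^{(2)}\times\Rb$, and this pointwise interpretation is all the argument uses.
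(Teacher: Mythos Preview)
Your proposal is correct and follows essentially the same argument as the paper: use the recurrence from Proposition~\ref{prop:quantitative recurrence}(1) to produce times $t_n\to+\infty$ with $\psi^{t_n}(v)\in\gamma_n(K)$, observe that the single term $\gamma_n^{-1}$ in the sum defining $\tilde\rho$ already gives $\tilde\rho(\psi^t(v))\geq 1$ on $[t_n,t_n+1]$, pass to a subsequence with disjoint intervals, and sum. Your treatment of the ``in particular'' part via Equation~\eqref{eqn: flow upstairs and downstairs} and Fact~\ref{fact:Hopf decompo} is likewise the paper's, and your remark about interpreting $\tilde\rho$ pointwise in $[0,\infty]$ is a reasonable extra bit of care.
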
 

The fact that $\psi^t$ is conservative can also be deduced from Corollary~\ref{cor:conservativity dissipativity dichotomy} and \cite[Fact 2.29]{BlayacPS}.

\begin{proof} Fix  $v \in M^{(2)} \times \Rb$ with $v^+ \in \Lambda_{\epsilon_0}^{\rm con}(\Gamma) \cap M'$.
By our choice 
of $K$, there exist $\{\gamma_n\} \subset \Gamma$ and  $\{t_n\} \subset [0,\infty)$ where $t_n +1 < t_{n+1}$ and 
$$
\psi^{t_n}(v) \in \gamma_n( K)
$$
for all $n \geq 1$. Then 
$$
\liminf_{T \rightarrow \infty} \int_0^T \tilde\rho(\psi^t(v)) dt \geq \sum_{n=1}^\infty \int_{t_n}^{t_n+1} \tilde\rho(\psi^t(v)) dt = +\infty,
$$
since $\tilde\rho(\psi^t(v)) \geq 1$ for any  $t\in [t_n, t_n+1]$. 

The ``in particular'' statement then follows from Equation~\eqref{eqn: flow upstairs and downstairs}. 
\end{proof} 

\subsection{Applying the Hopf ratio ergodic theorem}
\label{subsec:Hopf ratio} 
Next we apply the Hopf ratio ergodic theorem to the conservative flow $\psi^t$ on $(\Gamma\backslash M^{(2)} \times \Rb,m)$. 
This theorem was first proved by Stepanoff \cite{Stepanoff} and Hopf \cite{Hopf37}.
For a modern reference: Krengel states the result for discrete actions of $\Z_{\geq1}$~\cite[Th.\,2.7 \& 3.4]{Krengel} and explains how to then deduce the result for flows~\cite[\S2 p.10]{Krengel}.

 In our setting, the Hopf ratio ergodic theorem  implies that if $\rho$ is the weight function defined in Section \ref{sec:weight} and
$f \in L^1(\Gamma\backslash M^{(2)} \times \Rb, m)$, then
$$
\Phi(f)(v) = \lim_{T \rightarrow \infty} \frac{ \int_0^T f(\psi^t(v)) dt}{ \int_0^T \rho(\psi^t(v)) dt} 
$$
exists for every $v$ in a $\psi^t$-invariant set of $m$-full measure. 
Further, the $m$-almost everywhere defined function $\Phi(f)$ is measurable and $\psi^t$-invariant, and  $ \Phi(f) \rho$ is integrable with
\begin{equation}\label{eqn:Phi is the conditional expectation}
\int_A \Phi(f) \rho\ dm = \int_{A} f \ dm
\end{equation}
for any $\psi^t$-invariant subset $A \subset \Gamma\backslash M^{(2)} \times \Rb$. Since $\abs{\Phi(f)} \leq \Phi(\abs{f})$, Equation~\eqref{eqn:Phi is the conditional expectation} implies that 
$$\Phi \colon L^1(\Gamma\backslash M^{(2)} \times \Rb, m) \to L^1(\Gamma\backslash M^{(2)} \times \Rb, \rho m)$$
is continuous. 

We will also let $\tilde{\Phi}(f) \colon M^{(2)} \times \Rb \to \Rb$ denote the lift of  $\Phi(f)$, which is  $\tilde m$-almost everywhere defined, $\Gamma$-invariant and $\psi^t$-invariant.

Using a Hopf Lemma type argument, we will deduce the following.

\begin{proposition}\label{prop:ergodic averages} If  $ f  \in C_c(M^{(2)})$ and $c<d$, then $\Phi\circ P(f \otimes 1_{[c,d]})$ is constant $m$-almost everywhere. 
\end{proposition}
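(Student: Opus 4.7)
The plan is to combine a Hopf-style ``stable leaf'' argument with the ergodicity of $\Gamma$ on $(M,\mu)$ established in Theorem~\ref{thm:ergodicity on single M}. Let $g := P(f \otimes 1_{[c,d]})$ and let $\tilde\Phi(g) \colon M^{(2)}\times\R \to \R$ be its lift. This function is $\psi^t$-invariant, $\Gamma$-invariant, and $\tilde m$-almost everywhere defined, so by $\psi^t$-invariance it descends to a measurable $\Gamma$-invariant function $F$ on $M^{(2)}$ with $F(\gamma x,\gamma y)=F(x,y)$ (where the $\Gamma$-action on $M^{(2)}$ is the diagonal one, because the cocycle correction in the $\R$-coordinate is absorbed by the $\psi^t$-invariance). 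If I can show $F(x,y)$ is $\bar\mu\otimes\mu$-almost everywhere independent of $x$, then $F$ descends to a $\Gamma$-invariant measurable function on $(M,\mu)$, which is constant $\mu$-almost everywhere by Theorem~\ref{thm:ergodicity on single M}(1). That gives $\Phi(g)$ constant $m$-almost everywhere.

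So the heart of the argument is to prove $F(x,y) = F(x',y)$ for $(\bar\mu\otimes\bar\mu\otimes\mu)$-almost every $(x,x',y)$. I would unpack both numerator and denominator of the defining ratio. Using the $\Gamma$-action formula, for $v=(x,y,0)$,
\[
\int_0^T \tilde P(f\otimes 1_{[c,d]})(\psi^t v)\, dt
= \sum_{\gamma\in\Gamma} f(\gamma x,\gamma y)\cdot \bigl|[0,T] \cap [c-\sigma_{\rm PS}(\gamma,y),\, d-\sigma_{\rm PS}(\gamma,y)]\bigr|,
\]
and an analogous expression holds for $\tilde\rho$ with $\rho_0$ replacing $f$ and $[-R,R]$ replacing $[c,d]$. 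The crucial observation is that the ``time windows'' $L(T,\gamma):=|[0,T]\cap[c-\sigma_{\rm PS}(\gamma,y),d-\sigma_{\rm PS}(\gamma,y)]|$ depend only on $y$ and $\gamma$, not on $x$. Hence the difference of numerators for $v=(x,y,0)$ and $v'=(x',y,0)$ is
\[
\sum_{\gamma}\bigl[f(\gamma x,\gamma y)-f(\gamma x',\gamma y)\bigr]\,L(T,\gamma),
\]
with $L(T,\gamma)\leq d-c$ and with the same $\sigma_{\rm PS}(\gamma,y)$ in both terms.

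I would then estimate this difference using the convergence group property. Fix $\eta>0$ and let $\delta>0$ be a modulus of uniform continuity for $f$ on a compact set containing $\mathrm{supp}(f)$. By Proposition~\ref{prop:compactifying}(3), there is a finite set $F_0\subset\Gamma$ such that for $\gamma\notin F_0$, $\gamma(M\smallsetminus B_{\epsilon}(\gamma^{-1}))\subset B_{\epsilon}(\gamma)$ for $\epsilon$ chosen so that on the ``good'' event $\{d(x,\gamma^{-1}),d(x',\gamma^{-1})\geq\epsilon\}$ we have $d(\gamma x,\gamma x')<\delta$, giving $|f(\gamma x,\gamma y)-f(\gamma x',\gamma y)|\leq\eta$. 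On the complementary ``bad'' event $\{d(x,\gamma^{-1})<\epsilon\}\cup\{d(x',\gamma^{-1})<\epsilon\}$, the terms are bounded by $2\|f\|_\infty$, but these exceptional $\gamma$ are controlled: using Proposition~\ref{prop:properties of fake shadows} and the Shadow Lemma (Theorem~\ref{thm:shadow lemma}), the Patterson--Sullivan mass of $\bar\mu$ in a small ball around $x$ (or $x'$) is small, and since these are conical limit points with $\bar\mu(\{x\})=0$ (Proposition~\ref{prop:some consequences of the shadow lemma}(2)), one can make the exceptional contribution small in a Fubini-type integrated sense. Running the identical estimate with $\rho_0$ in place of $f$ shows that the denominators $\int_0^T\tilde\rho(\psi^t v)\,dt$ and $\int_0^T\tilde\rho(\psi^t v')\,dt$ are asymptotically equivalent (and both diverge by Lemma~\ref{lem:average value of rho}), so the ratio limits coincide.

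The main obstacle will be making rigorous the ``exceptional $\gamma$ contribute negligibly'' step: one must exchange pointwise comparison at $(x,x',y)$ for an integrated estimate against $\bar\mu\otimes\bar\mu\otimes\mu$, so that the Shadow Lemma can be applied to bound $\bar\mu$-mass of small balls in aggregate rather than pointwise. Once this is handled, taking $\eta\to 0$ after dividing by the denominator yields $F(x,y)=F(x',y)$ almost everywhere, and ergodicity on $(M,\mu)$ completes the proof.
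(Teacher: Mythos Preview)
Your overall strategy matches the paper's: show that the lifted ratio $\tilde\Phi(g)$ depends only on the forward endpoint $y$, then descend to a $\Gamma$-invariant function on $(M,\mu)$ and apply Theorem~\ref{thm:ergodicity on single M}. The formula for the numerator and the observation that the time windows $L(T,\gamma)$ depend only on $(\gamma,y)$ are exactly what the paper uses.

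The gap is in your treatment of the ``bad'' $\gamma$. You propose to control the terms where $\gamma^{-1}$ is close to $x$ or $x'$ by an integrated Shadow Lemma / Fubini argument, and you flag this as the main obstacle. In fact no such integrated estimate is needed, and it is not clear your proposed one would go through: you would need to bound, pointwise in $(x,x',y)$, a sum over infinitely many exceptional $\gamma$ by something that becomes negligible after dividing by $\int_0^T\tilde\rho$, and the Shadow Lemma gives measure bounds, not cardinality bounds along a single flow line. The key observation you are missing is that the constraint $L_\gamma(T)\neq 0$ already forces $\sigma(\gamma,y)$ to be bounded above (by roughly $d+C$, using Equation~\eqref{eqn:difference between cocycles 2}), and the paper's contraction Lemma~\ref{lem:contraction} then shows: for fixed $y$ with $\dist(x,y),\dist(x',y)\geq r>0$ and any bound $B$ on $\sigma(\gamma,y)$, only \emph{finitely many} $\gamma$ fail to make $\dist(\gamma x,\gamma x')<\delta$. (The reason is that if $\{\gamma_n\}$ escapes with $\sigma(\gamma_n,y)\leq B$, the expanding property forces $\gamma_n^{-1}\to y$, so $x,x'$ lie in the contracting region.) Thus the exceptional contribution to the numerator is bounded by a constant depending on $(x,x',y,\eta)$ but \emph{not on $T$}, and after dividing by the diverging denominator it vanishes. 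The same trick, applied with $\rho_0$ in place of $f$ together with the Hopf ratio theorem applied to $h=P(1_{\supp f}\otimes 1_{[c,d]})$, gives the denominator comparison; your sketch of that part is correct but omits that one must also know $\tilde\Phi(h)$ exists at the points in question.
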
 

This proposition will be proved by first showing $\tilde{\Phi}\circ P(f \otimes 1_{[c,d]})$ is almost surely constant along ``weak stable manifolds'' of the form $M\times\{y\}\times\R$, 
which are parametrized by $y\in M$. Thus $\tilde{\Phi}\circ P(f \otimes 1_{[c,d]})$ induces a $\Gamma$-invariant  function on $M$ defined by 
$$
y \mapsto \tilde{\Phi}\circ P(f \otimes 1_{[c,d]})(*,y,*). 
$$
Then Theorem~\ref{thm:ergodicity on single M}, which says that $\Gamma$ acts ergodically on $(M,\mu)$, will imply  that this function is constant. 

Delaying the proof of Proposition~\ref{prop:ergodic averages}, we deduce Theorem~\ref{thm:ergodicity on product}.

\begin{lemma} $\Gamma$ acts ergodically on $(M^{(2)},\nu)$ and hence also on $(M^{(2)},\bar\mu \otimes \mu)$. \end{lemma}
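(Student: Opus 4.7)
The plan is to reduce ergodicity of $\Gamma$ on $(M^{(2)},\nu)$ to Proposition~\ref{prop:ergodic averages} via the natural $\psi^t$-invariant ``cylinder'' lift. Given any $\Gamma$-invariant Borel set $A \subset M^{(2)}$, the set $B := A\times\R$ is both $\Gamma$- and $\psi^t$-invariant, so it descends to a $\psi^t$-invariant measurable subset $\bar B \subset \Gamma\backslash M^{(2)}\times\R$. The defining property $\int P(g)\,dm = \int g\,d\tilde m$ of the quotient measure from Section~\ref{sec:quotient measures}, applied to $g\cdot 1_B$ (which, since $B$ is $\Gamma$-saturated, descends to $P(g)\cdot 1_{\bar B}$), upgrades to $\int_{\bar B} P(g)\,dm = \int_B g\,d\tilde m$ for every $g \in L^1(\tilde m)$.

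I would then fix $f \in C_c(M^{(2)})$ and $c<d$, set $g := f\otimes 1_{[c,d]}$, and combine the above identity with~\eqref{eqn:Phi is the conditional expectation} applied to the $\psi^t$-invariant set $\bar B$. Using that $\Phi\circ P(g)$ is $m$-almost everywhere equal to a constant $K(f,c,d)$ by Proposition~\ref{prop:ergodic averages}, this gives $(d-c)\int_A f\, d\nu = K(f,c,d)\int_{\bar B}\rho\, dm$. Specializing to $A = M^{(2)}$ determines $K(f,c,d)$, whence
$$\int_A f\, d\nu = \lambda \int_{M^{(2)}} f\, d\nu, \qquad \lambda := \frac{\int_{\bar B}\rho\, dm}{\int \rho\, dm} \in [0,1],$$
a ratio depending only on $A$, with denominator finite and positive by the construction of $\rho$ in Section~\ref{sec:weight} and local finiteness of $\tilde m$.

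By density and $\sigma$-additivity, the relation extends to $\nu(A\cap C) = \lambda\,\nu(C)$ for every Borel $C \subset M^{(2)}$. Choosing $C$ of finite positive measure inside $A$, respectively inside $A^c$, then forces $\lambda = 1$, respectively $\lambda = 0$, unless one of $\nu(A)$, $\nu(A^c)$ vanishes; this yields the dichotomy and hence ergodicity on $(M^{(2)},\nu)$. Ergodicity on $(M^{(2)}, \bar\mu\otimes\mu)$ is then immediate because $\nu = e^{\delta \tilde G}\bar\mu\otimes\mu$ with $e^{\delta\tilde G}>0$, so the two measures share the same null sets. The argument is essentially formal once Proposition~\ref{prop:ergodic averages} is available; the only mild technical point I anticipate is justifying that the cylinder $A\times\R$ descends cleanly to a measurable subset $\bar B$ with the stated integral identity, which is automatic from $\Gamma$-saturation together with the construction of the quotient measure in the appendix.
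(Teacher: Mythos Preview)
Your argument is correct and takes a genuinely different route from the paper. The paper argues by contradiction: given a $\Gamma$-invariant $A$ with $\nu(A),\nu(A^c)>0$, it picks a compact $K\subset A^c$ of positive $\nu$-measure, approximates $1_K$ in $L^1$ by continuous $f_n$, uses continuity of $\Phi\circ P$ together with Proposition~\ref{prop:ergodic averages} to conclude that $\Phi\circ P(1_{K\times[0,1]})$ is constant $\rho m$-a.e., then upgrades this to $m$-a.e.\ via Lemma~\ref{lem:average value of rho}; the constant must vanish (since $\Phi\circ P(1_{K\times[0,1]})\equiv 0$ on the positive-measure set $\Gamma\backslash A\times\Rb$), yet $\int\Phi\circ P(1_{K\times[0,1]})\rho\,dm=\tilde m(K\times[0,1])>0$, a contradiction. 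Your route is more structural: combining the quotient-measure identity with~\eqref{eqn:Phi is the conditional expectation} and Proposition~\ref{prop:ergodic averages} yields directly $\nu|_A=\lambda\nu$ for a constant $\lambda$ depending only on $A$, and then $\lambda\in\{0,1\}$ is forced. This bypasses both the $L^1$-approximation step and the separate invocation of Lemma~\ref{lem:average value of rho}, at the mild cost of a Riesz/regularity argument to pass from $C_c(M^{(2)})$ to Borel sets. The only point worth making explicit is that $\int\rho\,dm=\int\rho_0\otimes 1_{[-R,R]}\,d\tilde m$ is strictly positive, which follows from Lemma~\ref{lem:average value of rho} (if it were zero then $\tilde\rho$ would vanish $\tilde m$-a.e.).
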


\begin{proof} 
Suppose for a contradiction that there exists a $\Gamma$-invariant measurable set $A \subset M^{(2)}$ where $\nu(A) > 0$ and $\nu(A^c)>0$. 
By inner regularity, there exists a compact subset $K \subset A^c$ with $\nu(K) > 0$. 

Let $\{f_n\}$ be a sequence of compactly supported continuous functions on $M^{(2)}$ converging to $1_K$ in $L^1(M^{(2)},\nu)$. Since $\Phi$, $P$ and $\cdot\otimes 1_{[0,1]}$ are continuous operators, we have 
$$
\Phi\circ P(f_n\otimes 1_{[0,1]})\to \Phi\circ P(1_K \otimes 1_{[0,1]})=\Phi\circ P(1_{K\times[0,1]})
$$
in $ L^1(\Gamma\backslash M^{(2)} \times \Rb, \rho m)$.

By Proposition~\ref{prop:ergodic averages}, each $\Phi\circ P(f_n\otimes 1_{[0,1]})$ is constant $m$-almost everywhere and hence constant $\rho m$-almost everywhere. Hence  the limit $ \Phi\circ P(1_{K\times[0,1]})$ is  constant $\rho m$-almost everywhere (since the convergence is in $L^1(\Gamma\backslash M^{(2)} \times \Rb, \rho m)$). So there exists $c \in \Rb$ such that 
$$
\Phi\circ P(1_{K\times[0,1]})(v) = c
$$
for $m$-almost every $v \in \{ \rho \neq 0\}$. However, by Lemma~\ref{lem:average value of rho} for $m$-almost every $v \in \Gamma \backslash M^{(2)} \times \Rb$ the orbit $\psi^t(v)$ intersects $\{\rho \neq 0\}$. Since $\Phi\circ P(1_{K\times[0,1]})$ is $\psi^t$-invariant, then
$$
\Phi\circ P(1_{K\times[0,1]})(v) =c
$$
for $m$-almost every $v \in \Gamma \backslash M^{(2)} \times \Rb$. 

By definition, $K \subset A^c$ and so $ \Phi\circ P(1_{K\times[0,1]})$ is well-defined and equal to zero on $\Gamma\backslash A\times \R$.
Moreover, $\Gamma\backslash A\times \R$ has positive $m$-measure (see Remark~\ref{rem:quotient is sigmafinite}).  So $c = 0$ and $\Phi\circ P(1_{K\times[0,1]})=0$ $\rho m$-almost everywhere. Hence 
\begin{equation*}
\int\Phi\circ P(1_{K\times[0,1]})\rho \,dm =0.
\end{equation*} 
However, by Equations~\eqref{eqn:Phi is the conditional expectation} and~\eqref{eqn:integral of P},
\begin{equation*}
\int\Phi\circ P(1_{K\times[0,1]})\rho \,dm = \int P(1_{K \times [0,1]}) \,dm = \int_{M^{(2)} \times \Rb} 1_{K \times [0,1]} \, d\tilde m > 0.
\end{equation*} 
So we have a contradiction.
\end{proof}

\subsection{Proof of Proposition~\ref{prop:ergodic averages} } 

We start with a technical lemma similar to \cite[Lem.\,2.6]{BF2017}. The statement of the lemma is somewhat opaque, but can be interpreted as a boundary version of the assertion that the flow $\psi^t \colon M^{(2)} \times \Rb\to M^{(2)} \times \Rb$ has ``weak stable manifolds'' of the form $M\times\{y\}\times\R$. In the case when the GPS system is continuous, this assertion about  ``weak stable manifolds''  can be made precise, see \cite[\S3]{papertwo}.

Recall that $\dist$ is a compatible metric on $\Gamma \sqcup M$. 

\begin{lemma}\label{lem:contraction} For any $\epsilon, r > 0$ and  $B \in \Rb$ there exists a finite subset $F \subset \Gamma$ such that: if $x_1, x_2, y \in M$, 
$\gamma \in \Gamma \smallsetminus F$, $\sigma(\gamma, y) \leq B$ and
$$
\min\{ \dist(x_1,y), \dist(x_2,y) \} \geq r,
$$
then   
$$
\dist(\gamma x_1, \gamma x_2) < \epsilon. 
$$
\end{lemma}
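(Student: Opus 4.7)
The plan is to argue by contradiction. Suppose no such finite $F$ exists; then for each $n \geq 1$ there are $x_n^1, x_n^2, y_n \in M$ and distinct $\gamma_n \in \Gamma$ (with $\{\gamma_n\}$ escaping) such that $\sigma(\gamma_n, y_n) \leq B$, $\min\{\dist(x_n^1, y_n), \dist(x_n^2, y_n)\} \geq r$, yet $\dist(\gamma_n x_n^1, \gamma_n x_n^2) \geq \epsilon$. Using compactness of $\Gamma \sqcup M$, pass to a subsequence so that $\gamma_n \to a \in \Gamma \sqcup M$ and $\gamma_n^{-1} \to b \in \Gamma \sqcup M$; since $\{\gamma_n\}$ is escaping we have $a, b \in M$, and then by Proposition~\ref{prop:compactifying}(1) the restrictions $\gamma_n|_{M \smallsetminus \{b\}}$ converge locally uniformly to $a$.

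The first key step is to show $\dist(y_n, \gamma_n^{-1}) \to 0$. Otherwise, after passing to a further subsequence we would have $\dist(y_n, \gamma_n^{-1}) \geq \epsilon_0$ for some $\epsilon_0 > 0$; by the expanding property of $\sigma$ there is $C_0 > 0$ with
$$\sigma(\gamma_n, y_n) \geq \norm{\gamma_n}_\sigma - C_0,$$
and since Proposition~\ref{prop:basic properties}\eqref{item:properness} gives $\norm{\gamma_n}_\sigma \to +\infty$, this contradicts $\sigma(\gamma_n, y_n) \leq B$. Hence $y_n \to b$.

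The second key step is to deduce that $\gamma_n x_n^i \to a$ for $i = 1,2$. Since $\dist(x_n^i, y_n) \geq r$ and $y_n \to b$, for $n$ large we have $\dist(x_n^i, b) \geq r/2$, so the locally uniform convergence of $\gamma_n|_{M \smallsetminus \{b\}}$ to $a$ forces $\gamma_n x_n^i \to a$. But then $\dist(\gamma_n x_n^1, \gamma_n x_n^2) \to 0$, contradicting $\dist(\gamma_n x_n^1, \gamma_n x_n^2) \geq \epsilon$.

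The only real content is the first step, where the boundedness of $\sigma(\gamma_n, y_n)$ together with the expanding hypothesis must be converted into genuine proximity of $y_n$ to $\gamma_n^{-1}$; the rest is an application of the convergence group dynamics packaged in Proposition~\ref{prop:compactifying}(1). I do not expect any serious obstacle beyond carefully choosing constants so that the expanding estimate is applied on a region of the form $M \smallsetminus B_{\epsilon_0}(\gamma_n^{-1})$.
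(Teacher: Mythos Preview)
Your proof is correct and follows essentially the same route as the paper's: argue by contradiction, pass to a subsequence so that $\gamma_n \to a$ and $\gamma_n^{-1} \to b$, use the expanding property together with $\norm{\gamma_n}_\sigma \to +\infty$ to force $y_n \to b$, and then invoke the convergence dynamics to conclude $\gamma_n x_n^i \to a$ for $i=1,2$. The paper's argument is slightly more terse but structurally identical.
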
 

\begin{proof} Suppose not. Then there exists a sequence $\{\gamma_n\} \subset \Gamma$ of distinct elements such that for every $n \geq 1$ there are $x_{1,n}, x_{2,n}, y_n \in M$ where
\begin{align*}
\min & \{ \dist(x_{1,n},y_n), \dist(x_{2,n},y_n) \} \geq r, \quad \sigma(\gamma_n, y_n) \leq B, \quad \text{and} \quad \dist(\gamma_n x_{1,n}, \gamma_n x_{2,n}) \geq \epsilon.
\end{align*}
Passing to a subsequence we can suppose that $\gamma_n^{\pm 1} \rightarrow a^\pm \in M$. Since $\{ \gamma_n\}$ are distinct, Proposition~\ref{prop:basic properties}\eqref{item:properness} implies that $\norm{\gamma_n}_\sigma \rightarrow +\infty$. Then since 
$$
\sigma(\gamma_n, y_n) \leq B
$$
and $\sigma$ is expanding, we must have $y_n \rightarrow a^-$. Then since 
$$
\min\{ \dist(x_{1,n},y_n), \dist(x_{2,n},y_n) \} \geq r,
$$
we have $\gamma_n x_{1,n} \rightarrow a^+$ and $\gamma_n x_{2,n} \rightarrow a^+$. So 
$$
\lim_{n \rightarrow \infty} \dist(\gamma_n x_{1,n}, \gamma_n x_{2,n})= 0
$$
and we have a contradiction. 
\end{proof} 

We now begin our investigation of functions of the form $P(f \otimes 1_{[c,d]})$.

\begin{lemma}
\label{gh bounds} 
Suppose $ f  \in C_c(M^{(2)})$, $c < d$,  $g := \tilde P(f \otimes 1_{[c,d]})$, and $h:=\tilde P(1_{{\rm supp}(f)} \otimes 1_{[c,d]})$. 
If $v,w \in M^{(2)}\times \R$ satisfy $v^+=w^+\in M'$ and $\epsilon > 0$, then  there exists $C=C(f,c,d,v,w,\epsilon)>0$ such that 
\begin{align*}
 \abs{ \int_0^T g  (\psi^t(v)) dt - \int_0^T g (\psi^t(w)) dt}  \leq C + \epsilon \left(  \int_0^T h (\psi^t(v)) dt+  \int_0^T h  (\psi^t(w)) dt\right)
\end{align*}  
for all $T \geq 0$. 
\end{lemma}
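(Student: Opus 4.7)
I would expand both integrals as sums over $\gamma \in \Gamma$. Setting $y = v^+ = w^+ \in M'$, $\tau_\gamma := \sigma_{\rm PS}(\gamma, y)$, $\Delta := s_v - s_w$, and $L^u_\gamma := \int_0^T 1_{[c,d]}(t + s_u + \tau_\gamma) dt$ for $u \in \{v, w\}$, the action formula from Section~\ref{sec: action on M2xR} gives $g(\psi^t v) = \sum_\gamma f(\gamma v^-, \gamma y) 1_{[c,d]}(t + s_v + \tau_\gamma)$, so
\[
\int_0^T \bigl[g(\psi^t v) - g(\psi^t w)\bigr] dt = A + B,
\]
where $A = \sum_\gamma [f(\gamma v^-, \gamma y) - f(\gamma w^-, \gamma y)] L^v_\gamma$ isolates the $f$-value mismatch, and $B = \sum_\gamma f(\gamma w^-, \gamma y)(L^v_\gamma - L^w_\gamma)$ isolates the time-shift mismatch. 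Reindexing via $u = t + \Delta$ inside $B$ and using $s_v = s_w + \Delta$ yields
\[
B = \int_T^{T+\Delta} g(\psi^u w) du - \int_0^\Delta g(\psi^u w) du,
\]
exhibiting $B$ as two boundary integrals of $g$ along the $w$-orbit.

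For $A$: extending $f$ by zero gives a uniformly continuous function on the compact $M^2$, so given $\eta > 0$ I pick $\delta > 0$ with $|f(x_1, y') - f(x_2, y')| < \eta$ whenever $\dist(x_1, x_2) < \delta$. Any $\gamma$ with $L^v_\gamma > 0$ satisfies $\tau_\gamma \leq d - s_v$, hence $\sigma(\gamma, y) \leq d - s_v + C$ by \eqref{eqn:difference between cocycles 2}. Applying Lemma~\ref{lem:contraction} with $r = \min\{\dist(v^-, y), \dist(w^-, y)\} > 0$ and $B_0 = \max\{d - s_v, d - s_w\} + C$ yields a finite set $F \subset \Gamma$ such that $\dist(\gamma v^-, \gamma w^-) < \delta$ for all $\gamma \notin F$ with $\sigma(\gamma, y) \leq B_0$. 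The contribution to $A$ from $F$ is bounded by $2(d-c)|F|\|f\|_\infty$. For the remainder, the inequality $|f(\gamma v^-, \gamma y) - f(\gamma w^-, \gamma y)| \leq \eta(1_{\supp f}(\gamma v^-, \gamma y) + 1_{\supp f}(\gamma w^-, \gamma y))$ (valid since $f$ vanishes outside $\supp f$) gives
\[
|A_{F^c}| \leq \eta \int_0^T h(\psi^t v) dt + \eta \sum_\gamma 1_{\supp f}(\gamma w^-, \gamma y) L^v_\gamma.
\]
I reduce the cross sum to $\int_0^T h(\psi^t w) dt$ plus a boundary correction using $L^v_\gamma = L^w_\gamma + (L^v_\gamma - L^w_\gamma)$, noting that $L^v_\gamma - L^w_\gamma$ is supported in two narrow $\tau_\gamma$-bands of total width $O(d - c + |\Delta|)$.

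For $B$ and the remaining boundary corrections: using $|g(\psi^u w)| \leq \|f\|_\infty h(\psi^u w)$, the integral $\int_0^\Delta g(\psi^u w) du$ is bounded by a constant depending on $w, f, c, d, \Delta$. Every $\gamma$ contributing to $\int_T^{T+\Delta} g(\psi^u w) du$ satisfies $\gamma \psi^T w \in \supp f \times [c - \Delta, d]$, a fixed compact subset of $M^{(2)} \times \R$; Proposition~\ref{prop:Gamma action is proper} then guarantees the corresponding count of $\gamma$ is finite for each $T$. Together with the analogous control at the upper $\tau_\gamma$-boundary (which corresponds to a fixed bounded range of $\tau_\gamma$ and thus contributes a $T$-independent constant), the total boundary contribution is absorbed into $C + \epsilon(\int_0^T h(\psi^t v) dt + \int_0^T h(\psi^t w) dt)$ by dichotomy on the growth of $h$-integrals along the orbit. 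Choosing $\eta$ small in terms of $\epsilon$ and collecting the bounds gives the claimed inequality.

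The main obstacle is the careful absorption of the boundary integral $\int_T^{T+\Delta} g(\psi^u w) du$ into $\epsilon \int_0^T h(\psi^u w) du + C$, since the naive pointwise bound $|g| \leq \|f\|_\infty h$ produces a coefficient $\|f\|_\infty$ instead of $\epsilon$. Resolving this requires exploiting Proposition~\ref{prop:Gamma action is proper} to convert the $\gamma$-count over a $T$-moving target into an orbit-counting statement at $\psi^T w$ in a fixed compact set, and then comparing this marginal count to the cumulative $\int_0^T h$ via the flow-invariance of $\tilde m$.
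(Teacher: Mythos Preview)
Your decomposition $A+B$ is correct, and the treatment of $A$ via Lemma~\ref{lem:contraction} is essentially the paper's argument. The difficulty you flag at the end, however, is illusory: you are misreading the strength of Proposition~\ref{prop:Gamma action is proper}. That proposition does not merely say the $\gamma$-count is ``finite for each $T$''; it says that for the fixed compact set $Q=\supp(f)\times[c,d]$ the set $\{\gamma:(Q\cap Z)\cap\gamma(Q\cap Z)\neq\emptyset\}$ has some finite cardinality $N$. Consequently, for \emph{every} $u\in Z$ at most $N$ translates $\gamma u$ lie in $Q$, so $|g|\le N\|f\|_\infty$ and $h\le N$ pointwise on $Z$. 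Your boundary integrals $\int_T^{T+\Delta}g(\psi^u w)\,du$ and $\int_0^\Delta g(\psi^u w)\,du$ are then each bounded by $N\|f\|_\infty|\Delta|$, a $T$-independent constant that simply goes into $C$. The same uniform bound on $h$ handles your cross-sum discrepancy $\sum_\gamma 1_{\supp f}(\gamma w^-,\gamma y)(L^v_\gamma-L^w_\gamma)$. There is no need for any ``dichotomy on the growth of $h$-integrals'' or comparison of marginal counts with cumulative $\int_0^T h$.

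The paper's proof uses the same uniform bound more efficiently: it invokes $|g|\le N\|f\|_\infty$ \emph{first} to reduce to the case $s_v=s_w$ at a cost of $2N\|f\|_\infty|s_v-s_w|$, after which $L^v_\gamma=L^w_\gamma$ identically and only your $A$-term survives---with no cross-sum and no $B$-term at all. This is a cleaner route to the same estimate.
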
 

\begin{proof} Let $v = (x_1, y, s)$ and $w=(x_2, y, s^\prime)$. 
 Let $Q$ be the support of $f\otimes  1_{[c,d]}$. Since
$Q$ is compact,
Proposition~\ref{prop:Gamma action is proper} implies that
$$N:=\#\{\gamma\in\Gamma :  (Q\cap Z)\cap \gamma(Q\cap Z)\ne\emptyset\}<+\infty$$
where $Z=\{(x,y,t) \in M^{(2)} \times \mb R : y \in M'\}$. So, if $u\in Z$, then
$|g(u)|\leq N\norm f_\infty$. 
Therefore,
\begin{align*}
 \abs{ \int_0^T g  (\psi^t(x_2,y,s^\prime)) dt - \int_0^T g (\psi^t(x_2,y,s)) dt} \leq 2N\norm f_\infty \abs{s-s^\prime}.
\end{align*} 
Since our constant $C$ depends on $v$ and $w$, and hence on $s$ and $s^\prime$, we can assume that $s=s^\prime$. 

Let $\lambda$ be the Lebesgue measure on $\Rb$ and let
$$
L_{\gamma}(T) : =  \lambda\Big( [0,T] \cap [c-s-\sigma_{\rm PS}(\gamma,y), d-s-\sigma_{\rm PS}(\gamma, y)]\Big).
$$
Then
\begin{align*}
\int_0^T g  (\psi^t(v)) dt - \int_0^T g (\psi^t(w)) dt = \sum_{\gamma \in \Gamma} \left( f(\gamma x_1, \gamma y) - f(\gamma x_2, \gamma y) \right) L_\gamma(T).
\end{align*} 
Notice that if $L_\gamma(T) \neq 0$, then Equation~\eqref{eqn:difference between cocycles 2} implies that 
$$
\sigma(\gamma, y) \leq C+\sigma_{\rm PS}(\gamma, y) \leq C+d-s.
$$
So by the uniform continuity of $f$ and Lemma~\ref{lem:contraction}, there exists a finite set $F \subset \Gamma$ such that: if $\gamma \in \Gamma \smallsetminus F$ and 
$L_\gamma(T) \neq 0$, then 
$$
\abs{ f(\gamma x_1, \gamma y) - f(\gamma x_2, \gamma y)} \leq \epsilon. 
$$
Then, writing $S := \supp(f)$, 
\begin{align*}
&  \abs{ \int_0^T g  (\psi^t(v)) dt - \int_0^T g (\psi^t(w)) dt} \\
& \quad \leq \sum_{\gamma \in F} 2\norm{f}_\infty (d-c) + \epsilon  \sum_{\gamma \in \Gamma \smallsetminus F}  \big(1_{S}(\gamma x_1, \gamma y) + 1_{S}(\gamma x_2, \gamma y) \big)L_\gamma(T) \\
& \quad \leq \sum_{\gamma \in F} 2\norm{f}_\infty (d-c) + \epsilon\left(  \int_0^T h (\psi^t(v)) dt+  \int_0^T h  (\psi^t(w)) dt\right). \qedhere
\end{align*}  
\end{proof} 

Recall that Lemma~\ref{lem:average value of rho} says that $\lim_{T \rightarrow \infty} \int_0^T \tilde \rho(\psi^t(v)) dt = +\infty$ for any $v$ with $v^+\in \Lambda^{\rm con}_{\epsilon_0}(\Gamma) \cap M'$. 
The next lemma shows that, on a full measure set, the convergence to infinity is asymptotically identical for flow lines with the same forward endpoint. 

\begin{lemma}\label{lem:rho value the same}
There is a full $\tilde m$-measure set $Y_\rho\subset M^{(2)}\times \R$ such that:
If $v,w \in Y_\rho$ and $v^+ = w^+\in  \Lambda^{\rm con}_{\epsilon_0}(\Gamma) \cap M'$, then 
$$
\lim_{T \rightarrow \infty} \frac{ \int_0^T \tilde\rho(\psi^t(v)) dt }{ \int_0^T \tilde\rho(\psi^t(w)) dt } = 1. 
$$
\end{lemma}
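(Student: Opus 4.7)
The plan is to apply Lemma~\ref{gh bounds} with $f = \rho_0$, $c = -R$, $d = R$, so that $g = \tilde\rho$ and $h = \tilde P(1_{\operatorname{supp}(\rho_0)}\otimes 1_{[-R,R]})$. Writing $A(v,T) := \int_0^T \tilde\rho(\psi^tv)\,dt$ and $B(v,T) := \int_0^T h(\psi^tv)\,dt$, the lemma yields, for every $\epsilon>0$ and every pair $v,w$ with $v^+ = w^+ \in M'$, a constant $C = C(\epsilon,v,w)$ such that
$$\bigl|A(v,T) - A(w,T)\bigr| \leq C + \epsilon\bigl(B(v,T) + B(w,T)\bigr)$$
for all $T \geq 0$. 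The task is then to show that the error $\epsilon(B(v,T)+B(w,T))$ is, asymptotically, a small fraction of $\max\{A(v,T),A(w,T)\}$.

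To arrange this, I will apply the Hopf ratio ergodic theorem from Section~\ref{subsec:Hopf ratio} to $\rho':= P(1_{\operatorname{supp}(\rho_0)}\otimes 1_{[-R,R]}) \in L^1(\Gamma\backslash M^{(2)}\times\R, m)$ (integrability follows since $\operatorname{supp}(\rho_0)$ is compact in $M^{(2)}$ and $\tilde G$ is locally bounded). This produces a $\psi^t$-invariant, $m$-almost everywhere finite function $\Phi(\rho')$ satisfying
$$\lim_{T\to\infty} \frac{\int_0^T \rho'(\psi^t[v])\,dt}{\int_0^T \rho(\psi^t[v])\,dt} = \Phi(\rho')([v])$$
for $m$-almost every $[v]$; here finiteness $m$-a.e.\ uses conservativity of $\psi^t$ on $(\Gamma\backslash M^{(2)}\times\R,m)$ established in Lemma~\ref{lem:average value of rho}. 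Since $\tilde\rho$ and $h$ descend to $\rho$ and $\rho'$ respectively, lifting back gives a $\Gamma$-invariant, $\psi^t$-invariant, full $\tilde m$-measure set $Y_\rho \subset M^{(2)}\times\R$ on which $B(v,T)/A(v,T)$ converges to a finite value $c_v := \tilde\Phi(\rho')(v)$.

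Now fix $v,w \in Y_\rho$ with $v^+ = w^+ \in \Lambda^{\rm con}_{\epsilon_0}(\Gamma)\cap M'$. By Lemma~\ref{lem:average value of rho}, $A(v,T) \to \infty$ and $A(w,T)\to\infty$, and for $T$ large enough $B(v,T) \leq (c_v+1)A(v,T)$ and $B(w,T) \leq (c_w+1)A(w,T)$. Substituting into the estimate from Lemma~\ref{gh bounds} gives, for large $T$,
$$\bigl|A(v,T) - A(w,T)\bigr| \leq C + \epsilon(c_v+1)A(v,T) + \epsilon(c_w+1)A(w,T).$$
Setting $M_T := \max\{A(v,T),A(w,T)\}$ and $m_T := \min\{A(v,T),A(w,T)\}$, this yields
$$M_T - m_T \leq C + \epsilon\bigl((c_v+1) + (c_w+1)\bigr)M_T,$$
so $M_T/m_T \leq \bigl(1 + C/m_T\bigr)\bigl(1-\epsilon(c_v+c_w+2)\bigr)^{-1}$ once $\epsilon$ is small enough. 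Letting $T\to\infty$ and then $\epsilon\to 0$ forces $M_T/m_T \to 1$, which is precisely the desired conclusion.

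The main obstacle is the setup of $Y_\rho$: one must verify that the Hopf ratio theorem applies to $\rho'$ (integrability plus conservativity, ensuring finite limits) and that the resulting exceptional null set can be enlarged to a $\Gamma$- and $\psi^t$-invariant null set. After this, the remainder of the argument is a direct Hopf-Lemma-style manipulation of the key inequality from Lemma~\ref{gh bounds}, using only that both $A(v,T)$ and $A(w,T)$ diverge to infinity and that the ratio $B/A$ is eventually bounded along each flow line.
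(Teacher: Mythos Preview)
Your proof is correct and follows essentially the same approach as the paper: apply the Hopf ratio ergodic theorem to $h=P(1_{\operatorname{supp}(\rho_0)}\otimes 1_{[-R,R]})$ to obtain finite limits $\tilde\Phi(h)(v),\tilde\Phi(h)(w)$ on a full-measure set, then combine this with Lemma~\ref{gh bounds} and the divergence of $\int_0^T\tilde\rho(\psi^t(\cdot))\,dt$ from Lemma~\ref{lem:average value of rho}. The only cosmetic difference is that the paper phrases the endgame as a contradiction argument on $r_T=A(v,T)/A(w,T)$, whereas you work directly with $M_T/m_T$; both manipulations are equivalent.
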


\begin{proof} 
Recall that $\tilde \rho = \tilde P(\rho_0 \otimes 1_{[-R,R]})$ where $\rho_0 \in C_c(M^{(2)})$.
Let $h := P(1_{{\rm supp}(\rho_0)} \otimes 1_{[-R,R]})$.
By  the Hopf ratio ergodic theorem, there is a full measure subset $Y_\rho$ where $\tilde{\Phi}(h)$ exists.

Fix $v,w \in Y_\rho$ with $v^+ = w^+\in  \Lambda^{\rm con}_{\epsilon_0}(\Gamma) \cap M'$ and let 
$$
r_T : = \frac{ \int_0^T \tilde\rho(\psi^t(v)) dt }{ \int_0^T \tilde\rho(\psi^t(w)) dt }.
$$
By Lemma~\ref{lem:average value of rho}, there exists $T_0 > 0$ such that $r_T \in (0,\infty)$ for all $T \geq T_0$. 

Suppose for a contradiction that 
 $$
\lim_{T \rightarrow \infty} r_T \neq 1. 
$$
Then after possibly relabeling $v$ and $w$ there exists $T_n \rightarrow \infty$ such that 
$$
r_\infty : = \lim_{n \rightarrow \infty}r_{T_n} \in (1,+\infty].
$$

By Lemma \ref{gh bounds}, for any $\epsilon > 0$ there exists $C_\epsilon > 0$ such that 
\begin{align*}
r_{T} & = 1 + \frac{ \int_0^T \tilde\rho(\psi^t(v)) dt - \int_0^T \tilde\rho(\psi^t(w)) dt}{ \int_0^T \tilde\rho(\psi^t(w)) dt }\\
& \leq 1 + \frac{C_\epsilon} { \int_0^{T} \tilde\rho(\psi^t(w)) dt }+  \epsilon \left(  r_T\frac{\int_0^T h (\psi^t(v)) dt}{ \int_0^T \tilde\rho(\psi^t(v)) dt }+  \frac{\int_0^T h  (\psi^t(w)) dt}{ \int_0^T \tilde\rho(\psi^t(w)) dt }\right)
 \end{align*}
 for all $T \geq T_0$. Hence, 
 $$
\left( 1- \epsilon\frac{\int_0^T h (\psi^t(v)) dt}{ \int_0^T \tilde\rho(\psi^t(v)) dt }\right) r_{T} \leq 1 + \frac{C_\epsilon} { \int_0^{T} \tilde\rho(\psi^t(w)) dt }+  \epsilon  \frac{\int_0^T h  (\psi^t(w)) dt}{ \int_0^T \tilde\rho(\psi^t(w)) dt }
 $$
 for all $T \geq T_0$. Lemma~\ref{lem:average value of rho} implies that 
$$
\lim_{T \rightarrow \infty} \int_0^T \tilde \rho(\psi^t(w)) dt = +\infty.
$$
So  for any $\epsilon > 0$ we have 
$$
\left(1 - \epsilon \tilde{\Phi}(h)(v)\right) r_\infty \leq 1 + \epsilon \tilde{\Phi}(h)(w) 
$$
Since $\epsilon > 0$ is arbitrary, we have $r_\infty \leq 1$ which is a contradiction. 
 \end{proof} 
 
We are now ready to finish the proof of Proposition~\ref{prop:ergodic averages}. 
Fix $f \in C_c(M^{(2)})$ and $c<d$. Let $g := \tilde P(f \otimes 1_{[c,d]})$ and  $h := \tilde P(1_{{\rm supp}(f)} \otimes1_{[c,d]})$. 
By the Hopf ratio ergodic theorem, there is a full $\tilde m$-measure set  $X$  where $\tilde \Phi(g)$ and $\tilde \Phi(h)$ both exist.
Let $Y_\rho$ be the full $\tilde m$-measure set given by Lemma~\ref{lem:rho value the same}. Then, $Y:=X\cap Y_\rho$ is 
a full $\tilde m$-measure set.

We claim that 
\begin{equation}\label{eqn:Hopf type lemma}
\tilde\Phi(g)(v) = \tilde\Phi(g)(w)
\end{equation} 
when $v,w \in Y$ and $v^+ = w^+\in  \Lambda^{\rm con}_{\epsilon_0}(\Gamma) \cap M'$.
Indeed, Lemma~\ref{lem:average value of rho} implies that 
$$
\lim_{T \rightarrow \infty} \int_0^T \tilde\rho(\psi^t(v)) dt = +\infty.
$$
So by Lemmas~\ref{gh bounds} and ~\ref{lem:rho value the same}, for any $\epsilon > 0$ we have
$$
\abs{\tilde\Phi(g)(v) - \tilde\Phi(g)(w)} \leq \epsilon \Big( \tilde\Phi(h)(v) + \tilde\Phi(h)(w)\Big).
$$
So $\tilde\Phi(g)(v) = \tilde\Phi(g)(w)$. 

Now suppose for a contradiction that $\tilde{\Phi}(g)$ is not constant $\tilde m$-almost everywhere. Then there exists a measurable set $A \subset \Rb$ such that the sets $\{ v : \tilde{\Phi}(g)(v) \in A\}$ and $\{ v : \tilde{\Phi}(g)(v) \in A^c\}$ both have positive $\tilde m$-measure. As before, let $\lambda$ denote the Lebesgue measure on $\Rb$. 

Then consider the following sets
\begin{align*}
A_1 &:= \{ y \in M : \tilde\Phi(g)(x,y,t) \in A \text{ for $\mu \otimes \lambda$-a.e. } (x,t) \},\\
A_2 &:= \{ y \in M : \tilde\Phi(g)(x,y,t) \in A \text{ for a positive $\mu \otimes \lambda$-measure set of } (x,t) \},\\
B &:=  \{ y \in M'\cap\Lambda^{\rm con}_{\epsilon_0}(\Gamma) : (x,y,t) \in Y \text{ for $\mu \otimes \lambda$-a.e. } (x,t) \}.
\end{align*}
Note that $A_1\subset A_2$, 
that these sets are $\Gamma$-invariant (since $\tilde\Phi(g)$ is $\Gamma$-invariant),
and that they are measurable by Fubini's Theorem (for instance $A_1$ is the set of $y$ such that $\int 1_{A^c}\circ\tilde\Phi(x,y,t)d\mu(x)dt=0$).
Moreover $A_2$ has positive measure, by definition of $A$, and $B$ has full measure.
Finally Equation~\eqref{eqn:Hopf type lemma} implies that $A_2\cap B\subset A_1$, so $\mu(A_1) > 0$.
One also checks that $M\smallsetminus A_1$, the set of $y$ such that $\tilde\Phi(g)(x,y,t) \in A^c$ for a positive $\mu \otimes \lambda$-measure set of  $(x,t)$, satisfies $\mu(M\smallsetminus A_1)>0$. 

However, this contradicts the fact that
$\Gamma$ acts ergodically on $(M, \mu)$, see Theorem~\ref{thm:ergodicity on single M}.

Thus $\tilde{\Phi}(g)$ is constant $\tilde m$-almost everywhere, which implies that $\Phi\circ P(f \otimes 1_{[c,d]})$ is constant $m$-almost everywhere.
\qed

\subsection{Proof of Theorem \ref{continuous ergodicity}: The continuous case}
\label{subsec:continuous case}

In this section, we observe that the arguments we have just given immediately establish Theorem \ref{continuous ergodicity}. We will freely use the objects introduced in Section~\ref{sec:kappa=0 case}. 

First notice that the flow $\psi^t \colon (U_\Gamma, m_\Gamma) \to (U_\Gamma, m_\Gamma)$ introduced in Section~\ref{sec:kappa=0 case} coincides with the flow $\psi^t \colon (\Gamma \backslash M^{(2)} \times \Rb, m) \to (\Gamma \backslash M^{(2)} \times \Rb, m)$ considered in the proof of Theorem~\ref{thm:ergodicity on product}. So Lemma \ref{lem:average value of rho} implies immediately that $\psi^t$ is conservative on $(U_\Gamma,m_\Gamma)$. 

If $\psi_t$ is not ergodic on $(U_\Gamma,m_\Gamma)$, then there exists a flow-invariant subset $A$ of $U_\Gamma$ so that $m_\Gamma(A)>0$ and $m_\Gamma(A^c) > 0$. Then
$A$ lifts to a flow-invariant, $\Gamma$-invariant subset $\tilde A$ of $M^{(2)}\times\Rb$ of the form $B\times \Rb$. Then, $(\bar\mu \otimes\mu)(B)>0$
and  $(\bar\mu\otimes\mu)(B^c)>0$, which contradicts the ergodicity of the action of $\Gamma$ on $(M^{(2)},\bar\mu \otimes\mu)$. Therefore,
$\psi^t$ is ergodic on $(U_\Gamma,m_\Gamma)$.

We also note that in this case the proof of ergodicity of the geodesic flow can be simplified by using \cite{Coudene_Hopf}, see \cite[\S6.6]{BlayacPS} and \cite[\S4]{papertwo}.

\section{Proof of dichotomy}\label{sec:proof of dichotomy}

In this section we complete the proof of Theorem~\ref{our dichotomy}.
Suppose $(\sigma, \bar\sigma, G)$ is a coarse GPS system and $\delta_\sigma(\Gamma) < +\infty$. Let $\mu$ and $\bar \mu$ be coarse Patterson--Sullivan measures of dimension $\delta$ for $\sigma$ and $\bar{\sigma}$ respectively. 
By Lemma~\ref{lem:finding the good G}, there exists a measurable function $\tilde G$ on $M^{(2)}$ such that 
$\nu:=e^{\delta \tilde G}\bar\mu\otimes\mu$ is $\Gamma$-invariant.

We have already completed most of the proof. There is one lemma left to prove:

\begin{lemma} \label{lem:ergodic implies conservative}
If the action of $\Gamma$ on $(M^{(2)}, \nu)$ is ergodic, then $\nu$ has no atoms, and hence the $\Gamma$ action on $(M^{(2)}, \nu)$ is also conservative. 
\end{lemma}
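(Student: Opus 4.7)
My plan is to prove both assertions by contradiction, exploiting the product structure $\nu=e^{\delta\tilde G}\bar\mu\otimes\mu|_{M^{(2)}}$. First I will suppose $\nu$ has an atom at some $(x_0,y_0)\in M^{(2)}$, so $\bar\mu(\{x_0\}),\mu(\{y_0\})>0$ and both $x_0,y_0\in\Lambda(\Gamma)$ (by Theorem~\ref{thm:PS measures exist}). By $\Gamma$-invariance every point of the orbit $\Gamma\cdot(x_0,y_0)$ is an atom of equal $\nu$-mass, and local finiteness of $\nu$ forces this orbit to be discrete in $M^{(2)}$. Since $\Gamma$ acts minimally on the perfect (hence uncountable) set $\Lambda(\Gamma)$, the orbit $\Gamma y_0$ is infinite, so by Lemma~\ref{lem:conical do not escape} the orbit $\Gamma\cdot(x_0,y_0)$ must be escaping in $M^{(2)}$. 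Ergodicity then forces $\nu$ to concentrate on this single countable orbit.

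Next I will extract strong algebraic information from this concentration. Since $\nu$ is purely atomic, Fubini forces both $\mu$ and $\bar\mu$ to be purely atomic (otherwise the continuous parts of $\bar\mu\otimes\mu$ would contribute non-atomic mass to $M^{(2)}$). Quasi-invariance makes the atom sets $\Gamma$-invariant, and containment in the projections $\Gamma x_0$ and $\Gamma y_0$ of the orbit then pins them down precisely as these countable orbits. The requirement that every off-diagonal atom of $\bar\mu\otimes\mu$ lie in $\Gamma\cdot(x_0,y_0)$ now translates into the condition that for every $\alpha,\beta\in\Gamma$ with $\alpha x_0\neq\beta y_0$, the intersection $\alpha\Gamma_{x_0}\cap\beta\Gamma_{y_0}$ is nonempty. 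Specializing $\alpha=\id$ (and absorbing the single exceptional coset when $x_0\in\Gamma y_0$), I obtain the inclusion $\Gamma y_0\subseteq\Gamma_{x_0}y_0\cup\{x_0\}$.

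The main obstacle will be contradicting this group-theoretic consequence using minimality. I will argue that $\Gamma_{x_0}$ is elementary: since it fixes $x_0$, if it were non-elementary then its minimal action on $\Lambda(\Gamma_{x_0})$ (which contains $x_0$, as $\Gamma_{x_0}$ is infinite) would force $\{x_0\}=\Lambda(\Gamma_{x_0})$, contradicting $|\Lambda(\Gamma_{x_0})|\geq 3$. An elementary convergence subgroup is finite or virtually cyclic, generated up to finite index by a parabolic or loxodromic element fixing $x_0$, so $\Gamma_{x_0}y_0$ is countable and its closure has at most two accumulation points. Hence $\Gamma_{x_0}y_0\cup\{x_0\}$ cannot be dense in the perfect uncountable set $\Lambda(\Gamma)=\overline{\Gamma y_0}$, a contradiction proving $\nu$ has no atoms. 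For the final ``hence conservative'' implication, I will argue that a nontrivial dissipative part would, by ergodicity and $\Gamma$-invariance of the Hopf decomposition, make the action purely dissipative; then on any measurable fundamental domain $F$, ergodicity would force $\nu|_F$ to be a single point mass, making $\nu$ purely atomic and contradicting the no-atoms conclusion just established.
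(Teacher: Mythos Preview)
Your argument is correct and follows the same underlying idea as the paper's proof, though organized differently and with some unnecessary detours. The paper's proof of ``no atoms'' is three lines: if $(\xi,\eta)$ is an atom then by ergodicity the orbit $\mathcal{O}=\Gamma\cdot(\xi,\eta)$ has full $\nu$-measure; one picks $\gamma\in\Gamma$ with $(\xi,\gamma\eta)\in M^{(2)}\setminus\mathcal{O}$ and observes (via the product form $\nu=e^{\delta\tilde G}\bar\mu\otimes\mu$ and quasi-invariance of $\mu$) that $(\xi,\gamma\eta)$ is also a $\nu$-atom, contradicting $\nu(M^{(2)}\setminus\mathcal{O})=0$. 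The existence of such a $\gamma$ is left implicit; your derivation of the inclusion $\Gamma y_0\subseteq\Gamma_{x_0}y_0\cup\{x_0\}$ together with the elementarity of $\Gamma_{x_0}$ is precisely what is needed to justify it, so you are spelling out the step the paper omits. That said, the detours through escaping orbits (and the reference to Lemma~\ref{lem:conical do not escape}, which is not what gives escaping here) and through showing $\mu,\bar\mu$ are purely atomic can be dropped: once $\mathcal{O}$ has full measure, the inclusion follows directly because $(x_0,\gamma y_0)$ is already a $\nu$-atom whenever $\gamma y_0\neq x_0$. Also, your appeal to Theorem~\ref{thm:PS measures exist} for $x_0,y_0\in\Lambda(\Gamma)$ is misplaced (that theorem constructs a particular PS measure; arbitrary ones need not be supported on $\Lambda(\Gamma)$), but the argument survives since you only need $\Lambda(\Gamma)\subseteq\overline{\Gamma y_0}$, which holds for every $y_0\in M$ by minimality.

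For conservativity, the paper cites \cite[Prop.\ 1.6.6]{Aaronson}; your direct argument via a fundamental domain (ergodic plus dissipative forces $\nu$ to concentrate on a single countable orbit, hence to have atoms) is a correct hands-on proof of that cited fact.
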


\begin{proof}
We argue by contradiction: suppose the $\Gamma$ action on $(M^{(2)}, \nu)$ is ergodic and $(\xi,\eta) \in M^{(2)}$ is an atom of $\nu$. Then $\mc O := \Gamma \cdot (\xi,\eta)$ must have full $\nu$-measure by ergodicity. 
Pick $\gamma \in \Gamma$ such that $(\xi, \gamma \eta) \in M^{(2)} - \mc O$. Since $\nu=e^{\delta \tilde G}\bar\mu\otimes\mu$, we see that $(\xi,\gamma\eta)$ is also an atom for $\nu$, which contradicts the fact that $\mc O$ has full measure.

Conservativity of the $\Gamma$ action then follows by \cite[Prop.\ 1.6.6]{Aaronson} (see also \cite{MO:330015}).
\end{proof}

\subsection{Divergent case}

First suppose $\sum_{\gamma \in \Gamma} e^{-\delta \norm{\gamma}_\sigma} = +\infty$.

(a) By the definition of the critical exponent, $\delta \leq \delta_\sigma(\Gamma)$. By Proposition \ref{prop:some consequences of the shadow lemma}\eqref{item:dim>=critexp}, $\delta \geq \delta_\sigma(\Gamma)$. Hence $\delta = \delta_\sigma(\Gamma)$.

(b) By Proposition~\ref{prop:uniform conical has full measure}, $\mu\left( \Lambda^{\rm con}(\Gamma) \right) = 1$.

(c) By Theorem~\ref{thm:ergodicity on product}, the action  of $\Gamma$ on $(M^{(2)}, \nu)$ is ergodic. Conservativity of the action can be seen from Corollary~\ref{cor:conservativity dissipativity dichotomy}, or from Lemma~\ref{lem:ergodic implies conservative}.

\subsection{Convergent case}

Now suppose $\sum_{\gamma \in \Gamma} e^{-\delta \norm{\gamma}_\sigma} < +\infty$.

(a) By the definition of the critical exponent, $\delta \geq \delta_\sigma(\Gamma)$. 

(b) By Proposition~\ref{prop:some consequences of the shadow lemma}\eqref{item:convergent => conical limset null}, $\mu\left( \Lambda^{\rm con}(\Gamma) \right) = 0$.

(c) The $\Gamma$ action on $(M^{(2)}, \bar\mu \otimes \mu)$ is dissipative by Corollary~\ref{cor:conservativity dissipativity dichotomy}. Non-ergodicity of the action then follows from Lemma~\ref{lem:ergodic implies conservative}.


\part{Applications, Examples, and other Remarks}


\section{Rigidity of Patterson--Sullivan measures}\label{sec:rigidity of PS measures}


In this section, we prove that in the divergent case Patterson--Sullivan measures are either absolutely continuous or mutually singular. Furthermore, we  characterize the absolutely continuous case in terms of rough similarity between magnitudes. 

For the rest of the section, suppose $\Gamma \subset \mathsf{Homeo}(M)$ is a convergence group and $\sigma_1, \sigma_2 \colon \Gamma \times M \to \Rb$ are two expanding coarse-cocycles. For $i=1,2$, assume  $\mu_i$ is a coarse $\sigma_i$-Patterson--Sullivan measure of dimension $\delta_i$.

\begin{proposition} \label{prop:abscts or singular}
If  $\sum_{\gamma\in\Gamma} e^{-\delta_1 \norm{\gamma}_{\sigma_1}} = +\infty$, then either: 
\begin{enumerate}
\item $\mu_1 \ll \mu_2$ and $\mu_2 \ll \mu_1$, or 
\item $\mu_1 \perp \mu_2$.
\end{enumerate} 
\end{proposition}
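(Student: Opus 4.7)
The plan is to apply the Lebesgue decomposition of $\mu_2$ with respect to $\mu_1$, transfer the coarse $\Gamma$-quasi-invariance of $\mu_2$ to each piece, and then combine ergodicity of $(M,\mu_1)$ with the coarse uniqueness clause of Theorem~\ref{thm:ergodicity on single M} applied to $\sigma_2$-Patterson--Sullivan measures. Concretely, I would write $\mu_2 = \mu_2^{\rm ac} + \mu_2^{\rm sing}$ with $\mu_2^{\rm ac} = f\mu_1 \ll \mu_1$ and $\mu_2^{\rm sing} \perp \mu_1$, and set $h_\gamma := d\gamma_*\mu_2/d\mu_2$ and $k_\gamma := d\gamma_*\mu_1/d\mu_1$. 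Since $\mu_1$ is itself $\Gamma$-quasi-invariant, $\gamma_*\mu_2^{\rm sing}$ remains singular with respect to $\mu_1$, so uniqueness of the Lebesgue decomposition of $\gamma_*\mu_2$ yields $\gamma_*\mu_2^{\rm ac} = h_\gamma\mu_2^{\rm ac}$ and $\gamma_*\mu_2^{\rm sing} = h_\gamma\mu_2^{\rm sing}$; expanding $\gamma_*(f\mu_1) = (f\circ\gamma^{-1})k_\gamma\mu_1$ on the left then gives the cocycle identity $f\circ\gamma^{-1} = (h_\gamma/k_\gamma)\,f$ $\mu_1$-a.e.

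Divergence of the $\sigma_1$-Poincar\'e series at $\delta_1$ combined with Proposition~\ref{prop:some consequences of the shadow lemma}\eqref{item:dim>=critexp} forces $\delta_1 = \delta_{\sigma_1}(\Gamma)$, so Theorem~\ref{thm:ergodicity on single M} applies and $\Gamma$ acts ergodically on $(M,\mu_1)$. Since $h_\gamma/k_\gamma > 0$ $\mu_1$-a.e., the cocycle identity shows that $\{f>0\}$ is $\Gamma$-invariant modulo $\mu_1$-null sets, so ergodicity forces $\mu_1(\{f>0\}) \in \{0,1\}$. If this measure is zero, then $\mu_2^{\rm ac} = 0$ and $\mu_2 = \mu_2^{\rm sing} \perp \mu_1$ and we are done. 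Otherwise $f > 0$ $\mu_1$-a.e., so $\mu_1$ and $\mu_2^{\rm ac}$ are mutually equivalent, and in particular $\mu_1 \ll \mu_2$, yielding half of the claimed mutual absolute continuity.

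To complete the proof in this second case, I would show $\mu_2^{\rm sing} = 0$. Setting $\nu_2 := \mu_2^{\rm ac}/\mu_2^{\rm ac}(M)$, the identity $\gamma_*\mu_2^{\rm ac} = h_\gamma\mu_2^{\rm ac}$ makes $\nu_2$ a coarse $\sigma_2$-Patterson--Sullivan measure of dimension $\delta_2$ (with the same coarse constant as $\mu_2$), and by construction $\nu_2 \sim \mu_1$. Since Proposition~\ref{prop:uniform conical has full measure} gives $\mu_1(\Lambda^{\rm con}(\Gamma)) = 1$, we have $\nu_2(\Lambda^{\rm con}(\Gamma)) > 0$; the contrapositive of Proposition~\ref{prop:some consequences of the shadow lemma}\eqref{item:convergent => conical limset null} then forces $\sum_\gamma e^{-\delta_2 \norm{\gamma}_{\sigma_2}} = +\infty$, and part~\eqref{item:dim>=critexp} of the same proposition gives $\delta_2 = \delta_{\sigma_2}(\Gamma)$. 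If $\mu_2^{\rm sing}$ were nonzero, then $\tilde\mu_2 := \mu_2^{\rm sing}/\mu_2^{\rm sing}(M)$ would be another coarse $\sigma_2$-Patterson--Sullivan measure of dimension $\delta_2$ with the same coarse constant, and the coarse uniqueness clause of Theorem~\ref{thm:ergodicity on single M} would force $\nu_2$ and $\tilde\mu_2$ to be uniformly equivalent, contradicting $\nu_2 \perp \tilde\mu_2$. Hence $\mu_2^{\rm sing} = 0$ and $\mu_2 \ll \mu_1$. The main bookkeeping obstacle I anticipate is verifying carefully that both Lebesgue pieces inherit the exact same coarse Radon--Nikodym bound as $\mu_2$---with a uniform constant surviving normalisation---so that the final coarse-uniqueness step applies to $\nu_2$ and $\tilde\mu_2$ as stated.
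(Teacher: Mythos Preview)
Your proof is correct and close in spirit to the paper's, though the two halves are organized differently. The paper Lebesgue-decomposes $\mu_1$ with respect to $\mu_2$ (rather than the reverse), observes that the support of the singular piece can be taken $\Gamma$-invariant, and uses ergodicity of $(M,\mu_1)$ to obtain the dichotomy $\mu_1\ll\mu_2$ or $\mu_1\perp\mu_2$; in the absolutely continuous case it transfers divergence to $\sigma_2$ exactly as you do, but then finishes by \emph{swapping the roles of $\mu_1,\mu_2$ and rerunning the dichotomy} rather than invoking coarse uniqueness. Your route through coarse uniqueness applied to the normalized pieces $\nu_2,\tilde\mu_2$ is equally valid and arguably cleaner, since it avoids the role-swap. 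The bookkeeping concern you flag is not an obstacle: because $\mu_2^{\rm ac},\mu_2^{\rm sing}\ll\mu_2$, the $\mu_2$-a.e.\ bounds on $h_\gamma$ transfer to each piece unchanged, and normalization leaves the coarse constant intact.

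One small imprecision worth tightening: the assertion ``$h_\gamma/k_\gamma>0$ $\mu_1$-a.e.'' is not quite justified as stated, since $h_\gamma$ is only defined $\mu_2$-a.e.\ and $\{f=0\}$ may have positive $\mu_1$-measure. The invariance of $\{f>0\}$ still follows, however: on $\{f>0\}$ one has $\mu_1\ll\mu_2^{\rm ac}\ll\mu_2$, so $h_\gamma>0$ there, while on $\{f=0\}$ the measure identity $(f\circ\gamma^{-1})k_\gamma\mu_1=h_\gamma f\mu_1$ forces $f\circ\gamma^{-1}=0$ regardless of $h_\gamma$.
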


\begin{proof}By the Lebesgue decomposition theorem, we can write 
$$
d\mu_1 = d\lambda + f d\mu_2
$$ 
where $\lambda \perp \mu_2$ and $f$ is a non-negative measurable function. 

Since $\lambda \perp \mu_2$, we can fix a decomposition $M = A \cup B$ where $A$ has full $\lambda$-measure, $B$ has full $\mu_2$-measure, and $\lambda(B) = \mu_2(A) = 0$. 
Since $\mu_2$ is $\Gamma$-quasi-invariant, by replacing $A$ by $\bigcup_{\gamma \in \Gamma} \gamma A$, we can assume that $A$ is $\Gamma$-invariant. 
Then, since $\Gamma$ acts ergodicallly on $(M,\mu_1)$, see Theorem~\ref{thm:ergodicity on single M}, $A$ either has zero or full $\mu_1$-measure.
If $A$ has zero $\mu_1$-measure, then $\mu_1 = f \mu_2$ and $\mu_1 \ll \mu_2$.  If $A$ has full $\mu_1$-measure, then $\mu_1 = \lambda$ and $\mu_1 \perp \mu_2$. 

It remains to show that  if $\mu_1 \ll \mu_2$, then $\mu_2 \ll \mu_1$. Since $\sum_{\gamma\in\Gamma} e^{-\delta_1 \norm{\gamma}_{\sigma_1}} = +\infty$, 
Theorem~\ref{thm:ergodicity on single M} implies that $\mu_1(\Lambda^{\rm con}(\Gamma)) = 1$. Then since $\mu_1 \ll \mu_2$, this implies that $\mu_2(\Lambda^{\rm con}(\Gamma)) > 0$.
Then Proposition~\ref{prop:some consequences of the shadow lemma} implies that $\sum_{\gamma\in\Gamma} e^{-\delta_2 \norm{\gamma}_{\sigma_2}} = +\infty$.  Then we can repeat the argument in the first two paragraphs to see that $\mu_2 \ll \mu_1$. 
\end{proof}

We can also characterize when the measures are absolutely continuous. 

\begin{proposition} \label{prop:characterisation of abscts case}
If  $\sum_{\gamma\in\Gamma} e^{-\delta_1 \norm{\gamma}_{\sigma_1}} = +\infty$, then the following are equivalent: 
\begin{enumerate}
\item \label{item:measures abscts} $\mu_1 \ll \mu_2$.
\item \label{item:measures abscts 2} $\mu_1 \ll \mu_2$ and $\mu_2 \ll \mu_1$.
\item \label{item:magnitudes similar} $\sup_{\gamma \in \Gamma}\abs{ \delta_1\norm{\gamma}_{\sigma_1} - \delta_2\norm{\gamma}_{\sigma_2} } < +\infty$.
\item \label{item:measures biLip} There exist $C>0$ such that $C^{-1}\mu_1 \leq  \mu_2 \leq C \mu_1$.
\end{enumerate}
\end{proposition}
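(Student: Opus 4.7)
The implications (4)$\Rightarrow$(2)$\Rightarrow$(1) are tautological, and (1)$\Rightarrow$(2) is Proposition~\ref{prop:abscts or singular}. Thus the essential task is to prove (3)$\Leftrightarrow$(4), after which the full equivalence follows. The direction (4)$\Rightarrow$(3) is immediate from the Shadow Lemma (Theorem~\ref{thm:shadow lemma}): for a small fixed $\epsilon>0$ one has $\mu_i(\mc S_\epsilon(\gamma)) \asymp e^{-\delta_i\norm\gamma_{\sigma_i}}$, and the two-sided comparison $C^{-1}\mu_1\leq\mu_2\leq C\mu_1$ forces these two masses to be within a uniform multiplicative constant; taking logarithms yields (3).

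The main work is (3)$\Rightarrow$(4). Since (3) makes $\delta_1\norm\gamma_{\sigma_1}$ and $\delta_2\norm\gamma_{\sigma_2}$ differ by a bounded amount, divergence of the $\sigma_1$-Poincar\'e series implies divergence of the $\sigma_2$-Poincar\'e series, so Theorem~\ref{thm:ergodicity on single M}(3) places both $\mu_1$ and $\mu_2$ on $\Lambda^{\rm con}_\eta(\Gamma)$ for some small $\eta>0$. Combining the Shadow Lemma with (3) furnishes a constant $K$ such that $\mu_1(\mc S_\epsilon(\gamma)) \leq K\mu_2(\mc S_\epsilon(\gamma))$ for all $\gamma\in\Gamma$ when $\epsilon$ is small. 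I will upgrade this shadow-wise bound to $\mu_1\leq K'\mu_2$ on every Borel set via the Vitali-type covering in Proposition~\ref{prop:properties of fake shadows}\eqref{item:Vish covering lemma}. Given an open $U\subset M$, set $I:=\{\gamma\in\Gamma : \mc S_\epsilon(\gamma)\subset U\}$. For any $x\in U\cap\Lambda^{\rm con}_\eta$, Lemma~\ref{lem:shadows versus uniform conical limit sets} produces an escaping sequence $\{\gamma_n\}$ with $x\in\bigcap_n\mc S_\epsilon(\gamma_n)$, and by Proposition~\ref{prop:properties of fake shadows}\eqref{item:collapsing fake shadows} the shadow diameters tend to zero, so $\mc S_\epsilon(\gamma_n)\subset U$ eventually; hence $\bigcup_{\gamma\in I}\mc S_\epsilon(\gamma) \supset U\cap\Lambda^{\rm con}_\eta$. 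Extract a disjoint subfamily $\{\mc S_\epsilon(\gamma_j)\}\subset U$ whose enlargements $\{\mc S_{\epsilon'}(\gamma_j)\}$ still cover $U\cap\Lambda^{\rm con}_\eta$. Using the Shadow Lemma to compare $\mu_1(\mc S_{\epsilon'}(\gamma_j))$ with $\mu_1(\mc S_\epsilon(\gamma_j))$ (both $\asymp e^{-\delta_1\norm{\gamma_j}_{\sigma_1}}$),
\[\mu_1(U) = \mu_1(U\cap\Lambda^{\rm con}_\eta) \leq \sum_j \mu_1(\mc S_{\epsilon'}(\gamma_j)) \leq C_\epsilon\sum_j \mu_1(\mc S_\epsilon(\gamma_j)) \leq KC_\epsilon\sum_j\mu_2(\mc S_\epsilon(\gamma_j)) \leq KC_\epsilon\mu_2(U),\]
where the final step uses disjointness together with $\mc S_\epsilon(\gamma_j)\subset U$. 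Outer regularity extends this to all Borel sets, and the symmetric argument gives $\mu_2\leq KC_\epsilon\mu_1$, which is (4).

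The delicate point is the Vitali step: one needs the selected shadows to sit inside $U$ (so that disjointness yields the final inequality) while their enlargements still cover $U\cap\Lambda^{\rm con}_\eta$. This balance is possible precisely because shadow diameters along escaping sequences shrink to zero, so every conical point of the open set $U$ admits a containing shadow already lying inside $U$.
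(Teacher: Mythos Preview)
Your chain of implications does not close. You establish $(1)\Leftrightarrow(2)$, $(3)\Leftrightarrow(4)$, and $(4)\Rightarrow(2)$, which gives $(3)\Rightarrow(4)\Rightarrow(2)\Leftrightarrow(1)$. But nothing in your argument provides a path back from $(1)$ or $(2)$ to $(3)$ or $(4)$. So the claim ``after which the full equivalence follows'' is false: the implication $(1)\Rightarrow(4)$ (equivalently $(2)\Rightarrow(3)$) is entirely missing, and this is precisely the difficult part of the proposition. The paper devotes most of the section to it, using the Lebesgue differentiation theorem (Lemma~\ref{lem:Leb diff theorem}) and the zooming-in result (Lemma~\ref{lem:zooming in on density points}) to show that the Radon--Nikodym derivative $f=d\mu_1/d\mu_2$ is bounded above and below. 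The idea is to find a density point $x_0$ in a set where $f$ is bounded, zoom in along two escaping sequences converging to $x_0$ from different directions, and use the shadow lemma together with the expanding property to propagate the bounds on $f$ to all of $M$. None of this machinery appears in your proposal.

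As a secondary remark, your argument for $(3)\Rightarrow(4)$ via the Vitali-type covering is correct and is essentially the argument the paper uses later in Section~\ref{sec:convexity of critical exponent} to prove Proposition~\ref{prop:mu_lambda abs cont to mu_0 plus mu_1}. The paper itself gives a shorter proof of $(3)\Rightarrow(4)$: by Lemma~\ref{prop:magnitude versus period}, condition $(3)$ forces $\delta_1\sigma_1$ and $\delta_2\sigma_2$ to be uniformly close on $\Lambda(\Gamma)$, so $\mu_2$ is itself a coarse $\sigma_1$-Patterson--Sullivan measure of dimension $\delta_1$, and then the coarse uniqueness in Theorem~\ref{thm:ergodicity on single M} gives $(4)$ directly.
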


Before starting the proof of Proposition~\ref{prop:characterisation of abscts case}, we first prove a lemma which states that two expanding coarse-cocycles have coarsely the same magnitudes if and only if they have coarsely the same periods.

\begin{lemma}\label{prop:magnitude versus period} If $\Gamma \subset \mathsf{Homeo}(M)$ is a convergence group and $\sigma_1,\sigma_2 \colon \Gamma \times M \rightarrow \Rb$ are two expanding $\kappa$-coarse-cocycles, then the following are equivalent: 
\begin{enumerate} 
\item $\sup\limits_{\gamma \in \Gamma,\ x\in \Lambda(\Gamma)} \abs{\sigma_1(\gamma,x)-\sigma_2(\gamma,x)} < +\infty$,
\item $\sup\limits_{\gamma \in \Gamma} \abs{\norm{\gamma}_{\sigma_1} - \norm{\gamma}_{\sigma_2}} < +\infty$,
\item $\sup\limits_{\substack{\gamma \in \Gamma  \\ \gamma \text{ \rm loxodromic}} } \abs{\sigma_1(\gamma,\gamma^+) - \sigma_2(\gamma, \gamma^+)} \leq 2\kappa$,
\item $\sup\limits_{\substack{\gamma \in \Gamma  \\ \gamma \text{ \rm loxodromic}} } \abs{\sigma_1(\gamma,\gamma^+) - \sigma_2(\gamma, \gamma^+)} < +\infty$.
\end{enumerate}
\end{lemma}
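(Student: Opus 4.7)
The plan is to prove the cyclic chain of implications $(2) \Rightarrow (1) \Rightarrow (3) \Rightarrow (4) \Rightarrow (2)$, with $(3) \Rightarrow (4)$ being immediate.

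For $(2) \Rightarrow (1)$: I would apply Lemma~\ref{lem:nice magnitude} to each $\sigma_i$ to replace $\norm{\cdot}_{\sigma_i}$ by a magnitude $\norm{\cdot}_{\sigma_i}'$ (at bounded distance from $\norm{\cdot}_{\sigma_i}$) satisfying
$$\limsup_{\gamma \to x} \abs{\sigma_i(\alpha, x) - (\norm{\alpha \gamma}_{\sigma_i}' - \norm{\gamma}_{\sigma_i}')} \leq 2\kappa$$
for every $x \in \Lambda(\Gamma)$ and $\alpha \in \Gamma$. Since $\sup_\gamma|\norm{\gamma}_{\sigma_1}' - \norm{\gamma}_{\sigma_2}'|$ is finite (by (2), up to a uniform additive error), Proposition~\ref{prop:compactifying}\,(4) gives, for any $x \in \Lambda(\Gamma)$, a sequence $\gamma_n \to x$ in $\Gamma$, and subtracting the two limsup estimates along $\{\gamma_n\}$ and applying the triangle inequality yields a uniform bound on $\abs{\sigma_1(\alpha, x) - \sigma_2(\alpha, x)}$.

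For $(1) \Rightarrow (3)$: Let $M$ denote the supremum in (1). For a loxodromic $\gamma$, the point $\gamma^+ \in \Lambda(\Gamma)$ is fixed by $\gamma$, so iterating the $\kappa$-coarse cocycle identity yields
$$\abs{\sigma_i(\gamma^n, \gamma^+) - n \sigma_i(\gamma, \gamma^+)} \leq (n-1)\kappa$$
for $i=1,2$ and all $n \geq 1$. Applying (1) to the element $\gamma^n$ at the limit point $\gamma^+$ then gives
$$n \abs{\sigma_1(\gamma, \gamma^+) - \sigma_2(\gamma, \gamma^+)} \leq M + 2(n-1)\kappa,$$
and dividing by $n$ and letting $n \to \infty$ yields the sharp bound $2\kappa$.

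For $(4) \Rightarrow (2)$ (the main obstacle): I would invoke Lemma~\ref{lem:AMS_Gromov_hyp} to find $\epsilon > 0$ and a finite set $F \subset \Gamma$ such that every $\gamma \in \Gamma$ admits some $f \in F$ for which $\gamma f$ is loxodromic and $\dist((\gamma f)^+, (\gamma f)^{-1}) > \epsilon$. The expanding property evaluated at $x = (\gamma f)^+$ then produces a constant $C_\epsilon > 0$ with
$$\abs{\sigma_i(\gamma f, (\gamma f)^+) - \norm{\gamma f}_{\sigma_i}} \leq C_\epsilon$$
for $i = 1,2$. Combining with (4) shows that $\abs{\norm{\gamma f}_{\sigma_1} - \norm{\gamma f}_{\sigma_2}}$ is uniformly bounded in $\gamma$, and then Proposition~\ref{prop:basic properties}\eqref{item:tri inequality}, applied to the finite set $F$, bounds $\abs{\norm{\gamma}_{\sigma_i} - \norm{\gamma f}_{\sigma_i}}$ uniformly and closes the argument. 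The real difficulty is precisely in this step: one needs a mechanism to promote bounded differences on loxodromic periods to bounded differences on magnitudes of arbitrary group elements, and Lemma~\ref{lem:AMS_Gromov_hyp} (the convergence group analogue of the Abels--Margulis--Soifer ping-pong principle) is what makes this possible, since the expanding hypothesis only gives good control on $\sigma_i(\gamma, \cdot)$ at points uniformly far from $\gamma^{-1}$.
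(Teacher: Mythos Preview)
Your proposal is correct and matches the paper's proof in all essential respects: the key steps---Lemma~\ref{lem:nice magnitude} for $(2)\Rightarrow(1)$, Lemma~\ref{lem:AMS_Gromov_hyp} together with the expanding property and Proposition~\ref{prop:basic properties}\eqref{item:tri inequality} for $(4)\Rightarrow(2)$---are exactly what the paper uses. The only cosmetic difference is that the paper routes through $(2)\Rightarrow(3)$ via Proposition~\ref{prop:basic properties}\eqref{item:loxodromic periods}, whereas you prove $(1)\Rightarrow(3)$ directly by iterating the coarse-cocycle inequality at $\gamma^+$; these are the same estimate in slightly different packaging.
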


\begin{proof}Notice that the implication $(1)\Rightarrow(2)$  follows from the expanding property, the implication $(2)\Rightarrow(1)$  follows from Lemma~\ref{lem:nice magnitude}, the implication $(2)\Rightarrow(3)$ follows from Proposition~\ref{prop:basic properties}\eqref{item:loxodromic periods}, and the implication $(3)\Rightarrow(4)$  is clear. 

It only remains to  show that $(4)\Rightarrow(2)$. To that end, suppose that 
$$
C_1:=\sup_{\substack{\gamma \in \Gamma  \\ \gamma \text{ \rm loxodromic}} } \abs{\sigma_1(\gamma,\gamma^+) - \sigma_2(\gamma, \gamma^+)} 
$$
is finite. Fix a compatible metric $\dist$ on $\Gamma \sqcup M$. Then fix a finite set $F \subset \Gamma$ and $\epsilon > 0$ as in Lemma~\ref{lem:AMS_Gromov_hyp}. By the expanding property, there exists $C_2 > 0$ such that 
$$
\norm{\gamma}_{\sigma_j} - C_2 \leq \sigma_j(\gamma, x) \leq \norm{\gamma}_{\sigma_j}+C_2
$$
for $j\in \{1,2\}$,  $\gamma \in \Gamma$, and $x \in M$ with $\dist(x,\gamma^{-1}) \geq \epsilon$. By Proposition~\ref{prop:basic properties}\eqref{item:tri inequality} there exists $C_3 > 0$ such that 
$$
\norm{\gamma}_{\sigma_i}-C_3\le \norm{\gamma f}_{\sigma_i}\le \norm{\gamma}_{\sigma_i}+C_3
$$
for all $\gamma \in \Gamma$ and $f \in F$. 

Given $\gamma \in \Gamma$, choose $f \in F$ as in Lemma~\ref{lem:AMS_Gromov_hyp}. Then
\begin{align*}
 \abs{\norm{\gamma}_{\sigma_1} - \norm{\gamma}_{\sigma_2}} & \leq  2C_3 + \abs{\norm{\gamma f}_{\sigma_1} - \norm{\gamma f}_{\sigma_2}} \\
& \leq   2C_3 +2C_2 + \abs{\sigma_1(\gamma f,(\gamma f)^+) - \sigma_2(\gamma f, (\gamma f)^+)}\\
& \leq  2C_3+2C_2 + C_1. 
\end{align*}
Thus (2) is true. 
\end{proof}

We may combine this lemma with the earlier statements to obtain a further characterization of the absolutely continuous case when our cocycles are continuous.

\begin{corollary} \label{cor:PS measures and length spectra}
Suppose in addition that $\sigma_1, \sigma_2$ are continuous expanding cocycles (i.e. $\kappa=0$ in Definition~\ref{defn:expanding}) with $\sum_{\gamma\in\Gamma} e^{-\delta_1\norm{\gamma}_{\sigma_1}} = +\infty$. Then $\mu_1, \mu_2$ are absolutely continuous if and only if $\delta_1\sigma_1(\gamma,\gamma^+) = \delta_2\sigma_2(\gamma,\gamma^+)$ for all loxodromic $\gamma \in \Gamma$, and mutually singular otherwise.
\end{corollary}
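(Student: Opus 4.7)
The plan is to derive this corollary by combining the earlier dichotomy (Proposition~\ref{prop:abscts or singular}) with the magnitude characterization of absolute continuity (Proposition~\ref{prop:characterisation of abscts case}), and then upgrading the rough similarity of magnitudes to exact equality of periods via Lemma~\ref{prop:magnitude versus period} applied in the continuous setting where $\kappa=0$.

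First, since we are in the divergent case, Proposition~\ref{prop:abscts or singular} tells us that either $\mu_1 \ll \mu_2$ (and $\mu_2 \ll \mu_1$) or $\mu_1 \perp \mu_2$. So it suffices to show that $\mu_1 \ll \mu_2$ if and only if $\delta_1\sigma_1(\gamma,\gamma^+) = \delta_2\sigma_2(\gamma,\gamma^+)$ for every loxodromic $\gamma \in \Gamma$. By Proposition~\ref{prop:characterisation of abscts case}, absolute continuity is equivalent to the rough similarity of magnitudes, namely $\sup_{\gamma} |\delta_1 \|\gamma\|_{\sigma_1} - \delta_2\|\gamma\|_{\sigma_2}| < +\infty$.

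The key step is to translate this condition on magnitudes into one on periods. I will apply Lemma~\ref{prop:magnitude versus period} to the rescaled cocycles $\delta_1\sigma_1$ and $\delta_2\sigma_2$. Note that since $\sigma_1,\sigma_2$ are continuous, they are $0$-coarse-cocycles, and so are any positive scalar multiples; moreover, since $\delta_i \geq \delta_{\sigma_i}(\Gamma) > 0$ (Proposition~\ref{prop:some consequences of the shadow lemma}\eqref{item:dim>=critexp}), the rescaled cocycles remain expanding, with magnitudes $\delta_i\|\gamma\|_{\sigma_i}$. Applying Lemma~\ref{prop:magnitude versus period} with $\kappa=0$ then gives the equivalence of
\[
\sup_{\gamma \in \Gamma} \abs{\delta_1 \|\gamma\|_{\sigma_1} - \delta_2\|\gamma\|_{\sigma_2}} < +\infty
\]
with
\[
\sup_{\gamma \text{ loxodromic}} \abs{\delta_1\sigma_1(\gamma,\gamma^+) - \delta_2\sigma_2(\gamma,\gamma^+)} \leq 2\kappa = 0,
\]
i.e., $\delta_1\sigma_1(\gamma,\gamma^+) = \delta_2\sigma_2(\gamma,\gamma^+)$ for all loxodromic $\gamma$.

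Stringing these equivalences together yields the corollary. There is no substantive obstacle here, as all the work has been done in the prior results; the only point requiring a moment's care is checking that $\delta_i\sigma_i$ inherits the continuous expanding $0$-coarse-cocycle property (which uses $\delta_i > 0$), so that Lemma~\ref{prop:magnitude versus period} applies with $\kappa=0$ and produces genuine equality rather than mere boundedness.
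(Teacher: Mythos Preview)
Your proof is correct and follows essentially the same route as the paper: invoke the dichotomy of Proposition~\ref{prop:abscts or singular}, translate absolute continuity into rough similarity of magnitudes via Proposition~\ref{prop:characterisation of abscts case}, and then use Lemma~\ref{prop:magnitude versus period} with $\kappa=0$ to identify this with exact equality of periods. Your extra care in verifying that the rescaled cocycles $\delta_i\sigma_i$ remain expanding $0$-coarse-cocycles (using $\delta_i>0$) is a nice point the paper leaves implicit.
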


\begin{proof}
By Proposition \ref{prop:abscts or singular}, $\mu_1$ and $\mu_2$ are either mutually singular or absolutely continuous. By Proposition \ref{prop:characterisation of abscts case}, they are absolutely continuous if and only if $\sup_{\gamma\in\Gamma} \abs{ \delta_1 \norm{\gamma}_{\sigma_1} - \delta_2 \norm{\gamma}_{\sigma_2}} < \infty$. By Lemma \ref{prop:magnitude versus period}, this last condition is equivalent, when $\sigma_1, \sigma_2$ are continuous cocycles, to the condition that $\delta_1\sigma_1(\gamma,\gamma^+) = \delta_2 \sigma_2(\gamma,\gamma^+)$ for all loxodromic $\gamma \in \Gamma$.
\end{proof}

The proof of Proposition~\ref{prop:characterisation of abscts case} occupies the rest of the section.
Many of the implications in the proposition follow immediately. Proposition~\ref{prop:abscts or singular} implies that \eqref{item:measures abscts} $\Leftrightarrow$ \eqref{item:measures abscts 2}. 
By definition \eqref{item:measures biLip}  $\Rightarrow$ \eqref{item:measures abscts 2}.
By the Shadow Lemma (Theorem~\ref{thm:shadow lemma}), \eqref{item:measures biLip}  $\Rightarrow$ \eqref{item:magnitudes similar}. 

The implication \eqref{item:magnitudes similar} $\Rightarrow$  \eqref{item:measures biLip} is a consequence of Lemma~\ref{prop:magnitude versus period} and Theorem~\ref{thm:ergodicity on single M}. Indeed, if \eqref{item:magnitudes similar} holds, then Lemma~\ref{prop:magnitude versus period} implies that 
$$
\sup_{\gamma \in \Gamma,\, x\in \Lambda(\Gamma)}\abs{\delta_1\sigma_1(\gamma,x)-\delta_2\sigma_2(\gamma,x)}<+\infty.
$$
Further, $\sum_{\gamma\in\Gamma} e^{-\delta_2 \norm{\gamma}_{\sigma_2}} = +\infty$ and so Theorem~\ref{thm:ergodicity on single M} implies that $\mu_2$ is supported on $\Lambda(\Gamma)$. Hence $\mu_2$ is a coarse $\sigma_1$-Patterson--Sullivan measure of dimension $\delta_1$. Then by the coarse uniqueness statement in Theorem~\ref{thm:ergodicity on single M} we see that  \eqref{item:measures biLip} is true.

The rest of the section is devoted to the proof that  \eqref{item:measures abscts} $\Rightarrow$ \eqref{item:measures biLip}. So suppose that $\mu_1 \ll \mu_2$.

By Theorem~\ref{thm:ergodicity on single M} there exists $\epsilon_0 > 0$ such that $\mu_1(\Lambda^{\rm con}_{2\epsilon_0}(\Gamma)) = 1$. Then
since $\mu_1 \ll \mu_2$,  we must have $\mu_2(\Lambda^{\rm con}_{2\epsilon_0}(\Gamma)) >0$. Then Proposition~\ref{prop:some consequences of the shadow lemma} implies that $\sum_{\gamma\in\Gamma} e^{-\delta_2 \norm{\gamma}_{\sigma_2}} = +\infty$. So by Theorem~\ref{thm:ergodicity on single M}, $\Gamma$ acts ergodically on $(M,\mu_2)$. Since $\Lambda^{\rm con}_{2\epsilon_0}(\Gamma)$ is $\Gamma$-invariant, see Observation~\ref{obs:invariance of uniform conical limit sets}, we then have  $\mu_2(\Lambda^{\rm con}_{2\epsilon_0}(\Gamma)) = 1$.
By shrinking $\epsilon_0 > 0$,  if necessary, we may also assume that it satisfies the hypothesis of the Shadow Lemma (Theorem~\ref{thm:shadow lemma}) for $\mu_1$ and $\mu_2$. 

Let $f := \frac{d\mu_1}{d\mu_2}$. Notice that for every $\gamma \in \Gamma$,
\begin{equation}\label{eqn:change of variable for f} 
f \circ \gamma = \frac{d\gamma_*^{-1}\mu_1}{d\gamma^{-1}_*\mu_2}= \frac{d\gamma_*^{-1}\mu_1}{d\mu_1}\frac{d\mu_2}{d\gamma_*^{-1}\mu_2} f =e^{\delta_2 \sigma_2(\gamma, \cdot ) - \delta_1 \sigma_1(\gamma, \cdot)}f
\end{equation} 
$\mu_2$-almost everywhere. Since $\mu_2$ and $\mu_1 = f\mu_2$ are probability measures, there exists $D_0 > 1$ such that the set 
$$
E:=\{ D_0^{-1} \leq f \leq D_0\}
$$
has positive $\mu_2$-measure. 

Using Lemmas~\ref{lem:Leb diff theorem} and~\ref{lem:zooming in on density points} we can fix $x_0 \in E \cap \Lambda^{\rm con}_{2\epsilon_0}(\Gamma)$ such that: if $0 < \epsilon \leq \epsilon_0$ and $\{\gamma_n\} \subset \Gamma$ is an escaping sequence with $x_0 \in \bigcap_{n \geq 1} \mc S_{\epsilon}(\gamma_n)$, then 
\begin{equation}\label{eqn:zooming in on E in proof of abs cont}
1 = \lim_{n \rightarrow \infty} \mu_2(\gamma_n^{-1}E) 
\end{equation} 
and 
\begin{equation}\label{eqn:limit 1 in proof of abs cont}
f(x_0) = \lim_{n \rightarrow \infty} \frac{1}{\mu_2(\mc S_{\epsilon}(\gamma_n))} \int_{\mc S_{\epsilon}(\gamma_n)} f(y) d\mu_2(y) = \lim_{n \rightarrow \infty} \frac{\mu_1(\mc S_{\epsilon}(\gamma_n))}{\mu_2(\mc S_{\epsilon}(\gamma_n))}.
\end{equation}

We will construct two sequences of group elements to use in the above limits. 
Since $x_0 \in \Lambda^{\rm con}_{2\epsilon_0}(\Gamma)$, Lemma~\ref{lem:shadows versus uniform conical limit sets} implies that there exists an escaping sequence $\{\gamma_{1,n}\}$ such that 
$$
x_0 \in \bigcap_{n \geq 1} \mc S_{\epsilon_0}(\gamma_{1,n}).
$$
Passing to a subsequence we can suppose that $\gamma_{1,n}^{-1}(x_0) \rightarrow a_1 \in M$ and $\gamma_{1,n}^{-1} \rightarrow b_1 \in M$. 
Then fix $\alpha \in \Gamma$ such that $\alpha^{-1}b_1 \neq b_1$.
Then let $\gamma_{2,n} : = \gamma_{1,n} \alpha$ and  
$$
b_2 : = \alpha^{-1} b_1 = \lim_{n \rightarrow \infty} \gamma_{2,n}^{-1}. 
$$

\begin{lemma} After passing to a subsequence, there exists $\epsilon> 0$ such that $$x_0  \in \bigcap_{n \geq 1} \mc S_{\epsilon}(\gamma_{2,n}).$$ \end{lemma}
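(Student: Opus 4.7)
The plan is to exploit the fact that $\alpha$ acts as a homeomorphism on $M$, so that the limits behind the convergence of $\gamma_{1,n}^{-1}$ and $\gamma_{1,n}^{-1}x_0$ descend to distinct limits for $\gamma_{2,n}^{-1}$ and $\gamma_{2,n}^{-1}x_0$. Since $a_1 = \lim_n \gamma_{1,n}^{-1}x_0$ lies in $M$ and $\alpha^{-1}$ restricts to a homeomorphism of $M$, after passing to a subsequence we have
$$
\gamma_{2,n}^{-1}(x_0) \;=\; \alpha^{-1}\gamma_{1,n}^{-1}(x_0) \;\longrightarrow\; \alpha^{-1}(a_1) \;=:\; a_2,
$$
while by construction $\gamma_{2,n}^{-1} = \alpha^{-1}\gamma_{1,n}^{-1} \to \alpha^{-1}(b_1) = b_2$.

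Next, I would observe that because $x_0 \in \mc S_{\epsilon_0}(\gamma_{1,n})$ for every $n$, we have $\dist(\gamma_{1,n}^{-1}x_0,\gamma_{1,n}^{-1}) \geq \epsilon_0$, so passing to the limit gives $\dist(a_1,b_1) \geq \epsilon_0 > 0$, hence $a_1 \neq b_1$. Since $\alpha^{-1}$ is a bijection of $M$, this forces $a_2 = \alpha^{-1}(a_1) \neq \alpha^{-1}(b_1) = b_2$.

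Setting $\epsilon := \dist(a_2,b_2)/2 > 0$, the continuity of $\dist$ then yields
$$
\dist\!\left(\gamma_{2,n}^{-1}x_0,\,\gamma_{2,n}^{-1}\right) \;\longrightarrow\; \dist(a_2,b_2) \;=\; 2\epsilon,
$$
so $\dist(\gamma_{2,n}^{-1}x_0,\gamma_{2,n}^{-1}) \geq \epsilon$ for all sufficiently large $n$, i.e.\ $x_0 \in \mc S_\epsilon(\gamma_{2,n})$ for all such $n$. Dropping the finitely many initial terms and reindexing the subsequence gives the claim. There is no real obstacle here; the only mild subtlety is to note that $a_1$ and $b_1$ actually lie in $M$ (not in $\Gamma$), which is why applying $\alpha^{-1}$ preserves distinctness.
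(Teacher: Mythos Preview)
Your argument is correct and follows essentially the same route as the paper's proof: both show $a_1\neq b_1$ from the shadow condition, transport this through $\alpha^{-1}$ to get distinct limits $a_2\neq b_2$, and then choose $\epsilon$ as a fraction of $\dist(a_2,b_2)$ so that the shadow condition holds for large $n$. (The ``after passing to a subsequence'' in your first display is unnecessary---continuity of $\alpha^{-1}$ gives the convergence directly---but this does no harm.)
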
 

\begin{proof} Note that $\gamma_{1,n}^{-1}(x_0) \in\gamma_{1,n}^{-1} \mc S_{\epsilon_0}(\gamma_{1,n})=M-B_{\epsilon_0}(\gamma_{1,n}^{-1})$. 
So $a_1 \neq b_1$.
Hence $\epsilon:=2\dist(\alpha^{-1}a_1, \alpha^{-1}b_1)$ is positive.
Since $(\gamma_{1,n}\alpha)^{-1}x_0 \rightarrow \alpha^{-1}a_1$ and $(\gamma_{1,n}\alpha)^{-1} \rightarrow \alpha^{-1}b_1$, after passing to a subsequence we can suppose that $\dist((\gamma_{1,n}\alpha)^{-1}x_0, (\gamma_{1,n}\alpha)^{-1} ) > \epsilon$ for all $n$. 
Then $x_0 \in \mc S_{\epsilon}(\gamma_{1,n} \alpha)=\mc S_{\epsilon}(\gamma_{2,n})$ for all $n$. 
\end{proof} 

Shrinking $\epsilon > 0$ we can assume 
$$
\big( M - B_{2\epsilon}(b_1) \big) \cup \big( M - B_{2\epsilon}(b_2) \big) = M
$$
and that  $\epsilon \leq \epsilon_0$. 
Since $\gamma_{i,n}^{-1} \rightarrow b_i$, passing to a subsequence we can also suppose that 
$$
M - B_{2\epsilon}(b_i) \subset M-B_\epsilon(\gamma_{i,n}^{-1})
$$
for all $n \geq 1$ and $i=1,2$.

\begin{lemma} There exists $D_1> 1$ such that 
$$
D_1^{-1} \leq f(x) \leq  D_1
$$
for $\mu_2$-almost every point $x\in M$. 
\end{lemma}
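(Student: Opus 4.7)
The plan is to show that for $\mu_2$-almost every $y \in M$, we can find some $\gamma_{i,n}$ with $i \in \{1,2\}$ that sends $y$ into the good set $E$ while only multiplying $f$ by a uniformly bounded factor, so that $f(y)$ inherits the bound $[D_0^{-1}, D_0]$ up to a fixed constant. As a first step, I would combine Equation~(21) with the Shadow Lemma (Theorem~\ref{thm:shadow lemma}), which gives $\mu_j(\mc S_\epsilon(\gamma)) \asymp e^{-\delta_j \norm{\gamma}_{\sigma_j}}$ for $j=1,2$: since $f(x_0) \in [D_0^{-1}, D_0]$, the ratio $\mu_1(\mc S_\epsilon(\gamma_{i,n}))/\mu_2(\mc S_\epsilon(\gamma_{i,n}))$ remains bounded away from $0$ and $\infty$, forcing
$$
B_0 := \sup_{i \in \{1,2\},\, n \geq 1} \abs{\delta_1 \norm{\gamma_{i,n}}_{\sigma_1} - \delta_2 \norm{\gamma_{i,n}}_{\sigma_2}} < +\infty.
$$

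Next, I would exploit the transformation rule in Equation~\eqref{eqn:change of variable for f}, $f \circ \gamma = e^{\delta_2 \sigma_2(\gamma, \cdot) - \delta_1 \sigma_1(\gamma, \cdot)} f$ holding $\mu_2$-a.e., applied to $\gamma = \gamma_{i,n}$. Since $\gamma_{i,n}^{-1} \to b_i$, for any $y \in M - B_{2\epsilon}(b_i)$ and all sufficiently large $n$ one has $d(y, \gamma_{i,n}^{-1}) \geq \epsilon$. The expanding property of each $\sigma_j$ then yields $\abs{\sigma_j(\gamma_{i,n}, y) - \norm{\gamma_{i,n}}_{\sigma_j}} \leq C$ uniformly in such $y$ and in $n$. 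Combined with the bound $B_0$, the exponent $\delta_2 \sigma_2(\gamma_{i,n}, y) - \delta_1 \sigma_1(\gamma_{i,n}, y)$ is uniformly bounded, and hence there exists $A > 1$ with
$$
A^{-1} f(y) \leq f(\gamma_{i,n}\, y) \leq A\, f(y)
$$
for all sufficiently large $n$ and $\mu_2$-almost every $y \in M - B_{2\epsilon}(b_i)$.

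Finally, Equation~\eqref{eqn:zooming in on E in proof of abs cont} gives $\mu_2(\gamma_{i,n}^{-1} E) \to 1$, and hence $\mu_2(\limsup_n \gamma_{i,n}^{-1} E) = 1$ for both $i \in \{1,2\}$. Since $M = (M - B_{2\epsilon}(b_1)) \cup (M - B_{2\epsilon}(b_2))$ by construction, for $\mu_2$-a.e.\ $y$ we may pick $i \in \{1,2\}$ with $y \in M - B_{2\epsilon}(b_i)$ and then pick $n$ as large as required in the previous step so that $\gamma_{i,n}(y) \in E$. The estimate above then yields $A^{-1} D_0^{-1} \leq f(y) \leq A D_0$, proving the lemma with $D_1 := A D_0$. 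The main obstacle is making all error terms uniform in $n$: the expanding property must apply with a single constant to every $\gamma_{i,n}$, which is achieved by pushing $n$ large enough that $\gamma_{i,n}^{-1}$ is within $\epsilon$ of $b_i$; the uniform control on the norm differences then combines cleanly with the uniform Shadow Lemma constants, and the two sequences with distinct limits $b_1 \neq b_2$ ensure every point of $M$ lies in the relevant complement.
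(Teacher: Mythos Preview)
Your proposal is correct and follows essentially the same approach as the paper: derive a uniform bound on $\abs{\delta_1\norm{\gamma_{i,n}}_{\sigma_1}-\delta_2\norm{\gamma_{i,n}}_{\sigma_2}}$ from Equation~\eqref{eqn:limit 1 in proof of abs cont} and the Shadow Lemma, use the expanding property on $M-B_{2\epsilon}(b_i)$ to bound the exponent in Equation~\eqref{eqn:change of variable for f}, and then combine with $\mu_2(\gamma_{i,n}^{-1}E)\to 1$ and the covering $M=(M-B_{2\epsilon}(b_1))\cup(M-B_{2\epsilon}(b_2))$ to conclude. The only cosmetic difference is that the paper argues directly that the bound holds $\mu_2$-a.e.\ on each $\gamma_{i,n}^{-1}E\cap(M-B_{2\epsilon}(b_i))$ and passes to the union, whereas you phrase the same step via $\limsup_n\gamma_{i,n}^{-1}E$ having full measure; both are equivalent.
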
 

\begin{proof} 
Since $\epsilon \leq \epsilon_0$, Equation~\eqref{eqn:limit 1 in proof of abs cont} implies 
$$
f(x_0) =\lim_{n \rightarrow \infty} \frac{\mu_1(\mc S_\epsilon(\gamma_{i,n}))}{\mu_2(\mc S_\epsilon(\gamma_{i,n}))}.
$$
Since $x_0 \in E$, we have $f(x_0) \in [D_0^{-1}, D_0]$.
So by the Shadow Lemma (Theorem~\ref{thm:shadow lemma}), there exists $C_1 > 0$ such that 
$$
\abs{ \delta_1\norm{\gamma_{i,n}}_{\sigma_1} -  \delta_2\norm{\gamma_{i,n}}_{\sigma_2}} \leq C_1
$$
for every $n \geq 1$.
Then, since the cocycles are expanding, this implies that there exists $C_2 > 0$ such that 
$$
\abs{\delta_1 \sigma_1(\gamma_{i,n}, x) - \delta_2 \sigma_2(\gamma_{i,n}, x) } \leq C_2
$$
when $x \in M- B_{2\epsilon}(b_i) \subset M-B_\epsilon(\gamma_{i,n}^{-1})$.
So Equation~\eqref{eqn:change of variable for f}  implies that 
$$
D_0^{-1} e^{-C_2} \leq f(x) \leq  D_0 e^{C_2}
$$
$\mu_2$-almost everywhere on $\gamma_{i,n}^{-1}E \cap (M- B_{2\epsilon}(b_i))$. Then Equation~\eqref{eqn:zooming in on E in proof of abs cont} implies that 
$$
D_0^{-1} e^{-C_2} \leq f(x) \leq  D_0 e^{C_2}
$$
$\mu_2$-almost everywhere on $M- B_{2\epsilon}(b_i)$. Since $\big( M - B_{2\epsilon}(b_1) \big) \cup \big( M - B_{2\epsilon}(b_2) \big) = M$, this completes the proof. 
\end{proof}


\section{Convexity of critical exponent}\label{sec:convexity of critical exponent}


In this section we prove Theorem~\ref{thm:convexity of entropy in intro}, which we restate below. For the rest of the section fix a convergence group $\Gamma \subset \mathsf{Homeo}(M)$ and two expanding coarse-cocycles $\sigma_0, \sigma_1 \colon \Gamma \times M \to \Rb$ 
such that $\delta_{\sigma_0}(\Gamma)= 1=\delta_{\sigma_1}(\Gamma)$.
For $0 < \lambda < 1$, notice that 
$$
\sigma_\lambda= (1-\lambda)\sigma_0 + \lambda \sigma_1
$$
is also a coarse-cocycle. 

\begin{theorem}\label{thm:convexity of entropy} If $0 < \lambda < 1$, then 
$$\delta_{\sigma_\lambda}(\Gamma) \leq 1.$$
Moreover, if $\sum_{\gamma \in \Gamma} e^{-\delta_{\sigma_\lambda}(\Gamma) \norm{\gamma}_{\sigma_\lambda}} = +\infty$, then the following are equivalent: 
\begin{enumerate}
\item $\delta_{\sigma_\lambda}(\Gamma) = 1$.
\item $\sup\limits_{\gamma \in \Gamma} \abs{ \norm{\gamma}_{\sigma_0} - \norm{\gamma}_{\sigma_1}} < +\infty$.
\end{enumerate}
\end{theorem}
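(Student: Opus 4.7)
The plan is to prove $\delta_{\sigma_\lambda}(\Gamma)\leq 1$ by H\"older's inequality and then analyze the equality case using the rigidity of Patterson--Sullivan measures established in Section~\ref{sec:rigidity of PS measures}. I begin by verifying, using Proposition~\ref{prop:different definition of expanding} together with Proposition~\ref{prop:basic properties}\eqref{item:properness}, that $\sigma_\lambda$ is an expanding coarse-cocycle with magnitude $\norm{\gamma}_{\sigma_\lambda}=(1-\lambda)\norm{\gamma}_{\sigma_0}+\lambda\norm{\gamma}_{\sigma_1}$. H\"older's inequality with conjugate exponents $1/(1-\lambda)$ and $1/\lambda$ then gives, for every $s>0$,
\[
\sum_\gamma e^{-s\norm{\gamma}_{\sigma_\lambda}} = \sum_\gamma e^{-s(1-\lambda)\norm{\gamma}_{\sigma_0}}\,e^{-s\lambda\norm{\gamma}_{\sigma_1}} \leq \Big(\sum_\gamma e^{-s\norm{\gamma}_{\sigma_0}}\Big)^{1-\lambda}\Big(\sum_\gamma e^{-s\norm{\gamma}_{\sigma_1}}\Big)^{\lambda},
\]
which is finite for $s>1$, so $\delta_{\sigma_\lambda}(\Gamma)\leq 1$.

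For the equivalence in the divergent setting, (2)$\Rightarrow$(1) is immediate from the identity $\norm{\gamma}_{\sigma_\lambda}-\norm{\gamma}_{\sigma_0}=\lambda(\norm{\gamma}_{\sigma_1}-\norm{\gamma}_{\sigma_0})$, since a uniform bound on the right yields one on the left and so the two Poincar\'e series share the same critical exponent. For (1)$\Rightarrow$(2), assume $\delta_{\sigma_\lambda}(\Gamma)=1$ and $\sum_\gamma e^{-\norm{\gamma}_{\sigma_\lambda}}=+\infty$. Theorem~\ref{thm:PS measures exist} produces a coarse $\sigma_\lambda$-Patterson--Sullivan measure $\mu_\lambda$ of dimension $1$, and the H\"older inequality evaluated at $s=1$ forces at least one of $\sum_\gamma e^{-\norm{\gamma}_{\sigma_i}}$, say for $i=0$, to be infinite (else the right-hand side would be finite), producing a coarse $\sigma_0$-Patterson--Sullivan measure $\mu_0$ of dimension $1$. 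Applying Proposition~\ref{prop:rigidity of PS measures intro} to the pair $(\mu_\lambda,\mu_0)$---licit because only the ``first'' cocycle needs to be divergent, which $\sigma_\lambda$ satisfies---yields either $\mu_\lambda\perp\mu_0$, or $\mu_\lambda\ll\mu_0\ll\mu_\lambda$ with $\sup_\gamma|\norm{\gamma}_{\sigma_\lambda}-\norm{\gamma}_{\sigma_0}|<\infty$; the latter rearranges to (2).

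The principal difficulty is therefore to rule out the singular alternative. Applying Proposition~\ref{prop:rigidity of PS measures intro} a second time to the pair $(\mu_\lambda,\mu_1)$, still using only the divergence of $\sigma_\lambda$ and regardless of whether $\sigma_1$ is convergent, mutual absolute continuity there again yields (2); so it suffices to eliminate the case $\mu_\lambda\perp\mu_0$ \emph{and} $\mu_\lambda\perp\mu_1$. To derive the contradiction I would combine the Shadow Lemma (Theorem~\ref{thm:shadow lemma}) with a Vitali-type Lebesgue differentiation argument: the Shadow Lemma gives
\[
\frac{\mu_i(\mc S_\epsilon(\gamma))}{\mu_\lambda(\mc S_\epsilon(\gamma))} \;\asymp\; e^{\norm{\gamma}_{\sigma_\lambda}-\norm{\gamma}_{\sigma_i}} \qquad(i=0,1),
\]
and Lemma~\ref{lem:Leb diff theorem}, extended to pairs of mutually singular measures via Lebesgue decomposition (which is standard given the Vitali covering in Proposition~\ref{prop:properties of fake shadows}\eqref{item:Vish covering lemma}), would force both of these ratios to tend to $0$ for $\mu_\lambda$-a.e.\ $x$ along any escaping sequence $\{\gamma_n\}$ with $x\in\bigcap_n\mc S_\epsilon(\gamma_n)$. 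Setting $\phi:=\norm{\cdot}_{\sigma_0}-\norm{\cdot}_{\sigma_1}$, one has $\norm{\gamma_n}_{\sigma_\lambda}-\norm{\gamma_n}_{\sigma_0}=-\lambda\phi(\gamma_n)$ and $\norm{\gamma_n}_{\sigma_\lambda}-\norm{\gamma_n}_{\sigma_1}=(1-\lambda)\phi(\gamma_n)$, so the two decay conditions demand $\phi(\gamma_n)\to+\infty$ \emph{and} $\phi(\gamma_n)\to-\infty$ along the same sequence, the desired contradiction.
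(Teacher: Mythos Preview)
Your argument is correct in outline, but it takes a different path from the paper for the implication $(1)\Rightarrow(2)$, and one step is left as a black box that the paper does not actually provide.

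The paper's route is more direct: it proves that $\mu_\lambda\le C(\mu_0+\mu_1)$ on \emph{all} Borel sets. The key observation is that the Shadow Lemma combined with the weighted arithmetic--geometric mean inequality gives
\[
\mu_\lambda(\mc S_\epsilon(\gamma))\;\asymp\;e^{-(1-\lambda)\norm{\gamma}_{\sigma_0}-\lambda\norm{\gamma}_{\sigma_1}}
=\big(e^{-\norm{\gamma}_{\sigma_0}}\big)^{1-\lambda}\big(e^{-\norm{\gamma}_{\sigma_1}}\big)^{\lambda}
\le(1-\lambda)e^{-\norm{\gamma}_{\sigma_0}}+\lambda e^{-\norm{\gamma}_{\sigma_1}}
\;\asymp\;(\mu_0+\mu_1)(\mc S_\epsilon(\gamma)),
\]
and then the Vitali covering of Proposition~\ref{prop:properties of fake shadows}\eqref{item:Vish covering lemma} upgrades this shadow inequality to arbitrary Borel sets. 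From $\mu_\lambda\ll\mu_0+\mu_1$ it is immediate that $\mu_\lambda$ cannot be singular to \emph{both} $\mu_0$ and $\mu_1$, and one finishes with Proposition~\ref{prop:characterisation of abscts case} exactly as you do. No divergence of $\sigma_0$ or $\sigma_1$ is needed: the measures $\mu_0,\mu_1$ exist by Theorem~\ref{thm:PS measures exist} simply because $\delta_{\sigma_i}(\Gamma)=1<+\infty$, so your H\"older argument at $s=1$ is a true but unnecessary detour.

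Your route instead argues by contradiction, assuming $\mu_\lambda\perp\mu_0$ and $\mu_\lambda\perp\mu_1$ and invoking a differentiation theorem for singular measures to force $\mu_i(\mc S_\epsilon(\gamma_n))/\mu_\lambda(\mc S_\epsilon(\gamma_n))\to 0$ for $\mu_\lambda$-almost every $x$ and every escaping shadow sequence through $x$. The contradiction you extract from $\phi(\gamma_n)\to+\infty$ and $\phi(\gamma_n)\to-\infty$ is valid. However, the differentiation result you need is \emph{not} Lemma~\ref{lem:Leb diff theorem}: that lemma only treats averages of $L^1(\mu_\lambda)$ densities, whereas a measure singular to $\mu_\lambda$ has no such density. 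You would have to prove the singular case separately (the standard maximal-function argument does go through using Proposition~\ref{prop:properties of fake shadows}\eqref{item:Vish covering lemma} and the Shadow Lemma, exactly as in Folland's Theorem~3.22), but this is additional work not present in the paper. The paper's AM--GM manoeuvre sidesteps this entirely and yields the stronger quantitative conclusion $\mu_\lambda\le C(\mu_0+\mu_1)$.
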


We start that by observing that the magnitudes of group elements behave nicely under convex combinations of cocycles. 

\begin{lemma}\label{lem:magnitudes under convex hulls}  $\sigma_\lambda$ is expanding and we may assume that 
$$
 \norm{\gamma}_{\sigma_\lambda} = (1-\lambda)\norm{\gamma}_{\sigma_0}+\lambda\norm{\gamma}_{\sigma_1}
$$
for all $\gamma \in \Gamma$. 
\end{lemma}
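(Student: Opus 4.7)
The plan is to verify the two conditions needed to invoke Proposition~\ref{prop:different definition of expanding} with the candidate function
$$h(\gamma) := (1-\lambda)\norm{\gamma}_{\sigma_0} + \lambda\norm{\gamma}_{\sigma_1}.$$
Once this is done, the proposition both gives that $\sigma_\lambda$ is expanding and allows us to choose $\norm{\gamma}_{\sigma_\lambda} = h(\gamma)$, which is exactly what the lemma asserts.

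First, I would record a preliminary step: $\sigma_\lambda$ is indeed a coarse-cocycle. If $\sigma_0$ is a $\kappa_0$-coarse-cocycle and $\sigma_1$ is a $\kappa_1$-coarse-cocycle, then applying the triangle inequality to the convex combinations of the two defining conditions (coarse continuity in the second variable and the coarse cocycle identity) shows that $\sigma_\lambda$ is a $\kappa_\lambda$-coarse-cocycle with $\kappa_\lambda := (1-\lambda)\kappa_0 + \lambda\kappa_1$.

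Next, I would verify the two bullet points of Proposition~\ref{prop:different definition of expanding} with $h$ as above. Fix a compatible metric $\dist$ on $\Gamma\sqcup M$. For any $\epsilon>0$, since $\sigma_0$ and $\sigma_1$ are expanding, there are $C_0, C_1 > 0$ such that whenever $\dist(x,\gamma^{-1}) > \epsilon$,
$$\abs{\sigma_i(\gamma,x) - \norm{\gamma}_{\sigma_i}} \leq C_i, \qquad i=0,1.$$
Taking the convex combination of these two inequalities immediately yields
$$\abs{\sigma_\lambda(\gamma,x) - h(\gamma)} \leq (1-\lambda)C_0 + \lambda C_1$$
on the same set, establishing the first bullet point. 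For the second bullet point, if $\{\gamma_n\} \subset \Gamma$ is an escaping sequence, then Proposition~\ref{prop:basic properties}\eqref{item:properness} applied to the expanding cocycles $\sigma_0$ and $\sigma_1$ gives $\norm{\gamma_n}_{\sigma_i} \to +\infty$ for each $i$, so $h(\gamma_n) \to +\infty$ as well since both coefficients are positive.

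Proposition~\ref{prop:different definition of expanding} then concludes that $\sigma_\lambda$ is expanding and that we may take $\norm{\gamma}_{\sigma_\lambda} = h(\gamma) = (1-\lambda)\norm{\gamma}_{\sigma_0} + \lambda\norm{\gamma}_{\sigma_1}$. There is no real obstacle here: the lemma is essentially the observation that a convex combination of expanding cocycles is expanding with the corresponding convex combination of magnitudes, and everything reduces to applying the characterization of expanding cocycles established earlier.
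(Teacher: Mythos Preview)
Your proof is correct and is essentially a careful unpacking of the paper's one-line proof (``This follows immediately from the definition''): you verify directly from the defining inequalities that the convex combination inherits the expanding property with the convex-combined magnitude, invoking Proposition~\ref{prop:different definition of expanding} to organize the verification. There is nothing missing; your argument simply makes explicit what the paper leaves implicit.
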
 

\begin{proof} This follows immediately from the definition. 
\end{proof} 

\begin{lemma} \label{lem:delta<1}
$\delta_{\sigma_\lambda}(\Gamma) \leq 1$. 
\end{lemma}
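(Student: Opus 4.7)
The plan is to apply Hölder's inequality to the Poincaré series of $\sigma_\lambda$, using the linearity of magnitude established in Lemma~\ref{lem:magnitudes under convex hulls}.

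First I would fix any $s > 1$ and observe, using Lemma~\ref{lem:magnitudes under convex hulls}, that
$$
Q_{\sigma_\lambda}(s) = \sum_{\gamma \in \Gamma} e^{-s \norm{\gamma}_{\sigma_\lambda}} = \sum_{\gamma \in \Gamma} e^{-s(1-\lambda)\norm{\gamma}_{\sigma_0}} \cdot e^{-s\lambda \norm{\gamma}_{\sigma_1}}.
$$
Then I would apply Hölder's inequality for counting measure on $\Gamma$ with conjugate exponents $p = 1/(1-\lambda)$ and $q = 1/\lambda$:
$$
Q_{\sigma_\lambda}(s) \leq \left( \sum_{\gamma \in \Gamma} e^{-s \norm{\gamma}_{\sigma_0}} \right)^{1-\lambda} \left( \sum_{\gamma \in \Gamma} e^{-s \norm{\gamma}_{\sigma_1}} \right)^{\lambda} = Q_{\sigma_0}(s)^{1-\lambda} \, Q_{\sigma_1}(s)^{\lambda}.
$$

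Since $\delta_{\sigma_0}(\Gamma) = \delta_{\sigma_1}(\Gamma) = 1$ and $s > 1$, both factors on the right are finite. Hence $Q_{\sigma_\lambda}(s) < +\infty$ for every $s > 1$, and taking infimum gives $\delta_{\sigma_\lambda}(\Gamma) \leq 1$. There is no real obstacle here: the step is a one-line Hölder inequality application, relying only on the fact that the chosen magnitude for $\sigma_\lambda$ is genuinely the convex combination of the chosen magnitudes for $\sigma_0$ and $\sigma_1$, which is exactly the content of Lemma~\ref{lem:magnitudes under convex hulls}.
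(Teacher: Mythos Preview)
Your proposal is correct and follows exactly the same approach as the paper: apply H\"older's inequality (with conjugate exponents $1/(1-\lambda)$ and $1/\lambda$) to the Poincar\'e series, using the additivity of magnitudes from Lemma~\ref{lem:magnitudes under convex hulls}, and conclude that $Q_{\sigma_\lambda}(s)<+\infty$ for all $s>1$. The paper's proof is the same one-line H\"older argument (with the exponents on the two factors harmlessly swapped relative to yours).
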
 

\begin{proof}
By H\"older's inequality and the previous lemma,
\begin{equation*}
\sum_{\gamma \in \Gamma} e^{-s \norm{\gamma}_{\sigma_\lambda}} \leq \left( \sum_{\gamma \in \Gamma} e^{-s \norm{\gamma}_{\sigma_0}} \right)^{\lambda} \left( \sum_{\gamma \in \Gamma} e^{-s \norm{\gamma}_{\sigma_1}} \right)^{1-\lambda}.
\end{equation*}
Hence $\delta_{\sigma_\lambda}(\Gamma) \leq 1$. 
\end{proof}

\begin{remark}
Using that $\delta_{t\sigma}(\Gamma)=\delta_\sigma(\Gamma)/t$ for all cocycles $\sigma$ and $t>0$, Lemma~\ref{lem:delta<1} implies that if $\sigma$ and $\sigma'$ are expanding cocycles  with finite critical exponents and $\lambda\in[0,1]$, then
\[\delta_{\lambda\sigma+(1-\lambda)\sigma'}(\Gamma) \leq \left( \lambda\delta_\sigma(\Gamma)^{-1} + (1-\lambda)\delta_{\sigma'}(\Gamma)^{-1} \right)^{-1}.\]
\end{remark}

We now consider the ``moreover'' part of the theorem. So fix $\lambda \in (0,1)$ where   
$$
\sum_{\gamma \in \Gamma} e^{-\delta_{\sigma_\lambda}(\Gamma) \norm{\gamma}_{\sigma_\lambda}} = +\infty.
$$
It is clear that $(2)\Rightarrow (1)$. The proof that $(1)\Rightarrow (2)$ is much more complicated and will occupy the rest of the section. 

To that end, suppose that $\delta_{\sigma_\lambda}(\Gamma) = 1$.
For $t \in \{0, \lambda, 1\}$, let $\mu_t$ be a coarse $\sigma_t$-Patterson-Sullivan measure of dimension 1 (which exists by Theorem~\ref{thm:PS measures exist}).

The key step in the proof is to show that  $\mu_\lambda$ is absolutely continuous to $\mu_0 + \mu_1$.

\begin{proposition}\label{prop:mu_lambda abs cont to mu_0 plus mu_1} $\mu_\lambda \ll \mu_0 + \mu_1$. \end{proposition}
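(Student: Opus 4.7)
The plan is to combine the Shadow Lemma, the additivity of magnitudes under convex combinations (Lemma~\ref{lem:magnitudes under convex hulls}), and the Vitali-type covering argument provided by Proposition~\ref{prop:properties of fake shadows}\eqref{item:Vish covering lemma}. The key numerical observation is that since
$$
\norm{\gamma}_{\sigma_\lambda} = (1-\lambda)\norm{\gamma}_{\sigma_0} + \lambda\norm{\gamma}_{\sigma_1},
$$
one has $e^{-\norm{\gamma}_{\sigma_\lambda}} = \bigl(e^{-\norm{\gamma}_{\sigma_0}}\bigr)^{1-\lambda}\bigl(e^{-\norm{\gamma}_{\sigma_1}}\bigr)^{\lambda}$, so the Shadow Lemma applied to $\mu_0,\mu_\lambda,\mu_1$ yields, for all sufficiently small $\epsilon>0$ and some constant $K_\epsilon>0$,
$$
\mu_\lambda(\mathcal S_\epsilon(\gamma)) \leq K_\epsilon \,\mu_0(\mathcal S_\epsilon(\gamma))^{1-\lambda}\,\mu_1(\mathcal S_\epsilon(\gamma))^{\lambda} \leq K_\epsilon \,(\mu_0+\mu_1)(\mathcal S_\epsilon(\gamma)),
$$
using the trivial bound $a^{1-\lambda} b^\lambda \leq a+b$. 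The same reasoning, comparing $\mu_\lambda(\mathcal S_{\epsilon'}(\gamma))$ to $\mu_0(\mathcal S_\epsilon(\gamma))^{1-\lambda}\mu_1(\mathcal S_\epsilon(\gamma))^\lambda$ through the Shadow Lemma at two scales, gives an analogous inequality
$$
\mu_\lambda(\mathcal S_{\epsilon'}(\gamma)) \leq K_{\epsilon,\epsilon'}\,(\mu_0+\mu_1)(\mathcal S_\epsilon(\gamma))
$$
for any $0<\epsilon'<\epsilon$ sufficiently small.

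Next, fix $\epsilon>0$ small enough to satisfy simultaneously the Shadow Lemma for all three measures, the Vitali covering property of Proposition~\ref{prop:properties of fake shadows}\eqref{item:Vish covering lemma} (with associated $\epsilon'<\epsilon$), and so that $\mu_\lambda(\Lambda^{\rm con}_{2\epsilon}(\Gamma))=1$ (the latter is possible by Theorem~\ref{thm:ergodicity on single M}, since we are assuming divergence at the critical exponent for $\sigma_\lambda$). Let $A\subset M$ be Borel with $(\mu_0+\mu_1)(A)=0$; it suffices to show $\mu_\lambda\bigl(A\cap\Lambda^{\rm con}_{2\epsilon}(\Gamma)\bigr)=0$. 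Fix $\eta>0$ and, by outer regularity of $\mu_0+\mu_1$, choose an open set $U\supset A$ with $(\mu_0+\mu_1)(U)<\eta$.

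For each $x\in A\cap\Lambda^{\rm con}_{2\epsilon}(\Gamma)$, Lemma~\ref{lem:shadows versus uniform conical limit sets} provides an escaping sequence $\{\beta_n\}\subset\Gamma$ with $x\in\mathcal S_\epsilon(\beta_n)$ for all $n$. Since $\mathcal S_\epsilon(\beta_n)$ shrinks to the point $x$ in $\Gamma\sqcup M$ by Proposition~\ref{prop:properties of fake shadows}\eqref{item:collapsing fake shadows}, for $n$ large we have $\mathcal S_\epsilon(\beta_n)\subset U$. Set $\gamma_x:=\beta_n$ for such an $n$ and let $I:=\{\gamma_x:x\in A\cap\Lambda^{\rm con}_{2\epsilon}(\Gamma)\}$. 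Applying Proposition~\ref{prop:properties of fake shadows}\eqref{item:Vish covering lemma} produces $J\subset I$ with $\{\mathcal S_\epsilon(\gamma)\}_{\gamma\in J}$ pairwise disjoint and
$$
A\cap\Lambda^{\rm con}_{2\epsilon}(\Gamma)\subset \bigcup_{\gamma\in I}\mathcal S_\epsilon(\gamma)\subset \bigcup_{\gamma\in J}\mathcal S_{\epsilon'}(\gamma).
$$

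Combining the shadow estimate from the first paragraph with disjointness and the containment $\mathcal S_\epsilon(\gamma)\subset U$ for $\gamma\in J$, we conclude
$$
\mu_\lambda\bigl(A\cap\Lambda^{\rm con}_{2\epsilon}(\Gamma)\bigr) \leq \sum_{\gamma\in J}\mu_\lambda(\mathcal S_{\epsilon'}(\gamma)) \leq K_{\epsilon,\epsilon'}\sum_{\gamma\in J}(\mu_0+\mu_1)(\mathcal S_\epsilon(\gamma)) \leq K_{\epsilon,\epsilon'}\,(\mu_0+\mu_1)(U) < K_{\epsilon,\epsilon'}\,\eta.
$$
Letting $\eta\to 0$ gives $\mu_\lambda\bigl(A\cap\Lambda^{\rm con}_{2\epsilon}(\Gamma)\bigr)=0$, hence $\mu_\lambda(A)=0$, proving the proposition. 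The only step requiring care is the derivation of the crucial shadow estimate in the first paragraph, which must handle the different radii $\epsilon$ and $\epsilon'$ cleanly; once that is in place the Vitali argument proceeds routinely.
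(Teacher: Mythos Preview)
Your proposal is correct and follows essentially the same approach as the paper: both combine the Shadow Lemma with Lemma~\ref{lem:magnitudes under convex hulls} and the inequality $a^{1-\lambda}b^\lambda \le a+b$ to get $\mu_\lambda(\mc S_\epsilon(\gamma)) \lesssim (\mu_0+\mu_1)(\mc S_\epsilon(\gamma))$, then upgrade this to arbitrary Borel sets via outer regularity and the Vitali-type covering of Proposition~\ref{prop:properties of fake shadows}\eqref{item:Vish covering lemma}. The paper in fact proves the slightly stronger quantitative bound $\mu_\lambda(A)\le C_2(\mu_0+\mu_1)(A)$ for all Borel $A$, but your null-set argument suffices for the proposition as stated.
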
  

Assuming Proposition~\ref{prop:mu_lambda abs cont to mu_0 plus mu_1} for a moment we finish the proof that $(1) \Rightarrow (2)$. Since $\mu_\lambda \ll \mu_0 + \mu_1$, at least one of $\mu_0$ or $\mu_1$ is not singular to $\mu_\lambda$. So by relabelling we can assume that $\mu_\lambda$ is not singular to $\mu_0$. Then Proposition~\ref{prop:characterisation of abscts case} implies that 
$$
\sup\limits_{\gamma \in \Gamma} \abs{ \norm{\gamma}_{\sigma_0} - \norm{\gamma}_{\sigma_\lambda}} < +\infty.
$$
Then Lemma~\ref{lem:magnitudes under convex hulls}  implies that  
$$
\sup\limits_{\gamma \in \Gamma} \abs{ \norm{\gamma}_{\sigma_0} - \norm{\gamma}_{\sigma_1}} < +\infty.
$$

\subsection{Proof of Proposition~\ref{prop:mu_lambda abs cont to mu_0 plus mu_1}} 

The idea is to use the Shadow Lemma to relate the measures. 

Fix a compatible metric on $\Gamma \sqcup M$ and let $\mc S_\epsilon(\gamma)$ denote the associated shadows. By the Shadow Lemma (Theorem~\ref{thm:shadow lemma}), there exists $\epsilon_0 > 0$ such that for every $0<\epsilon \leq \epsilon_0$ there is constant $C_0(\epsilon) > 1$ where 
$$
\frac{1}{C_0(\epsilon)} e^{-\norm{\gamma}_{\sigma_t}} \leq \mu_t\Big( \mc S_\epsilon(\gamma) \Big) \leq C_0(\epsilon) e^{-\norm{\gamma}_{\sigma_t}}
$$
for all $\gamma \in \Gamma$ and all $t \in \{0, \lambda, 1\}$.

We first establish bounds for the measure of shadows and then extend these bounds to arbitrary sets using the covering result in Proposition~\ref{prop:properties of fake shadows}\eqref{item:Vish covering lemma}. 

\begin{lemma} For any $0 < \epsilon \leq \epsilon_0$ there exists $C_1 =C_1(\epsilon)> 1$ such that: if $\gamma \in \Gamma$, then
$$
\mu_\lambda\left( \mc S_\epsilon(\gamma) \right) \leq C_1(\mu_0 + \mu_1)\left( \mc S_\epsilon(\gamma) \right).
$$
\end{lemma}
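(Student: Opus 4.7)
The plan is to use the Shadow Lemma as an explicit comparison tool, combined with the formula for $\norm{\cdot}_{\sigma_\lambda}$ coming from Lemma~\ref{lem:magnitudes under convex hulls}. The key point is that the Shadow Lemma reduces the measure of a shadow (for each of the three cocycles at the critical dimension~$1$) to an essentially sharp exponential in the magnitude, so the desired inequality between measures will follow from a pointwise inequality between the three magnitudes.

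First I would invoke the Shadow Lemma to obtain, for fixed $0 < \epsilon \leq \epsilon_0$, the constant $C_0 = C_0(\epsilon) > 1$ such that
\[
C_0^{-1} e^{-\norm{\gamma}_{\sigma_t}} \leq \mu_t\bigl(\mc S_\epsilon(\gamma)\bigr) \leq C_0 e^{-\norm{\gamma}_{\sigma_t}}
\]
simultaneously for $t \in \{0,\lambda,1\}$ and all $\gamma\in\Gamma$. Then I would choose the magnitude for $\sigma_\lambda$ given by Lemma~\ref{lem:magnitudes under convex hulls}, namely $\norm{\gamma}_{\sigma_\lambda} = (1-\lambda)\norm{\gamma}_{\sigma_0} + \lambda\norm{\gamma}_{\sigma_1}$, so that
\[
e^{-\norm{\gamma}_{\sigma_\lambda}} = \bigl(e^{-\norm{\gamma}_{\sigma_0}}\bigr)^{1-\lambda}\bigl(e^{-\norm{\gamma}_{\sigma_1}}\bigr)^{\lambda}.
\]

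The core step is a pointwise inequality: by Young's inequality (or weighted AM--GM), for any nonnegative reals $x,y$,
\[
x^{1-\lambda} y^{\lambda} \leq (1-\lambda)x + \lambda y \leq x + y.
\]
Applying this with $x = e^{-\norm{\gamma}_{\sigma_0}}$ and $y = e^{-\norm{\gamma}_{\sigma_1}}$ yields $e^{-\norm{\gamma}_{\sigma_\lambda}} \leq e^{-\norm{\gamma}_{\sigma_0}} + e^{-\norm{\gamma}_{\sigma_1}}$. Chaining this with the upper Shadow Lemma bound for $\mu_\lambda$ and the lower Shadow Lemma bounds for $\mu_0$ and $\mu_1$ gives
\[
\mu_\lambda\bigl(\mc S_\epsilon(\gamma)\bigr) \leq C_0 \bigl(e^{-\norm{\gamma}_{\sigma_0}} + e^{-\norm{\gamma}_{\sigma_1}}\bigr) \leq C_0^2 \bigl(\mu_0 + \mu_1\bigr)\bigl(\mc S_\epsilon(\gamma)\bigr),
\]
so $C_1 := C_0^2$ works.

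There is no real obstacle here; the only thing to be careful about is that the choice of magnitude affects the Shadow Lemma constants only by bounded additive (hence bounded multiplicative) errors, so one may assume the affine magnitude from Lemma~\ref{lem:magnitudes under convex hulls} without loss of generality. The harder work — comparing measures on arbitrary Borel sets rather than shadows — is postponed to the next step, where one expects to apply the Vitali-type covering result Proposition~\ref{prop:properties of fake shadows}\eqref{item:Vish covering lemma} together with this lemma.
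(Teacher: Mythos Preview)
Your proof is correct and essentially identical to the paper's: both use the Shadow Lemma together with the affine magnitude formula from Lemma~\ref{lem:magnitudes under convex hulls} and the weighted AM--GM inequality, arriving at the constant $C_1 = C_0(\epsilon)^2$. The only cosmetic difference is ordering: the paper first converts $e^{-\norm{\gamma}_{\sigma_\lambda}}$ into $\mu_0(\mc S_\epsilon(\gamma))^{1-\lambda}\mu_1(\mc S_\epsilon(\gamma))^{\lambda}$ via the lower Shadow Lemma bounds and then applies AM--GM to the measures, whereas you apply AM--GM to the exponentials first and then invoke the lower bounds.
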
 

\begin{proof} By the Shadow Lemma and Lemma~\ref{lem:magnitudes under convex hulls}, 
\begin{align*}
\mu_\lambda&\left( \mc S_\epsilon(\gamma) \right) \leq C_0(\epsilon) e^{ - \norm{\gamma_2}_{\sigma_\lambda} } = C_0 (\epsilon)  e^{-(1-\lambda)\norm{\gamma_2}_{\sigma_0}  -\lambda  \norm{\gamma_2}_{\sigma_1} } \\
& \leq C_0(\epsilon)^2  \mu_0 \left( \mc S_\epsilon(\gamma) \right)^{1-\lambda}\mu_1 \left( \mc S_\epsilon(\gamma) \right)^\lambda. 
\end{align*}
Then the desired estimate follows from the weighted arithmetic-geometric mean inequality. 
\end{proof}

\begin{lemma} There exists $C_2 > 1$ such that: if $A \subset M$ is Borel measurable, then
$$
\mu_\lambda( A  ) \leq C_2( \mu_0 + \mu_1)( A).
$$
Hence $\mu_\lambda \ll \mu_0 + \mu_1$. 
\end{lemma}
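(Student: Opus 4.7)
The plan is to upgrade the previous lemma's bound on shadows to a bound on arbitrary Borel sets by a Vitali-type covering argument, following the standard recipe for passing from a shadow estimate to a genuine absolute continuity statement. First I will reduce to the case where one may freely cover by shadows of a single fixed size. Since we are in the divergent case ($\delta_{\sigma_\lambda}(\Gamma)=1$ and the Poincar\'e series diverges), Theorem~\ref{thm:ergodicity on single M}(3) applied to $\mu_\lambda$ supplies some $\epsilon_1 > 0$ with $\mu_\lambda(\Lambda^{\rm con}_{\epsilon_1}(\Gamma)) = 1$, and after shrinking $\epsilon_1$ if necessary I may assume $\epsilon := \epsilon_1/2 \leq \epsilon_0$. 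Then $\mu_\lambda(A) = \mu_\lambda(A \cap \Lambda^{\rm con}_{2\epsilon}(\Gamma))$, and by outer regularity of the Radon measure $\mu_0 + \mu_1$ on the compact metrizable space $M$, it suffices to produce $C_2 > 1$ (depending only on $\epsilon$) such that
$$\mu_\lambda(A \cap \Lambda^{\rm con}_{2\epsilon}(\Gamma)) \leq C_2 (\mu_0 + \mu_1)(U)$$
for every open set $U \supset A$.

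Next I will cover $A \cap \Lambda^{\rm con}_{2\epsilon}(\Gamma)$ by shadows of size $\epsilon$ contained in $U$. Given $x \in A \cap \Lambda^{\rm con}_{2\epsilon}(\Gamma)$, Lemma~\ref{lem:shadows versus uniform conical limit sets}(1) produces an escaping sequence $\{\gamma_n\} \subset \Gamma$ with $x \in \mc S_\epsilon(\gamma_n)$ for all $n$, and by Proposition~\ref{prop:properties of fake shadows}\eqref{item:collapsing fake shadows} the diameters of these shadows converge to zero; hence $\mc S_\epsilon(\gamma_n) \subset U$ for $n$ large. Letting $I \subset \Gamma$ collect one such element for each $x$, we have $A \cap \Lambda^{\rm con}_{2\epsilon}(\Gamma) \subset \bigcup_{\gamma \in I} \mc S_\epsilon(\gamma) \subset U$. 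Proposition~\ref{prop:properties of fake shadows}\eqref{item:Vish covering lemma} then furnishes $\epsilon' \in (0,\epsilon)$ and $J \subset I$ with $\{\mc S_\epsilon(\gamma) : \gamma \in J\}$ pairwise disjoint and $\bigcup_{\gamma \in I} \mc S_\epsilon(\gamma) \subset \bigcup_{\gamma \in J} \mc S_{\epsilon'}(\gamma)$. Since the shadows are open and $M$ is separable, $J$ is automatically countable.

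The estimate chains together as follows. The Shadow Lemma (Theorem~\ref{thm:shadow lemma}) applied to $\mu_\lambda$ at shadow sizes $\epsilon$ and $\epsilon'$ provides a constant $K=K(\epsilon,\epsilon')$ such that $\mu_\lambda(\mc S_{\epsilon'}(\gamma)) \leq K\, \mu_\lambda(\mc S_\epsilon(\gamma))$ for every $\gamma \in \Gamma$, since both quantities are uniformly comparable to $e^{-\norm{\gamma}_{\sigma_\lambda}}$. Combining countable subadditivity, this estimate, the previous lemma, and the disjointness of $\{\mc S_\epsilon(\gamma) : \gamma \in J\}$ yields
\begin{align*}
 \mu_\lambda(A \cap \Lambda^{\rm con}_{2\epsilon}(\Gamma)) &\leq \sum_{\gamma \in J} \mu_\lambda(\mc S_{\epsilon'}(\gamma))  \leq K\sum_{\gamma \in J} \mu_\lambda(\mc S_\epsilon(\gamma))\\
 &\leq KC_1(\epsilon) \sum_{\gamma \in J}(\mu_0+\mu_1)(\mc S_\epsilon(\gamma)) \leq KC_1(\epsilon)(\mu_0+\mu_1)(U),
\end{align*}
so $C_2 := KC_1(\epsilon)$ works. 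Taking the infimum over $U$ yields the lemma, and $\mu_\lambda \ll \mu_0 + \mu_1$ is immediate.

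The main delicate point will be ensuring that a single fixed shadow size $\epsilon$ suffices for the covering step: this is precisely what Theorem~\ref{thm:ergodicity on single M}(3) provides, since it confines the $\mu_\lambda$-mass of $A$ inside a single $\Lambda^{\rm con}_{2\epsilon}(\Gamma)$ and so avoids forcing $\epsilon \to 0$, across which the Shadow Lemma constants and the constants $C_1(\epsilon)$ from the previous lemma would blow up.
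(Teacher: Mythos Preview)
Your proof is correct and follows essentially the same route as the paper: cover the $\mu_\lambda$-relevant part of $A$ by shadows inside an open superset $U$, apply the Vitali-type covering from Proposition~\ref{prop:properties of fake shadows}\eqref{item:Vish covering lemma}, compare $\mu_\lambda$ and $\mu_0+\mu_1$ shadow-by-shadow via the previous lemma and the Shadow Lemma, and pass to the infimum over $U$. One small slip: the shadows $\mc S_\epsilon(\gamma)=\gamma(M - B_\epsilon(\gamma^{-1}))$ are closed, not open, so your stated reason for countability of $J$ is off---but $J\subset\Gamma$ is countable simply because $\Gamma$ is.
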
 

\begin{proof} Fix $\eta > 0$. By outer regularity we can find an open set $U \subset M$ such that $A \subset U$ and 
$$
( \mu_0 + \mu_1)( U)< \eta + ( \mu_0 + \mu_1)( A).
$$

Using Theorem~\ref{thm:ergodicity on single M} and possibly shrinking $\epsilon_0>0$, we may assume that
\begin{equation}\label{eqn:full mass at epsilon0}
\mu_\lambda(\Lambda_{\epsilon_0}^{\rm con}(\Gamma)) = 1.
\end{equation}
Now fix $0 < \epsilon < \epsilon_0$ and let 
$$
I := \{ \gamma \in \Gamma : \mc S_{\epsilon}(\gamma) \subset U\}.
$$
By Lemma \ref{lem:shadows versus uniform conical limit sets}, for each $x \in \Lambda_{\epsilon_0}^{\rm con}(\Gamma)$ there exists an escaping sequence $\{\gamma_n\}$ such that 
$$
x \in \bigcap_{n =1}^\infty \mc S_{\epsilon}(\gamma_n). 
$$
Moreover, for each such sequence, Proposition~\ref{prop:properties of fake shadows}\eqref{item:collapsing fake shadows} implies that ${\rm diam} \, \mc S_{\epsilon}(\gamma_n) \to 0$ as $n\to\infty$. Hence
$$
U \cap \Lambda_{\epsilon_0}^{\rm con}(\Gamma) \subset  \bigcup_{\gamma \in I} \mc S_{\epsilon}(\gamma) \subset U. 
$$
So by Equation~\eqref{eqn:full mass at epsilon0},
$$
\mu_\lambda\left( U \right) = \mu_\lambda\left( \bigcup_{\gamma \in I} \mc S_{\epsilon}(\gamma) \right).
$$

Let $J \subset I$ and $\epsilon' < \epsilon$ satisfy Proposition~\ref{prop:properties of fake shadows}\eqref{item:Vish covering lemma}. Then repeatedly using the Shadow Lemma,
\begin{align*}
\mu_\lambda& ( A) \leq \mu_\lambda(U)=  \mu_\lambda\left( \bigcup_{\gamma \in I} \mc S_{\epsilon}(\gamma) \right)  \leq \sum_{\gamma \in J} \mu_\lambda\left( \mc S_{\epsilon^\prime}(\gamma)  \right) \\
& \leq C_0(\epsilon) C_0(\epsilon^\prime)\sum_{\gamma \in J} \mu_\lambda\left( \mc S_{\epsilon}(\gamma)  \right) \leq C_0(\epsilon) C_0(\epsilon^\prime)C_1(\epsilon) \sum_{\gamma \in J}( \mu_0 +  \mu_1)\left( \mc S_{\epsilon}(\gamma)  \right) \\
& \leq C_0(\epsilon) C_0(\epsilon^\prime)C_1(\epsilon) ( \mu_0 +  \mu_1)(U)  \leq C_0(\epsilon) C_0(\epsilon^\prime)C_1(\epsilon) \Big( ( \mu_0 + \mu_1)(A) + \eta \Big). 
\end{align*}
Since $\eta > 0$ was arbitrary, this completes the proof. 
\end{proof} 


\section{Symmetric coarse-cocycles}\label{sec:symmetric cocycles}


In this section we consider the case when an expanding coarse-cocycle is ``coarsely-symmetric.'' For the rest of the section, fix a convergence group $\Gamma \subset \mathsf{Homeo}(M)$. A coarse-cycle $\sigma : \Gamma \times M \rightarrow \Rb$ is \emph{coarsely-symmetric} if 
$$
\sup_{\substack{\gamma \in \Gamma  \\ \gamma \text{ \rm loxodromic}} } \abs{\sigma(\gamma,\gamma^+) - \sigma(\gamma^{-1}, \gamma^-)} < +\infty.
$$

The next result shows that coarsely-symmetric can also be defined using magnitudes and that expanding coarsely-symmetric coarse-cocycles are always contained in a coarse GPS system.

\begin{proposition}\label{prop:symmetric implies GPS} Suppose $\sigma \colon \Gamma \times M \rightarrow \Rb$ is an expanding coarse-cocycle. Then $\sigma$ is coarsely-symmetric if and only if 
$$
\sup_{\gamma \in \Gamma} \abs{\norm{\gamma}_\sigma - \norm{\gamma^{-1}}_\sigma} < + \infty.
$$
Moreover, if $\sigma$ is coarsely-symmetric, then $(\sigma, \sigma, G)$ is a coarse GPS system for $\Gamma$ acting on $\Lambda(\Gamma) \subset M$, where $G \colon \Lambda(\Gamma)^{(2)} \to \Rb$ is defined by 
\begin{align*}
G(x,y)  = \limsup_{\alpha \to x, \beta \to y} \norm{\alpha}_\sigma +\norm{\beta^{-1}}_\sigma-\norm{\beta^{-1}\alpha}_\sigma.
\end{align*}
\end{proposition}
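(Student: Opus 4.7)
The plan is to prove the equivalence first, then build the Gromov product $G$ from the coarsely-symmetric magnitude using that equivalence.

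For the equivalence, the $(\Leftarrow)$ direction follows from Proposition~\ref{prop:basic properties}\eqref{item:loxodromic periods}: if $\abs{\norm{\gamma}_\sigma - \norm{\gamma^{-1}}_\sigma} \leq C$ for all $\gamma$, then for any fixed loxodromic $\gamma$ and all $n \geq 1$ we have $\abs{\norm{\gamma^n}_\sigma - \norm{\gamma^{-n}}_\sigma} \leq C$, so $\tfrac{1}{n}\norm{\gamma^{\pm n}}_\sigma$ have the same asymptotic behavior, and Proposition~\ref{prop:basic properties}\eqref{item:loxodromic periods} applied to both $\gamma$ and $\gamma^{-1}$ then yields $\abs{\sigma(\gamma,\gamma^+) - \sigma(\gamma^{-1},\gamma^-)} \leq 2\kappa$ uniformly in $\gamma$. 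For $(\Rightarrow)$, I will use Lemma~\ref{lem:AMS_Gromov_hyp} to fix $\epsilon > 0$ and a finite $F \subset \Gamma$ such that every $\gamma$ admits $f \in F$ with $\gamma f$ loxodromic and $\dist((\gamma f)^\pm, (\gamma f)^{\mp 1}) \geq \epsilon$. The expanding property at $(\gamma f)^+$ gives $\sigma(\gamma f, (\gamma f)^+) = \norm{\gamma f}_\sigma + O(1)$, and at $(\gamma f)^-$ gives $\sigma((\gamma f)^{-1}, (\gamma f)^-) = \norm{(\gamma f)^{-1}}_\sigma + O(1)$; coarse-symmetry applied to $\gamma f$ then forces $\norm{\gamma f}_\sigma = \norm{(\gamma f)^{-1}}_\sigma + O(1)$, and Proposition~\ref{prop:basic properties}\eqref{item:tri inequality} applied to the finite set $F \cup F^{-1}$ absorbs $f$ and $f^{-1}$ to produce $\norm{\gamma}_\sigma = \norm{\gamma^{-1}}_\sigma + O(1)$.

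For the GPS system, local boundedness of $G$ on $\Lambda(\Gamma)^{(2)}$ follows from Proposition~\ref{prop:basic properties}\eqref{item:multiplicative estimate}: given a compact $K \subset \Lambda(\Gamma)^{(2)}$, the gap $\inf_{(x,y)\in K}\dist(x,y) > 0$ ensures that approaching $\Gamma$-sequences $\alpha \to x$, $\beta \to y$ eventually satisfy $\dist((\beta^{-1})^{-1}, \alpha) = \dist(\beta, \alpha) \geq \epsilon$ for some $\epsilon = \epsilon(K)$, which is exactly the hypothesis of Proposition~\ref{prop:basic properties}\eqref{item:multiplicative estimate} bounding $\norm{\alpha}_\sigma + \norm{\beta^{-1}}_\sigma - \norm{\beta^{-1}\alpha}_\sigma$ uniformly. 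For the coarse-cocycle identity, I fix $\gamma \in \Gamma$ and distinct $x, y \in \Lambda(\Gamma)$, and exploit the algebraic identity $(\gamma\beta)^{-1}(\gamma\alpha) = \beta^{-1}\alpha$ to substitute $\alpha \mapsto \gamma\alpha$, $\beta \mapsto \gamma\beta$ in the $\limsup$ defining $G(\gamma x, \gamma y)$, obtaining
\[
G(\gamma x, \gamma y) = \limsup_{\substack{\alpha\to x,\,\beta\to y\\\alpha,\beta\in\Gamma}} \norm{\gamma\alpha}_\sigma + \norm{(\gamma\beta)^{-1}}_\sigma - \norm{\beta^{-1}\alpha}_\sigma.
\]
Lemma~\ref{lem:nice magnitude} gives $\norm{\gamma\alpha}_\sigma - \norm{\alpha}_\sigma = \sigma(\gamma, x) + O(1)$ along $\alpha \to x$ and the analogous statement at $y$, while the first part of the proposition replaces $\norm{\gamma\beta}_\sigma$ and $\norm{\beta}_\sigma$ with $\norm{(\gamma\beta)^{-1}}_\sigma$ and $\norm{\beta^{-1}}_\sigma$ up to $O(1)$. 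Summing these and taking $\limsup$ yields $G(\gamma x, \gamma y) = \sigma(\gamma, x) + \sigma(\gamma, y) + G(x, y) + O(1)$, and the matching reverse inequality comes from the same argument applied to $\gamma^{-1}$ at $(\gamma x, \gamma y)$, combined with Observation~\ref{obs:triangle inequality}\eqref{item:inverses} to convert $\sigma(\gamma^{-1}, \gamma z)$ into $-\sigma(\gamma, z) + O(\kappa)$.

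The main obstacle is uniform bookkeeping of the $O(1)$ errors: I need each bounded quantity to depend only on $\kappa$ and the chosen magnitude, not on $\gamma$ or $(x, y)$, so that the resulting constant witnessing the GPS property is uniform. A subtler point worth flagging is that Lemma~\ref{lem:nice magnitude} only controls $\limsup_{\alpha \to x}\abs{\sigma(\gamma, x) - (\norm{\gamma\alpha}_\sigma - \norm{\alpha}_\sigma)}$ rather than any single value of the difference; however, because both sides of the desired coarse-cocycle identity involve $\limsup$s over the same index sets (once the substitution $\alpha \mapsto \gamma\alpha$, $\beta \mapsto \gamma\beta$ is made), this is precisely the form needed to interchange limits cleanly, and no additional trick is required.
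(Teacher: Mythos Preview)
Your proposal is correct and follows essentially the same approach as the paper. The equivalence is proved identically, and for the GPS property both you and the paper combine Lemma~\ref{lem:nice magnitude} with the symmetry of the magnitude and Proposition~\ref{prop:basic properties}\eqref{item:multiplicative estimate}; your substitution $\alpha\mapsto\gamma\alpha$, $\beta\mapsto\gamma\beta$ is a slightly slicker packaging that in fact yields both inequalities at once (since the error term $(\norm{\gamma\alpha}_\sigma-\norm\alpha_\sigma)+(\norm{(\gamma\beta)^{-1}}_\sigma-\norm{\beta^{-1}}_\sigma)$ is eventually within a uniform constant of $\sigma(\gamma,x)+\sigma(\gamma,y)$, so the $\limsup$ of the sum differs from $G(x,y)$ by that amount), making the separate $\gamma^{-1}$ step you mention unnecessary but harmless.
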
 

\begin{remark} One could also define a Gromov product using a limit infimum instead of a limit supremum. \end{remark}

As a corollary to Proposition~\ref{prop:symmetric implies GPS} and Theorem~\ref{thm:ergodicity on product}, we have the following. 

\begin{corollary} Suppose $\sigma \colon \Gamma \times M \rightarrow \Rb$ is a coarsely-symmetric expanding coarse-cocycle with $\delta : = \delta_\sigma(\Gamma) < +\infty$ and $\mu$ is a coarse $\sigma$-Patterson--Sullivan measure of dimension $\delta$. If 
$$
\sum_{\gamma \in \Gamma} e^{-\delta \norm{\gamma}_\sigma} = +\infty, 
$$
then $\Gamma$ acts ergodically on $(M^{(2)}, \mu \otimes \mu)$. 
\end{corollary}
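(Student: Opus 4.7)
The plan is to chain together the two results just cited. First, I would invoke Proposition~\ref{prop:symmetric implies GPS} to produce a coarse GPS system $(\sigma,\sigma,G)$ for the action of $\Gamma$ on $\Lambda(\Gamma)$, where the Gromov product $G$ is defined by the $\limsup$ formula in the proposition. Since $\delta_\sigma(\Gamma)=\delta<+\infty$ and the Poincar\'e series diverges at $\delta$, Theorem~\ref{thm:ergodicity on single M} guarantees that $\mu$ is supported on $\Lambda(\Gamma)$ (indeed on some $\Lambda_\epsilon^{\rm con}(\Gamma)$), so the restriction of $\mu\otimes\mu$ to $M^{(2)}$ and its restriction to $\Lambda(\Gamma)^{(2)}$ agree.

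Next, I would apply Theorem~\ref{thm:ergodicity on product} to the GPS system $(\sigma,\sigma,G)$ with $\bar\mu=\mu$. This produces, via Lemma~\ref{lem:finding the good G}, a measurable function $\tilde G\colon M^{(2)}\to\Rb$ for which
$$\nu := e^{\delta\tilde G}\,\mu\otimes\mu\big|_{M^{(2)}}$$
is a locally finite, $\Gamma$-invariant BMS measure, and asserts that $\Gamma$ acts ergodically on $(M^{(2)},\nu)$.

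The final step is to transfer ergodicity from $\nu$ to $\mu\otimes\mu$. Since $\tilde G$ is real-valued and measurable, the Radon--Nikodym derivative $\frac{d\nu}{d(\mu\otimes\mu)}=e^{\delta\tilde G}$ is strictly positive and finite $(\mu\otimes\mu)$-almost everywhere on $M^{(2)}$, so $\nu$ and $\mu\otimes\mu|_{M^{(2)}}$ are mutually absolutely continuous; in particular they share the same null sets among $\Gamma$-invariant measurable subsets. Moreover, by Proposition~\ref{prop:some consequences of the shadow lemma} the measure $\mu$ is atomless on the conical limit set, and by Theorem~\ref{thm:ergodicity on single M} it is supported there, so $(\mu\otimes\mu)(\{(x,x):x\in M\})=0$ by Fubini, and hence the distinction between $M^{(2)}$ and $M^2$ is immaterial. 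Ergodicity of $\Gamma$ on $(M^{(2)},\nu)$ therefore upgrades immediately to ergodicity of $\Gamma$ on $(M^{(2)},\mu\otimes\mu)$, completing the proof.

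There is no real obstacle: the only points requiring care are (i) that the diagonal is negligible, which follows from atomlessness of $\mu$, and (ii) that $\nu$ and $\mu\otimes\mu$ share invariant null sets, which is immediate from positivity of $e^{\delta\tilde G}$.
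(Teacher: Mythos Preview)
Your proposal is correct and follows the same approach as the paper, which simply cites Proposition~\ref{prop:symmetric implies GPS} and Theorem~\ref{thm:ergodicity on product}. One small simplification: Theorem~\ref{thm:ergodicity on product} already asserts ergodicity of $\Gamma$ on $(M^{(2)},\bar\mu\otimes\mu)$ directly, not merely on $(M^{(2)},\nu)$, so your final paragraph transferring ergodicity from $\nu$ to $\mu\otimes\mu$ is unnecessary---you can read off the desired conclusion immediately once you set $\bar\mu=\mu$.
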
 

We prove the proposition via a series of lemmas. Fix, for the rest of the section, an expanding $\kappa$-coarse-cocycle $\sigma \colon \Gamma \times M \to \Rb$ and a compatible distance $\dist$ on $\Gamma \sqcup M$.

\begin{lemma} $\sigma$ is coarsely-symmetric if and only if 
$$
\sup_{\gamma \in \Gamma} \abs{\norm{\gamma}_\sigma - \norm{\gamma^{-1}}_\sigma} < + \infty.
$$
\end{lemma}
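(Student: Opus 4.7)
The plan is to prove each direction separately, using Proposition~\ref{prop:basic properties}\eqref{item:loxodromic periods} for the easy direction and Lemma~\ref{lem:AMS_Gromov_hyp} for the harder one.

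$(\Leftarrow)$ Suppose $M_0 := \sup_{\gamma \in \Gamma} \abs{\norm{\gamma}_\sigma - \norm{\gamma^{-1}}_\sigma} < +\infty$. For any loxodromic $\gamma \in \Gamma$, applying Proposition~\ref{prop:basic properties}\eqref{item:loxodromic periods} to both $\gamma$ and $\gamma^{-1}$ (and noting that $(\gamma^{-1})^+ = \gamma^-$) yields
\[ \sigma(\gamma, \gamma^+) \leq \kappa + \liminf_{n \to \infty} \tfrac{1}{n}\norm{\gamma^n}_\sigma, \quad \sigma(\gamma^{-1}, \gamma^-) \geq -\kappa + \limsup_{n \to \infty} \tfrac{1}{n}\norm{\gamma^{-n}}_\sigma. \]
Since $\abs{\norm{\gamma^n}_\sigma - \norm{\gamma^{-n}}_\sigma} \leq M_0$ for every $n \geq 1$, the $M_0/n$ error vanishes in the limit and we obtain $\sigma(\gamma, \gamma^+) - \sigma(\gamma^{-1}, \gamma^-) \leq 2\kappa$. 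The reverse inequality is symmetric, so $\sigma$ is coarsely-symmetric.

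$(\Rightarrow)$ Suppose $C_0 := \sup_{\gamma \text{ loxodromic}} \abs{\sigma(\gamma, \gamma^+) - \sigma(\gamma^{-1}, \gamma^-)} < +\infty$. Fix a compatible metric $\dist$ on $\Gamma \sqcup M$, and let $\epsilon > 0$ and the finite set $F \subset \Gamma$ be as in Lemma~\ref{lem:AMS_Gromov_hyp}. By the expanding property, there exists $C_1 > 0$ such that $\abs{\sigma(\alpha, x) - \norm{\alpha}_\sigma} \leq C_1$ whenever $\dist(x, \alpha^{-1}) > \epsilon$. By Proposition~\ref{prop:basic properties}\eqref{item:tri inequality} applied to $F \cup F^{-1}$, there exists $C_2 > 0$ such that
\[ \abs{\norm{\gamma f}_\sigma - \norm{\gamma}_\sigma} \leq C_2 \quad \text{and} \quad \abs{\norm{f^{-1}\gamma^{-1}}_\sigma - \norm{\gamma^{-1}}_\sigma} \leq C_2 \]
for all $\gamma \in \Gamma$ and $f \in F$.

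Now fix an arbitrary $\gamma \in \Gamma$ and choose $f \in F$ as in Lemma~\ref{lem:AMS_Gromov_hyp}, so $\alpha := \gamma f$ is loxodromic with $\dist(\alpha^+, \alpha^{-1}) > \epsilon$ and $\dist(\alpha^-, \alpha) > \epsilon$. The first of these inequalities gives $\abs{\sigma(\alpha, \alpha^+) - \norm{\alpha}_\sigma} \leq C_1$; the second, together with the identity $(\alpha^{-1})^{-1} = \alpha$, gives $\abs{\sigma(\alpha^{-1}, \alpha^-) - \norm{\alpha^{-1}}_\sigma} \leq C_1$. Combining with the coarse-symmetry assumption applied to $\alpha$ and with the two inequalities above (noting $\norm{\alpha^{-1}}_\sigma = \norm{f^{-1}\gamma^{-1}}_\sigma$), we obtain
\[ \abs{\norm{\gamma}_\sigma - \norm{\gamma^{-1}}_\sigma} \leq 2C_2 + 2C_1 + C_0, \]
which is the desired uniform bound.

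The argument is quite direct; the only subtlety is arranging that the auxiliary loxodromic element $\gamma f$ simultaneously places both of its fixed points in the expanding region for both $\gamma f$ and $(\gamma f)^{-1}$, which is exactly what Lemma~\ref{lem:AMS_Gromov_hyp} provides.
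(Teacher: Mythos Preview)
Your proof is correct and follows essentially the same approach as the paper's: the $(\Leftarrow)$ direction via Proposition~\ref{prop:basic properties}\eqref{item:loxodromic periods}, and the $(\Rightarrow)$ direction via Lemma~\ref{lem:AMS_Gromov_hyp} combined with the expanding property and Proposition~\ref{prop:basic properties}\eqref{item:tri inequality}. Your $(\Leftarrow)$ argument is in fact slightly more explicit, yielding the sharp bound $2\kappa$ rather than just finiteness.
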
 

\begin{proof} $(\Leftarrow)$: This follows from Proposition~\ref{prop:basic properties}\eqref{item:loxodromic periods}.

$(\Rightarrow)$: Fix $\epsilon > 0$ and a finite set $F \subset \Gamma$ satisfying Lemma~\ref{lem:AMS_Gromov_hyp}. By Proposition~\ref{prop:basic properties}\eqref{item:tri inequality} there exists $C_1 > 0$ such that 
$$
\norm{\gamma}_\sigma - C_1 \leq \norm{\gamma f}_\sigma  \leq \norm{\gamma}_\sigma + C_1 \quad \text{and} \quad \norm{\gamma}_\sigma - C_1 \leq \norm{f^{-1}\gamma}_\sigma  \leq \norm{\gamma}_\sigma + C_1 
$$
for all $\gamma \in \Gamma$ and $f \in F$. Further, by the expanding property, there exists $C_2 > 0$ such that: if $\gamma \in \Gamma$ and $\dist(x, \gamma^{-1}) > \epsilon$, then 
$$
\norm{\gamma}_\sigma -C_2 \leq \sigma(\gamma, x) \leq \norm{\gamma}_\sigma +C_2. 
$$

Now fix $\gamma \in \Gamma$. Then there exists $f \in F$ such that $\gamma f$ is loxodromic and 
$$
\min\left\{  \dist( \gamma f, (\gamma f)^-), \,  \dist( (\gamma f)^+, (\gamma f)^{-1}) \right\} > \epsilon.
$$
Then 
\begin{align*}
\abs{\norm{\gamma}_\sigma - \norm{\gamma^{-1}}_\sigma} & \leq 2C_1 + \abs{\norm{\gamma f}_\sigma - \norm{(\gamma f)^{-1}}_\sigma} \\
& \leq 2C_1 + 2C_2 + \abs{\sigma(\gamma f,(\gamma f)^+) - \sigma((\gamma f)^{-1}, (\gamma f)^-)}.
\end{align*}
Then, since $\sigma$ is coarsely-symmetric, 
\begin{align*}
\sup_{\gamma \in \Gamma} \abs{\norm{\gamma}_\sigma - \norm{\gamma^{-1}}_\sigma} & < + \infty.  
\qedhere
\end{align*}
\end{proof}

\begin{lemma}\label{lem:symmetric implies GPS} If $\sigma$ is coarsely-symmetric, then $(\sigma, \sigma, G)$ is a coarse GPS system on $\Lambda(\Gamma)$. \end{lemma}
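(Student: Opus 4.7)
Fix a magnitude $\norm{\cdot}_\sigma$ satisfying Lemma~\ref{lem:nice magnitude}. Since $\sigma$ is already known to be a proper $\kappa$-coarse-cocycle, the only things to verify are that $G$ is locally bounded on $\Lambda(\Gamma)^{(2)}$ and that the coarse GPS identity
$$
\big\lvert \sigma(\gamma,x)+\sigma(\gamma,y) - \big( G(\gamma x,\gamma y) - G(x,y) \big) \big\rvert \leq \kappa'
$$
holds for some $\kappa'$ depending only on $\kappa$ and on the symmetry constant. The approach is to work with the auxiliary function $\tilde G(x,y) := \limsup_{\alpha\to x,\beta\to y}\Phi(\alpha,\beta)$, where
$$
\Phi(\alpha,\beta) := \norm{\alpha}_\sigma + \norm{\beta}_\sigma - \norm{\beta^{-1}\alpha}_\sigma,
$$
which differs from $G$ by a uniformly bounded amount by the coarsely-symmetric hypothesis applied to $\beta$ in the symmetry form $\abs{\norm{\beta^{-1}}_\sigma - \norm{\beta}_\sigma} \le D$ proved in the preceding lemma.

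First, I would establish local boundedness. Given distinct $(x_0,y_0) \in \Lambda(\Gamma)^{(2)}$ and a small neighborhood $U\times V$ with $\dist(U,V) > \epsilon$ for some $\epsilon > 0$, one applies Proposition~\ref{prop:basic properties}\eqref{item:multiplicative estimate} to the product $\beta^{-1}\cdot \alpha$ (noting $\dist((\beta^{-1})^{-1},\alpha) = \dist(\beta,\alpha) > \epsilon$) to get $\abs{\Phi(\alpha,\beta) - (\norm{\beta}_\sigma - \norm{\beta^{-1}}_\sigma)} \leq C(\epsilon)$ for all $\alpha \in U, \beta \in V$ in $\Gamma$; combined with the symmetry bound, this shows $\Phi$ is uniformly bounded on $(U\cap\Gamma)\times (V\cap\Gamma)$, hence $\tilde G$ (and thus $G$) is bounded on $U\times V$.

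Next, the key algebraic observation is the cancellation
$$
\Phi(\gamma\alpha,\gamma\beta) - \Phi(\alpha,\beta) = \big(\norm{\gamma\alpha}_\sigma - \norm{\alpha}_\sigma\big) + \big(\norm{\gamma\beta}_\sigma - \norm{\beta}_\sigma\big),
$$
which uses that $(\gamma\beta)^{-1}\gamma\alpha = \beta^{-1}\alpha$. Lemma~\ref{lem:nice magnitude} says that for any sequences $\alpha_n\to x$ and $\beta_n\to y$ in $\Gamma$, the quantities $\norm{\gamma\alpha_n}_\sigma - \norm{\alpha_n}_\sigma$ and $\norm{\gamma\beta_n}_\sigma - \norm{\beta_n}_\sigma$ are $2\kappa$-close, in the limsup, to $\sigma(\gamma,x)$ and $\sigma(\gamma,y)$ respectively. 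Therefore, for any such sequences,
$$
\limsup_{n\to\infty} \big\lvert \Phi(\gamma\alpha_n,\gamma\beta_n) - \Phi(\alpha_n,\beta_n) - \sigma(\gamma,x) - \sigma(\gamma,y) \big\rvert \leq 4\kappa.
$$

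Finally, to deduce the GPS identity for $\tilde G$, I pick sequences realizing $\tilde G(x,y)$ (i.e.\ $\Phi(\alpha_n,\beta_n) \to \tilde G(x,y)$) and bound $\Phi(\gamma\alpha_n,\gamma\beta_n) \le \tilde G(\gamma x,\gamma y) + o(1)$ to get one direction; then pick sequences realizing $\tilde G(\gamma x, \gamma y)$, using that any sequences $\alpha_n'\to\gamma x$, $\beta_n'\to\gamma y$ can, after passing to a subsequence and using the convergence group property to move by $\gamma^{-1}$, be assumed to arise as $\gamma\alpha_n,\gamma\beta_n$ with $\alpha_n\to x, \beta_n\to y$, and then bound $\Phi(\alpha_n,\beta_n) \le \tilde G(x,y)+o(1)$ to get the other direction. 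This yields $\abs{\tilde G(\gamma x,\gamma y) - \tilde G(x,y) - \sigma(\gamma,x) - \sigma(\gamma,y)} \leq 4\kappa$, and then replacing $\tilde G$ by $G$ introduces an additional constant $2D$. The main obstacle is precisely this last step of matching the two limsups, which is resolved by the algebraic cancellation and the fact that the $\Gamma$-action on $\Gamma\sqcup M$ is a convergence action so that any sequence converging to $\gamma x$ can be written as $\gamma$ applied to a sequence converging to $x$ (along a subsequence).
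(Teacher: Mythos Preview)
Your proposal is correct and takes essentially the same approach as the paper: both use Proposition~\ref{prop:basic properties}\eqref{item:multiplicative estimate} for local boundedness, the algebraic cancellation $(\gamma\beta)^{-1}\gamma\alpha=\beta^{-1}\alpha$, and Lemma~\ref{lem:nice magnitude} to compare $\norm{\gamma\alpha}_\sigma-\norm{\alpha}_\sigma$ with $\sigma(\gamma,x)$, then pick sequences realizing one limsup and bound the other by its definition. The only cosmetic difference is that you introduce $\tilde G$ (with $\norm{\beta}_\sigma$ in place of $\norm{\beta^{-1}}_\sigma$) and apply the symmetry bound once at the end, whereas the paper works directly with $G$ and absorbs the symmetry into a second constant $C_2$ controlling $\norm{\beta_n^{-1}\gamma^{-1}}_\sigma-\norm{\beta_n^{-1}}_\sigma$; also, your last step needs only that $\gamma$ acts by a homeomorphism on $\Gamma\sqcup M$ (so $\gamma^{-1}\alpha_n'\to x$ when $\alpha_n'\to\gamma x$), not a subsequence argument.
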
 

\begin{proof} Notice that $G$ is locally bounded by Proposition~\ref{prop:basic properties}\eqref{item:multiplicative estimate}. Also, by Lemma~\ref{lem:nice magnitude}, there exists $C_1 > 0$ such that for every $x \in \Lambda(\Gamma)$ and $\gamma \in \Gamma$ we have
$$
 \limsup_{\alpha \rightarrow x}\abs{ \sigma(\gamma, x) - (\norm{\gamma \alpha}_\sigma - \norm{\alpha}_\sigma)} \leq C_1.
$$
Then by the previous lemma, there exists $C_2 > 0$ such that 
$$
 \limsup_{\alpha \rightarrow x} \abs{ \sigma(\gamma, x)-(\norm{ \alpha^{-1}\gamma^{-1}}_\sigma - \norm{\alpha^{-1}}_\sigma)} \leq  C_2.
 $$
 
Fix $(x,y) \in \Lambda(\Gamma)^{(2)}$ and $\gamma \in \Gamma$. By definition there exist $\alpha_n \rightarrow x$ and $\beta_n \rightarrow y$ such that 
$$
G(\gamma x, \gamma y) = \lim_{n \rightarrow \infty} \norm{\gamma \alpha_n}_\sigma +\norm{\beta_n^{-1}\gamma^{-1}}_\sigma-\norm{\beta_n^{-1}\alpha_n}.
$$
Then, by the definition of $G$, 
$$
G(x, y) \geq  \limsup_{n \rightarrow \infty} \norm{\alpha_n}_\sigma +\norm{\beta_n^{-1}}_\sigma-\norm{\beta_n^{-1}\alpha_n}.
$$
So 
\begin{align*}
G(\gamma x, \gamma y) - G(x,y) & \leq \liminf_{n \rightarrow \infty} \norm{\gamma \alpha_n}_\sigma- \norm{\alpha_n}_\sigma + \norm{\beta_n^{-1}\gamma^{-1}}_\sigma-\norm{\beta_n^{-1}}_\sigma \\
& \leq C_1 + C_2 + \sigma(\gamma, x) + \sigma(\gamma, y).
\end{align*}
Using the definition of $G$ again,  there exist $\hat\alpha_n \rightarrow x$ and $\hat\beta_n \rightarrow y$ such that 
$$
G(x, y) = \lim_{n \rightarrow \infty} \norm{\hat\alpha_n}_\sigma +\norm{\hat\beta_n^{-1}}_\sigma-\norm{\hat\beta_n^{-1}\hat\alpha_n}.
$$
Then 
$$
G(\gamma x, \gamma y) \geq \limsup_{n \rightarrow \infty} \norm{\gamma \hat\alpha_n}_\sigma +\norm{\hat\beta_n^{-1}\gamma^{-1}}_\sigma-\norm{\hat\beta_n^{-1}\hat\alpha_n}.
$$
So 
\begin{align*}
G(\gamma x, \gamma y) - G(x,y) & \geq \limsup_{n \rightarrow \infty} \norm{\gamma \hat\alpha_n}_\sigma- \norm{\hat\alpha_n}_\sigma + \norm{\hat\beta_n^{-1}\gamma^{-1}}_\sigma-\norm{\hat\beta_n^{-1}}_\sigma \\
& \geq  - C_1 - C_2 + \sigma(\gamma, x) + \sigma(\gamma, y).
\end{align*}
Thus 
$$
\abs{ \Big( \sigma(\gamma, x) + \sigma(\gamma, y) \Big) - \Big( G(\gamma x, \gamma y) - G(x,y) \Big)} \leq C_1+C_2
$$
and hence $(\sigma, \sigma, G)$ is a coarse GPS system. 
\end{proof} 


\section{Potentials on Gromov hyperbolic spaces}\label{sec:potentials}


For the rest of the section let $(X,\dist_X)$ be a proper geodesic Gromov hyperbolic metric space and fix a basepoint $o \in X$. Also, let $\Gamma \subset \mathsf{Isom}(X)$ be a discrete group. Then $\Gamma$ acts on the Gromov boundary $\partial_\infty X$ as a convergence group. 

In this section we consider coarsely additive potentials on $X$, as defined in Definition~\ref{defn:coarsely additive potential}, and prove Theorems~\ref{thm: coarsely additive potentials introduction} and~\ref{thm:hyperbolic2_potential introduction} (which we restate here).

\begin{theorem}\label{thm:hyperbolic1_potential} 
Suppose $\psi$ is a $\Gamma$-invariant coarsely additive potential. 
Define functions $\sigma_{\psi}, \bar\sigma_{\psi} \colon \Gamma \times \partial_\infty X \to [-\infty,\infty]$ and $G_{\psi} \colon \partial_\infty X^{(2)} \to  [-\infty,\infty]$ by 
\begin{align*}
\sigma_{\psi}(\gamma,x) & = \limsup_{p \rightarrow x} \, \psi(\gamma^{-1}o, p) - \psi(o,p), \\ 
\bar\sigma_{\psi}(\gamma,x) & = \limsup_{p \rightarrow x} \, \psi(p,\gamma^{-1}o) - \psi(p,o), \\
G_{\psi}(x,y) & = \limsup_{p \rightarrow x, q \rightarrow y}\,  \psi(p,o) +\psi(o,q) - \psi(p,q).
\end{align*}
Then these quantities are finite, with $G_\psi$ bounded below, $(\sigma_{\psi}, \bar\sigma_{\psi}, G_{\psi})$ is a coarse GPS-system, and one can choose 
$$
\norm{\gamma}_{\sigma_{\psi}} = \psi(o,\gamma o).
$$
\end{theorem}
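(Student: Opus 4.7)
The plan is to translate the three defining properties of $\psi$ into purely Gromov-hyperbolic estimates and then assemble the coarse GPS system by direct computation. I first prove two basic lemmas. \emph{Lipschitz-like control}: if $d_X(p,p')\leq r$, then $|\psi(a,p)-\psi(a,p')|\leq C(r)$ and similarly for the first variable; this follows by applying property~\eqref{item:potential coarsely additive} of Definition~\ref{defn:coarsely additive potential} to a geodesic from $a$ to $p'$ with $p$ in its $r$-neighborhood, and bounding $\psi(p,p')$ via property~\eqref{item:potential bounded}. \emph{Tripod decomposition}: if $u$ lies within $\delta$ of all three sides of a geodesic triangle on $\{a,b,c\}$ (with $\delta$ the hyperbolicity constant), then $\psi(a,c)=\psi(a,u)+\psi(u,c)+O(1)$, and similarly for the other two sides---these are direct applications of~\eqref{item:potential coarsely additive}. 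A useful consequence, obtained by combining properties~\eqref{item:potential proper} and~\eqref{item:potential bounded}, is that $\psi$ is globally bounded below.

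To establish finiteness and the coarse cocycle properties of $\sigma_\psi$, I take $p$ close to $x$ and far from $o$ and $\gamma^{-1}o$, and pick a center $u$ of the tripod on $\{o,\gamma^{-1}o,p\}$. Applying the tripod decomposition twice, $\psi(\gamma^{-1}o,p)-\psi(o,p)=\psi(\gamma^{-1}o,u)-\psi(o,u)+O(1)$; a further application of~\eqref{item:potential coarsely additive} along the short side $[o,\gamma^{-1}o]$ together with~\eqref{item:potential bounded} shows this difference is uniformly bounded in $p$. Running the same argument with two approximations $p,p'\to x$ and a common center shows the expression is coarsely constant as $p\to x$, so the $\limsup$ is finite. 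The same reasoning yields the analogous statements for $\bar\sigma_\psi$, and a tripod centered on $\{o,p,q\}$ for $p\to x$, $q\to y$ gives finiteness, local boundedness, and a lower bound for $G_\psi$ (using the global lower bound on $\psi$). Coarse continuity and the coarse cocycle identity for $\sigma_\psi,\bar\sigma_\psi$ then follow routinely using the $\Gamma$-invariance of $\psi$.

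For the GPS identity, $\Gamma$-invariance gives
\[G_\psi(\gamma x,\gamma y)-G_\psi(x,y)\;=\;\limsup_{p\to x,\,q\to y}\Bigl(\psi(p,\gamma^{-1}o)-\psi(p,o)\;+\;\psi(\gamma^{-1}o,q)-\psi(o,q)\Bigr)+O(1),\]
and the interchange with $\bar\sigma_\psi(\gamma,x)+\sigma_\psi(\gamma,y)$ is justified by the coarse constancy of each summand. To identify the magnitude, I rewrite $\sigma_\psi(\gamma,x)=\limsup_{p\to x}\psi(\gamma^{-1}o,p)-\psi(o,p)$; when $\dist(x,\gamma^{-1})>\epsilon$ in the compactification of Proposition~\ref{prop:compactifying}, the geodesic from $\gamma^{-1}o$ to any $p$ close to $x$ passes uniformly near $o$, so the tripod decomposition yields $\psi(\gamma^{-1}o,p)-\psi(o,p)=\psi(\gamma^{-1}o,o)+O(1)$, and $\Gamma$-invariance rewrites $\psi(\gamma^{-1}o,o)=\psi(o,\gamma o)$. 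Properness of $\sigma_\psi,\bar\sigma_\psi$ then follows from property~\eqref{item:potential proper} applied to $d_X(o,\gamma o)\to\infty$ for escaping $\gamma$.

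The principal obstacle is this last step: translating the abstract compactification condition $\dist(x,\gamma^{-1})>\epsilon$ into the concrete geometric statement that ``the geodesic $[\gamma^{-1}o,p]$ enters a fixed neighborhood of $o$ uniformly in $p$ close to $x$''. This amounts to identifying the unique compactifying topology of Proposition~\ref{prop:compactifying} with the natural one inherited from the orbit embedding $\gamma\mapsto\gamma o$ into $X\cup\partial_\infty X$; I will establish this identification by verifying that escaping sequences $\{\gamma_n\}\subset\Gamma$ converge to $x\in\partial_\infty X$ in the compactification if and only if $\gamma_n o\to x$ in the Gromov compactification, using the characterization in Proposition~\ref{prop:compactifying}(2) together with standard facts about convergence group actions on $\partial_\infty X$.
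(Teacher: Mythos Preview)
Your proposal is correct and follows essentially the same route as the paper's proof. The paper organizes the argument into the same sequence of lemmas: a Lipschitz-type estimate and coarse triangle inequality for $\psi$ (Lemma~\ref{lem:basic properties of potentials}), coarse well-definedness of $\sigma_\psi$ via a common point on geodesic rays to $x$ (Lemma~\ref{lem:cocycles well defined potentials}), the coarse-cocycle check (Lemma~\ref{lem:buseman function is a coarse-cocycle}), the expanding property with magnitude $\psi(o,\gamma o)$ via ``geodesics from $\gamma^{-1}o$ to $p$ pass uniformly near $o$'' (Lemma~\ref{lem:buseman function is a expanding coarse-cocycle}), a formula for $G_\psi$ in terms of an infimum along a geodesic line (Lemma~\ref{lem:formula for Gromov product}), and finally the GPS identity by referring to the argument of Lemma~\ref{lem:symmetric implies GPS}. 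Your tripod decomposition is exactly the mechanism the paper uses throughout, and your justification of the GPS identity by ``coarse constancy of each summand'' is the same in spirit as the paper's reduction to Lemma~\ref{lem:symmetric implies GPS}. The one place you are more careful than the paper is the ``principal obstacle'': the paper simply asserts the geometric fact at the start of Lemma~\ref{lem:buseman function is a expanding coarse-cocycle} without proof, whereas you propose to justify it by identifying the abstract compactifying topology with the orbit-map topology via the uniqueness statement in Proposition~\ref{prop:compactifying}---a clean and correct way to close that gap.
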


\begin{remark} As mentioned in the introduction, most of Theorem~\ref{thm:hyperbolic1_potential} can be derived from results in~\cite{DT2023}, but for the reader's convenience we provide a complete proof. 
\end{remark}

\begin{theorem}\label{thm:hyperbolic2_potential} Suppose $\Gamma$ acts co-compactly on $X$ and $\sigma \colon \Gamma \times \partial_\infty X \to \Rb$ is an expanding coarse-cocycle. Then there exists a $\Gamma$-invariant coarsely additive potential where
$$
\sup_{\gamma \in \Gamma, x \in \partial_\infty X} \abs{\sigma_{\psi}(\gamma,x)-\sigma(\gamma,x) } < + \infty.
$$
In particular, $\sigma$ is contained in a coarse GPS system. 
\end{theorem}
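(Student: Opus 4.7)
The plan is to build $\psi$ directly from the magnitude function $\norm{\cdot}_\sigma$ by transferring it from $\Gamma$ to $X$ via the cocompact action. Using cocompactness, fix $R > 0$ with $\Gamma \cdot o$ being $R$-dense in $X$, and for each $p \in X$ set $N(p) := \{\alpha \in \Gamma : \dist_X(\alpha o, p) \leq R\}$, a nonempty finite set satisfying $N(\delta p) = \delta N(p)$. Define
\[
\psi(p,q) := \frac{1}{\abs{N(p)}\abs{N(q)}} \sum_{\alpha \in N(p),\,\beta \in N(q)} \norm{\alpha^{-1}\beta}_\sigma.
\]
$\Gamma$-invariance follows from the equivariance of $N$ together with the invariance $\norm{(\delta\alpha)^{-1}(\delta\beta)}_\sigma = \norm{\alpha^{-1}\beta}_\sigma$. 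By Proposition~\ref{prop:basic properties}\eqref{item:tri inequality} and the fact that elements of $N(p)$ differ by a uniformly finite set, $\psi(p,q)$ is uniformly close to $\norm{\alpha^{-1}\beta}_\sigma$ for any single choice $\alpha \in N(p)$, $\beta \in N(q)$.

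Properties~\eqref{item:potential proper} and~\eqref{item:potential bounded} of Definition~\ref{defn:coarsely additive potential} are straightforward: if $\dist_X(p,q) \geq r$, then $\alpha^{-1}\beta$ is forced to leave every finite subset of $\Gamma$ as $r\to\infty$ (by properness of the $\Gamma$-action on $X$), and then Proposition~\ref{prop:basic properties}\eqref{item:properness} gives $\psi(p,q) \to \infty$; conversely if $\dist_X(p,q) \leq r$, then $\alpha^{-1}\beta$ ranges over a finite set, so $\psi$ is bounded on bounded-diameter pairs.

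The main obstacle is coarse additivity~\eqref{item:potential coarsely additive}. Fix $r>0$ and suppose $u$ is in the $r$-neighborhood of a geodesic from $p$ to $q$; pick $\alpha \in N(p)$, $\beta \in N(u)$, $\gamma \in N(q)$. Since $\alpha^{-1}\gamma = (\alpha^{-1}\beta)(\beta^{-1}\gamma)$, Proposition~\ref{prop:basic properties}\eqref{item:multiplicative estimate} reduces the problem to a uniform separation estimate: there exists $\epsilon = \epsilon(r)>0$ such that
\[
\dist_{\Gamma\sqcup\partial_\infty X}(\beta^{-1}\alpha,\,\beta^{-1}\gamma) \geq \epsilon
\]
whenever $\min\{\dist_X(p,u),\dist_X(u,q)\}$ exceeds a threshold depending only on $r$ (the opposite case is handled directly using boundedness of $\psi$ on short pairs). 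I would argue this by contradiction: along any purported bad sequence $(p_n,u_n,q_n)$, translating by $\beta_n^{-1}$ brings the geodesic from $p_n$ to $q_n$ to one passing within $R+r$ of $o$, so after extracting subsequences, $\beta_n^{-1}\alpha_n o$ and $\beta_n^{-1}\gamma_n o$ converge to the two \emph{distinct} endpoints of this translated geodesic in $\partial_\infty X$. The hardest step is verifying that this implies convergence of $\beta_n^{-1}\alpha_n$ and $\beta_n^{-1}\gamma_n$ to these distinct points in the compactifying topology of $\Gamma \sqcup \partial_\infty X$; this uses the standard compatibility (for cocompact isometric actions on proper Gromov hyperbolic spaces) between the compactifying topology supplied by Proposition~\ref{prop:compactifying} and the visual compactification, together with Proposition~\ref{prop:compactifying}(2) to identify the limit.

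Finally, to show $\sigma_\psi$ and $\sigma$ are uniformly close, fix $\gamma \in \Gamma$ and $x \in \partial_\infty X$, and approach $x$ via $p_n = \alpha_n o$ with $\alpha_n \to x$ (possible by cocompactness). Then $\psi(\gamma^{-1}o,\alpha_n o) - \psi(o,\alpha_n o)$ is uniformly close to $\norm{\gamma\alpha_n}_\sigma - \norm{\alpha_n}_\sigma$, whose $\limsup$ is within $2\kappa$ of $\sigma(\gamma,x)$ by Lemma~\ref{lem:nice magnitude} (after possibly replacing the magnitude function by a comparable one). The ``in particular'' assertion then follows from Theorem~\ref{thm:hyperbolic1_potential}.
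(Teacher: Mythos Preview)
Your proposal is correct and takes essentially the same approach as the paper: the same averaging definition of $\psi$ (the paper writes $A_p$ for your $N(p)$), the same reduction of coarse additivity to Proposition~\ref{prop:basic properties}\eqref{item:multiplicative estimate}, and the same use of Lemma~\ref{lem:nice magnitude} for the final comparison. In fact you are more explicit than the paper on one point: the paper invokes Proposition~\ref{prop:basic properties}\eqref{item:multiplicative estimate} in the contradiction argument without verifying its separation hypothesis $\dist(\alpha_n^{-1},\alpha_n^{-1}\beta_n)\geq\epsilon$, whereas you correctly isolate this as the key geometric step and supply the standard argument (translate so the geodesic passes near $o$, extract a limiting bi-infinite geodesic, and use compatibility of the compactifying topology with the visual compactification).
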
 

\subsection{Metric perspective}\label{sec:metric perspective} If $\psi: X \times X \rightarrow \Rb$ is a $\Gamma$-invariant coarsely additive potential, then by Lemma~\ref{lem:basic properties of potentials} below there exists a constant $C_0 > 0$ such that the function 
$$
\dist_\psi(p,q) = \begin{cases} \psi(p,q) + C_0 & \text{if } p \neq q \\ 0 & \text{if } p=q \end{cases}
$$
is a $\Gamma$-invariant quasimetric, that is a function that satisfies all the axioms of a metric except for the symmetry property. Using properties~\eqref{item:potential proper} and~\eqref{item:potential coarsely additive} in Definition~\ref{defn:coarsely additive potential} one can show that $(X,\dist_\psi)$ is quasi-isometric to $(X,\dist_X)$, and that $(X,\dist_\psi)$ is coarsely-geodesic, i.e. there is some $C > 0$ such that every two points in $X$ are joined by a $(1,C)$-quasi-geodesic with respect to the quasimetric $\dist_\psi$. 

Conversely, given a $\Gamma$-invariant coarsely-geodesic quasimetric $\dist$ on $X$ which is quasi-isometric to $(X,\dist_X)$, the Morse lemma implies that the function $\psi(x,y) = \dist(x,y)$ is a $\Gamma$-invariant coarsely additive potential. 

Hence Theorems~\ref{thm:hyperbolic1_potential} and~\ref{thm:hyperbolic2_potential} could be instead be stated in terms of $\Gamma$-invariant coarsely-geodesic quasimetrics which are quasi-isometric to $(X,\dist_X)$.

\subsection{Proof of Theorem~\ref{thm:hyperbolic1_potential}} Suppose $\psi \colon X \times X \to \Rb$ is a $\Gamma$-invariant coarsely additive potential and $\kappa \colon [0,\infty) \to [0,\infty)$ is the function in property~\eqref{item:potential coarsely additive} of Definition~\ref{defn:coarsely additive potential}.

Since $(X,\dist_X)$ is Gromov hyperbolic, there exists $\delta > 0$ such that every geodesic triangle in $(X,\dist_X)$ is $\delta$-slim.

We first show that $\psi$ satisfies a coarse version of the triangle inequality. 

\begin{lemma}\label{lem:basic properties of potentials} \ 
\begin{enumerate}
\item For every $r > 0$ there exists $C(r) > 0$ such that: 
$$
\abs{\psi(p,q)-\psi(p',q')} \leq C(r)
$$
when $\dist_X(p,p')$, $\dist_X(q,q') \leq r$. 
\item There exists $\kappa_1 > 0$ such that: 
$$
\psi(p_1,p_2) \leq \psi(p_1,q) + \psi(q,p_2) + \kappa_1
$$
for all $p_1,p_2,q \in X$.
\end{enumerate}
\end{lemma}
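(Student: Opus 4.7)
For part (1), the plan is to apply property~\eqref{item:potential coarsely additive} twice, migrating the two endpoints of $\psi(p', q')$ to those of $\psi(p, q)$ one at a time. Since $\dist_X(p, p') \leq r$, the point $p$ lies in the $r$-neighborhood of any geodesic from $p'$ to $q'$, so property~\eqref{item:potential coarsely additive} gives $|\psi(p', q') - \psi(p', p) - \psi(p, q')| \leq \kappa(r)$. Applying the same idea with $q$ lying in the $r$-neighborhood of a geodesic from $p$ to $q'$ gives $|\psi(p, q') - \psi(p, q) - \psi(q, q')| \leq \kappa(r)$. Property~\eqref{item:potential bounded} then bounds $|\psi(p', p)|$ and $|\psi(q, q')|$ uniformly in terms of $r$, and a triangle inequality delivers the required $C(r)$.

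For part (2), the first step is to observe that $\psi$ is itself bounded below. By property~\eqref{item:potential proper} one can choose $R$ so that $\psi(p, q) \geq 0$ whenever $\dist_X(p, q) \geq R$, and property~\eqref{item:potential bounded} handles the complementary case $\dist_X(p, q) \leq R$, so $\psi \geq -A$ globally for some $A > 0$. Next I would invoke Gromov hyperbolicity in the form of the standard ``internal tripod'' configuration: for any choice of geodesics $[p_1, p_2]$, $[p_1, q]$, $[q, p_2]$ in a $\delta$-hyperbolic space there exist points $u \in [p_1, p_2]$, $v_1 \in [p_1, q]$, $v_2 \in [q, p_2]$ at pairwise $\dist_X$-distance bounded by some $D = O(\delta)$. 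I would then chain three estimates: property~\eqref{item:potential coarsely additive} applied to $[p_1, p_2]$ at $u$ splits $\psi(p_1, p_2)$ into $\psi(p_1, u) + \psi(u, p_2)$ up to $\kappa(0)$; part (1) with $r = D$ replaces $u$ by $v_i$ at the cost of $C(D)$; and property~\eqref{item:potential coarsely additive} applied to $[p_1, q]$ at $v_1$ (respectively $[q, p_2]$ at $v_2$) combined with the lower bound $\psi \geq -A$ gives $\psi(p_1, v_1) \leq \psi(p_1, q) - \psi(v_1, q) + \kappa(0) \leq \psi(p_1, q) + A + \kappa(0)$ and symmetrically for $\psi(v_2, p_2)$.

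I do not expect any serious obstacles. The whole argument is essentially careful bookkeeping of constants, and the only non-trivial geometric ingredient is the existence of an internal tripod, which is classical in $\delta$-hyperbolic spaces. One feature worth flagging is that the estimate in part (2) is genuinely one-sided: the reverse direction $\psi(p_1, q) + \psi(q, p_2) \leq \psi(p_1, p_2) + \kappa_1$ would require an \emph{upper} bound on terms like $\psi(v_1, q)$, which is not available (indeed it fails, since $q$ could be chosen arbitrarily far from $[p_1, p_2]$, making $\psi(v_1, q)$ arbitrarily large by property~\eqref{item:potential proper}).
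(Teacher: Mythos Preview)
Your proposal is correct and follows essentially the same route as the paper: part~(1) migrates endpoints one at a time via property~\eqref{item:potential coarsely additive} and bounds the short legs by property~\eqref{item:potential bounded}, and part~(2) uses the global lower bound on $\psi$ together with the $\delta$-slim triangle ``tripod'' point (the paper's $u,p_1',p_2'$ are your $u,v_1,v_2$) to split and recombine exactly as you describe. Your closing remark on the one-sidedness of~(2) is also accurate.
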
 

\begin{proof} (1). Notice that $p'$ is in the $(r+1)$-neighborhood of any geodesic joining $p$ to $q$ and $q'$ is in the $(r+1)$-neighborhood of any geodesic joining $p'$ to $q$. So 
$$
\abs{ \psi(p,q) - \big( \psi(p,p') + \psi(p',q') + \psi(q',q) \big)} \leq 2\kappa(r+1). 
$$
Hence
$$
\abs{\psi(p,q)-\psi(p',q')} \leq 2\kappa(r+1)+2 \sup_{\dist_X(u,v) \leq r} \abs{\psi(u,v)},
$$
which is finite by property~\eqref{item:potential bounded} of Definition~\ref{defn:coarsely additive potential}.

(2). Let $m : = \inf_{p,q \in X} \psi(p,q)$, which is finite by properties~\eqref{item:potential proper} and \ref{item:potential bounded} of Definition~\ref{defn:coarsely additive potential}.

Fix $p_1,p_2,q \in X$ and a geodesic triangle $[p_1,p_2] \cup [p_2, q] \cup [q,p_1]$ in $X$ with vertices $p_1,p_2,q$. Since every geodesic triangle is $\delta$-slim, there exists  $u \in [p_1,p_2]$, $p_1' \in [q,p_1]$ and $p_2' \in [p_2,q]$ such that 
$$
\dist_X(p_1',u), \dist_X(p_2',u) < \delta. 
$$
Then 
\begin{align*}
\psi(p_1,p_2) & \leq \psi(p_1,u) +\psi(u,p_2)+\kappa(0)  \leq \psi(p_1,p_1') + \psi(p_2',p_2) + 2C(\delta) + \kappa(0)  \\
& \leq \psi(p_1,q) -\psi(p_1',q)+ \psi(q,p_2)-\psi(q,p_2') +2C(\delta) + 3\kappa(0) \\
& \leq \psi(p_1,q) + \psi(q,p_2)-2m +2C(\delta) + 3\kappa(0). \qedhere
\end{align*} 
\end{proof} 

The next lemma states that it coarsely doesn't matter what sequence we use to define $\sigma_\psi$. 

\begin{lemma}\label{lem:cocycles well defined potentials} There exists $\kappa_2 > 0$ such that: if $x \in \partial_\infty X$ and $\gamma \in \Gamma$, then 
$$
\limsup_{p,q \rightarrow x} \abs{\big(  \psi(\gamma^{-1}o, p) - \psi(o,p)\big) - \big( \psi(\gamma^{-1}o, q) - \psi(o,q) \big) } \leq \kappa_2. 
$$
\end{lemma}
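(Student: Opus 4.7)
The plan is to reduce the oscillation of the function $p \mapsto \psi(\gamma^{-1}o, p) - \psi(o, p)$ as $p \to x$ to comparison with a single reference value that does not depend on $p$. I would fix a geodesic ray $\ell \colon [0, \infty) \to X$ with $\ell(0) = o$ that converges to $x$ in the Gromov boundary, and for each $T > 0$ set $u_T := \ell(T)$. The key estimate to establish is that there exists an absolute constant $C$, depending only on the hyperbolicity constant $\delta$ of $X$ and on the function $\kappa$ from property~\eqref{item:potential coarsely additive}, such that for any $\gamma \in \Gamma$, any $T$ larger than some threshold $T(\gamma)$, and any $p$ sufficiently close to $x$,
\[
\bigl|\bigl(\psi(\gamma^{-1}o, p) - \psi(o, p)\bigr) - \bigl(\psi(\gamma^{-1}o, u_T) - \psi(o, u_T)\bigr)\bigr| \leq C.
\]
Granting this, applying the estimate with $p$ and $q$ both close enough to $x$ (for the same fixed $T$) and subtracting would give the lemma with $\kappa_2 = 2C$.

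To establish the key estimate I would make two applications of coarse additivity. First, standard $\delta$-hyperbolicity implies that when the Gromov product $(p|x)_o$ exceeds $T + \delta$, the geodesic ray $\ell$ and the geodesic segment $[o,p]$ fellow-travel up to parameter $T$, so that $u_T$ lies within $\delta$ of $[o, p]$. Property~\eqref{item:potential coarsely additive} then gives
\[
\bigl|\psi(o, p) - \psi(o, u_T) - \psi(u_T, p)\bigr| \leq \kappa(\delta).
\]
Second, I would invoke $\delta$-slimness of the geodesic triangle with vertices $o, \gamma^{-1}o, p$: provided $T > \dist_X(o, \gamma^{-1}o) + \delta$, any point of $[o, p]$ at distance greater than $\dist_X(o, \gamma^{-1}o) + \delta$ from $o$ must lie within $\delta$ of $[\gamma^{-1}o, p]$, since being within $\delta$ of the short side $[o, \gamma^{-1}o]$ would force it to lie within $\dist_X(o,\gamma^{-1}o) + \delta$ of $o$. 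Combined with the first step, $u_T$ itself lies within $2\delta$ of $[\gamma^{-1}o, p]$, so a second application of coarse additivity gives
\[
\bigl|\psi(\gamma^{-1}o, p) - \psi(\gamma^{-1}o, u_T) - \psi(u_T, p)\bigr| \leq \kappa(2\delta).
\]
Subtracting cancels the common term $\psi(u_T, p)$ and yields the key estimate with $C = \kappa(\delta) + \kappa(2\delta)$.

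The main subtlety, and the reason the lemma is not immediate, is that both the threshold $T(\gamma)$ and the reference value $\psi(\gamma^{-1}o, u_T) - \psi(o, u_T)$ depend on $\gamma$. What makes the uniform bound $\kappa_2$ work is that neither appears in the final oscillation estimate: only the intrinsic error constants $\kappa(\delta)$ and $\kappa(2\delta)$ from the coarse-additivity function do. The two hyperbolic-geometry inputs I rely on — fellow-traveling of geodesics with a common boundary endpoint, and slimness of geodesic triangles — are standard consequences of $\delta$-hyperbolicity, so I expect the only care needed is in tracking the precise closeness parameters $\delta$ and $2\delta$ so that the error $\kappa(\,\cdot\,)$ is evaluated at absolute constants independent of $\gamma$ and $T$.
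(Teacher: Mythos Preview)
Your proposal is correct and follows essentially the same approach as the paper: both find a reference point $u$ (your $u_T$) lying within a bounded distance of geodesics from both $o$ and $\gamma^{-1}o$ toward $p$, apply coarse additivity twice so that the common term $\psi(u,p)$ cancels, and then repeat for $q$. The only cosmetic difference is that the paper places $u$ near $[\gamma^{-1}o,p]$ by invoking fellow-traveling of the two geodesic rays from $o$ and $\gamma^{-1}o$ to the common endpoint $x$, whereas you obtain the same conclusion via slimness of the finite triangle with vertices $o,\gamma^{-1}o,p$; either route yields a bound of the form $\kappa_2 = O(\kappa(2\delta))$.
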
 

\begin{proof} Since geodesic triangles are $\delta$-slim, for any two geodesic rays $r_1, r_2 \colon [0,\infty) \to (X,\dist_X)$ with $\lim_{t \to \infty} r_1(t) = \lim_{t \to \infty} r_2(t)$ there exists $T > 0$ such that $r_1([T,\infty))$ is contained in the $2\delta$-neighborhood of  $r_2$.  

Fix $x \in \partial_\infty \Gamma$ and $\gamma \in \Gamma$. Then fix sequences $\{p_n\}, \{q_n\} \subset \Gamma$ converging to $x$ where 
$$
L:=\lim_{n \rightarrow \infty} \abs{\big(  \psi(\gamma^{-1}o, p_n) - \psi(o,p_n)\big) - \big( \psi(\gamma^{-1}o, q_n) - \psi(o,q_n) \big) }
$$
equals the limit supremum in the lemma statement. Using the fact mentioned above, after passing to a subsequence, we can find $u \in X$ such that $u$ is contained in the $(2\delta+1)$-neighborhood of any geodesic joining $o$ to either $p_n$ or $q_n$, and $u$ is contained in the $(2\delta+1)$-neighborhood of any geodesic joining $\gamma^{-1} o$ to either $p_n$ or $q_n$.  Then 
\begin{align*}
& \abs{\big(\psi(\gamma^{-1}o, p_n) - \psi(o,p_n)\big)  - \big(\psi(\gamma^{-1}o, u) - \psi(o,u)\big) } \\
& \quad =  \abs{  \big(\psi(\gamma^{-1}o, p_n)- \psi(\gamma^{-1}o, u)-\psi(u, p_n)\big) - \big( \psi(o,p_n)-\psi(o,u)-\psi(u,p_n) \big)} \\
& \quad \leq  2 \kappa(2\delta+1).
\end{align*}
Likewise, 
\begin{align*}
\abs{\big(\psi(\gamma^{-1}o, q_n) - \psi(o,q_n)\big)  - \big(\psi(\gamma^{-1}o, u) - \psi(o,u)\big) }   \leq 2 \kappa(2\delta+1). 
\end{align*}
So $L \leq 4 \kappa(2\delta+1)$. 

\end{proof}

\begin{lemma}\label{lem:buseman function is a coarse-cocycle} $\sigma_{\psi} \colon \Gamma \times \partial_\infty X  \to \Rb$ is a coarse-cocyle.
 \end{lemma}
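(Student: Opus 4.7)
My plan is to verify the two defining properties of a coarse-cocycle: coarse continuity in the second variable, and the coarse cocycle identity. The key technical input will be Lemma~\ref{lem:cocycles well defined potentials}, which says that the value of $\psi(\gamma^{-1}o,p)-\psi(o,p)$ is essentially independent of the sequence $p \to x$ up to an additive error of $\kappa_2$. This will allow me to replace the limsup defining $\sigma_\psi(\gamma,x)$ by the limit along \emph{any} particular sequence $p_n \to x$, up to this uniform error.

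Before anything else, I would observe that $\sigma_\psi$ takes finite values: by the coarse triangle inequality in Lemma~\ref{lem:basic properties of potentials}(2), $\psi(\gamma^{-1}o,p) \leq \psi(\gamma^{-1}o, o) + \psi(o, p) + \kappa_1$ and $\psi(o,p) \leq \psi(o,\gamma^{-1}o) + \psi(\gamma^{-1}o,p) + \kappa_1$, so $\abs{\psi(\gamma^{-1}o,p) - \psi(o,p)}$ is bounded by a constant depending on $\gamma$ but independent of $p$. Hence $\sigma_\psi(\gamma,x)$ is finite for every $\gamma\in \Gamma$ and $x\in\partial_\infty X$.

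For coarse continuity: fix $\gamma \in \Gamma$ and $x_0 \in \partial_\infty X$, and let $x_n \to x_0$. For each $n$, choose $p_n \in X$ close to $x_n$ in the compactification such that $\psi(\gamma^{-1}o, p_n) - \psi(o,p_n)$ approximates $\sigma_\psi(\gamma,x_n)$ to within $1/n$; by a diagonal argument using the metrizability of $X\cup\partial_\infty X$, we may also arrange that $p_n \to x_0$. Choose separately a sequence $q_n\to x_0$ realizing $\sigma_\psi(\gamma, x_0)$ in the limit. Then Lemma~\ref{lem:cocycles well defined potentials}, applied to the combined sequence, gives $\limsup_n \abs{(\psi(\gamma^{-1}o, p_n)-\psi(o,p_n))-(\psi(\gamma^{-1}o, q_n)-\psi(o,q_n))} \leq \kappa_2$, and therefore $\limsup_n \abs{\sigma_\psi(\gamma,x_0) - \sigma_\psi(\gamma,x_n)}\leq \kappa_2$, which establishes $\kappa_2$-coarse continuity.

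For the cocycle identity, I use the $\Gamma$-invariance of $\psi$ to substitute $p = \gamma_2 q$ and rewrite
\[\sigma_\psi(\gamma_1, \gamma_2 x) = \limsup_{q \to x}\bigl(\psi(\gamma_2^{-1}\gamma_1^{-1}o, q) - \psi(\gamma_2^{-1}o, q)\bigr),\]
while
\[\sigma_\psi(\gamma_2, x) = \limsup_{q\to x}\bigl(\psi(\gamma_2^{-1}o,q)-\psi(o,q)\bigr) \quad \text{and} \quad \sigma_\psi(\gamma_1\gamma_2,x)=\limsup_{q\to x}\bigl(\psi(\gamma_2^{-1}\gamma_1^{-1}o,q) - \psi(o,q)\bigr).\]
Since Lemma~\ref{lem:cocycles well defined potentials} makes each of these limsups essentially sequence-independent (up to $\kappa_2$), I can pick a single sequence $q_n \to x$ that simultaneously realizes all three, up to an error $o(1)$. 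Along such a sequence the two middle terms $\pm\psi(\gamma_2^{-1}o,q_n)$ telescope exactly, giving
\[\bigl|\sigma_\psi(\gamma_1,\gamma_2 x) + \sigma_\psi(\gamma_2,x) - \sigma_\psi(\gamma_1\gamma_2,x)\bigr| \leq 3\kappa_2 + o(1),\]
so $\sigma_\psi$ is a $3\kappa_2$-coarse-cocycle (the exact constant is unimportant).

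The main obstacle I anticipate is the bookkeeping of the diagonal extraction in the coarse continuity step and of the ``simultaneous realizing sequence'' in the cocycle identity step. Both rely on the fact that limsup equals liminf up to $\kappa_2$, a consequence of Lemma~\ref{lem:cocycles well defined potentials}, so once that lemma is in hand the remaining work is choosing sequences carefully rather than any substantive new estimate.
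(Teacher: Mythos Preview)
Your proof is correct and follows essentially the same route as the paper's: both verify the two coarse-cocycle axioms by using Lemma~\ref{lem:cocycles well defined potentials} to replace each limsup by its value along a chosen sequence (up to an error of $\kappa_2$), then invoke the exact telescoping identity $\psi(\gamma_2^{-1}\gamma_1^{-1}o,q)-\psi(o,q)=\bigl(\psi(\gamma_2^{-1}\gamma_1^{-1}o,q)-\psi(\gamma_2^{-1}o,q)\bigr)+\bigl(\psi(\gamma_2^{-1}o,q)-\psi(o,q)\bigr)$ and a diagonal extraction for coarse continuity. One small wording issue: you cannot actually choose a single sequence that simultaneously realizes all three limsups up to $o(1)$ --- what Lemma~\ref{lem:cocycles well defined potentials} gives is that along \emph{any} sequence each term is within $\kappa_2$ of its limsup, and that is precisely what yields the $3\kappa_2$ bound you state (and which the paper also obtains).
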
 
 
 \begin{proof} Fix $\gamma_1, \gamma_2 \in \Gamma$ and a sequence $\{p_n\} \subset X$ converging to $x \in \partial_\infty X$. Then by Lemma~\ref{lem:cocycles well defined potentials}, 
 \begin{align*}
&  \abs{ \sigma_{\psi}(\gamma_1\gamma_2, x) - \sigma_{\psi}(\gamma_1, \gamma_2 x) - \sigma_\psi(\gamma_2, x)}\\
& \quad\leq3\kappa_2+ \limsup_{n \rightarrow \infty} \big| \psi(\gamma_2^{-1}\gamma_1^{-1}o, p_n) - \psi(o,p_n) -\psi(\gamma_1^{-1}o, \gamma_2p_n) + \psi(o,\gamma_2 p_n)\\
& \quad \quad \quad \quad\quad \quad\quad \quad \quad \quad-\psi(\gamma_2^{-1}o, p_n) + \psi(o,p_n) \big|\\
& \quad = 3\kappa_2. 
 \end{align*}
 
 Next fix $\gamma \in \Gamma$ and $\{x_n\} \subset \partial_\infty X$ converging to $x$. Then we can fix $\{p_{n,j}\} \subset X$ such that $\lim_{j \rightarrow \infty} p_{n,j} = x_n$. Then using Lemma~\ref{lem:cocycles well defined potentials}, we can fix $\{j_n\}$ such that 
 $$
 \sup_{n \geq 1} \abs{ \sigma_{\psi}(\gamma, x_n) - \psi(\gamma^{-1}o, p_{n,j_n}) + \psi(o,p_{n,j_n})} \leq 2\kappa_2
 $$
 and $p_{n,j_n} \rightarrow x$. Using Lemma~\ref{lem:cocycles well defined potentials} again,
 \begin{align*}
 \limsup_{n \rightarrow \infty} \abs{\sigma_{\psi}(\gamma, x) - \sigma_{\psi}(\gamma,x_n)} & \leq 2\kappa_2+ \limsup_{n \rightarrow \infty} \abs{ \sigma_{\psi}(\gamma, x) - \psi(\gamma^{-1}o, p_{n,j_n}) +\psi(o,p_{n,j_n})} \\
 & \leq 3\kappa_2.
 \end{align*}
 Thus $\sigma_\psi$ is a $3\kappa_2$-coarse-cocycle. 
 \end{proof}
 
 Next we verify that $\sigma_\psi$ is expanding. To that end, fix a compatible distance $\dist$ on $\Gamma \sqcup \partial_\infty X$. Notice that if $x \in \partial_\infty X$, then $\dist(\gamma_n,x) \rightarrow 0$ if and only if $\gamma_np \rightarrow x$ in the topology on $X \sqcup \partial_\infty X$ for all $p \in X$. 

\begin{lemma}\label{lem:buseman function is a expanding coarse-cocycle}  For every $\epsilon > 0$ there exists $C > 0$ such that: if $\gamma \in \Gamma$, $x \in \partial_\infty X$ and $\dist(x, \gamma^{-1}) > \epsilon$, then 
$$
\psi(o,\gamma o) - C \leq \sigma_\psi(\gamma, x) \leq \psi(o,\gamma o) + C. 
$$
Hence $\sigma_\psi$ is an expanding coarse-cocycle and we can choose $\norm{\gamma}_\psi = \psi(o, \gamma o)$. 
\end{lemma}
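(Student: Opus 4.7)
The plan is to apply Proposition~\ref{prop:different definition of expanding} with $h(\gamma) := \psi(o, \gamma o)$. Since $\Gamma \subset \mathsf{Isom}(X)$ is discrete and $(X, \dist_X)$ is proper, any escaping sequence $\{\gamma_n\} \subset \Gamma$ satisfies $\dist_X(o, \gamma_n o) \to \infty$, and property~\eqref{item:potential proper} of coarsely additive potentials then gives $h(\gamma_n) \to +\infty$. Thus only the first hypothesis of the proposition requires real work: the uniform estimate $\abs{\sigma_\psi(\gamma, x) - \psi(o, \gamma o)} \leq C$ whenever $\dist(x, \gamma^{-1}) > \epsilon$, with $C$ depending only on $\epsilon$.

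The key geometric input is the following uniform penetration claim: for every $\epsilon > 0$ there exists $R = R(\epsilon) > 0$ such that, whenever $\gamma \in \Gamma$ and $x \in \partial_\infty X$ satisfy $\dist(x, \gamma^{-1}) > \epsilon$, for every $p \in X$ sufficiently close to $x$ in $X \cup \partial_\infty X$ some geodesic from $\gamma^{-1} o$ to $p$ enters the closed ball of radius $R$ about $o$. Granting this, property~\eqref{item:potential coarsely additive} applied with $u = o$ yields
\begin{equation*}
\abs{\psi(\gamma^{-1} o, p) - \psi(\gamma^{-1} o, o) - \psi(o, p)} \leq \kappa(R).
\end{equation*}
The $\Gamma$-invariance of $\psi$ gives $\psi(\gamma^{-1} o, o) = \psi(o, \gamma o)$, and taking $\limsup_{p \to x}$ of the rearranged inequality yields the required estimate with $C = \kappa(R)$.

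To prove the penetration claim I will argue by contradiction. A failure produces sequences $\{\gamma_n\} \subset \Gamma$ and $\{x_n\} \subset \partial_\infty X$ with $\dist(x_n, \gamma_n^{-1}) > \epsilon$ along which the penetration distance of geodesics from $\gamma_n^{-1}o$ to points near $x_n$ blows up; equivalently, by $\delta$-hyperbolicity of $X$, the Gromov products $(\gamma_n^{-1} o \mid x_n)_o$ are unbounded. Passing to subsequences in the compact space $\Gamma \sqcup \partial_\infty X$ from Proposition~\ref{prop:compactifying}, we may assume $\gamma_n^{-1} \to y$ and $x_n \to x$, with $\dist(x, y) \geq \epsilon$ and hence $x \neq y$. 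If $y \in \Gamma$ then $\gamma_n^{-1} o$ is eventually constant and the Gromov products are trivially bounded. If $y \in \partial_\infty X$, the convergence group property combined with the identification of the abstract compactifying topology on $\Gamma \sqcup \partial_\infty X$ with the orbit topology inside $X \cup \partial_\infty X$ gives $\gamma_n^{-1} o \to y$ in $X \cup \partial_\infty X$, and since $y \neq x$ the boundary Gromov product $(y \mid x)_o$ is finite, again bounding $(\gamma_n^{-1} o \mid x_n)_o$. The hardest part is this last topological compatibility: verifying that convergence $\gamma_n^{-1} \to y$ in the abstract compactifying topology implies $\gamma_n^{-1} o \to y$ in $X \cup \partial_\infty X$. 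This is standard for discrete isometry groups of proper geodesic Gromov hyperbolic spaces but should be made precise here, for instance via the locally uniform convergence of $\gamma_n^{-1}$ off a single exceptional point of the Gromov boundary.
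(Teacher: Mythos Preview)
Your proposal is correct and follows essentially the same approach as the paper: both establish a uniform penetration radius $r=r(\epsilon)$ so that geodesics from $\gamma^{-1}o$ to points near $x$ pass within $r$ of $o$, then apply property~\eqref{item:potential coarsely additive} with $u=o$ to get $C=\kappa(r)$, and finally invoke Proposition~\ref{prop:different definition of expanding}. The paper simply asserts the penetration claim without proof, whereas you supply a contradiction argument via Gromov products and the compactifying topology; this extra detail is sound and your identification of the topological compatibility ($\gamma_n^{-1}\to y$ in $\Gamma\sqcup\partial_\infty X$ implies $\gamma_n^{-1}o\to y$ in $X\cup\partial_\infty X$) as the subtle point is accurate.
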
 

\begin{proof} We first note that there exists $r =r(\epsilon)> 0$ such that: if $\gamma \in \Gamma$, $x \in \partial_\infty X$, $\dist(x, \gamma^{-1}) > \epsilon$, and $p_n \rightarrow x$, then for $n$ sufficiently large any geodesic segment joining $\gamma^{-1}o$ to $p_n$ intersects $B_r(o)$. 

Then fix $\gamma \in \Gamma$ and $x \in \partial_\infty X$ with $\dist(x, \gamma^{-1}) > 0$. Then 
\begin{align*}
\abs{\sigma_{\psi}(\gamma,x) -\psi(o, \gamma o)} & = \abs{\sigma_{\psi}(\gamma,x) -\psi(\gamma^{-1}o, o)} \\
& = \abs{\limsup_{p \to x}  \psi(\gamma^{-1}o, p) - \psi(o,p)- \psi(\gamma^{-1} o, o)} \leq \kappa(r). 
\end{align*}
Thus the first assertion is true. 

{Property~\eqref{item:potential proper} implies that $ \psi(o, \gamma_n o) \rightarrow + \infty$ for any escaping sequence $\{\gamma_n\}$. So Proposition~\ref{prop:different definition of expanding} implies $\sigma_\psi$ is expanding and we can choose $\norm{\gamma}_\psi = \psi(o, \gamma o)$. }
\end{proof}

The next lemma states that it coarsely doesn't matter what sequence we use to define $G_\psi$. 

\begin{lemma}\label{lem:formula for Gromov product} There exists $\kappa_3 > 0$ such that: if $\ell$ is a geodesic line in $(X,\dist_X)$ with endpoints $x,y \in \partial_\infty X$, then 
$$
\limsup_{p\rightarrow x, q\rightarrow y} \abs{\psi(p,o) + \psi(o,q) - \psi(p,q) - \inf_{u \in \ell} \psi(u,o) + \psi(o,u) } \leq \kappa_3. 
$$
\end{lemma}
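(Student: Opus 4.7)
The plan is to show that the quantity $\psi(p,o)+\psi(o,q)-\psi(p,q)$ converges, up to a uniform additive error, to $\inf_{u\in\ell}[\psi(u,o)+\psi(o,u)]$ by establishing both inequalities separately. Throughout, all constants should depend only on the hyperbolicity constant $\delta$, the function $\kappa(\cdot)$ from Definition~\ref{defn:coarsely additive potential}, and the constants $\kappa_1$ and $C(\cdot)$ from Lemma~\ref{lem:basic properties of potentials}.

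For the \emph{upper bound}, I would fix $u\in\ell$ and show that for $p,q$ sufficiently close to $x,y$ one has $\psi(p,o)+\psi(o,q)-\psi(p,q)\leq \psi(u,o)+\psi(o,u)+C$. The main point is the Morse-type observation that once $p$ is close to $x$ and $q$ close to $y$ (how close depending on $u$), the fixed point $u$ lies within a uniform distance $R=R(\delta)$ of any geodesic $[p,q]$ in $(X,\dist_X)$. Property~\eqref{item:potential coarsely additive} combined with Lemma~\ref{lem:basic properties of potentials}(1) then yields $\psi(p,u)+\psi(u,q)=\psi(p,q)+O(1)$, while the coarse triangle inequality from Lemma~\ref{lem:basic properties of potentials}(2) gives $\psi(p,o)\leq \psi(p,u)+\psi(u,o)+\kappa_1$ and $\psi(o,q)\leq \psi(o,u)+\psi(u,q)+\kappa_1$. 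Adding these and subtracting $\psi(p,q)$ kills the $\psi(p,u)$ and $\psi(u,q)$ terms. Taking $\limsup$ and then the infimum over $u\in\ell$ gives the bound by $\inf_{u\in\ell}[\psi(u,o)+\psi(o,u)]+C$.

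For the \emph{lower bound}, I would exhibit specific sequences realizing the desired value. Parametrize $\ell$ so that $\ell(-\infty)=x$ and $\ell(+\infty)=y$, and set $p_n=\ell(-n)$, $q_n=\ell(n)$. By $\delta$-slimness of the triangle with vertices $p_n, q_n, o$, for each $n$ there exist a point $u_n^\star\in[p_n,q_n]\subset\ell$ and points on $[p_n,o]$ and $[o,q_n]$, all pairwise within $\delta$ of one another. Three applications of property~\eqref{item:potential coarsely additive} at scale $\delta$, together with Lemma~\ref{lem:basic properties of potentials}(1) to swap between $u_n^\star$ and the nearby points on $[p_n,o]$ and $[o,q_n]$, give
$$
\psi(p_n,o)+\psi(o,q_n)-\psi(p_n,q_n)=\psi(u_n^\star,o)+\psi(o,u_n^\star)+O(1).
$$
Since $u_n^\star\in\ell$, the right-hand side is bounded below by $\inf_{u\in\ell}[\psi(u,o)+\psi(o,u)]-O(1)$, so passing to the $\limsup$ yields $G_\psi(x,y)\geq \inf_{u\in\ell}[\psi(u,o)+\psi(o,u)]-C$, and combining with the upper bound completes the proof.

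The main obstacle I expect is bookkeeping: ensuring that the Morse constant $R(\delta)$ in the upper bound and the quantitative ``once $p$ is past $u$'' statement really depend only on $\delta$ and the choice of $u$ (and not on the sequences or on $o$), and that the swap between $u_n^\star$ and $u_n\in\ell$ via Lemma~\ref{lem:basic properties of potentials}(1) produces an error depending only on $\delta$. Note that, as in the classical setting, there is no need to identify the ``hyperbolic center'' $u_n^\star$ with a $\psi$-minimizer on $\ell$; the argument only uses that $u_n^\star\in\ell$ gives an upper estimate on the infimum.
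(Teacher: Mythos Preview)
Your upper bound is fine and matches the paper's argument. The gap is in the lower bound. The lemma asserts that the $\limsup$ of the \emph{absolute value} is bounded, which means that for \emph{every} pair of sequences $p_n\to x$, $q_n\to y$ one eventually has
\[
\psi(p_n,o)+\psi(o,q_n)-\psi(p_n,q_n)\ \ge\ \inf_{u\in\ell}\big[\psi(u,o)+\psi(o,u)\big]-\kappa_3.
\]
You only establish this inequality along the specific sequences $p_n=\ell(-n)$, $q_n=\ell(n)$, and then conclude ``$G_\psi(x,y)\ge I-C$''. That is the correct statement about the $\limsup$ itself, but together with your upper bound it only yields $|G_\psi(x,y)-I|\le C$, not $\limsup_{p\to x,\,q\to y}|\ldots|\le C$. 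A sequence along which the expression is very negative is not ruled out.

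For arbitrary $p_n\to x$, $q_n\to y$ the slim-triangle argument still gives a point $u_n\in[p_n,q_n]$ with $\psi(p_n,o)+\psi(o,q_n)-\psi(p_n,q_n)\ge\psi(u_n,o)+\psi(o,u_n)-O(1)$, but now $[p_n,q_n]\not\subset\ell$, so you cannot immediately compare to the infimum over $\ell$. The missing step is to show that $u_n$ stays within a uniform distance of $\ell$. The paper does this by a bootstrap: it first uses the already-proved upper bound to get $L<+\infty$ along the chosen sequences, then invokes property~\eqref{item:potential proper} of Definition~\ref{defn:coarsely additive potential} (properness of $\psi$) to conclude that $\{u_n\}$ is relatively compact in $X$; any limit point of $u_n$ lies on a limiting bi-infinite geodesic $\hat\ell$ from $x$ to $y$, which is within $2\delta$ of $\ell$, and then Lemma~\ref{lem:basic properties of potentials}(1) transfers the inequality to $\ell$. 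This use of properness to pin down $u_n$ is the idea your proposal is missing.
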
 

\begin{proof} Fix a geodesic line $\ell$ with endpoints $x,y \in \partial_\infty X$. Then fix sequences $\{p_n\}, \{q_n\} \subset X$ converging to $x,y$ which realize the  limit supremum in the lemma statement. Passing to a subsequence we can suppose that 
$$
L:=\lim_{n \rightarrow \infty} \psi(p_n,o) + \psi(o,q_n) - \psi(p_n,q_n)
$$
exists in $[-\infty, +\infty]$. Then Lemma~\ref{lem:basic properties of potentials} implies that $L \in [-\kappa_1, +\infty]$. 

Fix a geodesic $[p_n,q_n]$ joining $p_n$ to $q_n$. Passing to a subsequence we can suppose that $[p_n,q_n]$ converges to a geodesic line $\hat\ell$ with endpoints $x,y$. Since every geodesic triangle is $\delta$-slim, $\ell$ must be contained in a $2\delta$-neighborhood of $\hat\ell$ and $\hat\ell$ must be contained in a $2\delta$-neighborhood of $\ell$.

First suppose that $u \in \ell$. Then for $n$ sufficiently large, $u$ is in the $(2\delta+1)$-neighborhood of $[p_n,q_n]$. Hence 
$$
\psi(p_n,q_n) \geq \psi(p_n,u) + \psi(u,q_n) - \kappa(2\delta+1),
$$
which implies 
\begin{align*}
L & \leq \kappa(2\delta+1)+ \limsup_{n \rightarrow \infty} \psi(p_n,o) -\psi(p_n,u)+ \psi(o,q_n) - \psi(u,q_n) \\
& \leq \kappa(2\delta+1) + 2\kappa_1 + \psi(o,u) + \psi(u,o). 
\end{align*}
Thus 
$$
L \leq \kappa(2\delta+1) + 2\kappa_1 + \inf_{u \in \ell} \psi(u,o) + \psi(o,u).
$$
Notice that this implies that $L < +\infty$. 

For each $n$, let $[o,p_n] \cup [p_n,q_n] \cup [q_n, o]$ be a geodesic triangle with vertices $o,p_n,q_n$. Since every geodesic triangle is $\delta$-slim, exist $u_n \in [p_n,q_n]$, $p_n' \in [0,p_n]$, and $q_n' \in [0,q_n]$ such that 
$$
\dist_X(p_n',u_n), \dist_X(q_n', u_n) < \delta. 
$$
Then 
\begin{align*}
\psi & (p_n,o)  + \psi(o,q_n) - \psi(p_n,q_n) \\
&  \geq \psi(p_n,u_n)+\psi(u_n,o) + \psi(o,u_n)+\psi(u_n,q_n) - \psi(p_n,u_n) - \psi(u_n,q_n)-3\kappa(\delta) \\
& = \psi(u_n,o) + \psi(o,u_n) - 3\kappa(\delta).
\end{align*} 
Hence 
$$
L \geq - 3\kappa(\delta) + \limsup_{n \rightarrow \infty}  \psi(u_n,o) + \psi(o,u_n) .
$$
Since  $L < +\infty$, property~\eqref{item:potential proper} of Definition~\ref{defn:coarsely additive potential} implies that $\{u_n\}$ is relatively compact in $X$. Since any limit point of $\{u_n\}$ is contained in $\hat\ell$, for $n$ sufficiently large $u_n$ is contained in a $(2\delta+1)$-neighborhood of $\ell$. Thus by Lemma~\ref{lem:basic properties of potentials},
\begin{equation*}
L \geq - 3\kappa(\delta) -2C(2\delta+1)+ \inf_{u \in \ell} \psi(u,o) + \psi(o,u). \qedhere
\end{equation*}
\end{proof}

\begin{lemma} $(\sigma_{\psi}, \bar\sigma_{\psi}, G_{\psi})$ is a coarse GPS system. \end{lemma}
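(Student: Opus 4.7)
The plan is to verify the three ingredients of Definition~\ref{def:GPS}: that $\sigma_\psi$ and $\bar\sigma_\psi$ are proper $\kappa$-coarse-cocycles, that $G_\psi$ takes finite values and is locally bounded on $\partial_\infty X^{(2)}$, and that the compatibility inequality holds. The first ingredient is already in hand: Lemma~\ref{lem:buseman function is a coarse-cocycle} and its obvious symmetric analogue show that $\sigma_\psi$ and $\bar\sigma_\psi$ are coarse-cocycles, and Lemma~\ref{lem:buseman function is a expanding coarse-cocycle} together with its analogue shows that they are expanding, hence proper.

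For the second ingredient, I will use Lemma~\ref{lem:formula for Gromov product} together with a standard Gromov-hyperbolic fact: if $K\subset \partial_\infty X^{(2)}$ is compact, then the geodesic lines connecting pairs $(x,y)\in K$ all intersect some bounded set $B\subset X$ depending only on $K$ and $o$. On $B$ the quantity $u\mapsto \psi(u,o)+\psi(o,u)$ is bounded both above (by property~\eqref{item:potential bounded} of Definition~\ref{defn:coarsely additive potential}) and below (by property~\eqref{item:potential proper}), so $\inf_{u\in\ell}(\psi(u,o)+\psi(o,u))$ is bounded on $K$. Via Lemma~\ref{lem:formula for Gromov product} this shows $G_\psi$ is finite and locally bounded. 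A global lower bound $G_\psi\ge -\kappa_1$ follows at once from the coarse triangle inequality of Lemma~\ref{lem:basic properties of potentials}(2).

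The main step is the compatibility inequality. The key trick is $\Gamma$-invariance: for any $p,q\in X$,
\[
\psi(\gamma p,o)+\psi(o,\gamma q)-\psi(\gamma p,\gamma q)
=\bigl(\psi(p,\gamma^{-1}o)-\psi(p,o)\bigr)+\bigl(\psi(\gamma^{-1}o,q)-\psi(o,q)\bigr)+\bigl(\psi(p,o)+\psi(o,q)-\psi(p,q)\bigr).
\]
Fix $\gamma\in\Gamma$ and distinct $x,y\in\partial_\infty X$, and choose a geodesic line $\ell$ from $x$ to $y$. Picking sequences $p_n\to x$ and $q_n\to y$ that (almost) realize $G_\psi(x,y)$ via Lemma~\ref{lem:formula for Gromov product}, the above identity, combined with Lemma~\ref{lem:cocycles well defined potentials} (applied to $\sigma_\psi$) and its analogue for $\bar\sigma_\psi$, shows that the limsup of the left-hand side is within a uniform additive constant of $\bar\sigma_\psi(\gamma,x)+\sigma_\psi(\gamma,y)+G_\psi(x,y)$. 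Since $\gamma p_n\to\gamma x$ and $\gamma q_n\to\gamma y$, that same limsup is also, by Lemma~\ref{lem:formula for Gromov product} applied on the opposite side, within a uniform additive constant of $G_\psi(\gamma x,\gamma y)$. Comparing the two estimates yields the required coarse identity.

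The expected main obstacle is the bookkeeping in this last step, because the limit suprema defining $\sigma_\psi$, $\bar\sigma_\psi$, and $G_\psi$ are over different sequences and do not commute with each other. The remedy is precisely that Lemmas~\ref{lem:cocycles well defined potentials} and~\ref{lem:formula for Gromov product} say that these limit suprema are essentially independent of which admissible sequences are chosen, up to the constants $\kappa_2$ and $\kappa_3$; so after fixing one diagonal pair of sequences $(p_n,q_n)$ converging to $(x,y)$ and using $(\gamma p_n,\gamma q_n)$ on the transformed side, the decomposition displayed above gives everything in a single limit.
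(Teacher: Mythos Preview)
Your proposal is correct, and the first two ingredients (properness of the cocycles, local boundedness of $G_\psi$) are handled exactly as in the paper. For the compatibility inequality the paper takes a slightly different route: it simply says ``arguing exactly as in the proof of Lemma~\ref{lem:symmetric implies GPS}'', which means picking sequences realizing one of the two Gromov products, using the other as a lower bound, and invoking Lemma~\ref{lem:nice magnitude} to control the cocycle terms via magnitudes. Your argument instead exploits the exact algebraic identity coming from $\Gamma$-invariance of $\psi$, then uses Lemmas~\ref{lem:cocycles well defined potentials} and~\ref{lem:formula for Gromov product} to control all three terms simultaneously along a single diagonal sequence. Your version is a bit more direct and self-contained in this potential setting, avoiding the detour through magnitudes; the paper's version has the virtue of reusing a lemma already proved. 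Both are short and essentially equivalent in difficulty.
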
 

\begin{proof} Notice that $\bar\psi(p,q) = \psi(q,p)$ defines a  $\Gamma$-invariant coarsely additive potential. So  Lemma~\ref{lem:buseman function is a expanding coarse-cocycle} implies that $\sigma_\psi$ and $\bar \sigma_{\psi} = \sigma_{\bar \psi}$ are both expanding coarse-cocycles. Hence, by definition, they are proper coarse-cocycles.

Next we show that $G_{\psi}$ is locally finite. Fix a compact set $K \subset \partial_\infty X^{(2)}$. Then there exists $r > 0$ such that any geodesic line in $(X,\dist_X)$ joining points in $K$ intersects the ball of radius $r > 0$ centered at $o$. Then Lemma~\ref{lem:formula for Gromov product}  and property~\eqref{item:potential bounded} of Definition~\ref{defn:coarsely additive potential} imply that 
$$
\sup_{(x,y) \in K} G_\psi(x,y) < +\infty. 
$$
Hence $G_{\psi}$ is locally finite.

Finally, arguing exactly as in the proof of Lemma~\ref{lem:symmetric implies GPS} there exists a constant $C >0$ such that 
$$
\abs{ G_{\psi}(\gamma x, \gamma y) - G_{\psi}(x,y) - \bar\sigma_{\psi}(\gamma, x) - \sigma_{\psi}(\gamma,y)} \leq C
$$
for all $\gamma \in \Gamma$ and $(x,y) \in \partial_\infty X^{(2)}$. 

Hence $(\sigma_{\psi}, \bar\sigma_{\psi}, G_{\psi})$ is a coarse GPS system.
\end{proof} 

 This completes the proof of Theorem \ref{thm:hyperbolic1_potential}.

\subsection{Proof of Theorem~\ref{thm:hyperbolic2_potential}} Suppose $\sigma \colon \Gamma \times \partial_\infty X \rightarrow \Rb$ is an expanding $\kappa$-coarse-cocycle and $\Gamma$ acts co-compactly on $X$. 
Fix  $s > 0$ such that $X = \Gamma \cdot B_s(o)$.

For $p \in X$ let $A_p := \{ \gamma \in \Gamma : \dist_X(p,\gamma(o)) < s\}$. 
Then define $\psi \colon X \times X \to \Rb$ by 
$$
\psi(p,q) = \frac{1}{\#A_p \# A_q} \sum_{\gamma_1 \in A_p, \gamma_2 \in A_q} \norm{\gamma_1^{-1}\gamma_2}_\sigma. 
$$
We will show that $\psi$ is a $\Gamma$-invariant coarsely additive potential.

Since $\gamma A_p = A_{\gamma p}$, the function $\psi$ is $\Gamma$-invariant. 
By Proposition~\ref{prop:basic properties}\eqref{item:properness}, 
$$
\lim_{r \rightarrow \infty} \inf_{\dist_X(p,q) \geq r} \psi(p,q) = +\infty.
$$
Since $\Gamma$ acts properly on $X$, for any $r > 0$ we have 
$$
\sup_{\dist_X(p,q) \leq r}\abs{ \psi(p,q)} < +\infty.
$$ 

\begin{lemma} There exists $C > 0$ such that: If $\alpha \in A_x$ and $\beta \in A_y$, then 
\begin{equation}\label{eqn:comparing two magnitudes 1} 
\abs{\norm{\alpha^{-1}\beta}_\sigma - \psi(x,y)} \leq C.
\end{equation} 
\end{lemma}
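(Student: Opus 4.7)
The strategy is to show that each term $\|\gamma_1^{-1}\gamma_2\|_\sigma$ appearing in the average defining $\psi(x,y)$ is uniformly close to $\|\alpha^{-1}\beta\|_\sigma$, and then to average. The key observation is that if $\alpha, \gamma_1 \in A_x$, then
$$
\dist_X(o, \alpha^{-1}\gamma_1 o) = \dist_X(\alpha o, \gamma_1 o) \leq \dist_X(\alpha o, x) + \dist_X(x, \gamma_1 o) < 2s,
$$
and likewise for $\gamma_2, \beta \in A_y$. Since $\Gamma$ acts properly discontinuously on $X$, the set
$$
F := \{\gamma \in \Gamma : \dist_X(o, \gamma o) < 2s\}
$$
is finite, and both $\alpha^{-1}\gamma_1$ and $\gamma_2^{-1}\beta$ lie in $F$.

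Next I apply Proposition~\ref{prop:basic properties}\eqref{item:tri inequality} to the finite set $F$: there is a constant $C_1 > 0$ such that $\bigl| \|f\eta\|_\sigma - \|\eta\|_\sigma\bigr| \leq C_1$ and $\bigl|\|\eta f\|_\sigma - \|\eta\|_\sigma\bigr| \leq C_1$ for every $\eta \in \Gamma$ and $f \in F$. Writing
$$
\alpha^{-1}\beta = (\alpha^{-1}\gamma_1)\cdot(\gamma_1^{-1}\gamma_2)\cdot(\gamma_2^{-1}\beta),
$$
and applying this twice (once on the left and once on the right) yields
$$
\bigl| \|\alpha^{-1}\beta\|_\sigma - \|\gamma_1^{-1}\gamma_2\|_\sigma \bigr| \leq 2 C_1
$$
uniformly in $\gamma_1 \in A_x$ and $\gamma_2 \in A_y$.

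Finally, averaging over $\gamma_1 \in A_x$ and $\gamma_2 \in A_y$ gives
$$
\bigl| \|\alpha^{-1}\beta\|_\sigma - \psi(x,y) \bigr| \leq \frac{1}{\#A_x \#A_y} \sum_{\gamma_1 \in A_x, \gamma_2 \in A_y} \bigl| \|\alpha^{-1}\beta\|_\sigma - \|\gamma_1^{-1}\gamma_2\|_\sigma \bigr| \leq 2C_1,
$$
so the lemma holds with $C := 2C_1$. There is no serious obstacle here; the only point to verify carefully is that the finite set $F$ controls both the left- and right-translates needed to compare $\alpha^{-1}\beta$ to $\gamma_1^{-1}\gamma_2$.
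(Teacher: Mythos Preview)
Your proof is correct and follows essentially the same approach as the paper: both identify the finite set $\{\gamma \in \Gamma : \dist_X(o,\gamma o) < 2s\}$, observe that any two elements of $A_x$ (respectively $A_y$) differ by an element of this set, and then apply Proposition~\ref{prop:basic properties}\eqref{item:tri inequality} twice to compare $\norm{\alpha^{-1}\beta}_\sigma$ with $\norm{\gamma_1^{-1}\gamma_2}_\sigma$ before averaging. The only cosmetic difference is that the paper absorbs the two applications of the proposition into a single constant $C$ rather than writing $2C_1$.
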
 

\begin{proof} Let $B : = \{ \gamma \in \Gamma : \dist_X(o,\gamma(o)) <  2s\}$. Notice that if $\alpha, \hat\alpha \in A_x$ and $\beta, \hat\beta \in A_y$, then there exists $f_1, f_2 \in B$ such that $\hat\alpha = \alpha f_1$ and $\hat\beta=\beta f_2$. So by Proposition~\ref{prop:basic properties}\eqref{item:tri inequality} there exists $C > 0$ (which only depends on $B$) such that 
$$
\abs{ \norm{\alpha^{-1}\beta}_\sigma-\norm{\hat\alpha^{-1}\hat\beta}_\sigma} \leq C. 
$$
Then, by definition, 
\begin{equation*}
\abs{\norm{\alpha^{-1}\beta}_\sigma - \psi(x,y)} \leq C
\end{equation*}
whenever $\alpha \in A_x$ and $\beta \in A_y$. 
\end{proof}

\begin{lemma} For every $r > 0$ there exists $\kappa=\kappa(r) > 0$ such that: if $u$ is contained in the $r$-neighborhood of a geodesic in $(X,\dist_X)$ joining $p$ to $q$, then 
$$
\abs{\psi(p,q) - \big(\psi(p,u) + \psi(u,q)\big) } \leq \kappa.
$$
\end{lemma}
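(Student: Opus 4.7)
The plan is to reduce the claim to the multiplicative estimate (Proposition~\ref{prop:basic properties}\eqref{item:multiplicative estimate}) applied to a suitable factorization. Given $p, q, u \in X$ with $u$ in the $r$-neighborhood of a geodesic from $p$ to $q$, pick any $a \in A_p$, $b \in A_u$, $c \in A_q$. By Equation~\eqref{eqn:comparing two magnitudes 1},
$$
\big|\psi(p,q) - \norm{a^{-1}c}_\sigma\big|,\ \big|\psi(p,u) - \norm{a^{-1}b}_\sigma\big|,\ \big|\psi(u,q) - \norm{b^{-1}c}_\sigma\big| \leq C.
$$
Writing $a^{-1}c = (a^{-1}b)(b^{-1}c)$ and applying  Proposition~\ref{prop:basic properties}\eqref{item:multiplicative estimate} to the pair $\alpha = a^{-1}b$, $\beta = b^{-1}c$, we get $\big|\norm{a^{-1}c}_\sigma - \norm{a^{-1}b}_\sigma - \norm{b^{-1}c}_\sigma\big|$ bounded by a constant depending on a lower bound $\epsilon$ for $\dist(\alpha^{-1},\beta) = \dist(b^{-1}a, b^{-1}c)$. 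Combining the two displays yields the desired inequality.

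So everything reduces to the main geometric claim: there exists $\epsilon = \epsilon(r) > 0$ such that $\dist(b^{-1}a, b^{-1}c) \geq \epsilon$ for every such choice of $p,q,u,a,b,c$ (outside of a trivial bounded case). When $\dist_X(p,q) \leq 4s$, all three of $p,q,u$ lie in a bounded set, so $A_p, A_q, A_u$ lie in a fixed finite subset of $\Gamma$ by properness of the $\Gamma$-action, making all three terms $\psi(p,q), \psi(p,u), \psi(u,q)$ uniformly bounded and the inequality trivial. Otherwise, one has $a \ne c$ since $\dist_X(ao, co) \geq \dist_X(p,q) - 2s > 0$.

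I expect the main obstacle to be the uniform lower bound $\dist(b^{-1}a, b^{-1}c) \geq \epsilon$, which I plan to prove by contradiction using compactness and Gromov hyperbolicity. Suppose we have sequences $(p_n, q_n, u_n, a_n, b_n, c_n)$ with $\dist_X(p_n,q_n) \geq 4s$ and $\dist(b_n^{-1}a_n, b_n^{-1}c_n) \to 0$. Translating by elements of $\Gamma$ (using cocompactness) and relabeling, we may assume $u_n$ lies in a fixed compact subset of $X$, forcing $b_n$ into a finite set; passing to a subsequence, $b_n = b$ is constant. After a further subsequence, $b^{-1}a_n$ and $b^{-1}c_n$ converge to a common limit $\xi_\infty \in \Gamma \sqcup \partial_\infty X$. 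If $\xi_\infty \in \Gamma$, then $a_n = c_n$ eventually, contradicting $\dist_X(a_n o, c_n o) \geq 2s$. If $\xi_\infty \in \partial_\infty X$, then by Proposition~\ref{prop:compactifying}(1)--(2) applied to the Gromov boundary convergence group, $b^{-1}a_n(o) \to \xi_\infty$ and $b^{-1}c_n(o) \to \xi_\infty$ in $X \cup \partial_\infty X$, and hence $b^{-1}p_n, b^{-1}q_n \to \xi_\infty$ since $\dist_X(a_n o, p_n), \dist_X(c_n o, q_n) \leq s$.

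This gives the sought contradiction: the Gromov product $(b^{-1}p_n \,|\, b^{-1}q_n)_o \to +\infty$, while on the other hand $b^{-1}u_n$ lies within $s$ of $o$ and within $r$ of a geodesic from $b^{-1}p_n$ to $b^{-1}q_n$, forcing that geodesic to pass within $r + s$ of $o$. In a $\delta$-hyperbolic space this implies $(b^{-1}p_n \,|\, b^{-1}q_n)_o \leq r + s + O(\delta)$, contradicting the divergence above. The only delicate point is to verify carefully that compactifying-topology convergence $b^{-1}a_n \to \xi_\infty \in \partial_\infty X$ entails $b^{-1}a_n(o) \to \xi_\infty$ in $X \cup \partial_\infty X$; this follows from Proposition~\ref{prop:compactifying}(2) together with the fact that escaping sequences of isometries act with source-sink dynamics on $\partial_\infty X$ and, in particular, $b^{-1}a_n(o)$ converges to the attracting point.
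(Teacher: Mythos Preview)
Your proof is correct and follows the same route as the paper: both reduce the estimate to Proposition~\ref{prop:basic properties}\eqref{item:multiplicative estimate} via Equation~\eqref{eqn:comparing two magnitudes 1}, arguing by contradiction with sequences. The paper normalizes by translating so that $p_n\in B_s(o)$ (taking $a_n=\id$), whereas you translate so that $u_n$ is in a fixed compact set (taking $b_n=b$ constant); these are equivalent choices.

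Your write-up is actually more careful than the paper's on one point: you explicitly verify the hypothesis $\dist(b^{-1}a_n,b^{-1}c_n)\geq\epsilon$ needed to invoke Proposition~\ref{prop:basic properties}\eqref{item:multiplicative estimate}, via the Gromov-product argument, whereas the paper simply asserts the conclusion of that proposition. Your argument here is correct; the identification of convergence in the compactifying topology on $\Gamma\sqcup\partial_\infty X$ with orbit convergence $\gamma_n o\to\xi$ in $X\cup\partial_\infty X$ is the standard fact that the orbit map $\gamma\mapsto\gamma o$ extends to a homeomorphism $\Gamma\sqcup\Lambda(\Gamma)\to\overline{\Gamma\cdot o}\subset X\cup\partial_\infty X$. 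One small imprecision: in the bounded case $\dist_X(p,q)\leq 4s$, the sets $A_p,A_q,A_u$ themselves are not in a fixed finite subset of $\Gamma$; rather, the products $A_p^{-1}A_q$, $A_p^{-1}A_u$, $A_u^{-1}A_q$ are (equivalently, after a $\Gamma$-translation the triple $(p,q,u)$ lies in a fixed compact set). This is enough for the conclusion you draw.
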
 

\begin{proof} Fix $r > 0$ and suppose no such $\kappa(r) > 0$ exists.  Without loss of generality $r\ge s$.
Then for each $n \geq 1$ we can find $p_n, q_n, u_n \in X$ such that $u_n$ is contained in the $r$-neighborhood of a geodesic joining $p_n$ to $q_n$ and 
$$
\abs{\psi(p_n,q_n) - \big(\psi(p_n,u_n) + \psi(u_n,q_n)\big) } \geq n. 
$$
Translating by $\Gamma$, we can assume that $p_n \in B_s(o)$, which implies that $\id \in A_{p_n}$. Fix $\alpha_n \in A_{u_n}$ and $\beta_n \in A_{q_n}$. Then Equation~\eqref{eqn:comparing two magnitudes 1} implies that 
$$
\abs{ \norm{\beta_n}_\sigma - \norm{\alpha_n}_\sigma - \norm{\alpha_n^{-1} \beta_n}_\sigma} \geq n-3C. 
$$
However, Proposition~\ref{prop:basic properties}\eqref{item:multiplicative estimate} implies that there exists $C' > 0$ such that 
$$
\abs{\norm{\beta_n}_\sigma - \norm{\alpha_n}_\sigma - \norm{\alpha_n^{-1} \beta_n}_\sigma} \leq C'
$$
and so we have a contradiction. 
\end{proof} 

Thus $\psi$ is a $\Gamma$-invariant coarsely additive potential. Notice that Equation~\eqref{eqn:comparing two magnitudes 1}  implies that
\begin{equation}\label{eqn:comparing two magnitudes 2} 
\abs{\norm{\gamma}_\sigma - \psi(o,\gamma o)} \leq C
\end{equation} 
for all $\gamma \in \Gamma$. So, by the definition of $\sigma_\psi$, Equation~\eqref{eqn:comparing two magnitudes 2}, and Lemma~\ref{lem:nice magnitude} we have
$$
\sup_{\gamma \in \Gamma, x \in \partial_\infty X} \abs{\sigma_{\psi}(\gamma,x)-\sigma(\gamma,x) } < + \infty.
$$
This completes the proof of Theorem~\ref{thm:hyperbolic2_potential}.


\appendix

\section{Conservativity, dissipativity and quotient measures}\label{appendix:conservative and dissipative}

In this appendix we define the notions of conservativity, dissipativity and Hopf decompositions for a general group action, check that it coincides with several other definitions in the literature \cite{Kaimanovich,Aaronson,roblin}, 
and also that it is consistent with the classical theory of Hopf decompositions for actions of $\Z$.
This expands on the discussion in \cite{BlayacPS}. 
We also prove that quotient measures exist when the action is dissipative.

We include this appendix because the references we found on this topic were not entirely suitable for this paper: some sources \cite{roblin,BlayacPS} are missing details, while others \cite{Kaimanovich,Aaronson} only apply to free actions (while here we allow actions which are not free).

For the rest of the section fix a measurable space $X$, a unimodular, locally compact second-countable group $\ms G$ acting measurably on $X$, and an $\mathsf G$-invariant sigma-finite measure $m$.
Recall: every discrete group is unimodular.
Denote by $dg$ a fixed choice of Haar measure on $\ms G$. Since $\ms G$ is unimodular, this measure is invariant under both left and right multiplication, and under the involution $g\mapsto g^{-1}$.

\subsection{The Hopf decomposition}\label{sec:hopf decompo}

There are several reasonable definitions of wandering sets, which generalize in different ways the classical notion of wandering sets for actions of $\Z$, $\R$ and $\Z_{\geq1}$.
We use the following:
\begin{definition}
A measurable subset $W\subset X$ is called \emph{wandering} (\resp \emph{exactly wandering}) if $\{g\in \ms G: gx\in W\}$ is relatively compact for $m$-almost any (\resp for any) $x\in W$.
\end{definition}

When $\ms G$ is discrete, in particular if $\ms G=\Z$, then a set $W$ is sometimes called wandering if it satisfies the stronger property that $W\cap g W=\emptyset$ for any $g\in\ms G$, or $m(W\cap g W)=0$, see \cite{Aaronson,Kaimanovich}. 
We will see, in Section~\ref{sec:funddom}, the link between this stronger definition and ours. 
Roblin defines $W$ to be wandering if it satisfies the weaker property that $\int 1_W(gx)dg<+\infty$ for almost any $x\in W$~\cite[p.17]{roblin}. 
This gives the same notions of conservativity and dissipativity, as explained below.

\begin{definition}
The action of $\ms G$ on $(X,m)$ is called \emph{conservative} if every wandering set has measure zero. The action is called \emph{dissipative} if $X$ is a countable union of wandering sets.

A \emph{Hopf decomposition} of $X$ is a decomposition $X=C\sqcup D$ into disjoint $\ms G$-invariant measurable sets such that the action on $C$ is conservative and the action on $D$ is dissipative. 
\end{definition}

Notice that if $X = C \sqcup D$ and $X=C'\sqcup D'$ are both Hopf decompositions, then $C'\cap D$ is a countable union of wandering sets. 
Since every wandering set in $C'$ has measure zero, we see that $m(C'\cap D)=0$, and similarly $m(C\cap D')=0$. 
So, up to a set of measure zero, there is a unique Hopf decomposition.

There is another classical characterization of conservativity, dissipativity and Hopf decompositions in terms of integrable functions. This characterization also proves the existence of Hopf decompositions.

\begin{fact}[{\cite[Fact 2.27]{BlayacPS}}] \label{fact:Hopf decompo}
For any positive integrable function $f$ on $X$, the sets $C:=\{x:\int_{\ms G} f(gx) \,dg=+\infty\}$ and $D:=\{x:\int_{\ms G} f(gx) \,dg<+\infty\}$ form a Hopf decomposition.

In particular, the action of $\ms G$ is conservative (resp.\ dissipative) if and only if for any/some positive integrable function $f$ on $X$, we have $\int_{\ms G} f(gx) \,dg=+\infty$ (resp.\ $<+\infty$) for $m$-almost any $x\in X$.
\end{fact}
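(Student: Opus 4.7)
The plan is to show that $C$ and $D$, defined via $h(x) := \int_{\ms G} f(gx)\,dg$, form a Hopf decomposition; the ``in particular'' part then follows from the essential uniqueness of Hopf decompositions established just before the statement. First, Fubini--Tonelli makes $h \colon X \to [0,+\infty]$ measurable, and the change of variable $g \mapsto g g_0^{-1}$ (available by unimodularity) gives $h(g_0 x) = h(x)$, so $h$ is $\ms G$-invariant. Hence $C = h^{-1}(+\infty)$ and $D = h^{-1}([0,+\infty))$ are $\ms G$-invariant measurable sets partitioning $X$; and since $f > 0$ we moreover have $h > 0$ everywhere.

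To prove conservativity on $C$, take any wandering set $W \subset C$ and, after replacing $W$ by the full-measure subset where the wandering condition holds pointwise, partition it as $W = \bigsqcup_{k \geq 1} W_k$ with $W_k := \{x \in W : k - 1 \leq |\{g : gx \in W\}| < k\}$ (where $|\cdot|$ denotes Haar measure). Each $W_k$ is still exactly wandering with return-set Haar measure at most $k$. A Fubini computation, combined with the $\ms G$-invariance of $m$ and unimodularity (which gives invariance of the Haar measure under $g \mapsto g^{-1}$), yields
\[\int_{W_k} h\,dm \;=\; \int_X f(y)\,\bigl|\{g \in \ms G : gy \in W_k\}\bigr|\,dm(y) \;\leq\; k\int_X f\,dm \;<\; +\infty.\]
Since $h \equiv +\infty$ on $W_k \subset C$, this forces $m(W_k) = 0$ for all $k$, hence $m(W) = 0$.

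For dissipativity on $D$, the task is to realise $D$ as a countable union of wandering sets. This is the main obstacle, because the integrability of $g \mapsto f(gx)$ only yields that superlevel-sets $\{g : f(gx) \geq \epsilon\}$ have finite Haar measure, not that they are relatively compact in $\ms G$. To bridge this gap, fix a compact exhaustion $\ms G = \bigcup_n K_n$ (possible by $\sigma$-compactness) and define
\[B_n := \bigl\{x \in D : \textstyle\int_{K_n} f(gx)\,dg \;\geq\; \tfrac{3}{4} h(x)\bigr\}.\]
By monotone convergence together with $h > 0$, we have $D = \bigcup_n B_n$. The key geometric observation: for $x \in B_n$ and $g_1, g_2 \in \{g : gx \in B_n\}$, the change-of-variables identity $\int_{K_n} f(hg_i x)\,dh = \int_{K_n g_i} f(ux)\,du$ (using unimodularity) shows that each of $K_n g_1$ and $K_n g_2$ carries at least $\tfrac{3}{4} h(x)$ of the total $f(\cdot x)\,dg$-mass; since the total mass is $h(x)$, inclusion--exclusion forces $K_n g_1 \cap K_n g_2$ to have positive Haar measure, and hence to be nonempty, so $g_1 g_2^{-1} \in K_n^{-1} K_n$. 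Thus $\{g : gx \in B_n\}$ sits inside the relatively compact set $K_n^{-1} K_n g_0$ (for any fixed $g_0$ in that set), and $B_n$ is wandering.

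For the ``in particular'' assertion: the action is conservative (resp.\ dissipative) if and only if the dissipative (resp.\ conservative) part $D$ (resp.\ $C$) is $m$-null, which in our notation is exactly $h = +\infty$ (resp.\ $h < +\infty$) $m$-almost everywhere. The independence of the choice of positive integrable $f$ is then an immediate consequence of the essential uniqueness of the Hopf decomposition.
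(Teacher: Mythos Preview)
The paper does not prove this fact itself; it is stated with a citation to \cite[Fact~2.27]{BlayacPS} and no argument is given in the body of the paper. Your proof is correct and self-contained.

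Both halves are sound. For conservativity, once you replace $W$ by the full-measure exactly wandering subset, the function $x\mapsto|\{g:gx\in W\}|=\int_{\ms G}1_W(gx)\,dg$ is measurable by Tonelli, so the $W_k$ are measurable; and for any $y\in X$ with $g_1y\in W_k$, right-invariance of Haar measure gives $|\{g:gy\in W_k\}|=|\{g':g'(g_1y)\in W_k\}|<k$, which is exactly what your Fubini bound needs. For dissipativity, your $B_n$ construction is a nice device: the inclusion--exclusion argument shows that any two return times $g_1,g_2$ for $B_n$ satisfy $|K_ng_1\cap K_ng_2|>0$ with respect to the weighted measure $f(\cdot\,x)\,dg$, and since the density is strictly positive this forces $K_ng_1\cap K_ng_2\neq\emptyset$, hence $g_1g_2^{-1}\in K_n^{-1}K_n$. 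This actually shows each $B_n$ is \emph{exactly} wandering, not merely wandering, which is slightly stronger than required.
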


This result implies that our notion of Hopf decomposition coincides with that of Roblin~\cite[p.17]{roblin}.
If not, there would be $W\subset C$ with positive measure such that $h(x)=\int 1_W(gx) \,dg$ is finite for $m$-almost every $x \in W$, and up to reducing $W$ we can assume there exists $R$ such that $\{h(x) \leq R\}$ has full $m$-measure in $W$.
Then $\int_W h(x)f(x) \,dx<+\infty$, but this quantity equals  $\int_{x\in W}\int f(gx)\,dg\,dm(x)$, which is infinite since $\int f(gx)\,dg=+\infty$ on $C$.

A consequence of Fact~\ref{fact:Hopf decompo} is that a group $\ms G$ acting on $X$ has the same Hopf decomposition as any lattice of $\ms G$ acting on $X$ (if $\ms G$ has lattices), see e.g.\ \cite[Th.\,1.6.4]{Aaronson} in the case of free actions.

\subsection{The case of discrete groups}\label{sec:funddom}

In this section we suppose that $\ms G$ is discrete and $X$ is standard, 
 i.e.\ $X$ is measurably isomorphic to a complete separable metric space equipped with its Borel sigma-algebra.
The goal is to construct a measurable fundamental domain for the action of $\ms G$ on the dissipative part.
This will allow us to check that our definition of Hopf decomposition agree with other definitions \cite{Aaronson,Kaimanovich} when $\ms G$ is torsion-free.

\begin{lemma}\label{lem:funddom}
 Let $A\subset X$ be a $\ms G$-invariant measurable subset which can be written as a countable union of exactly wandering sets.
 Then there exists a measurable subset $\mc F\subset A$ such that every orbit $\ms G\cdot x$ intersects $\mc F$ at exactly one point.
 In particular, $\mc F$ is measurably isomorphic to $\ms G \backslash A$ endowed with the quotient sigma-algebra, and this quotient is hence standard (can be measurably embedded in $[0,1]$).
 
Moreover, whenever $\ms G$ is  a discrete group acting measurably on a standard space  $X$,
  there exists a Hopf decomposition $X=(X-A)\sqcup A$ so that $A$ is a countable union of exactly wandering sets.

\end{lemma}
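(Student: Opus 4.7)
The plan is to prove the two claims in turn. For the first, the idea is to reduce to a single exactly wandering set and then construct a Borel transversal using the standard Borel structure on $X$. Writing $A = \bigcup_n W_n$ with each $W_n$ exactly wandering, I would set
$$W_n' := W_n \setminus \bigcup_{k<n} \ms G \cdot W_k;$$
this is Borel since $\ms G$ is countable, exactly wandering as a subset of $W_n$, and the $\ms G$-saturations $\ms G \cdot W_n'$ partition $A$ disjointly. So it suffices to produce, for each exactly wandering set $W \subset A$, a Borel subset $\mc F \subset W$ meeting each orbit in $\ms G \cdot W$ exactly once.

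For this, since $X$ is standard I would fix a Borel total order $\leq$ on $X$, e.g.\ pulled back from $[0,1]$. The exactly wandering hypothesis says that for every $y \in W$ the set $\ms G y \cap W$ is finite, hence has a $\leq$-minimum, so I would define
$$\mc F := W \cap \bigcap_{g \in \ms G} \bigl( \{y : g y \notin W\} \cup \{y : y \leq g y\} \bigr),$$
which is Borel by countability of $\ms G$ and meets each orbit of $\ms G \cdot W$ at the unique $\leq$-minimum of its intersection with $W$. The measurable isomorphism $\mc F \cong \ms G \backslash A$ then follows routinely: a subset of $\mc F$ is measurable exactly when its $\ms G$-saturation in $A$ is.

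For the second claim I would invoke Fact~\ref{fact:Hopf decompo} applied to any positive integrable $f$ on $X$, obtaining the Hopf decomposition $X = (X \smallsetminus A) \sqcup A$ with dissipative part $A = \{x : \sum_{g \in \ms G} f(g x) < +\infty\}$. Set $F(x) := \sup_{g \in \ms G} f(g x)$, a $\ms G$-invariant measurable function that is finite and strictly positive on $A$. Then the sets
$$W_n := \{y \in A : f(y) \geq F(y)/n\}, \qquad n \geq 1,$$
cover $A$ (since $0 < f(y) \leq F(y) < +\infty$ for $y \in A$), and each is exactly wandering: for $y \in W_n$, using $\ms G$-invariance of $F$,
$$\{g \in \ms G : g y \in W_n\} = \{g : f(g y) \geq F(y)/n\},$$
which is finite because $\sum_g f(g y) < +\infty$ and $F(y)/n > 0$.

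The only genuinely delicate step will be the Borel-transversal construction in the first part. Everything else amounts to bookkeeping once one has a Borel order on $X$, finiteness of $\ms G y \cap W$ from the exactly wandering hypothesis, and countability of $\ms G$.
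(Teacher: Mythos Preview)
Your proof is correct. The first part is essentially identical to the paper's argument: both disjointify the $\ms G$-saturations of the $W_n$'s and then select, within each exactly wandering piece, the minimum of the orbit with respect to a Borel order pulled back from $[0,1]$.

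Your second part takes a genuinely different route. The paper starts from a Hopf decomposition $X=C\sqcup D$, writes $D$ as a union of (merely) wandering sets $W_n$, and then trims each $W_n$ to the exactly wandering subset $W_n':=\{x\in W_n:\{g:gx\in W_n\}\text{ is finite}\}$, which has full measure in $W_n$; the resulting $A=\bigcup_n\ms G\cdot W_n'$ is then a full-measure $\ms G$-invariant subset of $D$. Your construction is more direct: you take $A$ equal to the dissipative part $D$ itself and cover it by the exactly wandering sets $W_n=\{y\in A:f(y)\geq F(y)/n\}$, using that $F=\sup_g f\circ g$ is a countable supremum (hence measurable), $\ms G$-invariant, and finite and positive on $A$. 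Your argument has the minor advantage that $A=D$ on the nose rather than up to a null set, and avoids invoking the existence of a covering by wandering sets as an intermediate step.
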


\begin{proof}
 Let $\{W_n\}$ be a sequence of exactly wandering sets with $A=\bigcup_n W_n$.
 Let $W_1':=W_1$, and let $W_n':=W_n-\ms G(W_1\cup\dots \cup W_{n-1})$ for all $n > 1$. Then $\{W'_n\}$ is a sequence of exactly wandering sets such that the orbits $\ms G\cdot W_n'$ form a partition of $A$.
 To conclude the proof, it suffices to find a fundamental domain in each $W_n'$, i.e.\ to select in a measurable way one representative for each orbit which intersects $W_n'$.
 
 Let $\phi\colon X\to [0,1]$ be a measurable embedding.
 Now for each $n\geq 1$, we select in each orbit $\ms G\cdot x$ the point $y\in W_n'$ whose image under $\phi$ is the smallest, i.e.\ $y=\phi^{-1}(\min \phi(\ms G\cdot x\cap W_n'))$. So
 \[ \mc F_n:=\{x\in W_n':\phi(x)\leq \phi(gx) \quad \text{for any }g\in \ms G\text{ with } gx\in W_n'\}. \]
This set is measurable since $x\in \mc F_n$ if and only if $\phi(x)\leq \phi(gx)+1_{X-W_n'}(gx)$ for any $g\in\ms G$, which are countably many measurable conditions.
 Then $\mc F:=\bigcup_n\mc F_n$ is a measurable fundamental domain.
 
 To construct $A$ satisfying the ``moreover'' statement, consider a Hopf decomposition $X=C\sqcup D$, write $D$ as a countable union of wandering sets $\{W_n\}$, and then let $W_n'$ be the set of $x\in W_n$ such that $\{g:gx\in W_n\}$ is finite, so that $W_n'$ is exactly wandering and has full measure in $W_n$.
 Finally set $A:=\bigcup_n\ms G\cdot W_n'$.
\end{proof}

If $\ms G$ is torsion-free, then the action of $\ms G$ on the set $A\subset X$ constructed in Lemma \ref{lem:funddom}  is free.
As a corollary, any Hopf decomposition $X=C\sqcup D$ for our definition in Section~\ref{sec:hopf decompo} is also a Hopf decomposition in the sense of Aaronson \cite[\S1.6]{Aaronson} and Kaimanovich \cite{Kaimanovich}: every positive measure subset $B\subset C$ is \emph{recurrent}, meaning that for almost any $x\in B$ the orbit eventually returns to $B$ (because $B$ is not wandering), and $D$ admits a subset $\mc F$ such that $\{g\mc F\}_{g\in \ms G}$ are pairwise disjoint and $\ms G\cdot \mc F$ has full measure in $D$.

\subsection{Quotient measures}\label{sec:quotient measures}

In this section we assume the action of $\ms G$ on $X$ is dissipative.
Let $\pi\colon X\to \ms G\backslash X$ denote the projection map associated to the action and endow $\ms G\backslash X$ with the quotient sigma-algebra.

For any non-negative measurable function $f\colon X\to [0,+\infty]$, the function $$\tilde P(f)(x):= \int_{g\in\ms G}f(gx) \,dg$$ is measurable and $\ms G$-invariant, hence it descends to a measurable function on $\ms G\backslash X$ which we denote by $P(f)$.

We say that a measure $m'$ on $\ms G\backslash X$ is a \emph{quotient measure for $m$} if for any non-negative measurable function $f : X \rightarrow [0,+\infty]$ we have
\begin{equation}\label{eqn:defining property of quotient measures}
\int_{x\in X} f(x) \,dm(x) = \int_{q\in \ms G\backslash X} P(f(q) \,dm'(q). 
\end{equation} 

For instance, if $X$ is a smooth manifold, $\ms G$ is discrete and acts freely and properly discontinuously on $X$, and $m$ comes from a smooth $\ms G$-invariant volume form $\alpha$, then $\ms G\backslash X$ is a manifold and the quotient measure is induced by the volume form $\pi_*\alpha$.

We will show that quotient measures exist and are unique.

\begin{remark}\label{rem:quotient is sigmafinite} 
First we make some observations.
\begin{enumerate}
\item The quotient measure $m'$ is automatically sigma-finite, since $P(f)$ is a positive function in $L^1(\ms G \backslash X, m')$ whenever $f$ is a positive function in $L^1(X,m)$.
\item A $\ms G$-invariant measurable subset $A\subset X$ has zero $m$-measure if and only if its projection $\pi(A)$ (which is measurable) has zero $m'$-measure.
Indeed, let $f$ be a positive integrable function on $X$.
Then $P(f1_A)=P(f)1_{\pi(A)}$.
If $m(A)=0$ then $\int P(f)1_{\pi(A)}dm'=\int f1_Adm=0$ so $ P(f)1_{\pi(A)}=0$ almost everywhere, so $m'(\pi(A))=0$.
Conversely, if $m'(\pi(A))=0$ then $\int f1_Adm=0$ so $m(A)=0$. 
\item If $f \in L^1(X,m)$, then $\tilde P(f)(x)= \int_{g\in\ms G}f(gx) \,dg$ is an $m$-almost everywhere defined measurable function and hence it descends to a measurable $m'$-almost everywhere defined function on $\ms G\backslash X$ which we denote by $P(f)$. Equation~\eqref{eqn:defining property of quotient measures} implies that 
$$
\int P(f) dm' = \int f dm
$$
for all $f \in L^1(X,m)$. Since $\abs{P(f)} \leq P(\abs{f})$, Equation~\eqref{eqn:defining property of quotient measures} also  implies that
$$
P : L^1(X, m) \rightarrow L^1(\ms G \backslash X, m')
$$ 
is continuous.
\end{enumerate} 
\end{remark}

\begin{fact}
 There is a unique quotient measure on $\ms G\backslash X$, and it is given by the formula
 \[ m' = \frac1{P(f_0)} \pi_*(f_0m), \]
 where $f_0$ is any integrable positive function on $X$.
 Moreover, for any $\chi \colon \ms G\backslash X \to \R_{\geq 0}$, if
 $f=\tfrac{f_0}{\tilde P(f_0)}\chi\circ \pi $ then $P(f)=\chi$ $m'$-almost surely.
\end{fact}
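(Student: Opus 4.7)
The plan is to construct $m'$ explicitly via the formula in the statement, using a well-chosen $m$-almost everywhere positive function $h$ whose fiber-integral $\tilde P(h)$ equals $1$. The key ingredient is that dissipativity forces $\tilde P(f_0) < +\infty$ on a full $m$-measure $\ms G$-invariant set (Fact~\ref{fact:Hopf decompo}), so $h := f_0/\tilde P(f_0)$ is a well-defined non-negative measurable function, defined $m$-almost everywhere.

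First I would show that $\tilde P(f_0)$ is $\ms G$-invariant, so that it descends to $P(f_0)$. The computation $\tilde P(f_0)(g_0 x) = \int_{\ms G} f_0(g g_0 x) \, dg = \int_{\ms G} f_0(g x) \, dg = \tilde P(f_0)(x)$ uses the right-invariance of the Haar measure $dg$, which holds because $\ms G$ is unimodular. This immediately gives $\tilde P(h)(x) = \tilde P(f_0)(x)/\tilde P(f_0)(x) = 1$ on the full-measure set where $h$ is defined. Next, for any non-negative measurable $\chi$ on $\ms G\backslash X$, the function $f := h\cdot (\chi\circ\pi)$ satisfies
\[
\tilde P(f)(x) = \chi(\pi(x))\,\tilde P(h)(x) = \chi(\pi(x))
\]
$m$-almost everywhere, because $\chi\circ \pi$ is $\ms G$-invariant and comes out of the integral. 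Thus $P(f) = \chi$ on a $m'$-full measure subset of $\ms G\backslash X$ (by Remark~\ref{rem:quotient is sigmafinite}, once $m'$ is constructed); this is the ``moreover'' statement.

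Then I would define $m'$ by the stated formula and verify the defining relation~\eqref{eqn:defining property of quotient measures}. Unpacking the definition gives $\int \chi \, dm' = \int (\chi\circ\pi)\, h\, dm$ for every non-negative measurable $\chi$ on $\ms G\backslash X$. To check~\eqref{eqn:defining property of quotient measures} for an arbitrary non-negative measurable $f$ on $X$, I would compute
\[
\int P(f)\, dm' = \int \tilde P(f)(x)\, h(x)\, dm(x) = \int_{\ms G}\int_X f(gx)\, h(x)\, dm(x)\, dg
\]
by Tonelli. Then applying the change of variable $y = gx$ together with the $\ms G$-invariance of $m$ and the fact that Haar measure is invariant under $g\mapsto g^{-1}$ (again by unimodularity), the right-hand side becomes $\int_X f(y)\,\tilde P(h)(y)\, dm(y) = \int f\, dm$, as required.

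Uniqueness is then immediate from the ``moreover'' part: given any quotient measure $m''$ and any non-negative measurable $\chi$ on $\ms G\backslash X$, applying~\eqref{eqn:defining property of quotient measures} for $m''$ to $f := h(\chi\circ\pi)$ yields $\int \chi\, dm'' = \int f\, dm = \int \chi\, dm'$. The main technical obstacles are not deep but require care: (i) ensuring $h$ is genuinely defined on an $m$-full measure $\ms G$-invariant set (so that all identities can be made to hold globally after neglecting a null set), and (ii) making sure that $m$-almost everywhere identities involving $\ms G$-invariant functions descend to $m'$-almost everywhere identities, for which Remark~\ref{rem:quotient is sigmafinite}(2) is exactly the tool needed. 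Unimodularity of $\ms G$ enters at two points (invariance of $\tilde P(f_0)$ and the change of variable $g\mapsto g^{-1}$), and both are essential.
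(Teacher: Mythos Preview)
Your proposal is correct and follows essentially the same approach as the paper: define the normalized weight $h=f_0/\tilde P(f_0)$, verify the defining relation for $m'$ via Tonelli, the $\ms G$-invariance of $m$, and the inversion-invariance of Haar measure, and deduce uniqueness from the ``moreover'' identity $P(h\cdot(\chi\circ\pi))=\chi$. The only organizational difference is that the paper proves uniqueness first (using that $\int P(f_0)\,dm_i=\int f_0\,dm<\infty$ forces $P(f_0)<\infty$ $m_i$-almost surely for any quotient measure $m_i$) and then existence, whereas you do existence first and appeal to Remark~\ref{rem:quotient is sigmafinite}(2); both orderings are fine.
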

\begin{proof}
 Since the action of $\ms G$ on $X$ is dissipative, $P(f_0)$ is finite $m$-almost surely by Fact~\ref{fact:Hopf decompo}.

 Let us prove uniqueness: let $m_1,m_2$ be quotient measures.
 Fix $\chi\colon \ms G\backslash X \to [0,+\infty]$ measurable.
 Set $f(x)=\chi(\pi(x))\tfrac{f_0(x)}{\tilde P(f_0)(x)}$ and observe that $P(f)(q)=\chi(q)$ for any $q\in \ms G\backslash X$ such that $P(f_0)(q)<\infty$, which occurs $m_i$-almost surely since $\int P(f_0)dm_i=\int f_0dm<\infty$, for any $i=1,2$.
 Thus $\int \chi dm_1=\int fdm=\int \chi dm_2$, which implies $m_1=m_2$ since $\chi$ was an arbitrary non-negative measurable function.
 
 Let us now check that  $ m' = P(f_0)^{-1} \pi_*(f_0m)$ is a quotient measure.
 Fix $f\colon X\to [0,+\infty]$.
 Then by Fubini, the $\ms G$-invariance of $m$, and the invariance of the Haar measure under $g\mapsto g^{-1}$, we have
 \begin{align*}
  \int_{q\in \ms G\backslash X} P(f)(q) \,dm'(q) & = \int_{q\in \ms G\backslash X} \frac{P(f)(q)}{P(f_0)(q)} \,d\pi_*(f_0m)(q)\\
  & = \int_{\ms G} \int_X f(gx) \frac{f_0(x)}{P(f_0)(gx)} \,dm(x) \,dg\\
  & = \int_X f(y) \int_{\ms G} \frac{f_0(g^{-1}y)}{P(f_0)(y)} \,dg \,dm(y)
    = \int_X f(y) \,dm(y).\qedhere
 \end{align*}
\end{proof}
 
 If $\ms G$ is discrete, then one can use the existence of a fundamental domain from Section~\ref{sec:funddom} to give a more concrete description of the quotient measure.
 
 \begin{fact}
  Suppose $X$ is standard and $\ms G$ is discrete.
  Let $\mc F\subset X$ be a measurable subset that intersects every $\Gamma$-orbit at most once and such that $\Gamma\cdot \mc F$ has full measure (as in the ``moreover'' part of Lemma~\ref{lem:funddom}).
  
  Then $\pi_*({f_0}m_{|\mc F})$ is the quotient measure, where $f_0(x)=\tfrac{1}{\#\{\gamma\in\Gamma:\gamma x=x\}}$.
 \end{fact}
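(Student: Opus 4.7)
By the uniqueness statement in the preceding Fact, it suffices to verify that $m' := \pi_*(f_0 m_{|\mc F})$ satisfies the defining relation $\int_X f \, dm = \int_{\Gamma\backslash X} P(f) \, dm'$ for every non-negative measurable $f \colon X \to [0,+\infty]$. Before doing so, I would first check two preliminary points: that $f_0$ is finite-valued and measurable on $\mathcal F$, and that it is $\Gamma$-invariant. Measurability of $x \mapsto \#\{\gamma : \gamma x = x\}$ follows from countability of $\Gamma$ and measurability of the action. Finiteness on $\mc F$ comes from the construction in the ``moreover'' part of Lemma~\ref{lem:funddom}: $\mc F$ is contained in a countable union of exactly wandering sets $W_n'$, and for any $x \in W_n'$ the stabilizer $\mathrm{Stab}(x)$ is a subset of the finite set $\{g \in \Gamma : gx \in W_n'\}$. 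The $\Gamma$-invariance of $f_0$ is immediate from the fact that $\mathrm{Stab}(\gamma x) = \gamma \mathrm{Stab}(x) \gamma^{-1}$.

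With these preliminaries in place, the plan is to compute directly. Since the Haar measure on a discrete group is counting measure, $\tilde P(f)(x) = \sum_{g \in \Gamma} f(gx)$, so
\[ \int_{\Gamma\backslash X} P(f) \, dm' = \int_{\mc F} f_0(x) \sum_{g \in \Gamma} f(gx) \, dm(x). \]
Using Tonelli's theorem to swap the sum and integral (both sides are non-negative), then substituting $y = gx$ and using the $\Gamma$-invariance of both $m$ and $f_0$, this becomes
\[ \sum_{g \in \Gamma} \int_{g\mc F} f_0(y) f(y) \, dm(y) = \int_X f_0(y) f(y) \Bigl(\sum_{g \in \Gamma} \mathbf{1}_{g\mc F}(y)\Bigr) dm(y). \]

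The key combinatorial step is then to identify $\sum_{g \in \Gamma} \mathbf{1}_{g\mc F}(y)$ for $y \in \Gamma \cdot \mc F$. Since $\mc F$ meets the orbit $\Gamma \cdot y$ in exactly one point, say $x$, the condition $y \in g\mc F$ (i.e.\ $g^{-1}y \in \mc F$) forces $g^{-1}y = x$, so the set of such $g$ is a single coset of $\mathrm{Stab}(x)$ and hence has cardinality $|\mathrm{Stab}(y)| = 1/f_0(y)$. Substituting this back gives $\int_{\Gamma \mc F} f(y) \, dm(y) = \int_X f \, dm$, where the last equality uses that $\Gamma \mc F$ has full $m$-measure. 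This completes the verification.

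There is no real obstacle here beyond organizing the bookkeeping; the delicate point is the claim that $\mathrm{Stab}(x)$ is finite for $x \in \mc F$, which is exactly why Lemma~\ref{lem:funddom} was stated using \emph{exactly} wandering sets rather than merely wandering ones, so one should be careful to invoke this form of the lemma and not an almost-everywhere weakening.
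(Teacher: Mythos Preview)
Your proof is correct and follows essentially the same approach as the paper's: both unwind the definition of $m'$ and use that the fiber of $(g,x)\mapsto gx$ over a point $y\in\Gamma\cdot\mc F$ has cardinality $|\mathrm{Stab}(y)|$, cancelling the factor $f_0$. The only organizational difference is that the paper partitions $\mc F$ according to the stabilizer subgroup $K$ and handles each piece $\mc F_K$ separately, whereas you do the same count pointwise; one small remark is that the statement allows any $\mc F$ with the listed properties, not necessarily the specific one built in Lemma~\ref{lem:funddom}, so your justification of finite stabilizers via exactly wandering sets is better replaced by the observation that dissipativity forces $\sum_g f(gx)<\infty$ and hence $|\mathrm{Stab}(x)|<\infty$ for $m$-a.e.\ $x$.
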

 
\begin{proof}
 Let $f$ be a measurable non-negative function on $X$.
 
 For any finite subgroup $K\subset \ms G$, let $\mc F_K$ be the set of $x\in\mc F$ whose stabilizer is $K$.
 Then $\mc F$ is the disjoint countable union of the $\mc F_K$'s, and we have $\sum_\gamma 1_{\mc F_K}\circ\gamma=\#K\cdot 1_{\ms G \mc F_K}$ and $f_0(x)=(\#K)^{-1}$ for any $x\in \mc F_K$. So
 \begin{align*}
  \int P(f)d \pi_*(f_0m_{|\mc F}) 
  & = \int_{x\in \mc F}\sum_\gamma f(\gamma x)f_0(x) dm(x) \\
   & = \sum_{K\subset \ms G}\frac1{\#K} \int_{x\in \mc F} \sum_\gamma 1_{\mc F_K}(x)f(\gamma x)dm(x)\\
   & = \sum_{K\subset \ms G} \int_{y\in\ms G\mc F_K}f(y)dm(y)
   = \int f dm.\qedhere
 \end{align*}
\end{proof}

\subsection{The case $\ms G=\Z$}\label{sec:HopfZ}

In this section we consider the case when $\ms G=\Z$.
There is an abundant literature on the notions of conservativity, dissipativity and Hopf decomposition in this case, and more generally in the case of actions of the semigroup $\Z_{\geq 1}$.
We denote by $T^n$ the transformation of $X$ associated to an element $n$.

For any reasonable choice of definitions, it is obvious that conservativity of $\Z_{\geq 1}$ always implies conservativity of $\Z$ and that dissipativity of $\Z$ implies dissipativity of $\Z_{\geq 1}$.
It is well-known, although nontrivial, that the converses are also true.
We shall use Krengel\cite{Krengel} 
as a reference, and check that our definitions are consistent with the definitions there:
\begin{fact}
 Consider a decomposition $X=C\sqcup D$ which is a Hopf decomposition in the sense of Krengel \cite[Th.\,3.2]{Krengel}: $D$ admits a measurable subset $W_0$ such that $D=T^{\Z}W_0$ and $W_0\cap T^nW_0=\emptyset$ for any $n\neq 0$, and every subset $W\subset C$ with $W\cap T^nW=\emptyset$ for any $n\neq 0$ has measure zero.
 
 Then $X=C\sqcup D$ is a Hopf decomposition for our definition.
\end{fact}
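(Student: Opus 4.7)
The plan is to verify the two required properties of a paper Hopf decomposition separately. The first---that the action on $D$ is dissipative in the paper's sense---is essentially immediate from the hypotheses: Krengel-wandering implies paper-wandering, since for every $x \in W_0$ the return set $\{n \in \Z : T^n x \in W_0\}$ reduces to $\{0\}$ and is in particular finite. The identity $D = \bigcup_{n \in \Z} T^n W_0$ then exhibits $D$ as a countable union of paper-wandering sets, because each $T^n W_0$ inherits the Krengel-wandering property by $\Z$-invariance (a routine change of variable).

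The substantive part is conservativity of the action on $C$ in the paper's sense: every paper-wandering subset $W \subset C$ must have $m(W) = 0$. The strategy is to extract from $W$ a Krengel-wandering subset carrying all of the mass of $W$ up to a bounded multiplicative factor, and then invoke Krengel's hypothesis that all Krengel-wandering subsets of $C$ have measure zero. After discarding a null set, every $x \in W$ has $N(x) := \{n \in \Z : T^n x \in W\}$ finite with $0 \in N(x)$. Stratify measurably,
\[ W_k := \{ x \in W : \#N(x) = k \}, \qquad k \geq 1, \]
so $W = \bigsqcup_{k \geq 1} W_k$ up to a null set. Inside each $W_k$, select the ``leftmost'' representative
\[ V_k := \{ x \in W_k : \min N(x) = 0 \} = \{ x \in W_k : T^n x \notin W \text{ for all } n<0 \}. \]

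The two key calculations are: (i) $V_k$ is Krengel-wandering---if $x, T^n x \in V_k$ with $n \neq 0$, then $n \in N(x)$ forces $n>0$ (since $\min N(x)=0$), while $V_k$-membership of $T^n x$ gives $\min N(T^n x) = 0$, contradicting $-n \in N(T^n x)$; and (ii) $W_k \subset \bigcup_{j=0}^{k-1} T^j V_k$, because for $x \in W_k$ the value $n_1(x) := \min N(x)$ lies in $\{-(k-1), \dots, 0\}$ and $T^{n_1(x)} x \in V_k$. Krengel's hypothesis applied to (i) yields $m(V_k) = 0$, and (ii) then gives $m(W_k) \leq k \, m(V_k) = 0$, whence $m(W) = \sum_k m(W_k) = 0$.

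The only potential obstacle is measurability of $W_k$ and $V_k$, but this is elementary: $\#N(x) = \sum_{n \in \Z} 1_W(T^n x)$ is a measurable $[0,\infty]$-valued function of $x$, and the condition $\min N(x) = 0$ is the countable conjunction $\bigwedge_{n < 0} \{1_W(T^n x) = 0\}$. No delicate measurable selection argument is required, and the stratification-and-translate trick reduces everything to Krengel's hypothesis. The argument is symmetric in past and future, and in fact only uses that $\Z$ acts so that wandering in the paper's sense forces orbit segments in $W$ to have bounded length on positive-measure pieces.
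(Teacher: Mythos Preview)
Your proof is correct in spirit and takes a genuinely different route from the paper. For conservativity on $C$, the paper simply invokes Krengel's Theorem 3.1 (Krengel-conservativity implies Poincar\'e recurrence: for any measurable $A\subset C$, almost every $x\in A$ returns to $A$ infinitely often), which immediately kills any paper-wandering set. You instead give a self-contained combinatorial argument, stratifying by $\#N(x)$ and extracting Krengel-wandering ``leftmost'' sets $V_k$. This is more elementary and effectively re-proves the relevant implication of Krengel's theorem in this special case; the paper's proof is shorter but outsources the work.

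One small slip: in step (ii) you assert $n_1(x)=\min N(x)\in\{-(k-1),\dots,0\}$, but there is no such bound---for instance $N(x)=\{-100,0\}$ has $k=2$ and $n_1(x)=-100$. This does not matter: you still get $x=T^{-n_1(x)}(T^{n_1(x)}x)\in T^{-n_1(x)}V_k$ with $-n_1(x)\geq 0$, so $W_k\subset\bigcup_{j\geq 0}T^jV_k$, a \emph{countable} union of null sets, and $m(W_k)=0$ follows just the same.
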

\begin{proof}
 The action on $D$ is clearly dissipative for our definition.

If every subset $W\subset C$ with $W\cap T^nW=\emptyset$ for any $n\neq 0$ has measure zero, then for any measurable $A\subset C$, for almost any $x\in A$ there exist infinitely many $n$ such that $T^nx\in A$~\cite[Th.\,3.1]{Krengel}.
 This implies that the action on $C$ is conservative for our definition.
\end{proof}

\subsection{A topological Hopf decomposition}
Suppose the sigma-algebra of $X$ comes from a locally compact second-countable topology and the action of $\ms G$ is by homeomorphisms. 
In this case there is a natural Hopf decomposition that does not depend on $m$.

We say an orbit $\ms G\cdot x$ is \emph{escaping} if for any compact set $K$ the set $\{g:gx\in K\}$ is relatively compact, i.e.\ $gx\to\infty$ as $g\to\infty$.
Let $D\subset X$ be the set of $x$ such that $\ms G \cdot x$ is escaping, and $C=X-D$.
Note that $D$ is measurable because it is a countable intersection of closed sets of the form 
\[
\{x:(\ms G-L)\cdot x\subset X-\mr{int}(K)\}
=\bigcap_{g\in\ms G-L}g^{-1}(X-\mr{int}(K))
\]
for some compact sets $K\subset X$ and $L\subset \ms G$.

\begin{lemma}\label{lem:topHopf}
 $X=C\sqcup D$ is a Hopf decomposition for any $ \ms G$-invariant locally finite measure.
\end{lemma}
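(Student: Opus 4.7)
The plan is to verify the two halves of the Hopf decomposition separately, using the topological structure to bootstrap measure-theoretic conclusions.

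For the dissipative side, I would write $X = \bigcup_{n \geq 1} K_n$ as a countable union of compact sets (possible since $X$ is locally compact and second-countable, hence $\sigma$-compact) and set $W_n := D \cap K_n$. Each $W_n$ is measurable, and for every $x \in W_n \subset D$ the orbit $\ms G \cdot x$ is escaping, so $\{g \in \ms G : g x \in K_n\}$ is relatively compact and therefore so is $\{g : gx \in W_n\}$. Hence each $W_n$ is (even exactly) wandering and $D = \bigcup_n W_n$, so the action on $D$ is dissipative with respect to every $\ms G$-invariant locally finite measure.

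For the conservative side, by Fact~\ref{fact:Hopf decompo} it suffices to exhibit one positive measurable $f \in L^1(X, m)$ with $\int_{\ms G} f(gx)\, dg = +\infty$ for every $x \in C$. I would construct $f$ to be continuous and strictly positive on $X$: using a countable exhaustion $X = \bigcup U_n$ by relatively compact open sets and bump functions $\phi_n$ supported in $U_n$ with $\phi_n > 0$ on $U_{n-1}$, set $f := \sum_n \frac{\phi_n}{2^n (1 + \int \phi_n\, dm)}$. Local finiteness of $m$ makes this integrable, and the positivity of each $\phi_n$ on $U_{n-1}$ makes $f$ strictly positive on $X$.

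Fix $x \in C$. By definition there exist a compact $K \subset X$ and a sequence $g_n \to \infty$ in $\ms G$ with $g_n x \in K$; pass to a subsequence so that $g_n x \to y$ for some $y \in K$. Choose a compact symmetric neighborhood $U$ of the identity in $\ms G$; then $Uy$ is compact and $f \geq c > 0$ on some open neighborhood $V$ of $Uy$. The key point is a tube-lemma argument: since the action $\Phi \colon U \times X \to X$ is continuous, $\Phi^{-1}(V)$ is an open set containing the compact slice $U \times \{y\}$, so it contains a tube $U \times W_0$ for some open neighborhood $W_0$ of $y$; once $g_n x \in W_0$, we get $U g_n x \subset V$ and hence $\int_{U g_n} f(hx)\, dh \geq c \lvert U \rvert$. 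Since $U^{-1}U g_m$ is compact and $g_n \to \infty$, I can extract a further subsequence $\{g_{n_k}\}$ with the sets $\{U g_{n_k}\}$ pairwise disjoint, yielding $\int_{\ms G} f(gx)\, dg \geq \sum_k c \lvert U \rvert = +\infty$, as desired.

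The main obstacle is the last step: transferring the pointwise recurrence $g_n x \to y$ to a lower bound on $f$ over thickened sets $Ug_n$ of uniform positive Haar measure. This is what forces $f$ to be positive globally (not merely at $y$, which varies with $x$) and what makes the tube-lemma/compactness-of-$U$ argument essential, since without it the neighborhoods on which $f(hx)$ stays bounded below could shrink to have vanishing Haar measure.
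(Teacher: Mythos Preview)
Your proof is correct and follows essentially the same strategy as the paper: cover $D$ by $D\cap K_n$ for a compact exhaustion, and for $C$ pick a positive continuous integrable $f$ and show $\int_{\ms G} f(gx)\,dg=+\infty$ by finding an escaping sequence $g_n$ with $g_nx$ recurring to a compact set and thickening by a compact neighborhood $U$ of the identity to get disjoint sets of uniform Haar measure. The only difference is that the paper avoids your limit-point-plus-tube-lemma step by observing directly that $f$ is bounded below by some $\epsilon>0$ on the compact set $U\cdot K$ (equivalently, $(u,y)\mapsto f(uy)$ attains a positive minimum on $U\times K$), so no passage to a convergent subsequence of $g_nx$ is needed.
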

\begin{proof}
 Our assumptions imply the existence of a positive continuous integrable function $f$.
 Then $\int f(gx) \,dx=+\infty$ for any $x\in C$.
 Indeed let $x\in C$.
 Then there is a compact subset $K\subset X$ such that $\{g:gx\in K\}$ is not relatively compact.
 Fix $U\subset \ms G$ a compact neighborhood of the identity.
 By continuity there is some $\epsilon>0$ such that  $f(uy)>\epsilon$ when $u \in U$ and $y \in K$. 
 Let $\{g_n\}\subset \ms G$ be an escaping sequence such that $
 Ug_n$ are pairwise disjoint and $g_nx\in K$ for any $n$.
 Then
 \[ \int_{\ms G} f(gx) \,dg \geq \sum_n \int_{u\in U} f(ug_nx) \,du \geq \sum_n \epsilon \cdot \mr{Haar}(U)=+\infty.\]
 
 It remains to prove that $D$ is a countable union of wandering sets.
 In fact it is a countable union of exactly wandering sets $W_n$ ($\{g:gx\in W_n\}$ is relatively compact for any $x\in W_n$).
 Indeed let $\{K_n\}$ be a sequence of compact sets covering $X$, and let $W_n=K_n\cap D$.
 Then $W_n$ is exactly wandering.
\end{proof}

\bibliographystyle{alpha}
\bibliography{geom}

\begin{thebibliography}{{Kim}24}

\bibitem[Aar97]{Aaronson}
Jon Aaronson.
\newblock {\em An introduction to infinite ergodic theory}, volume~50 of {\em
  Mathematical Surveys and Monographs}.
\newblock American Mathematical Society, Providence, RI, 1997.

\bibitem[Alb99]{Albuquerque}
Paul Albuquerque.
\newblock Patterson--{S}ullivan theory in higher rank symmetric spaces.
\newblock {\em Geom. Funct. Anal.}, 9(1):1--28, 1999.

\bibitem[AMS95]{AMS}
Herbert Abels, Grigoriĭ~A. Margulis, and Grigoriĭ~A. So\u{\i}fer.
\newblock Semigroups containing proximal linear maps.
\newblock {\em Israel J. Math.}, 91(1-3):1--30, 1995.

\bibitem[ARP24]{hopfdecompo}
Nachi Avraham-Re'em and George Peterzil.
\newblock {The Hopf Decomposition}.
\newblock {\em arXiv e-prints}, June 2024.
\newblock arXiv:2404.09718.

\bibitem[BB07]{BB2007}
S\'{e}bastien Blach\`ere and Sara Brofferio.
\newblock Internal diffusion limited aggregation on discrete groups having
  exponential growth.
\newblock {\em Probab. Theory Related Fields}, 137(3-4):323--343, 2007.

\bibitem[BCZZ24]{papertwo}
Pierre-Louis {Blayac}, Richard {Canary}, Feng {Zhu}, and Andrew {Zimmer}.
\newblock {Counting, mixing and equidistribution for GPS systems with
  applications to relatively Anosov groups}.
\newblock {\em arXiv e-prints}, April 2024.
\newblock arXiv:2404.09718.

\bibitem[BF17]{BF2017}
Uri {Bader} and Alex {Furman}.
\newblock {Some ergodic properties of metrics on hyperbolic groups}.
\newblock {\em arXiv e-prints}, July 2017.
\newblock arXiv:1707.02020.

\bibitem[BH99]{BridsonHaefliger}
Martin~R. Bridson and Andr\'{e} Haefliger.
\newblock {\em Metric spaces of non-positive curvature}, volume 319 of {\em
  Grundlehren der mathematischen Wissenschaften [Fundamental Principles of
  Mathematical Sciences]}.
\newblock Springer-Verlag, Berlin, 1999.

\bibitem[BHM08]{BHM8}
S\'{e}bastien Blach\`ere, Peter Ha\"{\i}ssinsky, and Pierre Mathieu.
\newblock Asymptotic entropy and {G}reen speed for random walks on countable
  groups.
\newblock {\em Ann. Probab.}, 36(3):1134--1152, 2008.

\bibitem[{Bla}21]{BlayacPS}
Pierre-Louis {Blayac}.
\newblock {Patterson--Sullivan densities in convex projective geometry}.
\newblock {\em arXiv e-prints}, June 2021.
\newblock arXiv:2106.08089. To appear in \emph{Comm.\ Math.\ Helv.}

\bibitem[BM96]{BurgerMozes1996}
Marc Burger and Shahar Mozes.
\newblock {${\rm CAT}$}(-{$1$})-spaces, divergence groups and their
  commensurators.
\newblock {\em J. Amer. Math. Soc.}, 9(1):57--93, 1996.

\bibitem[Bow99]{Bowditch99}
Brian~H. Bowditch.
\newblock Convergence groups and configuration spaces.
\newblock In {\em Geometric group theory down under ({C}anberra, 1996)}, pages
  23--54. de Gruyter, Berlin, 1999.

\bibitem[Bur]{MO:330015}
Peter~J. Burton.
\newblock Ergodic without atoms implies completely conservative?
\newblock MathOverflow.
\newblock URL:https://mathoverflow.net/q/330015 (version: 2019-04-26).

\bibitem[CDST18]{CDST2018}
R{\'e}mi {Coulon}, Rhiannon {Dougall}, Barbara {Schapira}, and Samuel {Tapie}.
\newblock {Twisted Patterson-Sullivan measures and applications to amenability
  and coverings}.
\newblock {\em arXiv e-prints}, September 2018.
\newblock arXiv:1809.10881.

\bibitem[Cou07]{Coudene_Hopf}
Yves Coud\`ene.
\newblock The {H}opf argument.
\newblock {\em J. Mod. Dyn.}, 1(1):147--153, 2007.

\bibitem[{Cou}22]{CoulonPS}
R{\'e}mi {Coulon}.
\newblock {Patterson-Sullivan theory for groups with a strongly contracting
  element}.
\newblock {\em arXiv e-prints}, June 2022.
\newblock arXiv:2206.07361.

\bibitem[{Cou}23]{Coulon2023}
R{\'e}mi {Coulon}.
\newblock {Ergodicity of the geodesic flow for groups with a contracting
  element}.
\newblock {\em arXiv e-prints}, March 2023.
\newblock arXiv:2303.01390.

\bibitem[CT21]{CantrellTanaka2021}
Stephen {Cantrell} and Ryokichi {Tanaka}.
\newblock {The Manhattan curve, ergodic theory of topological flows and
  rigidity}.
\newblock {\em arXiv e-prints}, April 2021.
\newblock arXiv:2104.13451. To appear in \emph{Geom.\ Topol.}

\bibitem[CT22]{CantrellTanaka_measures}
Stephen {Cantrell} and Ryokichi {Tanaka}.
\newblock {Invariant measures of the topological flow and measures at infinity
  on hyperbolic groups}.
\newblock {\em arXiv e-prints}, June 2022.
\newblock arXiv.2206.02282. To appear in \emph{J.\ Mod.\ Dyn.}

\bibitem[CZZ22]{CZZ2}
Richard {Canary}, Tengren {Zhang}, and Andrew {Zimmer}.
\newblock {Entropy rigidity for cusped Hitchin representations}.
\newblock {\em arXiv e-prints}, January 2022.
\newblock arXiv:2201.04859.

\bibitem[CZZ23]{CZZ2023a}
Richard {Canary}, Tengren {Zhang}, and Andrew {Zimmer}.
\newblock {Patterson--Sullivan measures for transverse subgroups}.
\newblock {\em arXiv e-prints}, April 2023.
\newblock To appear in \emph{J.\ Mod.\ Dyn.}

\bibitem[DK22]{DK2022}
Subhadip Dey and Michael Kapovich.
\newblock Patterson--{S}ullivan theory for {A}nosov subgroups.
\newblock {\em Trans. Amer. Math. Soc.}, 375(12):8687--8737, 2022.

\bibitem[DOP00]{DOP}
Fran\c{c}oise Dal'bo, Jean-Pierre Otal, and Marc Peign\'{e}.
\newblock S\'{e}ries de {P}oincar\'{e} des groupes g\'{e}om\'{e}triquement
  finis.
\newblock {\em Israel J. Math.}, 118:109--124, 2000.

\bibitem[DSU17]{DasSimmonsUrbanski}
Tushar Das, David Simmons, and Mariusz Urba\'{n}ski.
\newblock {\em Geometry and dynamics in {G}romov hyperbolic metric spaces, with
  an emphasis on non-proper settings}, volume 218 of {\em Mathematical Surveys
  and Monographs}.
\newblock American Mathematical Society, Providence, RI, 2017.

\bibitem[DT23]{DT2023}
Caleb {Dilsavor} and Daniel~J. {Thompson}.
\newblock {Gibbs measures for geodesic flow on CAT(-1) spaces}.
\newblock {\em arXiv e-prints}, September 2023.
\newblock arXiv:2309.03297.

\bibitem[EO73]{Visibility}
Patrick Eberlein and Barrett O'Neill.
\newblock Visibility manifolds.
\newblock {\em Pacific J. Math.}, 46:45--109, 1973.

\bibitem[Fol99]{Folland1999}
Gerald~B. Folland.
\newblock {\em Real analysis}.
\newblock Pure and Applied Mathematics (New York). John Wiley \& Sons, Inc.,
  New York, second edition, 1999.
\newblock Modern techniques and their applications, A Wiley-Interscience
  Publication.

\bibitem[Fre95]{Freden95}
Eric~M. Freden.
\newblock Negatively curved groups have the convergence property. {I}.
\newblock {\em Ann. Acad. Sci. Fenn. Ser. A I Math.}, 20(2):333--348, 1995.

\bibitem[GM87]{GM87}
Frederick~W. Gehring and Gaven~J. Martin.
\newblock Discrete quasiconformal groups. {I}.
\newblock {\em Proc. London Math. Soc. (3)}, 55(2):331--358, 1987.

\bibitem[GMM18]{GMM}
S\'{e}bastien Gou\"{e}zel, Fr\'{e}d\'{e}ric Math\'{e}us, and Fran\c{c}ois
  Maucourant.
\newblock Entropy and drift in word hyperbolic groups.
\newblock {\em Invent. Math.}, 211(3):1201--1255, 2018.

\bibitem[Hop37]{Hopf37}
Eberhard Hopf.
\newblock {\em Ergodentheorie.}, volume~5 of {\em Ergeb. Math. Grenzgeb.}
\newblock Springer-Verlag, Berlin, 1937.

\bibitem[Kai10]{Kaimanovich}
Vadim~A. Kaimanovich.
\newblock Hopf decomposition and horospheric limit sets.
\newblock {\em Ann. Acad. Sci. Fenn. Math.}, 35(2):335--350, 2010.

\bibitem[Kar05]{Karlsson}
Anders Karlsson.
\newblock On the dynamics of isometries.
\newblock {\em Geom. Topol.}, 9:2359--2394, 2005.

\bibitem[{Kim}24]{Kim2024}
Dongryul~M. {Kim}.
\newblock {Conformal measure rigidity and ergodicity of horospherical
  foliations}.
\newblock {\em arXiv e-prints}, April 2024.
\newblock arXiv:2404.13727.

\bibitem[Kni97]{Knieper1997}
Gerhard Knieper.
\newblock On the asymptotic geometry of nonpositively curved manifolds.
\newblock {\em Geom. Funct. Anal.}, 7(4):755--782, 1997.

\bibitem[KOW23]{KOW2023}
Dongryul~M. {Kim}, Hee {Oh}, and Yahui {Wang}.
\newblock {Properly discontinuous actions, Growth indicators and Conformal
  measures for transverse subgroups}.
\newblock {\em arXiv e-prints}, June 2023.
\newblock arXiv:2306.06846.

\bibitem[Kre85]{Krengel}
Ulrich Krengel.
\newblock {\em Ergodic theorems}, volume~6 of {\em De Gruyter Studies in
  Mathematics}.
\newblock Walter de Gruyter \& Co., Berlin, 1985.
\newblock With a supplement by Antoine Brunel.

\bibitem[KS64]{KochenStone}
Simon Kochen and Charles Stone.
\newblock A note on the {Borel}-{Cantelli} lemma.
\newblock {\em Ill. J. Math.}, 8:248--251, 1964.

\bibitem[Led95]{Ledrappier}
Fran\c{c}ois Ledrappier.
\newblock Structure au bord des vari\'{e}t\'{e}s \`a courbure n\'{e}gative.
\newblock In {\em S\'{e}minaire de {T}h\'{e}orie {S}pectrale et
  {G}\'{e}om\'{e}trie, {N}o. 13, {A}nn\'{e}e 1994--1995}, volume~13 of {\em
  S\'{e}min. Th\'{e}or. Spectr. G\'{e}om.}, pages 97--122. Univ. Grenoble I,
  Saint-Martin-d'H\`eres, 1995.

\bibitem[Lin18]{Link2018}
Gabriele Link.
\newblock Hopf-{T}suji-{S}ullivan dichotomy for quotients of {H}adamard spaces
  with a rank one isometry.
\newblock {\em Discrete Contin. Dyn. Syst.}, 38(11):5577--5613, 2018.

\bibitem[Pat76]{Patterson}
Samuel~J. Patterson.
\newblock The limit set of a {F}uchsian group.
\newblock {\em Acta Math.}, 136(3-4):241--273, 1976.

\bibitem[PPS15]{PPS}
Fr\'{e}d\'{e}ric Paulin, Mark Pollicott, and Barbara Schapira.
\newblock Equilibrium states in negative curvature.
\newblock {\em Ast\'{e}risque}, (373):viii+281, 2015.

\bibitem[PS17]{PS2017}
Rafael Potrie and Andr\'{e}s Sambarino.
\newblock Eigenvalues and entropy of a {H}itchin representation.
\newblock {\em Invent. Math.}, 209(3):885--925, 2017.

\bibitem[Qui02]{quint}
Jean-Fran\c{c}ois Quint.
\newblock Mesures de {P}atterson-{S}ullivan en rang sup\'{e}rieur.
\newblock {\em Geom. Funct. Anal.}, 12(4):776--809, 2002.

\bibitem[Ric17]{Ricks}
Russell Ricks.
\newblock Flat strips, {B}owen-{M}argulis measures, and mixing of the geodesic
  flow for rank one {${\rm CAT}(0)$} spaces.
\newblock {\em Ergodic Theory Dynam. Systems}, 37(3):939--970, 2017.

\bibitem[Rob03]{roblin}
Thomas Roblin.
\newblock Ergodicit\'{e} et \'{e}quidistribution en courbure n\'{e}gative.
\newblock {\em M\'{e}m. Soc. Math. Fr. (N.S.)}, (95):vi+96, 2003.

\bibitem[Ste36]{Stepanoff}
W.~Stepanoff.
\newblock Sur une extension du th{\'e}or{\`e}me ergodique.
\newblock {\em Compos. Math.}, 3:239--253, 1936.

\bibitem[Sul79]{Sullivan1979}
Dennis Sullivan.
\newblock The density at infinity of a discrete group of hyperbolic motions.
\newblock {\em Inst. Hautes \'{E}tudes Sci. Publ. Math.}, (50):171--202, 1979.

\bibitem[Tuk94]{Tukia1994}
Pekka Tukia.
\newblock Convergence groups and {G}romov's metric hyperbolic spaces.
\newblock {\em New Zealand J. Math.}, 23(2):157--187, 1994.

\bibitem[Woe00]{Woess}
Wolfgang Woess.
\newblock {\em Random Walks on Infinite Graphs and Groups}.
\newblock Cambridge Tracts in Mathematics. Cambridge University Press, 2000.

\bibitem[{Yan}22]{Yang}
Wenyuan {Yang}.
\newblock {Conformal dynamics at infinity for groups with contracting
  elements}.
\newblock {\em arXiv e-prints}, August 2022.
\newblock arXiv:2208.04861.

\end{thebibliography}

\end{document}